\documentclass[a4paper,11pt]{amsart}

\usepackage{graphicx}
\usepackage{amsmath}
\usepackage{amssymb}
\usepackage[active]{srcltx}
\usepackage{hyperref}

\newtheorem{thm}{Theorem}[section]
\newtheorem{lem}[thm]{Lemma}
\newtheorem{prop}[thm]{Proposition}
\newtheorem{cor}[thm]{Corollary}
\theoremstyle{definition}

\theoremstyle{remark}
\newtheorem{remark}{Remark}[section] %
\theoremstyle{plain}

\def\CC{{\mathbb C}}

\def\HH{{\mathbb H}}
\def\NN{{\mathbb N}}

\def\RR{{\mathbb R}}

\def\ZZ{{\mathbb Z}}

\def\new{\operatorname{new}}

\def\veca{{\text{\boldmath$a$}}}
\def\vecb{{\text{\boldmath$b$}}}

\def\vecc{{\text{\boldmath$c$}}}

\def\vece{{\text{\boldmath$e$}}}
\def\vecf{{\text{\boldmath$f$}}}

\def\veck{{\text{\boldmath$k$}}}

\def\vecell{{\text{\boldmath$\ell$}}}
\def\vecm{{\text{\boldmath$m$}}}
\def\vecn{{\text{\boldmath$n$}}}
\def\vecq{{\text{\boldmath$q$}}}

\def\vecr{{\text{\boldmath$r$}}}

\def\vecu{{\text{\boldmath$u$}}}
\def\vecv{{\text{\boldmath$v$}}}

\def\vecw{{\text{\boldmath$w$}}}
\def\vecx{{\text{\boldmath$x$}}}

\def\vecy{{\text{\boldmath$y$}}}
\def\vecz{{\text{\boldmath$z$}}}
\def\vecalf{{\text{\boldmath$\alpha$}}}
\def\vecbeta{{\text{\boldmath$\beta$}}}

\def\veceta{{\text{\boldmath$\eta$}}}
\def\vecpsi{{\text{\boldmath$\psi$}}}

\def\vecxi{{\text{\boldmath$\xi$}}}

\def\scrL{{\mathcal L}}

\def\scrU{{\mathcal U}}

\def\scrY{{\mathcal Y}}

\def\fR{{\mathfrak R}}
\def\fS{{\mathfrak S}}

\def\tre{\operatorname{Re}}
\def\tim{\operatorname{Im}}

\def\C{\operatorname{C{}}}
\def\L{\operatorname{L{}}}

\def\GL{\operatorname{GL}}

\def\SL{\operatorname{SL}}
\def\ASL{\operatorname{ASL}}

\def\tr{\operatorname{tr}}

\def\supp{\operatorname{supp}}

\def\ord{\operatorname{ord}}

\def\trans{\,^\mathrm{t}\!}

\def\Onder#1#2#3#4#5{#1 \setbox0=\hbox{$#1$}\setbox1=\hbox{$#2$}
       \dimen0=.5\wd0 \dimen1=\dimen0 \dimen2=\dp0 \dimen3=\dimen2
       \advance\dimen0 by .5\wd1 \advance\dimen0 by -#4
       \advance\dimen1 by -.5\wd1 \advance\dimen1 by -#4
       \advance\dimen2 by -#3 \advance\dimen2 by \ht1
       \advance\dimen2 by 0.3ex \advance\dimen3 by #5
        \kern-\dimen0\raisebox{-\dimen2}[0ex][\dimen3]{\box1}
       \kern\dimen1}

\renewcommand{\mod}{\:\text{mod}\:}

\newcommand{\G}{\mathrm{G}}
\newcommand{\intr}{\int_{-\infty}^{\infty}}

\newcommand{\tvecpsi}{\widetilde{\vecpsi}}

\newcommand{\p}{\mathsf{p}}

\newcommand{\hh}{\widehat{h}}

\newcommand{\tM}{\tilde{M}}

\newcommand{\tth}{\tilde{h}}
\newcommand{\hF}{\widehat{F}}

\newcommand{\oGamma}{\overline{\Gamma}}

\newcommand{\bs}{\backslash}

\newcommand{\lsl}{\mathfrak{sl}}

\newcommand{\ig}{\mathfrak{g}}

\newcommand{\GaG}{\Gamma\backslash \mathrm{G}}
\newcommand{\wh}{\widehat}

\newcommand{\hf}{\widehat f}

\newcommand{\Q}{\mathbb{Q}}
\newcommand{\R}{\mathbb{R}}
\newcommand{\Z}{\mathbb{Z}}
\newcommand{\N}{\mathbb{N}}

\newcommand{\col}{\: : \:}

\newcommand{\bn}{\mathbf{0}}

\newcommand{\ve}{\varepsilon}

\newcommand{\cmatr}[2]{\left( \begin{matrix} #1 \\ #2 \end{matrix} \right) }

\newcommand{\matr}[4]{\left( \begin{matrix} #1 & #2 \\ #3 & #4 \end{matrix} \right) }
\newcommand{\smatr}[4]{\bigl( \begin{smallmatrix} #1 & #2 \\ #3 & #4 \end{smallmatrix} \bigr) }

\newcommand{\starsum}{\sideset{}{^*}\sum}

\newcommand{\y}{{\text{\boldmath$y$}}}
\renewcommand{\v}{{\text{\boldmath$v$}}}

\title[Effective equidistribution in homogeneous spaces of $\SL(2,\R)\ltimes(\R^2)^{k}$]{Effective equidistribution of unipotent orbits in homogeneous spaces of $\SL(2,\R)\ltimes(\R^2)^{k}$}

\author{Andreas Str\"ombergsson}
\address{Department of Mathematics, Box 480, Uppsala University,
SE-75106 Uppsala, Sweden}
\email{astrombe@math.uu.se}

\author{Anders S\"odergren}
\address{Department of Mathematical Sciences, Chalmers University of Technology and the \newline 
\rule[0ex]{0ex}{0ex}\hspace{8pt} University of
Gothenburg, SE-412 96 Gothenburg, Sweden}
\email{andesod@chalmers.se}

\author{Pankaj Vishe}
\address{Department of Mathematical Sciences, Durham University, Durham DH1 3LE, U.K.}
\email{pankaj.vishe@durham.ac.uk}

\thanks{Str\"ombergsson was supported by the Knut and Alice Wallenberg Foundation and also by the Swedish Research Council Grant 2023-03411. S\"odergren was supported by the grant 2021-04605 from the Swedish Research Council.}

\begin{document}

\begin{abstract}
Let $\G=\SL(2,\R)\ltimes(\R^2)^{k}$, let $\Gamma$ be a congruence subgroup of $\SL(2,\Z)\ltimes(\Z^2)^{k}$,
and let $\mathrm{u}_{\R}=(\mathrm{u}_x)_{x\in\R}$ be the one-parameter subgroup of $\G$ given by
$\mathrm{u}_x=\left(\matr 1x01,\bn\right)$.
We prove polynomially effective asymptotic equidistribution results for 
expanding translates of $\mathrm{u}_{\R}$--orbits and for
long pieces of individual $\mathrm{u}_{\R}$--orbits in $\GaG$.
An important ingredient of the proof is %
the delta symbol version of the circle method. %
\end{abstract}

\maketitle

\tableofcontents

\newpage

\section{Introduction}
\label{intro}

In the theory of unipotent dynamics on homogeneous
spaces, a fundamental role is played by Ratner’s theorems \cite{Ratner91,mR91} 
on measure rigidity, topological rigidity, and orbit equidistribution.
These results also have 
a large number of applications in other areas,   %
ranging from number theory to mathematical physics,
see e.g.\
\cite{EMS96,EMM98,Elkies04,EMM05,EO06,Sha10,MS10,MS14}.
We refer to the book by Morris
\cite{Morris} for a detailed exposition and historical background.

In the last few decades,  %
because of its intrinsic interest and in view of the applications,
there has been 
an increased focus on %
establishing
\textit{effective} versions of Ratner's results in special cases, 
that is, to provide an explicit rate of density or equidistribution
for the orbits of a (non-horospherical) 
unipotent flow. 
Recently,
significant advances on this problem
have been obtained by Lindenstrauss and Mohammadi \cite{LM23}
and later with Wang and Yang and by Yang
\cite{LMW22,Yang25,LMWY25}.
In particular, in \cite{LMWY25}, effective %
equidistribution theorems with
polynomial rate for orbits of unipotent subgroups were established in quotients of
quasi-split, almost simple linear algebraic groups of absolute rank 2.
See also the very %
recent papers %
\cite{Lin2025a,Lin2025b} and \cite{LMY26},
where such effective results are obtained 
in several further cases.

Our purpose in the present paper is to establish results on effective equidistribution 
for unipotent orbits in quotients %
of the %
Lie group
$\G=\SL(2,\R)\ltimes(\R^2)^{k}$.
Here $(\R^2)^{k}$ denotes the direct sum of $k$ copies of $\R^2$, each provided with the standard action of
$\SL(2,\R)$.
Equidistribution of unipotent orbits in this setting
has been applied in the solution of several problems 
in mathematical physics; see works of Marklof
\cite{MarklofpaircorrI,MarklofpaircorrII,DMS}
and also Marklof and Str\"ombergsson \cite[\S\S 5.2 and 5.3.3]{MS24}, 
and Palmer and Str\"ombergsson
\cite{powerlaw}.
Effective equidistribution in the case of $\G=\SL(2,\R)\ltimes\R^2$
(i.e.\ $k=1$) was obtained by the first author in \cite{SASL} (see also a work by Browning and Vinogradov \cite{BV});
and for general $k$,
effective results concerning certain special families of unipotent orbits
were proved in a work by Str\"ombergsson and Vishe \cite{effopp1}.
Related effective %
results for the group
$\SL(d,\R)\ltimes\R^d$ with $d\geq2$ arbitrary
have been obtained by 
Kim \cite{Wooyeon}.
It should also be noted that the main result in the very recent paper 
by Lin \cite{Lin2025b}
as special cases yields %
effective equidistribution with a polynomial rate for
unipotent flows (subject to certain conditions)
in quotients of groups of the form
$H\ltimes V$, where $H$ is a semisimple real
algebraic group and $V$ is a non-trivial irreducible representation of $H$;
see \cite[Ex.\ 1.9 and 1.10]{Lin2025b}.

As far as we are aware, effective equidistribution results for unipotent orbits in the setting
of %
$\G=\SL(2,\R)\ltimes(\R^2)^k$
with $k\geq2$
have not been previously obtained,
apart from the results concerning special orbits in \cite{effopp1}.
One may also note that the explicit polynomial rates of equidistribution
which we prove in the present paper
are from a certain point of view %
significantly stronger than those obtained in
\cite{Lin2025a,Lin2025b,LMW22,LMWY25,LMY26,Wooyeon}:
As we will see below, for initial conditions subject to a Diophantine condition
(which is satisfied in the generic case),
our error bounds decay as rapidly as 
the power $-\frac14+\ve$ of the length of the unipotent orbit considered.
The corresponding powers in the main results of 
\cite{Lin2025a,Lin2025b,LMW22,LMWY25,LMY26,Wooyeon}
appear to be quite tiny. %

\vspace{5pt}

Before we state the main results, we will set some notation.
We view $(\R^2)^{k}$ as
the set of real $k\times 2$ matrices.
The (right) action of $\mathrm{G}'=\SL(2,\R)$ on $(\R^2)^{k}$
is given by matrix multiplication.
The elements of $\mathrm{G}=\mathrm{G}'\ltimes(\R^2)^{k}$
are then represented by pairs $(M,\vecv)$ 
in $\mathrm{G}'\times(\R^2)^{k}$,
and the multiplication law is:
\begin{align}\label{multlaw}
(M,\vecv)(M',\vecv')=(MM',\vecv M'+\vecv'). %
\end{align}
We will always view $\mathrm{G}'$ as a subgroup of $\mathrm{G}$ via the embedding $M\mapsto(M,\bn)$.
We set
\begin{align*}
\overline\Gamma=\SL(2,\Z)\ltimes(\Z^2)^{k},
\end{align*}
that is, the subgroup of all $(M,\vecv)\in \mathrm{G}$ with $M\in\SL(2,\Z)$ and 
the matrix $\vecv$ having integer entries.
Given a subgroup $\Gamma$ of $\overline\Gamma$ of finite index,
we consider the homogeneous space 
\begin{align*}
X=\GaG,
\end{align*}
and we fix $\mu$ to be the (left and right invariant) Haar measure on $\mathrm{G}$, normalized so as to induce a probability measure
on $X$,
which we also denote by $\mu$.

Throughout we denote the $2\times 2$ identity matrix by $1_2$. Let 
\begin{align*}
\mathrm{a}_y=\matr{\sqrt y}00{1/\sqrt y}\qquad
\text{and} \qquad
\mathrm{u}_x=\matr 1x01
\qquad (y>0,\: x\in\R).
\end{align*}
Then $\{\mathrm{a}_y\col y>0\}$
and $\{\mathrm{u}_x\col x\in\R\}$
are one-parameter subgroups of $\G'$, and thus also of $\G$.

For any $d$, we identify vectors in $\R^d$ with row matrices.
In particular, for any $\vecq\in\R^k$ and $\vecxi\in(\R^2)^k$ 
we have $\vecq\vecxi\in\R^2$.

The following equidistribution result
concerning expanding translates of $\mathrm{u}_{\R}$-orbits
is a special case of
\cite[Thm.\ 1.4]{Shah}
and also of \cite[Thm.\ 3]{DMS},
both of which depend crucially on Ratner's classification of invariant measures.

\begin{thm}\label{INEFFECTIVETHM}
Let $\Gamma$ be a finite index subgroup of $\overline\Gamma=\SL(2,\Z)\ltimes(\Z^2)^{k}$.
Fix a vector $\vecxi\in(\R^2)^{k}$
such that 
$\vecq \vecxi\notin\Z^2$ for all $\vecq\in\Z^k\setminus\{\bn\}$.
Then for any $M\in\SL(2,\R)$,
any Borel probability measure $\lambda$ on $\R$ which is absolutely continuous with respect to the Lebesgue
measure,
and any bounded continuous function $f$ on $X=\GaG$,
\begin{align}\label{INEFFECTIVETHMRES}
\lim_{y\to0^+}\int_\R f\bigl(\Gamma (1_2,\vecxi)M \mathrm{u}_x\mathrm{a}_y\bigr)\,d\lambda(x)
=%
\int_X f\,d\mu.
\end{align}
\end{thm}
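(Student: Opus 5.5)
We derive Theorem \ref{INEFFECTIVETHM} from Ratner's measure classification via the linearization technique, in the style of \cite{Shah}; alternatively one can simply verify the hypotheses of \cite[Thm.\ 3]{DMS}.

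\textbf{Step 1: a limit measure exists and is $\mathrm{a}$-invariant.} Let $\nu_y$ be the probability measure on $X$ defined by $f\mapsto\int_\R f(\Gamma(1_2,\vecxi)M\mathrm{u}_x\mathrm{a}_y)\,d\lambda(x)$. By approximation we may take $\lambda$ to have a continuous compactly supported density $\rho$. Since $\mathrm{u}_x\mathrm{a}_y=\mathrm{a}_y\mathrm{u}_{x/y}$, the measure $\nu_y$ is the pushforward of an average along a long $\mathrm{u}$-orbit of length $\asymp y^{-1}$, weighted by a slowly varying density. Hence every weak$^*$ limit $\nu$ as $y\to0^+$ is $\mathrm{u}_\R$-invariant.

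\textbf{Step 2: no escape of mass.} The projection $X\to\Gamma'\backslash\SL(2,\R)$, with $\Gamma'$ the image of $\Gamma$, has compact fibres (tori). Downstairs, expanding horocycle pieces equidistribute, in particular they do not escape to the cusp. Hence the family $\{\nu_y\}$ is tight, and every limit $\nu$ is a $\mathrm{u}_\R$-invariant probability measure whose projection to $\Gamma'\backslash\SL(2,\R)$ is Haar measure.

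\textbf{Step 3: Ratner's theorem and linearization.} By Ratner, each ergodic component of $\nu$ is homogeneous: it is $H$-invariant and supported on a closed orbit $\Gamma gH$. Since the projection of $\nu$ is Haar, almost every component projects onto all of $\SL(2,\R)$ (up to $\Gamma'$), so $H\supseteq g'\SL(2,\R)g'^{-1}$ for a suitable lift. The intermediate closed subgroups are therefore of the form $\SL(2,\R)\ltimes W$ (conjugated by a translation), where $W\subseteq(\R^2)^k$ is an $\SL(2,\R)$-invariant subspace, i.e.\ $W=V\otimes\R^2$ for a subspace $V\subseteq\R^k$. For the orbit to be closed, $V$ must be rational.

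Suppose $\nu\neq\mu$. Then we use the Dani--Margulis linearization (the "singular set" argument in \cite{Shah}) to show that a positive proportion of the curve $x\mapsto(1_2,\vecxi)M\mathrm{u}_x\mathrm{a}_y$ stays near the tube $\Gamma(1_2,\vecxi')\SL(2,\R)\ltimes(V\otimes\R^2)$ for some proper rational $V$. Concretely, for a nonzero integer vector $\vecq\perp V$, one considers the quantity $\vecq\vecxi M\mathrm{u}_x\mathrm{a}_y$ modulo $\Gamma$-translates, i.e.\ the torus coordinate $\vecq\vecxi$ modulo $\Z^2$ transported along the orbit. Polynomial nondivergence in the representation $\bigwedge(\mathrm{Lie}\,\G)$, together with the $(C,\alpha)$-good property of the polynomial maps $x\mapsto$ (the vector attached to $g\mathrm{u}_x\mathrm{a}_y$), forces the base point itself to lie in the singular set. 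That means $\vecq\vecxi\in\Z^2$ for some $\vecq\in\Z^k\setminus\{\bn\}$, which contradicts the hypothesis. Hence $\nu=\mu$.

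\textbf{Main obstacle.} The main difficulty is Step 3. One must classify the intermediate subgroups, which is done by the $\SL(2,\R)$-invariance argument giving $V\otimes\R^2$. Then one must verify that the linearization's algebraic condition, for the specific translate $(1_2,\vecxi)M$, reduces exactly to the rationality condition on $\vecq\vecxi$. Since $M$ acts invertibly on $\R^2$, $\vecq\vecxi M\mathrm{u}_x$ is integral for a whole interval of $x$ only if $\vecq\vecxi\in\Z^2$, so the reduction goes through. We then pass from $\rho$ back to a general absolutely continuous $\lambda$ and from compactly supported $f$ to bounded continuous $f$ by tightness.
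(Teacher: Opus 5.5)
You are taking the paper's route. The paper gives no proof of this statement; it quotes it as a special case of \cite[Thm.~1.4]{Shah} and \cite[Thm.~3]{DMS}, which is Ratner's theorem combined with Dani--Margulis linearization. Your Steps~1--2 are fine, and so is the first half of Step~3: almost every ergodic component projects to Haar measure, so $H$ is a translation-conjugate of $\SL(2,\R)\ltimes(V\otimes\R^2)$ with $V$ rational.

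The gap is in the one problem-specific step, where the hypothesis on $\vecxi$ has to enter, and your reduction there is wrong. First, $\vecq\vecxi M\mathrm{u}_x\in\Z^2$ for all $x$ in an interval only gives $\vecq\vecxi M\in\{0\}\times\Z$. For a real matrix $M$ this says nothing about whether $\vecq\vecxi\in\Z^2$. It is also not the relevant condition, since whether $\Gamma(1_2,\vecxi)T$ lies in $X_{\vecq}$ depends only on $\vecq\vecxi \bmod \Z^2$, not on $T$. Second, and more fundamentally, information taken from the $x$-interval alone (the base point lying in a $\mathrm{u}_{\R}$-invariant singular tube) cannot produce a contradiction. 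Take $k=1$, $\Gamma=\oGamma$, $M=1_2$ and $\vecxi=(0,\alpha)$ with $\alpha\notin\Q$. The hypothesis holds, yet the whole curve $\Gamma(1_2,\vecxi)\mathrm{u}_x$ lies in the proper closed $\mathrm{u}_{\R}$-invariant set $X_{1,\alpha}$ from the discussion after Theorem~\ref{INEFFECTIVETHM2}. This does not obstruct equidistribution only because $X_{\vecq,\alpha}\mathrm{a}_y=X_{\vecq,\alpha/\sqrt y}$ moves with $y$.

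The missing ingredient is to use the expansion by $\mathrm{a}_y$ through the basic $\SL(2,\R)$-representation lemma.

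1. Let $p_H\in\bigwedge^{\dim H}\mathfrak{g}$ represent $\mathrm{Lie}(H)$. Linearization gives $y_i\to0$ and $\gamma_i\in\Gamma$ such that $\mathrm{Ad}(\mathrm{a}_{y_i}^{-1}\mathrm{u}_{-x})w_i$ stays bounded for all $x\in\supp\rho$, where $w_i=\mathrm{Ad}(g^{-1}\gamma_i^{-1})p_H$ ranges over a discrete set.
2. This forces the $w_i$ to be bounded, hence equal to a fixed $w$ along a subsequence.
3. Then $w$ must be $\SL(2,\R)$-fixed. In a nontrivial irreducible summand, the top-weight component of $\mathrm{Ad}(\mathrm{u}_{-x})w$ is a nonzero polynomial in $x$ unless the projection of $w$ to that summand vanishes.
4. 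Hence $\SL(2,\R)$ normalizes $g^{-1}\gamma^{-1}H\gamma g=(1_2,\vecv')(\SL(2,\R)\ltimes W)(1_2,-\vecv')$, with $W=V\otimes\R^2$. Since $\SL(2,\R)\ltimes W$ is self-normalizing and $(\R^2)^k/W$ has no nonzero $\SL(2,\R)$-fixed vectors, we get $\vecv'\in W$.
5. Thus $\Gamma\cap g(\SL(2,\R)\ltimes W)g^{-1}$ is a lattice. It consists of the $(m,\vecn)\in\Gamma$ with $\vecn-\vecxi(m-1_2)\in W$.
6. Choose $\bn\neq\vecq\in\Z^k$ with $\vecq W=\{\bn\}$; this exists because $V$ is rational and proper, including $V=\{0\}$. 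Then $\vecq\vecxi(m-1_2)\in\Z^2$ for all $m$ in a finite-index subgroup of $\SL(2,\Z)$.
7. Taking $m=\smatr 1n01$ and $m=\smatr 10n1$ gives $(n\vecq)\vecxi\in\Z^2$, a contradiction. Note the conclusion is about $n\vecq$, not your $\vecq$.

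This computation is exactly the check that $\Gamma(1_2,\vecxi)\SL(2,\R)$ is dense in $X$, which is the hypothesis of Shah's theorem. Citing that theorem, as the paper does, would spare you re-deriving the rest of Step~3.
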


Let us note that the condition imposed on $\vecxi$ in Theorem \ref{INEFFECTIVETHM} cannot be weakened. %
Indeed, if $\vecq \vecxi\in\Z^2$ for some $\vecq\in\Z^k\setminus\{\bn\}$,
then all the points $\Gamma (1_2,\vecxi)M \mathrm{u}_x\mathrm{a}_y$
belong to the set
\begin{align*}
X_\vecq:=\bigl\{\Gamma(1_2,\vecv)T\col T\in\SL(2,\R),\:\vecv\in(\R^2)^k,\: \vecq\vecv\in\Z^2\bigr\},
\end{align*}
which is a closed embedded submanifold of codimension 2 in $X$.
Therefore, the corresponding orbits cannot equidistribute in $(X,\mu)$, %
i.e.\ \eqref{INEFFECTIVETHMRES} cannot hold for all $f$.
(For example, consider any bounded continuous $f\geq0$ satisfying $f_{|X_\vecq}\equiv0$
while $\int_X f\,d\mu>0$.)

To prepare for the statement of our first main theorem we introduce some further notation.
For a positive integer $N$, $\Gamma(N)$ denotes the principal congruence subgroup of $\SL(2,\Z)$ of level
$N$: %
\begin{align*}
\Gamma(N)=\biggl\{\matr abcd\in\SL(2,\Z)\col \matr abcd\equiv \matr 1001 \mod N\biggr\}.
\end{align*}
We will consider $X=\GaG$ where $\Gamma$ is a subgroup of $\overline\Gamma$ of the form
$\Gamma=\Gamma(N)\ltimes(\Z^2)^k$.\label{GAMMAdef}
(The case of an arbitrary congruence subgroup of $\overline\Gamma$ can easily be reduced to %
the case of $\Gamma=\Gamma(N)\ltimes(\Z^2)^k$,
by using the fact that for any $q\in\Z^+$, the map $(M,\vecv)\mapsto(M,q\vecv)$ is an automorphism of $\mathrm{G}$.)

For $T\in \mathrm{G}'$ we write $\|T\|$ for its Frobenius matrix norm:
\begin{align}\label{Frobnorm}
\bigl\|T\bigr\|:=\tr\bigl(\,T\trans T\,\bigr)
=\sqrt{a^2+b^2+c^2+d^2},\qquad \text{for }\: T=\matr abcd \in \mathrm{G}'.
\end{align}
We also introduce the following cuspidal height function, for $(M,\vecv)\in \mathrm{G}$:
\begin{align}\label{CHdef}
\scrY(M,\vecv)=\scrY(M):=\sup \{\tim \gamma M(i)\col\gamma\in\SL(2,\Z)\},
\end{align}
where on the right-hand side we use the standard action of $\mathrm{G'}=\SL(2,\R)$ on the
Poincar\'e upper half-plane $\HH=\{\tau=u+iv\in\CC\col v>0\}$.
Then $\scrY(M,\vecv)\geq\sqrt3/2$ for all $(M,\vecv)\in \mathrm{G}$.
Note that $\scrY(M,\vecv)$ depends only on the coset $\oGamma(M,\vecv)$,
and in particular $\scrY$ can be viewed as a function on $X$.
Given $p_1,p_2,\ldots\in X$, we have $\scrY(p_j)\to\infty$ if and only if the sequence $p_1,p_2,\ldots$
leaves all compact subsets of $X$.

For $m\geq0$ %
and $a\in\R$, let $\C_a^m(X)$ be the space of all $m$ times continuously differentiable
functions
on $X$, all of whose derivatives up to order $m$ are $\ll\scrY^{-a}$ throughout $X$.
Let $\ig$ be the Lie algebra of $\mathrm{G}$, and let $X_1,\ldots,X_{2k+3}$ be a fixed basis  of $\ig$.
Each $Y\in\ig$ can be realised as a left invariant differential operator
on the space of functions on $\mathrm{G}$, and therefore also a differential operator on $X=\GaG$,
which we will also denote by $Y$.
For any $f\in\C^m(X)$, set
\begin{align}\label{CMNORMDEF}
\|f\|_{\C^m_a}:=\sum_{\ord(D)\leq m}\,\sup_{p\in X}\,\scrY(p)^a\bigl|(Df)(p)\bigr|, %
\end{align}
where the sum is taken %
over all monomials in $X_1,\ldots,X_{2k+3}$ of degree $\leq m$.
In particular, $\|\cdot\|_{\C^0_0}$ is the supremum norm.
Then $\C_a^m(X)$ is the space of all $f\in\C^m(X)$ with $\|f\|_{\C^m_a}<\infty$.

In an analogous manner,
for any integer $m\geq0$, real number $a\geq0$,
and $h\in\C^m(\R)$, set
\begin{align}\label{Span1dimDEF}
\|h\|_{\C_a^m}=\sum_{j=0}^m\sup_{x\in\R}\,(1+|x|)^a\,\bigl|\partial^j h(x) \bigr|,
\end{align}
and let $\C_a^m(\R)$ be the space of all $h\in\C^m(\R)$ with
$\|h\|_{\C_a^m}<\infty$.

\vspace{5pt}

For $\vecv\in\R^n$ (any $n\in\Z^+$), let $\|\vecv\|$ denote its standard Euclidean norm and
let $\|\vecv\|_{\Z}$ be the distance to the nearest integer vector;
\begin{align*}
\|\vecv\|_{\Z}=\min_{\veca\in\Z^n}\|\vecv-\veca\|.
\end{align*}
Finally, given $m>k$, $\vecxi\in(\R^2)^k$ and $y>0$, we define the majorant function
\begin{align}\label{deltaNEWDEF}
\delta_{m}(y;\vecxi)=\sum_{\vecq\in\Z^k\setminus\{\bn\}}\sum_{d=1}^{\infty}\frac{\tau(d)}{\|\vecq\|^{m}d^{3/2}}
\biggl(1+\frac{\big\| d\vecq\vecxi\big\|_{\Z}}{d\sqrt y}\biggr)^{\hspace{-3pt}-1},
\end{align}
where $\tau(d)$ is the number of divisors of $d$.

We are now ready to state our first main theorem, Theorem \ref{MAINTHM3gen} below, which may be viewed as an effective version of Theorem \ref{INEFFECTIVETHM}
in the special case when $\Gamma$ is a congruence subgroup \mbox{of $\oGamma$.}

\begin{thm}\label{MAINTHM3gen}
Let $k,N\in\Z^+$ and 
set $X=\GaG$ with $\mathrm{G}=\SL(2,\R)\ltimes(\R^2)^k$
and $\Gamma=\Gamma(N)\ltimes(\Z^2)^k$.
Let $m\geq\max(321,k+1)$,
and set $n=3m+3k+5$ and $\alpha=\frac12(m+321)$.
Then for any $f\in\C_\alpha^n(X)$, any $h\in\C_{43}^5(\R)$, %
and any $M\in\SL(2,\R)$, $\vecxi\in(\R^2)^k$ and $0<y\leq 1$, we have
\begin{align}\label{MAINTHM3genres1}
\biggl|\int_\R f\bigl(\Gamma (1_2,\vecxi)M \mathrm{u}_x\mathrm{a}_y\bigr) h(x)\,dx
-\int_X f\,d\mu\int_{\R}h\,dx\biggr|
\ll
\|f\|_{\C_{\alpha}^{n}}\, \|h\|_{\C_{43}^5}\,\|M\|^{13} %
\, \delta_{m}(y;\vecxi),
\end{align}
where the implied constant depends only on $k$, $N$ and $m$.
\end{thm}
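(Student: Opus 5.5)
We expand $f$ in the torus variable. For fixed $T\in\SL(2,\R)$, the function $\vecv\mapsto f(\Gamma(1_2,\vecv)T)$ is $\Z^{k\times2}$-periodic. So we Fourier expand in $\vecv$. Changing variables, each Fourier mode is indexed by $\vecq\in\Z^k$ and a dual vector, and contributes a function of the form $\vecv\mapsto e(\langle \vecq\vecv\,,\cdot\rangle)$. The $\vecq=\bn$ term is the average of $f$ over the torus fibres, a function on $\Gamma(N)\backslash\SL(2,\R)$. For that term, the classical effective equidistribution of expanding horocycles (Sarnak, Str\"ombergsson, via spectral theory) gives a main term $\int_X f\,d\mu\int h$. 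Its error is $O(y^{1/2-\ve})$ times Sobolev norms, up to the exceptional eigenvalue correction for $\Gamma(N)$, which is at least $1/4$-spectral-gap admissible, and $\|M\|^{O(1)}$. This is dominated by $\delta_m(y;\vecxi)$, since the $d=1$ terms alone give a contribution $\gg\sqrt y$ after bounding $\|\vecq\vecxi\|_\Z\le 1$.

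Rather than using full Fourier expansion, we group the modes by the primitive direction. We write $\vecq=r\vecq_0$. After an $\SL(k,\Z)$ change of coordinates sending $\vecq_0$ to $(1,0,\ldots,0)$, each $\vecq\ne\bn$ part of $f$ is, up to decay factors $\|\vecq\|^{-m}$, a function on the $k=1$ space $\Gamma(N)\ltimes\Z^2\backslash\SL(2,\R)\ltimes\R^2$. This reduction comes from integrating by parts $m$ times in the torus directions, which costs $n\approx 3m$ derivatives. The relevant initial point there is $\vecq\vecxi\in\R^2$, and only nonzero characters occur. Those derivatives, together with the weights $\scrY^{\alpha}$ needed for the cusp, account for $n=3m+3k+5$ and $\alpha=\frac12(m+321)$.

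So the core estimate is a $k=1$ bound for mean-zero-in-torus functions $F$ on $\Gamma(N)\ltimes\Z^2\backslash\G_1$, evaluated along $(1_2,\veceta)M\mathrm{u}_x\mathrm{a}_y$ with $\veceta=\vecq\vecxi$. The target shape of this bound is
\begin{align*}
\ll \|F\|\,\|h\|\,\|M\|^{13}\sum_{d\ge1}\frac{\tau(d)}{d^{3/2}}\Bigl(1+\frac{\|d\veceta\|_\Z}{d\sqrt y}\Bigr)^{-1}.
\end{align*}
To prove it, we expand $F$ in nonzero characters of $\R^2/\Z^2$ and then along the $\SL(2)$ fibre. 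Unfolding over $\Gamma_\infty\backslash\Gamma(N)$ expresses the $x$-integral as a sum over coprime $(c,d)$ of oscillatory integrals involving $e(\,\cdot\,d\veceta\cdots)$. The delta symbol version of the circle method (Duke--Friedlander--Iwaniec / Heath-Brown) is used here to detect the lattice condition. It produces Kloosterman/Ramanujan-type sums modulo $d$, whose divisor-function bound gives $\tau(d)$. Stationary phase, or repeated integration by parts in $x$ using $h\in\C^5_{43}$, yields the factor $(1+\|d\veceta\|_\Z/(d\sqrt y))^{-1}$. The $d^{-3/2}$ comes from square-root cancellation together with the $d^{-2}$ density of the orbit. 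The $\|M\|^{13}$ factor arises from conjugating $M$ through the unfolding and the Sobolev bounds.

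Summing over $\vecq\ne\bn$ with weights $\|\vecq\|^{-m}$ then gives exactly $\delta_m(y;\vecxi)$; convergence holds because $m>k$.

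The main obstacle is the uniform $k=1$ estimate. Two things must be controlled simultaneously: the cusp excursions of the horocycle (handled by the weight $\scrY^{-\alpha}$ and a truncation at height $\asymp y^{-1}$), and the circle-method error terms. In particular, the bound must hold uniformly in $\veceta$ with no Diophantine assumption, which is why the bound is stated through the majorant $\delta_m$ rather than as an explicit power of $y$. I would first carry out this estimate for smooth compactly supported $F$ and then extend by a cusp cutoff argument.
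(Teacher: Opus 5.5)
There is a genuine gap, and it sits exactly where $k\geq2$ differs from $k=1$. Your reduction assumes that every nonzero torus Fourier mode of $f$ is a character of a single projection $\vecq\vecv\in\R^2$. That holds only for modes $\vecm\in(\Z^2)^k$ of rank one, $\vecm=\trans\vecq\,\vecb$, where $e(\tr(\vecm\trans\vecv))=e(\vecb\cdot\vecq\vecv)$. For $k\geq2$ there are also rank-two modes (the paper's $B_k$-orbits; for example $\vecm$ with rows $(1,0),(0,1),\bn,\ldots,\bn$). Their characters depend on two independent projections $\vecq_1\vecv,\vecq_2\vecv$, so the corresponding part of $f$ is not pulled back from $\Gamma(N)\ltimes\Z^2\backslash\SL(2,\R)\ltimes\R^2$. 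Hence your claim that ``each $\vecq\neq\bn$ part of $f$ is a function on the $k=1$ space'' is false, and these terms are never estimated. They are not negligible: generically $\widehat{f_R}(\cdot,\veceta)\not\equiv0$ for $\veceta\in B_k$. Since a rank-two $\veceta$ has trivial stabilizer in $\SL(2,\Z)$, their contribution is
\begin{align*}
\sum_{T\in[R]}e\bigl(\tr(\veceta\trans T^{-1}\trans\vecxi)\bigr)\int_\R\widehat{f_R}\bigl(TM\mathrm{u}_x\mathrm{a}_y,\veceta\bigr)h(x)\,dx .
\end{align*}
This is a sum over a \emph{full} coset of $\Gamma(N)$, with a phase $e([\veceta;\vecxi]\cdot T)$ that is linear in all four entries of $T$. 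No unfolding to coprime pairs $(c,d)$ is available here.

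The missing ingredient is a four-variable estimate: cancellation in $\sum_{T\in[R]}e(\vecalf\cdot T)\,w(T/X)$ with $X=y^{-1/2}$ (Proposition~\ref{EXPSUMprop1}). The paper proves it as follows.
\begin{itemize}
\item The delta method detects $a_1a_4-a_2a_3=1$.
\item Poisson summation modulo $qN$ turns the complete sums into Kloosterman sums, and Weil's bound gives $|S(q,\vecv)|\leq N^4\tau(q)q^{5/2}$.
\item Bounding the oscillatory integrals produces the factor $(1+X\|q\vecalf\|_\Z/q)^{-1}$.
\end{itemize}
This is then transferred to test functions on $\SL(2,\R)$ in Proposition~\ref{AkBkgenPROPMgen}. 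For $\veceta=(\trans\veca,\trans\vecb)$ one uses $\|q[\veceta;\vecxi]\|_\Z\geq\|q\veca\vecxi\|_\Z$, and summing over $\vecb$ costs a factor $\|\veca\|^{k}$.

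This is also where the parameters actually come from, so your derivative accounting is off. Rank-two coefficients decay like $\|T\|^{-m_1}\|\veceta\|^{-m_1}$ only at the price of $3m_1$ derivatives (Lemma~\ref{PARTBDECAYFNLEM2} with $\beta=\frac13$). The $\vecb$-sum forces $m_1=m+k$, whence $n=3m+3k+5$. The rank-one part needs only $m+5$ derivatives plus the cusp weight $\alpha$.

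In the paper even the rank-one terms are handled by the same proposition. It refolds $\Gamma'_\infty\backslash[R]$ into $[R]$ via the partition of unity $\psi(N^{-1}u)$, rather than using a coprime-pair analysis like yours, which as written is only heuristic.

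A smaller point concerns the main term. The bound $\delta_m\gg\sqrt y$ does not dominate the horocycle error $\sqrt{\scrY(M)}\,y^{1/2-7/64}$, nor even your own $y^{1/2-\ve}$. You need $\delta_m(y;\vecxi)\gg y^{1/4}\log(y^{-1}+1)$, which comes from the terms with $d\asymp y^{-1/2}$ (Lemma~\ref{deltalowboundLEM}), together with $\scrY(M)\leq\|M\|^2$.
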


Let us make some comments on this result.
First, note that for any fixed $\vecxi\in(\R^2)^k$
and $m>k$, one has
$\delta_{m}(y;\vecxi)\to0$ as $y\to0$
if and only if
$\vecq\vecxi\notin\Z^2$ for all $\vecq\in\Z^k\setminus\{\bn\}$.
Hence Theorem \ref{MAINTHM3gen} gives an effective version of Theorem~\ref{INEFFECTIVETHM}
when $\Gamma$ is a congruence subgroup of $\oGamma$.
Second, when $m$ is sufficiently large,
the majorant function 
$\delta_{m}(y;\vecxi)$ decays as rapidly as
$y^{\frac14-\ve}$
as soon as either the first or the second column of $\vecxi$
satisfies an appropriate Diophantine condition;
see Lemma~\ref{BSUMDIOPHBOUNDLEM1} below. 
In particular, it follows that this decay rate holds whenever
the first or the second column of $\vecxi$ lies outside a 
set of Hausdorff dimension $k-\frac13$
(see Remark \ref{HdimRemark}).
Furthermore, if the entries of one of the columns of $\vecxi$
are algebraic numbers which together with $1$ are linearly independent over $\Q$,
then via a result of Schmidt \cite{wS70} it follows that
$\delta_m(y;\vecxi)$ decays with the power rate $y^{\min(\frac14,\frac3{4(k+1)})-\ve}$
(see Remark \ref{Algnumbersrem} below).

\vspace{3pt}

Theorem \ref{MAINTHM3gen} %
generalizes 
\cite[Theorems 1.2 and 1.3]{effopp1}
from the case when $M=1_2$ and either the first or second column of $\vecxi$ vanishes,\footnote{Also
in \cite{effopp1} it is assumed that $k\geq2$.} 
to arbitrary $M$ and $\vecxi$.
Theorem~\ref{MAINTHM3gen} also %
generalizes 
\cite[Theorem 3.1]{SASL}
from the case when $k=1$, $M=1_2$ and $N=1$.
However, the bound in Theorem \ref{MAINTHM3gen}
involves stronger Sobolev norms on $f$ and $h$
than in both \cite[Theorems 1.2 and 1.3]{effopp1}
and \cite[Theorem 3.1]{SASL}.
The dependence on $y$ here is also slightly different as compared with \cite[Theorems 1.2 and 1.3]{effopp1}
and \cite[Theorem 3.1]{SASL}.
However, it should be noted that all these results have the same basic feature that the bound
decays like $y^{\frac14-\ve}$
when $\vecxi$ is either Lebesgue generic or satisfies a suitable Diophantine condition.
(Note that the notation in the present paper differs from the notation in \cite{effopp1};
see Remark \ref{effopp1TRANSLrem} for how to translate between the two.)

\vspace{5pt}

In the special case when $M=1_2$, for each $y$,
the orbit $\Gamma(1_2,\vecxi)\mathrm{u}_{\R}\mathrm{a}_y$ considered in
\eqref{MAINTHM3genres1} %
is a lift of a \textit{closed} horocycle on $\Gamma(N)\bs\SL(2,\R)$.
In this case, using the fact that the unipotent elements %
$\mathrm{u}_{jN}$ belong to $\Gamma(N)$ for all $j\in\Z$,
Theorem \ref{MAINTHM3gen} implies the following equidistribution result
concerning orbits which are ``smeared out''
by replacing the density $h(x)$ by $T^{-1}h(T^{-1}x)$ with $T\geq1$ arbitrary.
\begin{cor}\label{MAINTHM3cor2}
Let $k,N\in\Z^+$ and 
set $X=\GaG$ with $\mathrm{G}=\SL(2,\R)\ltimes(\R^2)^k$
and $\Gamma=\Gamma(N)\ltimes(\Z^2)^k$.
Let $m\geq\max(321,k+1)$,
and set $n=3m+3k+5$ and $\alpha=\frac12(m+321)$.
Then for any $f\in\C_\alpha^n(X)$, $\eta\in\C_0^5(\R)$, $h\in\C_2^5(\R)$
and any $\vecxi\in(\R^2)^k$, $0<y\leq1$ and $T\geq1$, we have
\begin{align}\notag
\Biggl|\frac 1T\int_\R f\bigl(\Gamma(1_2, {\vecxi})\mathrm{u}_x\mathrm{a}_y\bigr)
\eta(x)h(T^{-1}x)\,dx-\int_X f\,d\mu\int_{\R}\eta(Tx)h(x)\,dx\Biggr|
\hspace{80pt}
\\\label{MAINTHM3cor2res1}
\ll 
\|f\|_{\C_{\alpha}^{n}}\, \|\eta\|_{\C_0^5} \, \|h\|_{\C_2^5}\,\frac1T\sum_{j\in\Z}
\frac{\delta_{m}(y;\vecxi\mathrm{u}_{jN})}{(1+|j|/T)^{2}},
\end{align}
where the implied constant depends only on $k$, $N$ and $m$.
\end{cor}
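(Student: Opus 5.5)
Following the hint in the text, we prove Corollary \ref{MAINTHM3cor2} by decomposing $\R$ into the translates $[jN,(j+1)N)$ of a fundamental interval for $\mathrm{u}_{N\Z}$ and applying Theorem \ref{MAINTHM3gen} with $M=\mathrm{u}_{jN}$. The key identity is $(1_2,\vecxi)\mathrm{u}_{jN}=\mathrm{u}_{jN}(1_2,\vecxi\mathrm{u}_{jN})$, which follows from the multiplication law \eqref{multlaw}. Since $\mathrm{u}_{jN}\in\Gamma(N)\subset\Gamma$, we get
\begin{align*}
\Gamma(1_2,\vecxi)\mathrm{u}_{x+jN}\mathrm{a}_y=\Gamma(1_2,\vecxi\mathrm{u}_{jN})\mathrm{u}_x\mathrm{a}_y .
\end{align*}
We therefore choose a fixed smooth partition of unity $\sum_{j\in\Z}\psi(x-jN)\equiv1$, where $\psi\in C_c^\infty(\R)$ is supported in $[-N,2N]$ and depends only on $N$. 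This gives
\begin{align*}
\frac1T\int_\R f(\cdots)\eta(x)h(T^{-1}x)\,dx=\frac1T\sum_{j\in\Z}\int_\R f\bigl(\Gamma(1_2,\vecxi\mathrm{u}_{jN})\mathrm{u}_x\mathrm{a}_y\bigr)h_j(x)\,dx,
\end{align*}
where $h_j(x)=\psi(x)\eta(x+jN)h(T^{-1}(x+jN))$.

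For each $j$ we apply Theorem \ref{MAINTHM3gen} with $M=1_2$ and $\vecxi$ replaced by $\vecxi\mathrm{u}_{jN}$. The main term is $\int_X f\,d\mu\int h_j$, and the error is $\ll\|f\|_{\C^n_\alpha}\|h_j\|_{\C^5_{43}}\delta_m(y;\vecxi\mathrm{u}_{jN})$. Since $h_j$ has support in $[-N,2N]$, the weight $(1+|x|)^{43}$ is bounded there. Its derivatives up to order $5$ are bounded by $\|\eta\|_{\C^5_0}$ times $\sup_{|t|\le 2N}\sum_{i\le5}T^{-i}|\partial^ih(T^{-1}(jN+t))|$, and since $T\ge1$ this is $\ll\|h\|_{\C^5_2}(1+|j|N/T)^{-2}$ up to constants depending on $N$. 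Summing over $j$ gives exactly the right-hand side of \eqref{MAINTHM3cor2res1}. The sum over $j$ converges absolutely because $\delta_m$ is bounded uniformly in $\vecxi$ (for $m>k$, simply drop the factor $(1+\cdots)^{-1}$).

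For the main terms, undoing the partition of unity gives
\begin{align*}
\frac1T\sum_j\int h_j=\frac1T\int_\R\eta(x)h(T^{-1}x)\,dx=\int_\R\eta(Tx)h(x)\,dx
\end{align*}
after the substitution $x\mapsto Tx$. This is exactly the stated main term.

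The only potentially delicate step is the uniform control of $\|h_j\|_{\C^5_{43}}$ with the decay $(1+|j|/T)^{-2}$. Here compact support in $x$ is essential, since it lets the weaker weight $(1+|x|)^2$ on $h$ replace the weight $43$. Interchanging sum and integral is justified by absolute convergence, since $f$ is bounded and $h\in\C^5_2$ is integrable.
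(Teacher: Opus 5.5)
Your proposal is correct and follows the paper's proof essentially step for step: a partition of unity adapted to $\mathrm{u}_{N\Z}$, the identity $\Gamma(1_2,\vecxi)\mathrm{u}_{jN}=\Gamma(1_2,\vecxi\mathrm{u}_{jN})$, Theorem~\ref{MAINTHM3gen} applied to each piece with $M=1_2$, and the bound $\|h_j\|_{\C_{43}^5}\ll\|\eta\|_{\C_0^5}\|h\|_{\C_2^5}(1+|j|/T)^{-2}$. One small correction: your opening sentence says ``$M=\mathrm{u}_{jN}$'', but it should read $M=1_2$ with $\vecxi$ replaced by $\vecxi\mathrm{u}_{jN}$, which is what you actually do in the body; taking $M=\mathrm{u}_{jN}$ literally would introduce an unwanted factor $\|\mathrm{u}_{jN}\|^{13}$.
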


Let us note that this corollary provides an effective version of the 
statement that for any $\vecxi\in(\R^2)^k$
such that $\vecq\vecxi\notin\Z^2$ for all $\vecq\in\Z^k\setminus\{0\}$,
we have
\begin{align}\label{MAINTHM3cor2implication}
\frac 1T\int_\R f\bigl(\Gamma(1_2, {\vecxi})\mathrm{u}_x\mathrm{a}_y\bigr)\eta(x)h(T^{-1}x)\,dx
-\int_X f\,d\mu\int_{\R}\eta(Tx)h(x)\,dx\to0
\end{align}
as $y\to0$, \textit{uniformly over all $T\geq1$.}
Indeed, for any $\vecalf\in\R^2$ we have
$\|\vecalf\,\mathrm{u}_{jN}\|_{\Z}\geq\lambda(\vecalf)$
where $\lambda(\vecalf):=\|\alpha_2\|_{\Z}$ if $\alpha_1\in\Z$
and otherwise $\lambda(\vecalf):=\|\alpha_1\|_{\Z}$.
Using this bound (for $\vecalf=d\vecq\vecxi$) 
together with the definition of $\delta_{m}(y;\vecxi\mathrm{u}_{jN})$
(see \eqref{deltaNEWDEF})
and the fact that
$\frac1T\sum_{j\in\Z}\bigl(1+|j|/T\bigr)^{-2}\ll1$ uniformly over $T\geq1$,
it follows that the right-hand side of \eqref{MAINTHM3cor2res1} is
\begin{align}\label{MAINTHM3cor2strongerREMpf2}
\ll\|f\|_{\C_{\alpha}^{n}}\, \|\eta\|_{\C_0^5} \, \|h\|_{\C_2^5}\,
\sum_{\vecq\in\Z^k\setminus\{\bn\}}\sum_{d=1}^{\infty}\frac{\tau(d)}{\|\vecq\|^{m}d^{3/2}}
\biggl(1+\frac{\lambda(d\vecq\vecxi)}{d\sqrt y}\biggr)^{\hspace{-3pt}-1}.
\end{align}
This expression is independent of $T$, and tends to zero as $y\to0$
provided that $\vecq\vecxi\notin\Z^2$ for all $\vecq\in\Z^k\setminus\{0\}$,
thus proving our claim.

Let us note that in the special case $\eta\equiv1$,
Corollary \ref{MAINTHM3cor2}
solves %
the problem described in
\cite[p.\ 150, around (17)]{effopp1}.
Because of this, %
by using
Corollary \ref{MAINTHM3cor2} it should be possible to extend the arguments in
\cite[Sec.\ 9]{effopp1}
so as to prove,
for general $k\geq2$,
an effective version of the main result in
Marklof \cite[Theorem 1.6]{MarklofpaircorrII}
concerning the fine-scale pair correlation density of the sequence
$\|\vecm-\vecalf\|^k$ ($\vecm\in\Z^k$),
where $\vecalf$ is an arbitrary fixed vector in $\R^k$  %
subject to a certain Diophantine condition.
The possibility of choosing, more generally,
$\eta(x)=e^{2\pi itx}$ in Corollary~\ref{MAINTHM3cor2}
is of interest since it opens the door to proving %
effective asymptotic counting results on the number of 
integer vectors $\vecm_1,\vecm_2\in\Z^k$ with $\|\vecm_1\|,\|\vecm_2\|<T$
for which the value of the inhomogeneous quadratic form 
$\|\vecm_1-\vecalf\|^2-\|\vecm_2-\vecalf\|^2$ %
lies in an arbitrary given real interval 
of size $\gg T^{2-k}$.
\footnote{The starting point for obtaining such results is 
the following generalization of the formula \cite[(130)]{effopp1}:
\begin{align*}
&T^{2-k}\int_{\R}
\Theta_f\left(u+T^{-2}i,0;\cmatr{\bn}{\vecalf}\right)
\overline{\Theta_g\left(u+T^{-2}i,0;\cmatr{\bn}{\vecalf}\right)}
\, e^{2\pi itu}\, h\bigl(T^{2-k}u\bigr)\,du
\\
&=\frac1{T^k}\sum_{\vecm_1\in\Z^k}\sum_{\vecm_2\in\Z^k}
f\bigl(T^{-1}(\vecm_1-\vecalf)\bigr)
\,
\overline{g\bigl(T^{-1}(\vecm_2-\vecalf)\bigr)}
\hspace{4pt} \hh\left(-\tfrac12 T^{k-2}\bigl(\|\vecm_1-\vecalf\|^2-\|\vecm_2-\vecalf\|^2+2t\bigr)\right).
\end{align*}
See loc.\ cit.\ for an explanation of the notation.}
Such a result was obtained for $k=2$ in
\cite[Cor.\ 1.5]{effopp1},
but it would be new for $k\geq3$, where we stress that the interval may \textit{shrink} with $T$.\footnote{In a very recent paper by Kim, Marklof and Welsh, \cite[Theorem 1.2]{KMW26}, a similar (but ineffective) result allowing shrinking test intervals is obtained for the 
homogeneous quadratic form 
$\bigl(c_1m_1^2+c_2m_2^2+c_3m_3^2\bigr)-\bigl(c_1m_4^2+c_2m_5^2+c_3m_6^2\bigr)$
on $\Z^6$,
under suitable Diophantine
conditions on the coefficients $c_1,c_2,c_3\in\R_{>0}$.}
We hope to return to these questions in a later paper.

\vspace{5pt}

We next turn to our second main result,
which is an effective equidistribution theorem for long pieces of 
$\mathrm{u}_{\R}$-orbits in $X$.
In order to simplify the presentation,
we will restrict ourselves to the case $N=1$, i.e.\ 
in the following we will keep
$\Gamma'=\SL(2,\Z)$ and $\Gamma=\SL(2,\Z)\ltimes(\Z^2)^k$.

It follows from the results of Ratner \cite{mR91}
that every $\mathrm{u}_{\R}$-orbit in $X$ has a closure which is homogeneous.
In other words, for any given starting point $x=\Gamma g\in X$ ($g\in\mathrm{G}$),
there exists a closed connected subgroup $H\subset\mathrm{G}$ such that
$\mathrm{u}_{\R}\subset H$,
$\Gamma\cap gHg^{-1}$ is a lattice in $gHg^{-1}$,
and the closure of $x\mathrm{u}_{\R}$ in $X$ equals
$xH=\Gamma\bs\Gamma gH$;
and then the orbit $x\mathrm{u}_{\R}$
is asymptotically equidistributed in the space $xH$ 
with respect to its $H$-invariant Borel probability measure
\cite[Theorem B]{mR91}.

For our specific homogeneous space $X$ it is not difficult to 
explicitly list those subgroups $H$ which can occur in the above statement
(see for example the arguments in \cite[pp.\ 735--737]{DMS}),
and thereby in particular deduce a precise criterion for when
$x\mathrm{u}_{\R}$ is asymptotically equidistributed in $(X,\mu)$.
We state this result, in a format which will be convenient for us,
in Theorem \ref{INEFFECTIVETHM2} below.
For the statement we need to introduce some further notation.

Let us write $\ASL(2,\R)=\SL(2,\R)\ltimes\R^2$.
In other words, $\ASL(2,\R)$ is the group [$\G$ for $k=1$];
in particular we represent the elements of $\ASL(2,\R)$ 
by pairs in $\G'\times\R^2$,
with multiplication law given by \eqref{multlaw},
and we view $\G'$ as a subgroup of $\ASL(2,\R)$ through $M\mapsto(M,\bn)$.
The group $\ASL(2,\R)$ acts on $\R^2$ from the right through
\begin{align*}
\vecw(M,\vecv):=\vecw M+\vecv\qquad
\bigl(\vecw\in\R^2, \: (M,\vecv)\in\ASL(2,\R)\bigr).
\end{align*}
Note that for every $g\in\ASL(2,\R)$,
the set $\Z^2g=\{\vecw g\col \vecw\in\Z^2\}$
is a \textit{grid}, i.e.\ a translate of a lattice in $\R^2$.

For any $\vecq\in\R^k$, we denote by $\p_{\vecq}$ the homomorphism
\begin{align*}
\p_{\vecq}:\mathrm{G}\to\ASL(2,\R);\qquad \p_{\vecq}(M,\vecv):=(M,\vecq\vecv).
\end{align*}

\begin{thm}\textup{(Special case of Ratner, \cite[Theorem B]{mR91}.)}
\label{INEFFECTIVETHM2}
Let $\G=\SL(2,\R)\ltimes(\R^2)^k$ and $\Gamma=\SL(2,\Z)\ltimes(\Z^2)^k$,
and let $g\in\G$.
Assume that 
\begin{align}\label{INEFFECTIVETHM2ass}
&\Z^2\p_{\bn}(g)\cap(\{0\}\times\R)=\{\bn\}
\qquad
\text{and}\qquad \forall \vecq\in\Z^k\setminus\{\bn\}:
\hspace{6pt}
\Z^2\p_{\vecq}(g)\cap(\{0\}\times\R)=\emptyset.
\end{align}
Then the orbit $\Gamma g \mathrm{u}_{\R}$
is asymptotically equidistributed in $(X,\mu)$,
i.e., for any bounded continuous function $f$ on $X$, we have
\begin{align}\label{INEFFECTIVETHM2res}
\lim_{T\to+\infty}\frac1T\int_0^Tf(\Gamma g \mathrm{u}_t)\,dt
=\lim_{T\to+\infty}\frac1T\int_{-T}^0f(\Gamma g \mathrm{u}_t)\,dt
=\int_X f\,d\mu.
\end{align}
\end{thm}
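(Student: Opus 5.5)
We would deduce Theorem \ref{INEFFECTIVETHM2} from Ratner's orbit closure and equidistribution theorem \cite[Theorem B]{mR91}. That theorem gives a closed connected subgroup $H\subset\G$ with $\mathrm{u}_{\R}\subset H$ such that $\Gamma g\mathrm{u}_{\R}$ (in both time directions) equidistributes in the closed orbit $\Gamma gH$ with respect to its $H$-invariant probability measure. It therefore suffices to show that under \eqref{INEFFECTIVETHM2ass} one must have $H=\G$. The strategy is to list the possible $H$ and to show that every proper one forces a violation of \eqref{INEFFECTIVETHM2ass}, in the spirit of \cite[pp.\ 735--737]{DMS}.

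\emph{Step 1: the projection to $\Gamma'\bs\G'$.} Let $\pi:\G\to\G'$ be the projection. Then $\Gamma' \pi(g)\mathrm{u}_{\R}$ is a horocycle orbit on $\SL(2,\Z)\bs\SL(2,\R)$. By Dani's classification it is either periodic or dense and equidistributed. It is periodic exactly when $\pi(g)^{-1}$ maps the expanding direction to a rational one, i.e.\ when some nonzero $\vecw\in\Z^2$ has $\vecw\pi(g)\in\{0\}\times\R$. The first condition in \eqref{INEFFECTIVETHM2ass} excludes this, since $\p_{\bn}(g)=(\pi(g),\bn)$. Hence $\pi(H)$ contains conjugates generating $\G'$, and $\pi(H)=\G'$.

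\emph{Step 2: the unipotent radical part.} Since $\pi(H)=\G'$, Levi decomposition writes $H$, after conjugating by an element of $(\R^2)^k$, as $\G'\ltimes V$. Here $V\subset(\R^2)^k$ is a $\G'$-invariant subspace, so $V=\{\vecv: \vecv\in W\otimes\R^2\}$ for a subspace $W\subset\R^k$. Closedness of $\Gamma gH$ forces $W$ to be rational, i.e.\ $V\cap(\Z^2)^k$ is a lattice in $V$. If $W\neq\R^k$, pick $\vecq\in\Z^k\setminus\{\bn\}$ annihilating $W$. Then $\p_{\vecq}$ is trivial on $V$, so $\Gamma gH$ projects under $\p_{\vecq}$ to a closed orbit of a conjugate of $\G'$ in $\ASL(2,\Z)\bs\ASL(2,\R)$. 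This means $\p_{\vecq}(g)$ lies in $\ASL(2,\Z)\,\G'\,(1_2,\vecr)$ with $\vecr$ rational, i.e.\ the grid $\Z^2\p_{\vecq}(g)$ is a rational translate $\Z^2\pi(g)+\vecr'\pi(g)$. In fact the finer requirement that $\mathrm{u}_{\R}\subset$ the conjugated subgroup is what matters. The intersection of $\Z^2\p_{\vecq}(g)$ with $\{0\}\times\R$ is then nonempty. The ineffective counterpart, Theorem \ref{INEFFECTIVETHM}, suggests this is what distinguishes the cases. A cleaner route is to use the $\mathrm{u}_{\R}$-invariance directly: $\vecq g\mathrm{u}_t$ having a compact-mod-lattice orbit means some grid point $\vecw\p_{\vecq}(g)$ is fixed by $\mathrm{u}_t$. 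The fixed vectors of $\mathrm{u}_t$ acting on the right are $\{0\}\times\R$, contradicting the second condition. So $W=\R^k$ and $H=\G$.

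\emph{Main obstacle.} The delicate point is Step 2: making precise that a proper $H$ (including non-reductive conjugates $\G'_{\vecv_0}\ltimes V$ with nonzero translation part) yields an integer vector $\vecq\neq\bn$ and a point of $\Z^2\p_{\vecq}(g)$ on the line $\{0\}\times\R$. I would handle it by projecting via $\p_{\vecq}$ to $\ASL(2,\Z)\bs\ASL(2,\R)$ for $k=1$, where the $\mathrm{u}_{\R}$-invariant homogeneous subspaces are known explicitly (cf.\ \cite{SASL}). There, the $\G'$-conjugate stabilizer $(\vecv_0,\cdot)$ invariant under $\mathrm{u}_{\R}$ corresponds exactly to a grid point fixed by $\mathrm{u}_{\R}$, i.e.\ one on $\{0\}\times\R$. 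The statements for $t\to+\infty$ and $t\to-\infty$ follow identically, since Ratner's theorem applies to both directions.
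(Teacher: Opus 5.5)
Your route is the one the paper has in mind: apply Ratner's Theorem B, then rule out every proper intermediate subgroup $H$ through the projections $\p_{\vecq}$. The paper itself gives no proof beyond citing \cite[Theorem B]{mR91} and the classification in \cite[pp.\ 735--737]{DMS}. Step 1 and the reduction to $H=(1_2,\vecv_0)^{-1}(\G'\ltimes V)(1_2,\vecv_0)$ with $V=W\otimes\R^2$ are fine in substance. Rationality of $W$ still needs an argument, e.g.\ that $\Gamma\cap gHg^{-1}$ being a lattice in $gHg^{-1}$ forces $V\cap(\Z^2)^k$ to be a lattice in $V$.

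The gap is the decisive step of Step 2. You claim that for the $\vecq$ annihilating $W$ the grid $\Z^2\p_{\vecq}(g)$ meets $\{0\}\times\R$; in your ``cleaner route'', that some point of $\Z^2\p_\vecq(g)$ is fixed by $\mathrm{u}_t$. This is false. Take $k=1$, $M=\smatr 10{\sqrt2}1$ and $g=(1_2,(\tfrac12,0))M$. Then $\Z^2M=\{(a+b\sqrt2,\,b)\col a,b\in\Z\}$ meets $\{0\}\times\R$ only in $\bn$. The orbit $\Gamma g\mathrm{u}_\R$ lies in the closed proper set $\Gamma(1_2,(\tfrac12,0))\G'$, so $H=\G'$ and $W=\{0\}$. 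Yet for $q=1$ the grid $\Z^2\p_1(g)=\{(a+\tfrac12+b\sqrt2,\,b)\col a,b\in\Z\}$ misses $\{0\}\times\R$. Condition \eqref{INEFFECTIVETHM2ass} fails only at $q=2$, where $\Z^2\p_2(g)=\Z^2M\ni\bn$. Separately, your description $\p_\vecq(g)\in\ASL(2,\Z)\,\G'\,(1_2,\vecr)$ drops the translation coming from the conjugating element.

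The missing idea is to track two distinct translations and then pass to a multiple of $\vecq$.

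With $\vecw_0:=\vecq\vecv_0$ one has $\p_\vecq(H)=(1_2,\vecw_0)^{-1}\G'(1_2,\vecw_0)$. Since $(1_2,\vecw_0)\mathrm{u}_x(1_2,\vecw_0)^{-1}=(\mathrm{u}_x,\vecw_0\mathrm{u}_x-\vecw_0)$, the inclusion $\mathrm{u}_\R\subset H$ forces $\vecw_0\in\{0\}\times\R$.

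The map of quotients induced by $\p_\vecq$ is proper, so the projected orbit is closed. Hence $\ASL(2,\Z)\,\p_\vecq(g)(1_2,\vecw_0)^{-1}$ has a closed $\G'$-orbit, which forces a \emph{rational} translation parameter: $\Z^2\p_\vecq(g)=(\Z^2+\vecr)\pi(g)+\vecw_0$ with $\vecr\in\Q^2$, and in general $\vecr\notin\Z^2$.

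Now choose $D\in\Z^+$ with $D\vecr\in\Z^2$. Then $\Z^2\p_{D\vecq}(g)=\Z^2\pi(g)+D\vecw_0\ni D\vecw_0\in\{0\}\times\R$, contradicting \eqref{INEFFECTIVETHM2ass} at $D\vecq\neq\bn$. This is why the hypothesis must range over all $\vecq\in\Z^k\setminus\{\bn\}$, including non-primitive ones.

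With this correction the classification argument closes. As written, the key implication ``$H$ proper $\Rightarrow$ \eqref{INEFFECTIVETHM2ass} fails'' rests on a false assertion.
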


We remark that the conditions in \eqref{INEFFECTIVETHM2ass}
are easily seen to be \textit{necessary} for equidistribution.
Indeed, if one of these conditions fails, 
it means that there exist some $\vecq\in\Z^k$ and $\alpha\in\R$,
with either $\vecq\neq\bn$ or $\alpha\neq0$,
such that $\Gamma g$ belongs to the set
\begin{align}\label{XqalphaDEF}
X_{\vecq,\alpha}:=\{\Gamma g\col g\in \G,\: (0,\alpha)\in\Z^2\p_{\vecq}(g)\}.
\end{align}
However, $X_{\vecq,\alpha}$
is a closed embedded submanifold of $X$
of codimension two,
invariant under the $\mathrm{u}_{\R}$-flow;
hence the whole orbit $\Gamma g\mathrm{u}_{\R}$
is contained in $X_{\vecq,\alpha}$,
and thus does not equidistribute in $X$.
(In particular, it should be noted that 
$\Gamma g\in X_{\bn,\alpha}$ holds for some $\alpha\neq0$
if and only if $\Gamma g\mathrm{u}_{\R}$ projects to a closed orbit in $X'$,
i.e.\ a closed horocycle.)

\vspace{5pt}

The following result is our second main theorem.
It provides an effective version of Theorem~\ref{INEFFECTIVETHM2}.
It will be deduced as a consequence of
Theorem \ref{MAINTHM3gen},
via a careful (continuous) splitting of the $\mathrm{u}_{\R}$-orbit under consideration.

For any $T>0$, $g\in \mathrm{G}$ and $\vecq\in\Z^k$, 
we let $\fR_T$ be the rectangle
\begin{align*}
\fR_T:=[-T^{-1},T^{-1}]\times[-1,1]=T^{-\frac12}[-1,1]^2 \mathrm{a}_T^{-1}
\end{align*}
in $\R^2$,
and set
\begin{align}\label{bMxiqTDEF}
S_{g,\vecq}(T)
:=\sup\bigl\{S\geq0\col S\,\fR_T\cap \Z^2\,\p_{\vecq}(g)=\emptyset_{\vecq}\bigr\},
\end{align}
where $\emptyset_{\bn}:=\{\bn\}$ but $\emptyset_{\vecq}:=\emptyset$ for $\vecq\neq\bn$.
We also introduce the shorthand notation
\begin{align}\label{scrLjxDEF}
\scrL_j(x):=x\bigl(\log(2+x^{-1})\bigr)^j\qquad (x>0,\: j\geq0).
\end{align}

\begin{thm}\label{MAINTHM4}
Let $k\in\Z^+$,
$\mathrm{G}=\SL(2,\R)\ltimes(\R^2)^k$,
$\Gamma=\oGamma=\SL(2,\Z)\ltimes(\Z^2)^k$
and 
$X=\GaG$.
Let $m\geq\max(321,k+1)$,
and set $n=3m+3k+5$ and $\alpha=\frac12(m+321)$.
Then for any $f\in\C_\alpha^n(X)$, any $h\in\C_c^5(\R)$ with $\supp(h)\subset[-1,1]$,
and any $g\in \mathrm{G}$ and $T\geq 2$, we have
\begin{align}\notag
&\biggl|\frac1T\int_{\R}f\bigl(\Gamma g\mathrm{u}_t\bigr) h\Bigl(\frac tT\Bigr)\,dt
-\int_X f\,d\mu \int_{\R}h\,dt\biggr|
\\\label{MAINTHM4res}
&\qquad \ll \|f\|_{\C_{\alpha}^{n}}\, \|h\|_{\C_0^5}
\, \Biggl\{\scrL_3\Bigl(S_{g,\bn}(T)^{-\frac12}\Bigr) %
+\sum_{\vecq\in\Z^k\setminus\{\bn\}}\sum_{d=1}^{\infty}\frac{\tau(d)}{\|\vecq\|^{m}d^{3/2}}
\, \scrL_1\biggl(\frac1{1+d^{-1}S_{g,d\vecq}(T)}\biggr)\Biggr\},
\end{align}
where the implied constant depends only on $k$ and $m$.
\end{thm}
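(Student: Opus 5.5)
Following the remark preceding the statement, we deduce Theorem \ref{MAINTHM4} from Theorem \ref{MAINTHM3gen} by decomposing the long orbit piece into pieces of expanding translates of horocycles. The basic identity is $\mathrm{u}_t = \mathrm{a}_y^{-1}\mathrm{u}_{t/y}\mathrm{a}_y$. This means that a segment $\{\Gamma g'\mathrm{u}_t\}$ of length $\asymp y^{-1}$ can be written as $\Gamma (g'\mathrm{a}_y^{-1})\mathrm{u}_x\mathrm{a}_y$ with $x$ in a bounded range. In Theorem \ref{MAINTHM3gen} we write $g'\mathrm{a}_y^{-1}=(1_2,\vecxi)M$ with $M\in\SL(2,\R)$, so the error is controlled by $\|M\|^{13}\delta_m(y;\vecxi)$. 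The point is that $\|M\|$ may be large, but we may replace $g$ by $\gamma g$ for any $\gamma\in\Gamma$ without changing the orbit. Choosing $\gamma$ appropriately, via reduction theory for $\SL(2,\Z)$ acting on the lattice $\Z^2\p_{\bn}(g\mathrm{u}_{t_0})$, we can make $M$ bounded whenever the point $g\mathrm{u}_{t_0}\mathrm{a}_y^{-1}$ lies in a fixed compact part of $\Gamma'\bs\G'$. Then $y$ is chosen adapted to the cusp excursion of the pushed orbit.

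\textbf{Step 1: partition.} Take a smooth partition of unity in $t\in[-T,T]$. For each dyadic scale we choose $y=y(t_0)$ as large as possible subject to $\scrY(g\mathrm{u}_{t_0}\mathrm{a}_{y}^{-1})\ll1$. Equivalently, $y^{-1}$ is the length scale on which the horocycle segment near $t_0$ stays equidistributed at bounded height. Concretely, the height of $g\mathrm{u}_{t}\mathrm{a}_{1/T}$-type points is governed by the shortest vector of $\Z^2\p_{\bn}(g)$ in boxes $S\fR_T$. This is exactly how $S_{g,\bn}(T)$ enters: a lattice vector in $S\fR_T$ means the orbit of length $T$ spends time near a closed horocycle of length $\approx S^2$ or so. On pieces where the natural scale $y^{-1}$ is at least comparable to $T\cdot S_{g,\bn}(T)$, we apply Theorem \ref{MAINTHM3gen} with $h$ replaced by the partition bump functions; these satisfy $\C^5_{43}$ bounds uniformly after rescaling. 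We sum the resulting errors.

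\textbf{Step 2: translate $\delta_m$.} Next we translate $\delta_m(y;\vecxi)$ into the quantity $S_{g,d\vecq}(T)$. For the rescaled $\vecxi$, the condition that $\|d\vecq\vecxi\|_{\Z}$ is small relative to $d\sqrt y$ means that the grid $\Z^2\p_{d\vecq}(g)$ has a point in a thin box $S\fR_T$, with $S$ small relative to $d$. Averaging over the pieces (and over $t_0$, which shifts $\vecxi$ by $\mathrm{u}$) converts $(1+\|d\vecq\vecxi\|_\Z/(d\sqrt y))^{-1}$ into $\scrL_1\bigl((1+d^{-1}S_{g,d\vecq}(T))^{-1}\bigr)$. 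The logarithm arises from summing over dyadic ranges of pieces. Similarly, the pieces where the orbit goes high in the cusp, near a closed horocycle, are bounded trivially using $f\in\C_\alpha^n$, whose decay gives a factor $\scrY^{-\alpha}$. Their total proportion, together with the $\|M\|^{13}$ losses on intermediate pieces, sums to $\scrL_3(S_{g,\bn}(T)^{-1/2})$. The three logarithms come from three nested dyadic sums: over scales, over heights, and over the parameter of $M$.

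\textbf{Main obstacle.} The hard step is the geometry of Step 1. We must choose, for each $t_0$, the coset representative $\gamma$ and scale $y$ so that $\|M\|$ is bounded, or at least polynomially controlled in a summable way. We must also verify that the total measure of the $t$-values not covered by good pieces is controlled by $S_{g,\bn}(T)^{-1/2}$ up to logs. I would handle this via the standard description of a horocycle segment $\{\Gamma' M\mathrm{u}_t\}$ through primitive vectors $(c,d)$ of the lattice. The height at time $t$ is $\max_{(c,d)}\bigl((ct+d)^2\bigr)^{-1}$, after rescaling. So the excursion intervals are explicit, of width $\asymp |c|^{-2}$ times the height, and the counting of such vectors in $S\fR_T$-type boxes yields the stated bound.
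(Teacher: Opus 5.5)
Your overall architecture matches the paper's. You cut the orbit into expanding horocycle pieces whose length is adapted to the cusp excursion, left-multiply by an element of $\Gamma$ so that each piece has bounded $M$, apply Theorem~\ref{MAINTHM3gen} to each piece, and bound the too-short pieces trivially. (Small slip: the identity is $\mathrm{u}_t=\mathrm{a}_y^{-1}\mathrm{u}_{ty}\mathrm{a}_y$.)

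The paper realizes this by a \emph{continuous} splitting. With $\gamma^{-1}M\mathrm{a}_T=\smatr abcd$, it inserts $1=\int_{\R}\Phi\bigl(\frac{z-s}{(cs+d)^2}\bigr)(cs+d)^{-2}\,dz$, takes local scale $t(z)/T$ with $t(z)=(cz+d)^{-2}$, and left-multiplies by $\mathrm{u}_{-j}\gamma^{-1}$ with $j=\lfloor\frac{az+b}{cz+d}\rfloor$. Then $\|\tM\|\ll1$ for every piece, so there are no $\|M\|^{13}$ losses at all. The set $\fS$ where $t(z)>T/100$ has measure $\ll\sqrt y\asymp S_{g,\bn}(T)^{-1}$ and is handled with $\|f\|_\infty$ alone; the decay of $f$ is not needed there. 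This reduces everything to bounding
\[
\sqrt y+\sum_{\ell\in\Z}\frac{Y}{(Y+|\ell|)^2}\,\delta_m\Bigl(\Bigl(\frac{Y+|\ell|}{T\sqrt y}\Bigr)^{2};\vecxi\gamma\mathrm{u}_{\ell}\Bigr),\qquad Y=Ty=\scrY(M\mathrm{a}_T).
\]

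The genuine gap is your Step~2. This is where the right-hand side of \eqref{MAINTHM4res} is actually produced, and you only assert it. Set $(v_1,v_2)=d\vecq\vecxi\gamma$ and $w_i=\|v_i\|_{\Z}$. The shift by $\mathrm{u}_\ell$ replaces $v_2$ by $\ell v_1+v_2$, so for each $(\vecq,d)$ you must bound
\[
\sum_{\ell\in\Z}\frac{\alpha}{(\alpha+|\ell|)^2}\Bigl(1+\frac{\alpha\beta}{\alpha+|\ell|}\bigl(w_1+\|\ell w_1+w_2\|_{\Z}\bigr)\Bigr)^{-1},\qquad \alpha=Ty,\quad \beta=\frac{1}{d\sqrt y},
\]
that is, control how often the progression $\ell w_1+w_2$ comes near $\Z$. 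Lemma~\ref{nonclosedKeyBoundLEM1new} does this by splitting $\ell$ according to the integer nearest to $\ell w_1+w_2$. It obtains $\scrL_1\bigl((1+\alpha\beta w_1+\beta w_2)^{-1}\bigr)+\scrL_2\bigl((1+\beta)^{-1}\bigr)$.

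The second term is genuinely present. For $w_2=\frac12$, $\alpha\beta w_1=\sqrt\beta$ and large $\beta$, the $\ell$-sum is of order $\beta^{-1}(\log\beta)^2$. In the same case $\scrL_1\bigl((1+d^{-1}S_{g,d\vecq}(T))^{-1}\bigr)\asymp\beta^{-1}\log\beta$. So averaging over the pieces does \emph{not} yield the $\scrL_1$-term alone. It is this $\scrL_2$-term, summed over $d$ against $\tau(d)d^{-3/2}$, that produces $\scrL_3(y^{1/4})\asymp\scrL_3\bigl(S_{g,\bn}(T)^{-1/2}\bigr)$. It does not come from the cusp pieces, from $\|M\|^{13}$ losses, or from ``three nested dyadic sums''.

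Finally, you need a geometric comparison, which your per-piece heuristic does not supply because the pieces carry different vectors $\vecxi\gamma\mathrm{u}_\ell$ and different scales. The grid $\Z^2\p_{d\vecq}(g)\mathrm{a}_T$ contains $(\pm w_1,\pm w_2)\gamma^{-1}M\mathrm{a}_T$ for suitable signs. The top row of $\gamma^{-1}M\mathrm{a}_T$ is $\ll\sqrt{Ty}$ and the bottom row is $\le(Ty)^{-1/2}$. Hence this point lies in $3ST^{-1/2}[-1,1]^2$ with $S=T\sqrt y\,w_1+w_2/\sqrt y=d(\alpha\beta w_1+\beta w_2)$. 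It follows that $S_{g,d\vecq}(T)\le3S$, which turns the lemma's first term into the $\scrL_1$-term of \eqref{MAINTHM4res}.
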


Let us note that for any given $g\in\mathrm{G}$,
the expression within brackets in 
the right-hand side of \eqref{MAINTHM4res}
tends to zero as $T\to\infty$
if (and only if) the conditions in \eqref{INEFFECTIVETHM2ass} hold.
Indeed, %
for any fixed $\vecq\in\Z^k$,
the condition $\Z^2\p_{\vecq}(g)\cap(\{0\}\times\R)=\emptyset_{\vecq}$
implies that $S_{g,\vecq}(T)\to\infty$ as $T\to\infty$.
Hence \eqref{INEFFECTIVETHM2ass} ensures that each individual term in
the right-hand side of \eqref{MAINTHM4res} tends to zero;
and since also $\sum_{\vecq\in\Z^k\setminus\{\bn\}}\sum_{d=1}^{\infty}\frac{\tau(d)}{\|\vecq\|^{m}d^{3/2}}<\infty$,
it follows that the whole expression in the right-hand side of \eqref{MAINTHM4res} tends to zero,
as claimed.
This shows that Theorem \ref{MAINTHM4} is indeed an effective version of 
Theorem \ref{INEFFECTIVETHM2},
for \textit{smoothed} ergodic averages.
(The smoothing can easily be removed through an approximation argument,
choosing $h$ in \eqref{MAINTHM4res} 
to approximate the characteristic function of $[0,1]$ or $[-1,0]$;
however the resulting bound will have a slower decay as a function of $T$.)
It is not difficult to verify that for \textit{almost every $g\in\G$},
the right-hand side of
\eqref{MAINTHM4res}
decays as $T^{-\frac14+\ve}$ as $T\to+\infty$;
see Lemmas \ref{SullivanloglawapplLEM} and \ref{THM4boundgenericdecayLEM}
below for more precise statements.
It would be interesting to relate the rate of decay of the right-hand side of
\eqref{MAINTHM4res} more directly to appropriate Diophantine properties of
the point $g\in\G$.   %

Theorem \ref{MAINTHM4} essentially generalizes
\cite[Theorem 1.6]{SASL}
from the case $k=1$ to general $k$.
However \cite[Theorem 1.6]{SASL}
concerns non-smoothed ergodic averages
and involves a weaker Sobolev norm on the test function $f$;
but on the other hand,
the aforementioned $T^{\ve-\frac14}$ decay of the bound in 
Theorem \ref{MAINTHM4} for generic $g\in\G$
is much stronger than the corresponding generic decay rate of the bound in
\cite[Theorem 1.6]{SASL},
which is $T^{\ve-\frac18}$.
(Following the proof of 
\cite[Theorem 1.6]{SASL},
the power of decay would not be %
improved by considering smoothed ergodic averages;
the fact that we obtain a better decay rate in the present work %
instead stems %
from the flexibility of having an
arbitrary $M\in\SL(2,\R)$ in Theorem \ref{MAINTHM3gen}.)

\vspace{10pt}

The organization of this paper is as follows.
In Section \ref{deltamxiSEC} we establish several properties
of the majorant function
$\delta_m(y;\vecxi)$
which appears in the bound in Theorem \ref{MAINTHM3gen}.
Among these properties is a precise form of the statement that
$\delta_m(y;\vecxi)$ decays as $y^{\frac14-\ve}$
as soon as either the first or the second column of $\vecxi$
satisfies an appropriate Diophantine condition
(see Lemma~\ref{BSUMDIOPHBOUNDLEM1}).

In Section \ref{initialstepsSEC} we 
set up basic notation and recall some basic facts from \cite[Sec.\ 4]{SASL}
regarding the Fourier decomposition of 
a given function $f$ on $X$ with respect to the torus variable.
The contribution from the zeroth Fourier coefficient of $f$
to the integral $\int_\R f\bigl(\Gamma (1_2,\vecxi)M \mathrm{u}_x\mathrm{a}_y\bigr) h(x)\,dx$
is a weighted average along a translate of a
horocycle in $\Gamma(N)\bs\SL(2,\R)$,
which is expanding as $y\to0$.
Precise results on the effective equidistribution of such horocycle averages are well-known,
and using such a result we verify in Section \ref{LEADINGTERMSEC} that
the contribution from the zeroth Fourier coefficient of $f$
equals $\int_{\R}h\,dx$ times the volume average of $f$, i.e.\ $\int_X f\,d\mu$, 
up to an error which is by far subsumed by the 
right-hand side in \eqref{MAINTHM3genres1}.
After this, in order to prove Theorem \ref{MAINTHM3gen} it remains to prove that the
total contribution from all the non-zero Fourier coefficients is bounded
by the right-hand side in \eqref{MAINTHM3genres1}.

In Section \ref{cancellationSEC} we prove a bound 
which establishes cancellation in a linear exponential sum
running over all matrices in an arbitrary coset of $\Gamma(N)$ in $\SL(2,\Z)$.
The key tool for this proof is the Hardy--Littlewood--Ramanujan circle method and more precisely, the delta symbol version of it. 
Next, in Section \ref{midapproachSEC} we apply the exponential sum bound from 
Section \ref{cancellationSEC} in order to bound all the contributions from non-zero Fourier coefficients
of the test function $f$ 
to the integral $\int_\R f\bigl(\Gamma (1_2,\vecxi)M \mathrm{u}_x\mathrm{a}_y\bigr) h(x)\,dx$,
and thereby conclude the proof of Theorem \ref{MAINTHM3gen}.
At the end of Section \ref{midapproachSEC}
we also give the proof of Corollary \ref{MAINTHM3cor2}.
Finally, in Section \ref{genorbSEC} we prove Theorem \ref{MAINTHM4},
by exhibiting an appropriate continuous splitting of the given $\mathrm{u}_{\R}$-orbit
and applying Theorem \ref{MAINTHM3gen} to each part.
We conclude the section by proving Lemmas \ref{SullivanloglawapplLEM} and \ref{THM4boundgenericdecayLEM}
which make precise the statement that for generic $g$, the bound in 
Theorem~\ref{MAINTHM4}
decays as $T^{-\frac14+\ve}$.

\vspace{10pt}

\textbf{Acknowledgment:}
This material is based upon work supported by the Swedish Research Council under grant no.\ 2021-06594 while the authors were in residence at the Institut Mittag-Leffler in Djursholm, Sweden, during the program ``Analytic Number Theory'' (January to April 2024), when a large part of this project was carried out. We are grateful to Institut Mittag-Leffler for providing excellent working conditions.
We are also grateful to Damaris Schindler and Wooyeon Kim for helpful discussions.

\vspace{10pt}

\section{Some notation}
\label{NOTATION_SEC}

We use the standard notation $A=O(B)$ or $A\ll B$ meaning $|A|\leq CB$ for some constant $C>0$.
We will also use $A\asymp B$ to denote $A\ll B\ll A$.
The implicit constant $C$ will always be allowed to depend on $k$ and $N$ without any explicit mention.
If we wish to indicate that $C$ also depends on some other quantities $f,g,h$, we will use the notation
$A\ll_{f,g,h} B$ or $A=O_{f,g,h}(B)$.

We will use the standard notation $e(z):=e^{2\pi i z}$.

Recall from Section \ref{intro} that 
$\mathrm{G}'=\SL(2,\R)$, $\mathrm{G}=\mathrm{G}'\ltimes(\R^2)^k$,
$\oGamma=\SL(2,\Z)\ltimes(\Z^2)^k$
and $\Gamma=\Gamma(N)\ltimes(\Z^2)^k$.
We also write
$\oGamma'=\SL(2,\Z)$ and $\Gamma'=\Gamma(N)$.

\section{Properties of the majorant $\delta_{m}(y;\vecxi)$}
\label{deltamxiSEC}

\subsection{Basic bounds}

Recall that $\delta_m(y;\vecxi)$ was defined in \eqref{deltaNEWDEF}.
\begin{lem}\label{deltambasicfactLEM1}
For any $\vecxi\in(\R^2)^k$,
$\delta_m(y;\vecxi)$ is increasing as a function of $y>0$,
and $\delta_m(y';\vecxi)\leq(y'/y)^{\frac12}\delta_m(y;\vecxi)$
for all $y'\geq y>0$.
\end{lem}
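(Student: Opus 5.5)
The plan is to prove both assertions termwise in the series \eqref{deltaNEWDEF}, since every term is nonnegative and the weights $\tau(d)\|\vecq\|^{-m}d^{-3/2}$ do not depend on $y$. Fix $\vecq\in\Z^k\setminus\{\bn\}$ and $d\geq1$, and put $a:=\|d\vecq\vecxi\|_{\Z}/d\geq0$. The corresponding term is then $g(y):=(1+a y^{-1/2})^{-1}$. It therefore suffices to show two things: that $g$ is increasing in $y>0$, and that $g(y')\leq(y'/y)^{1/2}g(y)$ for $y'\geq y>0$. Multiplying by the weights and summing then gives both claims. If the series diverges, both sides are $+\infty$ and the statements hold trivially.

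\emph{Monotonicity.} If $a=0$ then $g\equiv1$. If $a>0$, then $y\mapsto a y^{-1/2}$ is decreasing, so $1+ay^{-1/2}$ is decreasing and its reciprocal $g$ is increasing.

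\emph{The ratio bound.} Write $g(y)=\sqrt y/(\sqrt y+a)$. Then
\begin{align*}
\frac{g(y')}{g(y)}=\Bigl(\frac{y'}{y}\Bigr)^{\frac12}\cdot\frac{\sqrt y+a}{\sqrt{y'}+a}\leq\Bigl(\frac{y'}{y}\Bigr)^{\frac12},
\end{align*}
since $\sqrt{y'}\geq\sqrt y$ makes the second factor at most $1$. This holds also when $a=0$, where $g\equiv1$ and the ratio is $1\leq(y'/y)^{1/2}$.

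There is no real obstacle here. The only point needing a moment's care is that the bound is applied termwise with the same $y$-independent weights, which is legitimate for series of nonnegative terms.
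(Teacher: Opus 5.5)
Your proposal is correct and is essentially the paper's argument: the paper proves the lemma ``by inspection'' of \eqref{deltaNEWDEF}, and you carry out that inspection termwise, using that each summand $\sqrt y/(\sqrt y+a)$ with $a=\|d\vecq\vecxi\|_{\Z}/d\geq0$ is increasing in $y$ and satisfies the ratio bound. Your remark about a divergent series is harmless but not needed: for $m>k$ every term is at most $1$, so the series always converges.
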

\begin{proof}
Immediate by inspection in \eqref{deltaNEWDEF}.
\end{proof}
\begin{lem}\label{basicboundLEM1}
For all real $A\geq1$,
\begin{align}\label{handwrp77midimproved}
\sum_{1\leq d\leq A}\frac{\tau(d)}{\sqrt d}\ll \sqrt{A}\,\log(A+1)
\qquad\text{and}\qquad
\sum_{d\geq A}\frac{\tau(d)}{d^{3/2}}\ll \frac1{\sqrt A} %
\log(A+1). %
\end{align}
\end{lem}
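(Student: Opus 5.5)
The plan is to derive both bounds from the classical divisor-sum estimate
\begin{align*}
D(x):=\sum_{1\leq d\leq x}\tau(d)\ll x\log(x+1)\qquad(x\geq1),
\end{align*}
which is elementary: $D(x)=\sum_{a\leq x}\lfloor x/a\rfloor\leq x\sum_{a\leq x}a^{-1}\ll x\log(x+1)$. The two sums in \eqref{handwrp77midimproved} are then handled by Abel (partial) summation against $D$, together with a trivial treatment of small $A$.

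For the first sum, write
\begin{align*}
\sum_{1\leq d\leq A}\tau(d)d^{-1/2}=D(A)A^{-1/2}+\tfrac12\int_1^A D(t)\,t^{-3/2}\,dt .
\end{align*}
The first term is $\ll\sqrt A\log(A+1)$ directly. The integral is $\ll\int_1^A t^{-1/2}\log(t+1)\,dt\leq\log(A+1)\int_1^A t^{-1/2}\,dt\ll\sqrt A\log(A+1)$. (Alternatively, one can avoid partial summation altogether: $\sum_{d\leq A}\tau(d)d^{-1/2}=\sum_{ab\leq A}(ab)^{-1/2}$, and summing over $b\leq A/a$ gives $\ll\sum_{a\leq A}a^{-1/2}(A/a)^{1/2}=\sqrt A\sum_{a\leq A}a^{-1}$, which is the same bound.)

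For the tail sum, partial summation gives
\begin{align*}
\sum_{d\geq A}\tau(d)d^{-3/2}\leq\tfrac32\int_{A}^{\infty}D(t)\,t^{-5/2}\,dt+O\bigl(D(A)A^{-3/2}\bigr).
\end{align*}
The boundary term is $\ll A^{-1/2}\log(A+1)$. The integral is $\ll\int_A^\infty t^{-3/2}\log(t+1)\,dt$. Integrating by parts, or using $\log(t+1)\leq\log(A+1)+\log(t/A+1)$ and substituting $t=Au$, shows this is $\ll A^{-1/2}\log(A+1)$, since $\int_1^\infty u^{-3/2}\log(u+1)\,du<\infty$.

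There is no genuine obstacle here. The only point needing care is keeping the constants uniform for $A$ near $1$, where $\log(A+1)\geq\log2>0$ keeps everything harmless, and ensuring the tail integral's logarithm is controlled via the substitution above rather than being bounded crudely.
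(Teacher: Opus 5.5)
Your proposal is correct and takes the same route as the paper, which simply says both bounds are easy consequences of the divisor-sum estimate $\sum_{1\leq d\leq X}\tau(d)\ll X\log(X+1)$. Your partial summation arguments, including the control of the logarithm in the tail integral via $\log(t+1)\leq\log(A+1)+\log(t/A+1)$, supply exactly the omitted details.
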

\begin{proof}
Easy consequences of the well-known bound
$\sum_{1\leq d\leq X}\tau(d)\ll X\log(X+1)$.
\end{proof}

\begin{lem}\label{basicboundLEM2}
For any integers $m>k\geq1$, and any $\vecxi\in(\R^2)^k$ and $0<y\leq1$,
\begin{align*}
\sum_{{\vecq}\in\Z^k\setminus\{\bn\}}\sum_{d>y^{-1/2}}\frac{\tau(d)}{\|\vecq\|^m d^{3/2}}
\biggl(1+\frac{\big\| d\vecq\vecxi\big\|_{\Z}}{d\sqrt y}\biggr)^{\hspace{-3pt}-1}
\ll_k\sum_{d>y^{-1/2}}\frac{\tau(d)}{d^{3/2}}
\ll y^{\frac14}\log(y^{-1}+1).
\end{align*}
\end{lem}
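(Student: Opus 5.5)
The plan is to prove the two inequalities separately. Both are elementary.

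For the first inequality, I will simply drop the factor involving $\vecxi$. Since $\bigl\| d\vecq\vecxi\bigr\|_{\Z}\geq0$, we have
\begin{align*}
\biggl(1+\frac{\big\| d\vecq\vecxi\big\|_{\Z}}{d\sqrt y}\biggr)^{-1}\leq1 .
\end{align*}
All terms are nonnegative, so the double sum is bounded by the product
\begin{align*}
\Biggl(\sum_{\vecq\in\Z^k\setminus\{\bn\}}\|\vecq\|^{-m}\Biggr)\Biggl(\sum_{d>y^{-1/2}}\frac{\tau(d)}{d^{3/2}}\Biggr).
\end{align*}
It remains to see that the $\vecq$-sum is finite and depends only on $k$ once $m>k$. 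To do this I group the $\vecq$ dyadically by $2^j\leq\|\vecq\|<2^{j+1}$. Each shell contains $\ll_k 2^{jk}$ integer vectors, so the sum is
\begin{align*}
\ll_k\sum_{j\geq0}2^{j(k-m)}\leq\sum_{j\geq0}2^{-j}.
\end{align*}
This is a convergent geometric series because $m\geq k+1$, and its bound is uniform in $m$. That gives the first $\ll_k$.

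For the second inequality, I apply Lemma~\ref{basicboundLEM1} with $A=y^{-1/2}$, which satisfies $A\geq1$ since $0<y\leq1$. The sum over $d>A$ is at most the sum over $d\geq A$, so it is
\begin{align*}
\ll A^{-1/2}\log(A+1)=y^{1/4}\log(y^{-1/2}+1).
\end{align*}
Finally, since $y^{-1/2}\leq y^{-1}$ for $0<y\leq1$, we have $\log(y^{-1/2}+1)\leq\log(y^{-1}+1)$, which gives the stated bound.

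There is no real obstacle here. The only point needing care is that the constant in the $\vecq$-sum depends only on $k$ and not on $m$, which the dyadic grouping handles.
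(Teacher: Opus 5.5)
Your proposal is correct and is essentially the paper's argument: the paper simply says the lemma is immediate from Lemma~\ref{basicboundLEM1}, which is exactly your route. You bound the $\vecxi$-dependent factor by $1$, sum $\|\vecq\|^{-m}$ using $m\geq k+1$, and apply that lemma with $A=y^{-1/2}$.
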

\begin{proof}
Immediate from Lemma \ref{basicboundLEM1}.
\end{proof}
The above Lemma \ref{basicboundLEM2}, 
combined with Lemma \ref{deltalowboundLEM} below,
shows that the order of magnitude of 
$\delta_{m}(y;\vecxi)$ would remain the same,
uniformly over all $\vecxi$ and $0<y\leq1$,
if in the definition \eqref{deltaNEWDEF}
we would restrict the summation range for $d$
to $1\leq d\leq y^{-1/2}$.
\begin{lem}\label{deltalowboundLEM}
For any $\vecv\in\R^2$ and $0<y\leq1$,
\begin{align}\label{deltalowboundLEMres}
\sum_{\frac12 y^{-\frac12}\leq d\leq y^{-\frac12}}\frac{\tau(d)}{d^{3/2}}
\Bigl(1+\frac{\|d\vecv\|_{\Z}}{d\sqrt y}\Bigr)^{-1}
\gg y^{\frac14}\log(y^{-1}+1),
\end{align}
where the implied constant is absolute.
 In particular,
$\delta_m(y;\vecxi)\gg y^{\frac14}\log(y^{-1}+1)$,
for all $0<y\leq1$, $m>k$ and $\vecxi\in(\R^2)^k$.
\end{lem}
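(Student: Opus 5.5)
We need a lower bound for $\sum_{D\le d\le 2D}\frac{\tau(d)}{d^{3/2}}(1+\|d\vecv\|_\Z/(d\sqrt y))^{-1}$, where $D=\frac12 y^{-1/2}$. In this range $d\sqrt y\asymp 1$: indeed $\frac12\le d\sqrt y\le 1$. Since $\|d\vecv\|_\Z\le \|(\tfrac12,\tfrac12)\|=\tfrac{1}{\sqrt2}$ always, we get $\|d\vecv\|_\Z/(d\sqrt y)\le \sqrt2$. Hence every summand satisfies
\begin{align*}
\frac{\tau(d)}{d^{3/2}}\Bigl(1+\frac{\|d\vecv\|_{\Z}}{d\sqrt y}\Bigr)^{-1}\geq \frac{1}{1+\sqrt2}\,\frac{\tau(d)}{d^{3/2}},
\end{align*}
regardless of $\vecv$. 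This removes all dependence on $\vecv$. So the task reduces to showing $\sum_{D\le d\le 2D}\tau(d)d^{-3/2}\gg y^{1/4}\log(y^{-1}+1)$.

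For this we bound $d^{-3/2}\ge (2D)^{-3/2}$ and use the classical asymptotic $\sum_{d\le X}\tau(d)=X\log X+O(X)$. This gives $\sum_{D\le d\le 2D}\tau(d)\ge 2D\log(2D)-D\log D-CD=D\log D+O(D)$. Hence the sum is $\gg D^{-1/2}\log D\asymp y^{1/4}\log(y^{-1}+1)$ once $D$ exceeds a large absolute constant $D_0$.

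For the remaining range $y\ge (2D_0)^{-2}$, the interval $[\frac12y^{-1/2},y^{-1/2}]$ has length at least $\tfrac12$. It therefore contains an integer as soon as it does not sit strictly between two consecutive integers. A concern is $y$ near $1$: there the interval is $[\tfrac12,1]$, which contains $d=1$. More generally, intervals $[a,2a]$ with $a\ge\frac12$ always contain an integer. The summand at such a $d$ is then $\gg 1\asymp y^{1/4}\log(y^{-1}+1)$, since $y$ is bounded below by an absolute constant. This gives the bound uniformly, with an absolute implied constant.

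For the final claim, keep in the definition \eqref{deltaNEWDEF} only the terms with $\vecq=\vece_1=(1,0,\dots,0)$ and $\frac12y^{-1/2}\le d\le y^{-1/2}$. All terms are nonnegative and $\|\vece_1\|=1$. Applying \eqref{deltalowboundLEMres} with $\vecv=\vece_1\vecxi$ gives $\delta_m(y;\vecxi)\gg y^{1/4}\log(y^{-1}+1)$. The only mildly delicate step is the divisor-sum lower bound over a dyadic interval, which uses the main term $X\log X$ rather than merely an upper bound; this is standard (Dirichlet's hyperbola method).
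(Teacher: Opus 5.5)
Your proof is correct and follows the paper's argument. As in the paper, you bound $(1+\|d\vecv\|_{\Z}/(d\sqrt y))^{-1}$ below by an absolute constant using $\|\cdot\|_{\Z}\le 1/\sqrt2$ and $d\sqrt y\ge\frac12$, pull out $d^{-3/2}\ge y^{3/4}$, and apply $\sum_{d\le X}\tau(d)=X\log X+O(X)$ on the dyadic range. Your treatment of bounded $y^{-1/2}$, where the asymptotic is not yet effective, and of the ``in particular'' claim via $\vecq=\vece_1$ spells out details the paper leaves implicit.
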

\begin{proof}
We have $\|\vecw\|_{\Z}\leq\sqrt{1/2}<1$ for all $\vecw\in\R^2$.
Therefore,
$\bigl(1+\frac{\|d\vecv\|_{\Z}}{d\sqrt y}\bigr)^{-1}>\bigl(1+\frac1{d\sqrt y}\bigr)^{-1}
\geq\frac13$ for all $0<y\leq1$
and $d\geq\frac12 y^{-\frac12}$.
As a result, the sum on the left-hand side of
\eqref{deltalowboundLEMres} is
\begin{align*}
\gg y^{\frac34}\sum_{\frac12 y^{-\frac12}\leq d\leq y^{-\frac12}}\tau(d).
\end{align*}
Hence the lemma follows using the well-known estimate
$\sum_{d<X}\tau(d)=X\log X+O(X)$ as $X\to\infty$.
\end{proof}

\subsection{Generic behavior}
Our results on the decay of $\delta_m(y;\vecxi)$
will involve hypotheses on either of the two columns of $\vecxi$.
To state these results, it is convenient to introduce 
the following short-hand notation:
For any vector $\vecpsi\in\R^k$, write $\tvecpsi:=\bigl(\trans\vecpsi\hspace{5pt} \bn\bigr)\in(\R^2)^k$,
i.e.\ $\tvecpsi$ is the $k\times 2$ matrix with left column $\trans\vecpsi$ and vanishing right column;
and then set
\begin{align}\label{deltamxi1DEF}
\delta_m(y;\vecpsi):=\delta_m(y;\tvecpsi).
\end{align}
Using $\bigl\|d\vecq\bigl(\trans\vecxi_1\:\trans\vecxi_2\bigr)\bigr\|_{\Z}\geq 
\|d\vecq\, \trans\vecxi_i\|_{\Z}$
($i=1,2$)
it is then immediate from \eqref{deltaNEWDEF}
that
\begin{align*}
\delta_m\bigl(y;(\trans\vecxi_1\,\trans\vecxi_2)\bigr)\leq\delta_m(y;\vecxi_i)
\qquad\text{for all $m>k$, $\vecxi_1,\vecxi_2\in\R^k$, $y>0$ and $i=1,2$.}
\end{align*}
Hence a bound on 
$\delta_m(y;\vecxi_i)$ for either $i=1$ or $i=2$ %
implies
the same bound on $\delta_m(y;\vecxi)$ where $\vecxi=(\trans\vecxi_1\:\trans\vecxi_2)$.

We have the following metric result,
an analogue of 
\cite[Remark 3]{effopp1}:
\begin{lem}\label{labaaxiLEM}
For any fixed $\ve>0$ and $m>k$
and for Lebesgue almost all
$\vecxi_1\in\R^k$, we have
$\delta_m(y;\vecxi_1)\ll y^{\frac14}(\log(y^{-1}))^{2+\ve}$ as $y\to0$.
\end{lem}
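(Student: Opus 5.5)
The plan is a first-moment (Markov) estimate followed by Borel--Cantelli, run along a dyadic sequence of $y$'s. For $\tvecpsi$ with $\vecpsi=\vecxi_1$ we have $d\vecq\tvecpsi=(d\,\vecq\cdot\vecxi_1,0)$, so the relevant quantity is the scalar $\|d\,\vecq\cdot\vecxi_1\|_{\Z}$.

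\textbf{Reductions.} For $\vecq\in\Z^k$ this quantity is $\Z^k$-periodic in $\vecxi_1$, so it suffices to treat almost every $\vecxi_1\in[0,1]^k$ with Lebesgue probability measure. By Lemma \ref{deltambasicfactLEM1}, if $2^{-j-1}\leq y\leq 2^{-j}$ then $\delta_m(y;\vecxi_1)\leq\delta_m(2^{-j};\vecxi_1)\leq\sqrt2\,\delta_m(2^{-j-1};\vecxi_1)$. Hence it suffices to prove the bound for $y=y_j:=2^{-j}$, $j\to\infty$. By Lemma \ref{basicboundLEM2}, the part of the sum with $d>y^{-1/2}$ is $\ll y^{1/4}\log(y^{-1}+1)$ for every $\vecxi_1$. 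So only $1\leq d\leq y_j^{-1/2}$ needs attention.

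\textbf{First moment.} Split $d$ dyadically: $d\sim D=2^{-r}y_j^{-1/2}$ with $r\geq0$. Set
\begin{align*}
S_{\vecq,r,j}(\vecxi_1):=\sum_{d\sim D}\frac{\tau(d)}{d^{3/2}}\Bigl(1+\frac{\|d\,\vecq\cdot\vecxi_1\|_{\Z}}{d\sqrt{y_j}}\Bigr)^{-1}.
\end{align*}
For $\vecq\neq\bn$ and $d\geq1$, the number $d\,\vecq\cdot\vecxi_1 \bmod 1$ is uniformly distributed on $\R/\Z$ when $\vecxi_1$ is uniform on $[0,1]^k$. Therefore
\begin{align*}
\int_0^1\Bigl(1+\frac{\|t\|_{\Z}}{d\sqrt{y}}\Bigr)^{-1}dt\ll d\sqrt y\,\log\Bigl(2+\frac1{d\sqrt y}\Bigr).
\end{align*}
Combining this with $\sum_{d\sim D}\tau(d)\ll D\log D\ll Dj$ gives
\begin{align*}
\mathbb E\, S_{\vecq,r,j}\ll D^{-1/2}\sqrt{y_j}\,D\,j\,(1+r)\asymp y_j^{1/4}\,2^{-r/2}\,j\,(1+r),
\end{align*}
uniformly in $\vecq\neq\bn$.

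\textbf{Borel--Cantelli.} Fix small $\ve>0$ with $m>k+\ve$, which is possible since $m\geq k+1$. Define the bad event
\begin{align*}
B_{\vecq,r,j}=\Bigl\{S_{\vecq,r,j}>y_j^{1/4}\,2^{-r/2}\,j^{2+\ve}(1+r)^{2+\ve}\|\vecq\|^{k+\ve}\Bigr\}.
\end{align*}
By Markov's inequality, $\mathrm{P}(B_{\vecq,r,j})\ll j^{-1-\ve}(1+r)^{-1-\ve}\|\vecq\|^{-k-\ve}$, and this is summable over $\vecq\neq\bn$, $r\geq0$ and $j\geq1$. Hence almost every $\vecxi_1$ lies in only finitely many $B_{\vecq,r,j}$. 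For such $\vecxi_1$, each of the finitely many exceptional terms is trivially $\ll D^{-1/2}\log D$ and so only changes the implied constant, which may depend on $\vecxi_1$.

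\textbf{Conclusion.} For all other indices,
\begin{align*}
\sum_{\vecq\neq\bn}\|\vecq\|^{-m}\sum_{r\geq0}S_{\vecq,r,j}\ll y_j^{1/4}j^{2+\ve}\sum_{\vecq\neq\bn}\|\vecq\|^{-m+k+\ve}\sum_{r\geq0}2^{-r/2}(1+r)^{2+\ve}\ll y_j^{1/4}(\log y_j^{-1})^{2+\ve}.
\end{align*}
This gives the claimed bound at $y=y_j$, and the interpolation step above extends it to all $y\to0$.

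\textbf{Main obstacle.} The delicate step is choosing the Borel--Cantelli weights. A naive Markov bound applied to all of $\delta_m$ at once only gives failure probability $\asymp j^{-\ve}$, which is not summable over $j$. Using a weight $\log D$ instead of the distance $1+r$ to the top scale would cost an extra power of $j$. The weights must be chosen so that the total probability converges while the losses concentrate at $r\approx0$, where the $2^{-r/2}$ decay absorbs the polynomial factors in $r$.
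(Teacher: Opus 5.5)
There is a genuine gap in your final summation over $\vecq$. To make the Markov probabilities summable over $\vecq$, you put the weight $\|\vecq\|^{k+\ve}$ into the threshold defining $B_{\vecq,r,j}$. That weight then reappears in your conclusion as $\sum_{\vecq\neq\bn}\|\vecq\|^{-m+k+\ve}$. A lattice sum $\sum_{\vecq\in\Z^k\setminus\{\bn\}}\|\vecq\|^{-s}$ converges only for $s>k$, so this step needs $m>2k+\ve$, not $m>k+\ve$. Under the lemma's hypothesis $m>k$ (for instance $m=k+1$), the sum diverges and the argument proves nothing. The defect is built into the per-$\vecq$ splitting. Any weight $\|\vecq\|^{\theta}$ needs $\theta>k$ for the probabilities to be summable and $m-\theta>k$ for the final sum to converge, which forces $m>2k$.

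The fix is not to split in $\vecq$ at all; the splitting in $r$ is also unnecessary. Sum your own first-moment bound over $r$, and over $\vecq$ with the weights $\|\vecq\|^{-m}$ (this uses only $m>k$). Together with the $d>y^{-1/2}$ tail, this gives $\int_{(0,1)^k}\delta_m(y_j;\vecxi_1)\,d\vecxi_1\ll y_j^{1/4}\,j$, which is exactly the paper's bound $\ll y^{1/4}\log(y^{-1})$. Markov's inequality with threshold $j^{1+\ve}\cdot y_j^{1/4}j$ then gives failure probability $\ll j^{-1-\ve}$, which is summable over $j$. Monotonicity of $\delta_m$ in $y$ interpolates between the dyadic points, giving the stated $y^{1/4}(\log y^{-1})^{2+\ve}$; this is precisely the paper's proof.

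Your ``main obstacle'' remark is therefore mistaken. The naive Markov bound applied to all of $\delta_m$ at once fails with probability $j^{-1-\ve}$, not $j^{-\ve}$, because the mean carries only one logarithm. The rest of your proposal is fine: the periodicity reduction, the dyadic interpolation, the treatment of $d>y^{-1/2}$, and the first-moment computation.
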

\begin{proof}
It follows from \eqref{deltamxi1DEF} and \eqref{deltaNEWDEF} that
\begin{align}\label{labaaxiLEMpf2}
\int_{(0,1)^k}\delta_m(y;\vecxi_1)\,d\vecxi_1
=\sum_{\vecq\in\Z^k\setminus\{\bn\}}\sum_{d=1}^{\infty}
\frac{\tau(d)}{\|\vecq\|^md^{3/2}}
\int_{(0,1)^k}\biggl(1+\frac{\|d\vecq\trans\vecxi_1\|_{\Z}}{d\sqrt y}\biggr)^{\hspace{-3pt}-1}\,d\vecxi_1.
\end{align}
For any $\vecq$ and $d$ in the above sum,
the map $\vecxi_1\mapsto\|d\vecq\trans\vecxi_1\|_{\Z}$
push-forwards the Lebesgue measure on $(0,1)^k$ to
2 times Lebesgue measure on $(0,\frac12)$.
Hence
\begin{align*}
\int_{(0,1)^k}\biggl(1+\frac{\|d\vecq\trans\vecxi_1\|_{\Z}}{d\sqrt y}\biggr)^{\hspace{-3pt}-1}\,d\vecxi_1
=2\int_0^{1/2}\Bigl(1+\frac x{d\sqrt y}\Bigr)^{-1}\,dx
\ll \begin{cases}
1&\text{if }\: d\sqrt y\geq\frac14
\\[5pt]
d\sqrt y\log\bigl(\frac1{d\sqrt y}\bigr)&\text{if }\: d\sqrt y\leq\frac14.
\end{cases}
\end{align*}
Using this bound in \eqref{labaaxiLEMpf2},
and also using Lemma \ref{basicboundLEM1} and a simple
dyadic decomposition argument,
we obtain
\begin{align}\label{labaaxiLEMpf1}
\int_{(0,1)^k}\delta_m(y;\vecxi_1)\,d\vecxi_1\ll y^{\frac14}\log(y^{-1}),
\qquad\forall 0<y<\tfrac12.
\end{align}
It follows that
for every $0<y<\frac12$ and every $A\geq1$,
the set of $\vecxi_1\in(0,1)^k$ satisfying
$\delta_m(y;\vecxi_1)\geq Ay^{\frac14}\log(y^{-1})$
has Lebesgue measure $\ll A^{-1}$.
Hence, by Borel-Cantelli,
for almost every $\vecxi_1\in(0,1)^k$,
there is some $J\in\Z^+$ such that
for all
$j\geq J$,
$y=2^{-j-1}$ satisfies
$\delta_m(y;\vecxi_1)<j^{1+\ve}y^{\frac14}\log(y^{-1})$.
Since $\delta_m(y;\vecxi_1)$ is increasing,
it then follows that
$\delta_m(y;\vecxi_1)<j^{1+\ve}(2^{-j-1})^{\frac14}\log(2^{j+1})
\ll j^{2+\ve}\,2^{-\frac14\hspace{1pt} j}$
for all $2^{-j-2}\leq y\leq 2^{-j-1}$,
and all $j\geq J$. Therefore, 
$\delta_m(y;\vecxi_1)\ll y^{\frac14}(\log(y^{-1}))^{2+\ve}$
for all sufficiently small $y$.
\end{proof}

\subsection{Behavior when imposing a Diophantine condition}
\label{DiophCondSec}

As in 
\cite[Sec.\ 3]{effopp1},
given real numbers $\kappa\geq k$ and $\alpha\geq1$,
we say that a vector $\vecxi\in\R^k$ is 
\textit{$(\kappa,\alpha)$--LFD}\footnote{LFD is short for linear form Diophantine.}
if there is a constant $c>0$ such that
\begin{align}\label{LFDdef}
\|d\vecq\trans\vecxi\|_{\Z}\geq c d^{-\alpha}\|\vecq\|^{-\kappa}
\qquad\text{for all }\: d\in\Z^+\:\text{ and }\: \vecq\in\Z^k\setminus\{\bn\}.
\end{align}

The next lemma is analogous to %
\cite[Lemma 3.3]{effopp1},
and the proof follows an almost identical argument.
\begin{lem}\label{BSUMDIOPHBOUNDLEM1}
Assume that $\vecxi_1\in\R^k$ is $(\kappa,\alpha)$-LFD,
for some fixed $\kappa\geq k$ and $\alpha\geq1$,
and let $m>k+\frac{3\kappa}{2(\alpha+1)}$.
Then for any fixed $\ve>0$,
\begin{align}\label{BSUMDIOPHBOUNDLEM1res}
\delta_m(y;\vecxi_1)
\ll_{m,\ve,\vecxi_1} \bigl(y^{\frac3{4(\alpha+1)}}+y^{\frac14}\bigr)y^{-\ve}\qquad\text{as }\: y\to0^+.
\end{align}
\end{lem}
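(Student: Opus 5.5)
The plan is to bound $\delta_m(y;\vecxi_1)$ term by term in $\vecq$, using the $(\kappa,\alpha)$-LFD hypothesis \eqref{LFDdef} to show that the integers $d$ for which $\|d\vecq\trans\vecxi_1\|_{\Z}$ is small are both rare and spaced out. By Lemma~\ref{basicboundLEM2}, the range $d>y^{-1/2}$ contributes $\ll y^{\frac14}\log(y^{-1}+1)$, so we may restrict to $1\leq d\leq y^{-1/2}$. We also use $\tau(d)\ll_\ve d^{\ve}$, which accounts for the factor $y^{-\ve}$. Fix $\vecq\neq\bn$, write $Q=\|\vecq\|$, and decompose $[1,y^{-1/2}]$ dyadically into blocks $d\sim D$. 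In each block we estimate
\begin{align*}
\Sigma(D):=\sum_{d\sim D}\min\Bigl(1,\frac{D\sqrt y}{\|d\vecq\trans\vecxi_1\|_{\Z}}\Bigr),
\end{align*}
which dominates the sum of the factors $\bigl(1+\|d\vecq\trans\vecxi_1\|_{\Z}/(d\sqrt y)\bigr)^{-1}$ over the block up to a constant.

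The key input is a spacing estimate. If $d\neq d'$ both lie in the block, then by \eqref{LFDdef} applied to $|d-d'|\leq D$,
\begin{align*}
\|(d-d')\vecq\trans\vecxi_1\|_{\Z}\geq c D^{-\alpha}Q^{-\kappa}=:\eta.
\end{align*}
Hence the points $d\vecq\trans\vecxi_1$ mod $1$ are $\eta$-separated. Ordering the values $\|d\vecq\trans\vecxi_1\|_{\Z}$ increasingly, the $i$-th one is $\gg i\eta$, apart from one exceptional term. This gives
\begin{align*}
\Sigma(D)\ll 1+\min\bigl(D,\ D\sqrt y\,\eta^{-1}\log D\bigr)\ll 1+\min\bigl(D,\ D^{1+\alpha}\sqrt y\,Q^{\kappa}\log D\bigr).
\end{align*}
The leftover ``$1$'' has to be handled separately, since summed over $D$ with weight $D^{-3/2}$ it would not decay. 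It comes from at most one $d$ in the block. For that $d$ we use \eqref{LFDdef} directly, $\|d\vecq\trans\vecxi_1\|_{\Z}\geq c\,d^{-\alpha}Q^{-\kappa}$, so this term is $\ll\min\bigl(1,D^{1+\alpha}\sqrt y\,Q^\kappa\bigr)$. For the larger blocks we refine the count with a second, smaller separation scale, following \cite[Lemma 3.3]{effopp1}. Suppose two indices $d<d'$ in $[1,D]$ both have $\|d\vecq\trans\vecxi_1\|_{\Z}\leq\delta$. Then $r=d'-d$ satisfies $\|r\vecq\trans\vecxi_1\|_{\Z}\leq2\delta$, which by LFD forces $r\gg(\delta Q^\kappa)^{-1/\alpha}$. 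So the number of such $d\sim D$ is $\ll 1+D(\delta Q^\kappa)^{1/\alpha}$, and the first such $d$ is itself $\gg(\delta Q^\kappa)^{-1/\alpha}$.

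Feeding these counts into the dyadic decomposition in the size of $\|d\vecq\trans\vecxi_1\|_{\Z}$ bounds the $\vecq$-term by a sum over $D\leq y^{-1/2}$ of $D^{-3/2+\ve}$ times the block bound. We then choose a crossover point $D_0$ between the regime where the direct LFD bound $D^{1+\alpha}\sqrt y\,Q^\kappa$ is used and the regime where the counting bound is used. The aim is a total of the form $Q^{\frac{3\kappa}{2(\alpha+1)}}\bigl(y^{\frac3{4(\alpha+1)}}+y^{\frac14}\bigr)y^{-\ve}$. Summing over $\vecq$ against $Q^{-m}$ then converges exactly because $m>k+\frac{3\kappa}{2(\alpha+1)}$, which gives \eqref{BSUMDIOPHBOUNDLEM1res}.

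I expect the main obstacle to be this last optimization: arranging the spacing and counting arguments so that the small-$D$ blocks really carry the saving $y^{\frac3{4(\alpha+1)}}$ and the $Q$-dependence is exactly $Q^{3\kappa/(2(\alpha+1))}$. A naive combination of the bounds above either leaves an $O(1)$ contribution from small $D$ or gives the weaker exponent $y^{1/(4(\alpha+1))}$. If the spacing argument does not give the right exponent, I would instead use Erd\H{o}s--Tur\'an. That inequality, with $\bigl|\sum_{d\sim D}e(hd\vecq\trans\vecxi_1)\bigr|\ll\|h\vecq\trans\vecxi_1\|_{\Z}^{-1}$ controlled by LFD, gives a discrepancy estimate for $\{d\vecq\trans\vecxi_1\}_{d\sim D}$, and the threshold $D_0$ would be chosen as a power of $y$ times $Q^{\kappa/(\alpha+1)}$.
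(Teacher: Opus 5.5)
Your outline has the right ingredients, but it stops exactly at the step that fixes the exponent, and the block bound you actually wrote down cannot close it. Write $\theta:=\vecq\trans\vecxi_1$, $Q=\|\vecq\|$ and $u:=Q^{\kappa}\sqrt y/c$; if $u>1$ the trivial bound suffices. The worst-case separation $\eta=cD^{-\alpha}Q^{-\kappa}$ gives $\Sigma(D)\ll\min\bigl(D,uD^{1+\alpha}\log(2D)\bigr)$. At $D\asymp u^{-1/\alpha}$ the best this yields is $D^{-3/2}\Sigma(D)\ll u^{1/(2\alpha)}$, i.e.\ decay $y^{1/(4\alpha)}$ for fixed $\vecq$. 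That is strictly weaker than $y^{\min(\frac14,\frac3{4(\alpha+1)})}$ for every $\alpha>1$. (Incidentally there is no exceptional term: LFD gives $\|d\theta\|_{\Z}\ge2^{-\alpha}\eta$ for all $d\sim D$.) The Erd\H{o}s--Tur\'an fallback is no remedy either. Fed only with $\|h\theta\|_{\Z}^{-1}\le h^{\alpha}Q^{\kappa}/c$, it gives discrepancy $\ll(Q^{\kappa}/(cD))^{1/(\alpha+1)}$ for $\{d\theta\}_{d\sim D}$. Even combined with the $\eta$-bound for small $D$, this gives an exponent below the target for every $\alpha\ge2$. So as written the proof is incomplete, and your worry is justified.

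The gap can be closed with your own second count $N(D,\delta):=\#\{d\sim D:\|d\theta\|_{\Z}\le\delta\}\ll1+D(\delta Q^{\kappa}/c)^{1/\alpha}$. Use it in \emph{every} block $D>D_0:=u^{-1/(\alpha+1)}$, and use the $\eta$-bound only for $D\le D_0$. For $D\le D_0$ one has $D^{-3/2}\Sigma(D)\ll uD^{\alpha-\frac12}\log(2D)$, which sums to $\ll D_0^{-3/2}\log(2/y)=u^{\frac3{2(\alpha+1)}}\log(2/y)$. For $D>D_0$, insert the count into $\Sigma(D)\le N(D,D\sqrt y)+\sum_{j\ge1}2^{1-j}N(D,2^jD\sqrt y)$ to get $\Sigma(D)\ll1+D\min\bigl(1,(uD)^{1/\alpha}\bigr)\log(2/y)$. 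Since $uD_0=D_0^{-\alpha}$, the quantity $D^{-1/2}(uD)^{1/\alpha}$ equals $D_0^{-3/2}$ at $D=D_0$. It is nonincreasing in $D$ when $\alpha\ge2$, and for $1\le\alpha<2$ it peaks at $D=u^{-1}$ with value $u^{1/2}$. Hence each $\vecq$-term (over $d\le y^{-1/2}$) is $\ll u^{\gamma}y^{-\ve}$ with $\gamma:=\min(\frac12,\frac3{2(\alpha+1)})$. Summing $\|\vecq\|^{-m}u^{\gamma}$ over $Q^{\kappa}\le cy^{-1/2}$, plus the trivial tail $\ll y^{(m-k)/(2\kappa)}$, gives $y^{\gamma/2-\ve}$, because $m>k+\frac{3\kappa}{2(\alpha+1)}\ge k+\kappa\gamma$.

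Completed this way, your argument is an elementary alternative that applies LFD only to differences $d'-d$. The paper instead applies Lemma~\ref{DIOPHAUXLEM1} with $c\|\vecq\|^{-\kappa}$ in place of $c$ and $T=y^{-1/2}$. That lemma is proved, following \cite[Lemma 3.2]{effopp1}, through the continued fraction convergents $q_\ell$ of $\theta$, cutting at $q_{\ell-1}\le(c\|\vecq\|^{-\kappa}y^{-1/2})^{1/(\alpha+1)}<q_\ell$, which is exactly your $D_0$. It yields a slightly cleaner per-$\vecq$ bound, with no power of $\|\vecq\|$ on the $y^{1/4}$ term.
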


The proof will make use of the following auxiliary lemma,
which is related to
\cite[Lemma~3.2]{effopp1}.
\begin{lem}\label{DIOPHAUXLEM1}
Let $\ve>0$, $\eta\in\R$, $c>0$, $\kappa\geq1$,
and assume that
$\|d\eta\|_{\Z}\geq cd^{-\kappa}$ for all $d\in\Z^+$.
Then
\begin{align}\label{DIOPHAUXLEM1res}
\sum_{d=1}^\infty\frac {\tau(d)}{d^{3/2}+Td^{1/2}\| d\eta\|_{\Z}}  %
\ll_{\ve}\Bigl((cT)^{-\frac{3/2}{\kappa+1}}+T^{-\frac12}\Bigr)T^{\ve}
\qquad\text{for all }\: T\geq1.
\end{align}
\end{lem}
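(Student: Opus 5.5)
The plan is a dyadic decomposition in $d$, bounding each dyadic block by counting how many $d$ in the block have $\|d\eta\|_{\Z}$ small. The Diophantine hypothesis enters through a spacing (gap) argument.

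\emph{Tail and divisor bound.} First split off the range $d>T$. There we drop the second term of the denominator, and Lemma \ref{basicboundLEM1} gives a contribution
\[
\ll\sum_{d>T}\tau(d)d^{-3/2}\ll T^{-1/2}\log(T+1).
\]
For $d\leq T$ we use $\tau(d)\ll_\ve T^{\ve}$. All logarithmic factors from dyadic decompositions (at most $O((\log T)^2)$ blocks) are also absorbed into $T^{\ve}$. It therefore suffices to show that, for each dyadic $D\leq T$,
\[
\Sigma_D:=\sum_{D\leq d<2D}\bigl(d^{3/2}+Td^{1/2}\|d\eta\|_{\Z}\bigr)^{-1}\ll \bigl((cT)^{-\frac{3/2}{\kappa+1}}+T^{-1/2}\bigr)\log T .
\]

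\emph{Counting lemma.} This is the key step. For $x>0$, let $\mathcal N(D,x)$ be the number of $d\in[D,2D)$ with $\|d\eta\|_{\Z}\leq x$. I claim
\[
\mathcal N(D,x)\ll 1+D(x/c)^{1/\kappa}.
\]
To see this, fix one such $d_1$. Every other such $d$ satisfies $\|(d-d_1)\eta\|_{\Z}\leq 2x$. By the hypothesis, any $e\neq0$ with $\|e\eta\|_{\Z}\leq 2x$ has $|e|\geq(c/2x)^{1/\kappa}$. Applying this to differences of two such $d$'s shows that consecutive ones are at least $(c/2x)^{1/\kappa}$ apart, which gives the count. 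In addition, $\|d\eta\|_{\Z}\geq c(2D)^{-\kappa}$ for every $d$ in the block. Hence if $cD^{-\kappa}\gg x$ the set is empty.

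\emph{Bounding $\Sigma_D$.} Decompose according to the dyadic size $x$ of $\|d\eta\|_{\Z}$.
\begin{itemize}
\item In the range $\|d\eta\|_{\Z}\leq D/T$, each term is $\ll D^{-3/2}$. This range is nonempty only if $D^{\kappa+1}\gg cT$, and it contributes $\ll D^{-3/2}+D^{-1/2}(D/(cT))^{1/\kappa}$.
\item In the range $x>D/T$, each term is $\ll (TD^{1/2}x)^{-1}$. Using the count $1+D(x/c)^{1/\kappa}$, the dyadic piece at $x$ contributes
\[
\frac{1}{TD^{1/2}x}+\frac{D^{1/2}x^{1/\kappa-1}}{Tc^{1/\kappa}}.
\]
Since $\kappa\geq1$, both terms are largest at the smallest admissible $x$, namely $x\asymp\max(D/T,cD^{-\kappa})$.
\end{itemize}
When $D^{\kappa+1}\leq cT$ the smallest admissible $x$ is $cD^{-\kappa}$, and the block is $\ll D^{\kappa-1/2}/(cT)+D^{\kappa+1/2}D^{-1}/(cT)\ll D^{\kappa-1/2}/(cT)$. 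Summed over $D\leq D_0:=(cT)^{1/(\kappa+1)}$, this gives $\ll D_0^{-3/2}=(cT)^{-\frac{3/2}{\kappa+1}}$.

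When $D\geq D_0$ we get
\[
\Sigma_D\ll \min\bigl(D^{-1/2},\,D^{-3/2}+D^{-1/2}(D/(cT))^{1/\kappa}\bigr)\log T,
\]
where the trivial bound $D^{-1/2}$ comes from dropping the second denominator term. The $D^{-3/2}$ part sums to $\ll D_0^{-3/2}$. The middle term behaves like the power $D^{1/\kappa-1/2}$, so there are two cases.
\begin{itemize}
\item If $\kappa\geq2$ it is nonincreasing in $D$, and it is maximal at $D=D_0$, where it equals $D_0^{-1/2}D_0^{-1}=D_0^{-3/2}$.
\item If $\kappa<2$ it is increasing, and it is maximal at $D=T$, where the minimum with $D^{-1/2}$ is $\ll T^{-1/2}$.
\end{itemize}
In either case the dyadic sum over $D_0\leq D\leq T$ is $\ll\bigl((cT)^{-\frac{3/2}{\kappa+1}}+T^{-1/2}\bigr)\log T$. (If $D_0>T$, the range $D\leq T$ is covered entirely by the first case.)

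I expect the main obstacle to be the counting lemma. The naive separation bound $1+xD^{\kappa}/c$ is too weak: it only yields the exponent $\frac1{2\kappa}$ instead of $\frac{3}{2(\kappa+1)}$. The argument works only with the "first return" bound $1+D(x/c)^{1/\kappa}$ from the gap argument, so the care goes into applying the hypothesis to the differences $d-d'$, and into handling $\kappa=1$, where the sum over dyadic $x$ produces an extra logarithm that is absorbed into $T^{\ve}$.
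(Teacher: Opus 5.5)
There is a genuine gap in the range $D_0\le D\le T$ when $1\le\kappa<2$. Your treatment of $d>T$, of the blocks $D\le D_0$, and of $D\ge D_0$ for $\kappa\ge2$ is fine.

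\textbf{Where the argument fails.} For $\kappa<2$ your per-block bound is $\min\bigl(D^{-1/2},\,D^{-3/2}+D^{-1/2}(D/(cT))^{1/\kappa}\bigr)\log T$. The claim that this is largest at $D=T$ is wrong. The second entry is increasing, and the two entries cross at $D\asymp cT$. So the minimum peaks there, with value $(cT)^{-1/2}$. This point lies in $[D_0,T]$ in the only nontrivial regime $cT\ge1$, since $c\le\frac12$ by the hypothesis at $d=1$.

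Hence your argument actually proves only $\ll\bigl((cT)^{-\frac{3/2}{\kappa+1}}+(cT)^{-1/2}\bigr)T^{\ve}$. For $\kappa<2$ the term $(cT)^{-1/2}$ exceeds both terms of the claimed bound once $c$ is small. For example, $\kappa=1$, $c=T^{-1/2}$ gives $T^{-1/4}$ against $T^{-3/8+\ve}$. The lemma must hold uniformly in $c$, since the implied constant depends only on $\ve$. This matters: in the proof of Lemma~\ref{BSUMDIOPHBOUNDLEM1} it is applied with $c$ replaced by $c\|\vecq\|^{-\kappa}$ (and $\kappa$ by $\alpha$), and that quantity gets as small as about $T^{-1}$.

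\textbf{Why this cannot be fixed inside your framework.} At $D\asymp cT$ and $x=D/T\asymp c$, your counting bound $1+D(2x/c)^{1/\kappa}$ is $\gg D$. It therefore allows every $d\in[D,2D)$ to have $\|d\eta\|_{\Z}\le D/T$. No argument resting on it can beat $D\cdot D^{-3/2}=(cT)^{-1/2}$ for that block. The gap argument only records that returns of $d\eta$ near $0$ are separated by $(c/2x)^{1/\kappa}$. It misses that $\{d\eta\}$ is equidistributed at scale $1/q$ along runs of $q$ consecutive integers, when $q$ is a convergent denominator.

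\textbf{The missing input.} If $q_\ell$ is a continued fraction denominator of $\eta$, then $\|e\eta\|_{\Z}\ge\|q_{\ell-1}\eta\|_{\Z}>1/(2q_\ell)$ for $0<|e|<q_\ell$. Hence
\[
\#\{d\in[D,2D):\|d\eta\|_{\Z}\le x\}\ll(1+D/q_\ell)(1+q_\ell x),
\]
and $c$ does not appear. The paper, following \cite[Lemma 3.2]{effopp1}, proceeds as follows:
\begin{itemize}
\item It chooses $\ell$ with $q_{\ell-1}\le(cT)^{1/(\kappa+1)}<q_\ell$.
\item It uses the Diophantine hypothesis only for $d\le q_\ell/2$, where $\sum_{d\le q_\ell/2}\tau(d)d^{-1/2}\|d\eta\|_{\Z}^{-1}\ll c^{-1}(\log q_\ell)\,q_{\ell-1}^{\kappa+(\ve-1)/2}$.
\item It bounds the range $q_\ell/2<d\le T$ by $q_\ell^{(\ve-3)/2}+T^{(\ve-1)/2}\log q_\ell$ using this $c$-free count.
\end{itemize}
Your dyadic bookkeeping, fed with this count instead of the gap count, does reproduce the bound for the second range.
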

\begin{proof}
After using the bound $\tau(d)\ll_{\ve} d^{\ve/2}$,
the proof of \cite[Lemma 3.2]{effopp1}
carries over with very minor and easy modifications:
In place of \cite[(23)]{effopp1}
we obtain, for any $\ell\geq1$,
\begin{align}
\sum_{1\leq d\leq q_\ell/2}\frac{\tau(d)}{d^{1/2}\| d\eta\|_{\Z}}
\ll c^{-1}(\log q_\ell)q_{\ell-1}^{\kappa+(\ve-1)/2},
\end{align}
and in place of 
\cite[(26)]{effopp1}
we obtain
\begin{align}
\sum_{q_\ell/2<d\leq T}\frac{\tau(d)}{d^{3/2}+Td^{1/2}\| d\eta\|_{\Z}}
\ll q_\ell^{(\ve-3)/2} %
+T^{(\ve-1)/2}\log q_\ell,
\end{align}
and just as in the proof of 
\cite[Lemma 3.2]{effopp1},
we conclude by choosing $\ell\geq1$ so that
$q_{\ell-1}\leq (cT)^{\frac1{\kappa+1}}<q_{\ell}$
(having assumed $cT>1$ from the start).
\end{proof}

\begin{proof}[Proof of Lemma \ref{BSUMDIOPHBOUNDLEM1}]
Take $c>0$ such that \eqref{LFDdef} holds;
then by Lemma \ref{DIOPHAUXLEM1}, for 
every $\vecq\in\Z^k\setminus\{\bn\}$ and $0<y\leq1$ we have
\begin{align*}
\sum_{d=1}^{\infty}\frac{\tau(d)}{d^{3/2}}
\biggl(1+\frac{\big\| d\vecq\trans\vecxi_1\big\|_{\Z}}{d\sqrt y}\biggr)^{\hspace{-3pt}-1}
\ll_{\ve}
\biggl(\bigl(c\|\vecq\|^{-\kappa}y^{-\frac12}\bigr)^{-\frac{3/2}{\alpha+1}}+y^{\frac14}\biggr)\,y^{-\ve}.
\end{align*}
Note also that the sum in the left-hand side is
$\ll 1$, trivially.
Hence from the definitions, \eqref{deltamxi1DEF} and \eqref{deltaNEWDEF}, we obtain
\begin{align*}
\delta_m(y;\vecxi_1)
&\ll_{\ve}
\sum_{1\leq\|\vecq\|<(cy^{-1/2})^{1/\kappa}}\|\vecq\|^{-m}
\biggl(\bigl(c\|\vecq\|^{-\kappa}y^{-\frac12}\bigr)^{-\frac{3/2}{\alpha+1}}+y^{\frac14}\biggr)\,y^{-\ve}
+\sum_{\|\vecq\|\geq(cy^{-1/2})^{1/\kappa}}\|\vecq\|^{-m},
\end{align*}
and using $m>k+\frac{3\kappa}{2(\alpha+1)}$,
this leads to the bound in \eqref{BSUMDIOPHBOUNDLEM1res}.
\end{proof}
\begin{remark}\label{HdimRemark}
Recall that by 
\cite[Lemma 3.1]{effopp1},
for any $\kappa>k$ and $\alpha>1$,
the set of all $\vecxi_1\in\R^k$ which are not
$(\kappa,\alpha)$-LFD 
has Hausdorff dimension 
$k-1+\max\bigl(\frac{k+1}{\kappa+1},\frac2{\alpha+1}\bigr)$.
For example, when using this result in combination with
Lemma \ref{BSUMDIOPHBOUNDLEM1},
we obtain a sharpening of the ``Lebesgue almost all''
in Lemma \ref{labaaxiLEM},
at the price of weakening the decay rate $y^{\frac14}(\log(y^{-1}))^{2+\ve}$
to $y^{\frac14-\ve}$, and imposing a stronger lower bound on $m$:
Indeed, applying Lemma \ref{BSUMDIOPHBOUNDLEM1}
with $\kappa=(3k+1)/2$ and $\alpha=2$,
we conclude that if 
$m>(7k+1)/4$
then for every $\vecxi_1\in\R^k$ which is
$(\kappa,\alpha)$-LFD we have $\delta_m(y;\vecxi_1)\ll y^{\frac14-\ve}$. On the other hand, 
the set of all $\vecxi_1\in\R^k$ which are not
$(\kappa,\alpha)$-LFD 
has Hausdorff dimension $k-\frac13$.
\end{remark}
\begin{remark}\label{Algnumbersrem}
By Schmidt, \cite{wS70},
if $\xi_1,\ldots,\xi_k$ are (real) \textit{algebraic} numbers
and
$1,\xi_1,\ldots,\xi_k$ are linearly independent over $\Q$,
then $\vecxi=(\xi_1,\ldots,\xi_k)$ is $(\kappa,\kappa)$-LFD for every $\kappa>k$.
Hence, for such a $\vecxi$,
Lemma \ref{BSUMDIOPHBOUNDLEM1} implies that
for any $m>k+\frac32$ and $\ve>0$
we have
$\delta_m(y;\vecxi)\ll y^{\min(\frac 3{4(k+1)},\frac14)-\ve}$ as $y\to0^+$.
\end{remark}

\section{Some initial steps} %
\label{initialstepsSEC}

\subsection{Fourier decomposition with respect to the torus variable}
\label{FOURIERDECSEC}

As in \cite{SASL} and \cite{effopp1},
the starting point of the proof of Theorem \ref{MAINTHM3gen} 
is to consider %
the Fourier decomposition
of the given test function $f$ on $X$
with respect to the torus variable.
In this section we recall the set-up and some basic bounds
from \cite[Sec.\ 4]{effopp1}
relating to this Fourier decomposition.

There are some differences between the notation in the present paper and the one used in \cite{effopp1};
we explain in Remark \ref{effopp1TRANSLrem} below how to translate between the two.

Given a function $f$ on $X=\GaG$, we will often view $f$ as a function on $\G$ via $f(g)=f(\Gamma g)$,
and we will write $f(M,\vecv)$ in place of $f((M,\vecv))$, for $(M,\vecv)\in \G$. %
For every $\vecn\in(\Z^2)^k$ we have
$(1_2,\vecn)\in\Gamma$ and so
$f((1_2,\vecxi)M)=f((1_2,\vecxi+\vecn)M)$ for all $\vecxi\in(\R^2)^k$ and $M\in \G'$.
Hence, assuming $f\in\C^{k+1}(X)$, we have a Fourier decomposition
\begin{align}\label{FOURIERSERIES}
f((1_2,\vecxi)M)=\sum_{\vecm\in(\Z^2)^k}\widehat f(M,\vecm)\, e(\tr(\vecm\trans\vecxi)).
\end{align}
Here the 
sum on the right-hand side is 
absolutely convergent, uniformly %
over $\vecxi$ and $M$ in any compact subsets of $(\R^2)^k$ and $\G'$.
The Fourier coefficients $\hf(M,\vecm)$ are given by
\begin{align}\label{WHFDEF}
\widehat f(M,\vecm):=\int_{(\Z^2)^k\backslash(\R^2)^k} f((1_2,\vecxi)M)\, e(-\tr(\vecm\trans\vecxi))\,d\vecxi,
\qquad M\in \G',\: \vecm\in(\Z^2)^k.
\end{align}
Here $d\vecxi$ denotes Lebesgue measure on $(\R^2)^k$.
The fact that $f$ is left $T$-invariant for every $T\in\Gamma'=\Gamma(N)$
translates into the relation
\cite[Lemma 4.1]{effopp1}
\begin{align}\label{BASICFOURIERLEMRES1}
\wh f(TM,\vecm)=\wh f(M,\vecm\trans T^{-1}),\qquad\forall T\in\Gamma',\: M\in \G',\:\vecm\in(\Z^2)^k.
\end{align}

Given any $R\in\overline\Gamma'=\SL(2,\Z)$, we set
(as in \cite[(19)]{effopp1}):
\begin{align}\label{FRDEF}
f_R(M,\vecv):=f(R^{-1}(M,\vecv))=f( R^{-1}M, \vecv).
\end{align}
Since $\Gamma'$ is normal in $\overline\Gamma'$, $f_R$ is also left $\Gamma$-invariant, i.e.\ $f_R$ can be
viewed as a function on $X$,
and then the formulas in 
\eqref{FOURIERSERIES}--\eqref{BASICFOURIERLEMRES1}
hold with $f$ replaced by $f_R$.
Let us also note that
\begin{align}\label{effopp1p155lm1}
\widehat{f_R}(M,\vecm)=\hf(R^{-1}M,\vecm \trans R^{-1}),\qquad
\forall R\in\overline\Gamma',\:M\in \G',\: \vecm\in(\Z^2)^k,
\end{align}
and that, for any $f\in\C^m(X)$:
\begin{align}\label{normfReqnormf}
\|f_R\|_{\C_a^m}=\|f\|_{\C_a^m},\qquad \forall m\in\Z_{\geq0},\: a\in\R.
\end{align}

We next introduce some notation relating to the 
right action of $\overline\Gamma'=\SL(2,\Z)$  %
on $(\Z^2)^k$.
We call an orbit for this action %
an \textit{A-orbit} if it contains some element
of the form $(\trans \vecr\: \bn)$ with $\vecr\in\Z^k\setminus\{\bn\}$.
Every other non-zero orbit is called a \textit{B-orbit}.
We fix, once and for all, a set of representatives $A_k,B_k\subset(\Z^2)^k$ such that
$A_k$ contains exactly one element from each A-orbit and $B_k$ contains exactly one element from
each B-orbit,
and furthermore each $\veceta\in A_k$ is of the form $\veceta=(\trans \vecr\: \bn)$
and each $\veceta\in B_k$ has the property
that, writing
$\veceta=(\trans \vecr\:\trans \vecq)$
with $\vecr=(r_1,\ldots,r_k)$ and $\vecq=(q_1,\ldots,q_k)$,
there exist some $1\leq\ell_1<\ell_2\leq k$ such that
$r_j=0$ for all $j<\ell_1$, $q_j=0$ for all $j<\ell_2$,
and $r_{\ell_1}>0$, $0\leq r_{\ell_2}<|q_{\ell_2}|$.
The existence of such a set of representatives $B_k$ is guaranteed by
\cite[Lemma 4.2]{effopp1};
note that $B_k\neq\emptyset$ if and only if $k\geq2$.

Recall that $\overline\Gamma'=\SL(2,\Z)$ and $\Gamma'=\Gamma(N)$. We set
\begin{align}\label{GAMMAinfdef}
\overline\Gamma'_\infty:=\left\{\matr 1n01\col n\in\Z\right\}
\qquad\text{and}\qquad
\Gamma'_\infty:=\Gamma'\cap\overline\Gamma'_\infty=\left\{\matr 1{Nn}01\col n\in\Z\right\}.
\end{align}
For any subgroup $H$ of $\G'$ and any subset $A\subset \G'$ satisfying $HA=A$,
let $H\backslash A$ denote a set of representatives for the distinct cosets $Ha$ ($a\in A$).
We also define 
$\overline\Gamma'_\infty \backslash\overline\Gamma'/\Gamma'$ to be a set of representatives for the
double cosets of the form $\overline\Gamma'_\infty R\Gamma'$ with $R\in\overline\Gamma'$.
Finally, for each $R\in\overline\Gamma'$
we set
$[R]:=\Gamma'R=R\Gamma'$;
this is the set of all matrices in $\overline\Gamma'$ which are congruent to $R$ modulo $N$.
Using this notation, 
the Fourier decomposition in \eqref{FOURIERSERIES}
can be rewritten as follows
\cite[(37)]{effopp1}:
\begin{align}\notag
f((1_2,\vecxi)M)=\widehat f(M,\bn)+\sum_{\veceta\in A_k}
\sum_{R\in\overline\Gamma'_\infty \backslash\overline\Gamma'/\Gamma'} \,
\sum_{T\in\Gamma'_\infty\backslash [R]}
\widehat{f_R}(TM,\veceta)e(\tr(\veceta\trans T^{-1} \trans\vecxi))
\hspace{60pt}
\\\label{BASICFOURIER}
+\sum_{\veceta\in B_k}\sum_{R\in\overline\Gamma'/\Gamma'}\, \sum_{T\in [R]} 
\widehat{f_R}(TM,\veceta)e(\tr(\veceta\trans T^{-1}\trans\vecxi)).
\end{align}
Here it should be noted that
the function $M\mapsto\hf(M,\bn)$ is left $\Gamma'$-invariant,
and for any $\veceta\in A_k$ and $R\in\overline\Gamma'$, the function
$M\mapsto\widehat{f_R}(M,\veceta)$ is left $\Gamma'_\infty$-invariant
(indeed see \eqref{BASICFOURIERLEMRES1} and \eqref{effopp1p155lm1}
and note that $\veceta \trans T^{-1}=\veceta$ for all $\veceta\in A_k$ and $T\in\Gamma'_\infty$).

We will sometimes express $\widehat{f_R}(M,\veceta)$ in terms of Iwasawa co-ordinates,
that is we write %
\begin{align}\label{TFNIWASAWA}
\widehat{f_R}(u,v,\theta;\veceta):=\widehat{f_R}\left(\matr 1u01\matr{\sqrt v}00{1/\sqrt v}
\matr{\cos\theta}{-\sin\theta}{\sin\theta}{\cos\theta},\veceta\right),
\end{align}
for $u\in\R$, $v>0$, $\theta\in\R/2\pi\Z$, $\veceta\in(\Z^2)^k$.

The following bounds on 
$\widehat{f_R}$ and its derivatives will be used later in the paper.
\begin{lem}\label{DERDECAYTFNFROMCMNORMLEM2}
For any %
$R\in\oGamma'$, %
$\alpha\in\R_{\geq0}$, $\vecr\in\Z^k\setminus\{\bn\}$, 
integers $m,\ell_1,\ell_2,\ell_3\geq0$ and $f\in\C^{m+\ell}_\alpha(X)$,
where $\ell=\ell_1+\ell_2+\ell_3$, we have
\begin{align}\label{DERDECAYTFNFROMCMNORMLEM2RES}
&\left|\Bigl(\frac{\partial}{\partial u}\Bigr)^{\ell_1}
\Bigl(\frac{\partial}{\partial v}\Bigr)^{\ell_2}
\Bigl(\frac{\partial}{\partial\theta}\Bigr)^{\ell_3}
\widehat{f_R}\left(u,v,\theta;(\trans \vecr\: \bn)\right)\right|
\ll_{m,\ell,\alpha}\|f\|_{\C_\alpha^{m+\ell}}\|\vecr\|^{-m}v^{\frac m2-\ell_1-\ell_2}\min(1,v^{-\alpha}).
\end{align}
\end{lem}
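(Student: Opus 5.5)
By \eqref{normfReqnormf} we have $\|f_R\|_{\C^{m+\ell}_\alpha}=\|f\|_{\C^{m+\ell}_\alpha}$, and by \eqref{effopp1p155lm1} $\widehat{f_R}(M,\veceta)=\hf(R^{-1}M,\veceta\trans R^{-1})$. So the plan is to prove the bound for a general function $F\in\C^{m+\ell}_\alpha(X)$ (playing the role of $f_R$), using only \eqref{WHFDEF} and the definition of the norm. Set $g=\mathrm{u}_u\mathrm{a}_v k_\theta$ and $\veceta=(\trans\vecr\:\bn)$. The proof has three steps.

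\textbf{Step 1: the factor $\|\vecr\|^{-m}v^{m/2}$.} We integrate by parts in the torus variable $\vecxi$. The key is to express translation of $\vecxi$ through a left-invariant field. Write
\begin{align*}
(1_2,\vecxi)g=g\,(1_2,\vecxi g).
\end{align*}
Hence $\partial_{\vecxi}$ in the direction $\vecw$ equals the left-invariant vector field in the direction $\vecw g$. Now $\tr(\veceta\trans\vecxi)=\vecr\cdot\vecxi_1$ involves only the first column $\vecxi_1$, and we differentiate in that column. For a vector of the form $(a,0)$, the image under $g=\mathrm{u}_u\mathrm{a}_v k_\theta$ is $(a\sqrt v,0)k_\theta$. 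Its norm is $|a|\sqrt v$, so it is a combination of fixed basis elements $X_j$ with coefficients $O(\sqrt v)$, uniformly in $u$ and $\theta$. Choose the direction along $\vecr/\|\vecr\|$ in $(\R^k)$. Integrating by parts $m$ times then gives
\begin{align*}
|\widehat F(g,\veceta)|\ll \|\vecr\|^{-m}v^{m/2}\sup_{\ord D\le m}\int_{\text{torus}}|DF((1_2,\vecxi)g)|\,d\vecxi .
\end{align*}

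\textbf{Step 2: the factor $\min(1,v^{-\alpha})$.} For this we need $\scrY((1_2,\vecxi)g)=\scrY(g)\gg \max(1,v)$, which holds because $\tim(g(i))=v$. The sup-norm weight then bounds the integrand by $\|F\|_{\C^m_\alpha}\min(1,v^{-\alpha})$.

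\textbf{Step 3: the derivatives in $u,v,\theta$.} Differentiation passes under the integral in \eqref{WHFDEF}. We express $\partial_u,\partial_v,\partial_\theta$ of $F((1_2,\vecxi)\mathrm{u}_u\mathrm{a}_vk_\theta)$ as left-invariant fields applied to $F$, evaluated at the same point:
\begin{align*}
\partial_\theta&\;\text{gives the fixed field } \tfrac{d}{d\theta}k_\theta \text{ (bounded coefficients);}\\
\partial_v&\;\text{gives } v^{-1}\cdot \mathrm{Ad}(k_\theta^{-1})H/2;\\
\partial_u&\;\text{gives } \mathrm{Ad}((\mathrm{a}_vk_\theta)^{-1})E=v^{-1}\,\mathrm{Ad}(k_\theta^{-1})E.
\end{align*}
Iterating these, each $\partial_u$ or $\partial_v$ costs a factor $v^{-1}$. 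Repeated differentiation produces lower-order terms, but these carry the same or better powers of $v$, and the $\theta$-derivatives of $\mathrm{Ad}(k_\theta)$ are bounded. The derivative we want is therefore a sum of terms of the form $v^{-\ell_1-\ell_2}\cdot(\text{bounded coefficient})\cdot D'F$ with $\ord D'\le\ell$.

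These fields commute past the $\vecxi$-integration by parts from Step 1 only up to commutators. To avoid tracking those, we order the operations: first apply the $u,v,\theta$ derivatives to $F$ pointwise, obtaining $\widehat{D'F}$. Then run Step 1 on $D'F$, which lies in $\C^m_\alpha$ with norm $\le\|F\|_{\C^{m+\ell}_\alpha}$. Combining the three steps gives \eqref{DERDECAYTFNFROMCMNORMLEM2RES}.

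\textbf{Main obstacle.} The delicate point is the bookkeeping of powers of $v$ in Step 3. When $\partial_v$ hits the $v$-dependent coefficients $v^{-1}$ that came from earlier $u$- or $v$-derivatives, we must check that the result never produces worse than $v^{-\ell_1-\ell_2}$. It does not, because $\partial_v v^{-j}=-jv^{-j-1}$ uses up exactly one $v$-derivative. Equivalently, one can write $F$ in $\log v$ and use $v\partial_v$, which makes this check immediate.
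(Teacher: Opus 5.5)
Your overall plan is sound: reduce to $F=f_R$ via \eqref{normfReqnormf}, integrate by parts in the torus variable using $(1_2,\vecxi)g=g(1_2,\vecxi g)$, use $\tim g(i)=v$ for the weight, and convert the Iwasawa derivatives into left-invariant fields. Carried out correctly, this is a self-contained proof of \cite[Lemma 4.5]{effopp1}, which the paper simply cites after the same reduction $f\to f_R$. However, Step 1 fails as written, because its key computation is wrong in this paper's convention (row vectors, right action). Here $\mathrm{u}_u$ fixes $(0,b)$, not $(a,0)$. Indeed $(a,0)\mathrm{u}_u=(a,au)$, so
\[
(a,0)\,\mathrm{u}_u\mathrm{a}_vk_\theta=\bigl(a\sqrt v,\;au/\sqrt v\bigr)k_\theta ,
\]
which has norm $|a|\sqrt{v+u^2/v}$, not $|a|\sqrt v$. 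If you differentiate only in the first column, along $\vecw=(\trans\vecc\;\bn)$ with $\vecc\parallel\vecr$, each integration by parts costs a factor $\asymp\sqrt{v+u^2/v}/\|\vecr\|$. For fixed $u\neq0$ and $v\to0$ this grows like $|u|/\sqrt v$ instead of decaying like $\sqrt v$. You cannot escape this by reducing to $u=0$: $\widehat{f_R}(u,v,\theta;\veceta)$ is only $N$-periodic in $u$, and small $v$ is exactly where the lemma has content.

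The fix is to use the freedom in the second column. Since $\tr(\veceta\trans\vecxi)=\vecr\cdot\vecxi_1$ does not see the second column, differentiate instead along $\vecw:=(\trans\vecc\;\bn)\mathrm{u}_{-u}=(\trans\vecc\;\,-u\trans\vecc)$ with $\vecc=\vecr/\|\vecr\|^2$. Then $\tr(\veceta\trans\vecw)=1$, while $\vecw g=(\sqrt v\,\trans\vecc\;\bn)k_\theta$ has norm $\sqrt v/\|\vecr\|$ uniformly in $u$ and $\theta$. So $m$ integrations by parts give the factor $(\sqrt v/\|\vecr\|)^m$ times order-$m$ derivatives of $F$ in bounded combinations of the basis. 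Equivalently, substitute $\vecxi\mapsto\vecxi g^{-1}$ in \eqref{WHFDEF}: the frequency becomes $\veceta\trans g^{-1}$, whose rows are $(r_i/\sqrt v,0)k_\theta$, of total norm $\|\vecr\|/\sqrt v$.

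With this correction the rest of your argument goes through. Step 2 is right, since $\scrY(g)\ge\max(v,\sqrt3/2)$. Your identifications of $\partial_u,\partial_v,\partial_\theta$ with $v^{-1}\mathrm{Ad}(k_\theta^{-1})E$, $(2v)^{-1}\mathrm{Ad}(k_\theta^{-1})H$ and a fixed field are correct. Applying $\partial_\theta^{\ell_3}$ first, then $\partial_v^{\ell_2}$, then $\partial_u^{\ell_1}$ makes the $v^{-\ell_1-\ell_2}$ bookkeeping immediate. Applying $\partial_v$ to a coefficient $v^{-j}c(\theta)$ only produces multiples of $v^{-j-1}c(\theta)$. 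And $\partial_u$ never hits the coefficients, since they do not depend on $u$.
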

\begin{proof}
Apply \cite[Lemma 4.5]{effopp1}
to $f_R$, and use \eqref{normfReqnormf}.
\end{proof}

\begin{lem}\label{PARTBDECAYFNLEM2}
For any $R\in\oGamma'$, $0<\beta<\frac12$, $\veceta\in B_k$, $m\geq0$,
$D\in\scrU(\lsl(2,\R))$ of order $\leq k$,
and any $f\in \C^{m+k}_0(X)$ and $T\in \G'$, %
\begin{align}\label{PARTBDECAYFNLEM2RES}
\bigl|\bigl[D\widehat{f_R}\bigr]\bigl(T,\veceta\bigr)\bigr|
\ll_m\frac{\|Df\|_{\C^m_0}}{\|T\|^{m(1-2\beta)}\|\veceta\|^{m\beta}}.
\end{align}
\end{lem}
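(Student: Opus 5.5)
The plan is to reduce to the corresponding bound for $f$ itself, which should be available in \cite[Sec.\ 4]{effopp1} (the B-orbit analogue of \cite[Lemma 4.5]{effopp1}). If it is not available in the needed form, we prove it directly, as follows.

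First we reduce to $R=1_2$. By \eqref{effopp1p155lm1} we have $\widehat{f_R}(T,\veceta)=\hf(R^{-1}T,\veceta\trans R^{-1})$. The operator $D$ is left invariant, so it commutes with left translation by $R^{-1}$ and with passing to $f_R$. Also $\|Df_R\|_{\C^m_0}=\|Df\|_{\C^m_0}$ by \eqref{normfReqnormf}. So it suffices to bound $|[D\hf](T',\veceta')|$ with $T'=R^{-1}T$ and $\veceta'=\veceta\trans R^{-1}$, in terms of $\|T'\|$ and $\|\veceta'\|$.

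The core estimate is integration by parts in the torus variable. From \eqref{WHFDEF} we have $[D\hf](T,\veceta)=\widehat{Df}(T,\veceta)$, since $D$ acts only on the $\G'$ variable. Differentiating $f((1_2,\vecxi)T)$ in $\vecxi$ corresponds to left invariant derivatives in the $(\R^2)^k$ directions transported by $T$. Indeed, $(1_2,\vecxi+\vecw)T=(1_2,\vecxi)T(1_2,\vecw T)$, so the $\vecxi$-derivative along $\vecw$ equals the Lie derivative along $\vecw T$. Integrating by parts $m$ times against $e(-\tr(\veceta\trans\vecxi))$ then gives
\begin{align*}
|\widehat{Df}(T,\veceta)|\ll_m \|Df\|_{\C^m_0}\,\min_{\vecw}\frac{\|\vecw T\|^m}{|\tr(\veceta\trans\vecw)|^m}.
\end{align*}
Here $\vecw$ ranges over $(\R^2)^k$, and we choose the best direction.

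So we need $\min_{\vecw}\|\vecw T\|/|\tr(\veceta\trans\vecw)|\ll \|T\|^{-(1-2\beta)}\|\veceta\|^{-\beta}$. Equivalently, we need a vector $\vecw$ with pairing $1$ against $\veceta$ and $\|\vecw T\|$ small. The first natural choice is $\vecw$ proportional to $\veceta\trans T^{-1}T^{-1}$, which gives $\|\vecw T\|\asymp 1/\|\veceta\trans T^{-1}\|$. This yields the bound $\|\veceta\trans T^{-1}\|^{-m}$. The remaining task is the lower bound
\begin{align*}
\|\veceta\trans T^{-1}\|\gg \|T\|^{1-2\beta}\|\veceta\|^{\beta},
\end{align*}
which is the main obstacle. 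It uses that $\veceta$ lies in a B-orbit, i.e.\ its two columns $\trans\vecr,\trans\vecq$ are linearly independent. Then $\veceta\trans T^{-1}$ is, up to the transpose, the $k\times2$ matrix of rank $2$ multiplied by $T^{-1}$.

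To prove it, write $T^{-1}=k_1\mathrm{a}_s k_2$ with $\|T\|\asymp s^{1/2}$. The smallest singular value of $\veceta$ is $\gg \|\veceta\|^{-1}$, since it is an integer matrix of rank $2$ whose $2\times2$ minors have Gram determinant a positive integer. Hence $\|\veceta\trans T^{-1}\|\gg\max\bigl(\|T\|\,\|\veceta\|^{-1},\,\|T\|^{-1}\|\veceta\|\bigr)$, and also $\|\veceta\trans T^{-1}\|\gg \|T\|$ times the smallest singular value. Taking the geometric mean with weights $1-\beta$ and $\beta$ gives at least $\|T\|^{1-2\beta}\|\veceta\|^{\beta}\cdot\|\veceta\|^{\cdots}$; the exponents must be checked so that they come out to exactly $\|T\|^{1-2\beta}\|\veceta\|^{\beta}$ for $0<\beta<\frac12$. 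If this crude interpolation falls short, we will use the integrality more carefully, via the reduced form of $B_k$ representatives and a case split on whether $\|T\|^2\ge\|\veceta\|$. Finally, a matching bound is needed for $\veceta'=\veceta\trans R^{-1}$ and $T'=R^{-1}T$, where $\veceta'\trans T'^{-1}=\veceta\trans T^{-1}$. This means the reduction step must be arranged so that the final inequality is stated in terms of the original $T$ and $\veceta$.
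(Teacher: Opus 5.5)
There is a genuine gap: the lower bound for $\|\veceta\trans T^{-1}\|$, which you yourself flag as the main obstacle, is not established, and the argument you propose for it cannot work.

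Your integration-by-parts step is sound. It gives $|[D\widehat{f_R}](T,\veceta)|\ll_m\|Df\|_{\C^m_0}\,\|\veceta\trans T^{-1}\|^{-m}$. You do not even need the reduction to $R=1_2$: $g=Df_R=(Df)_R$ is itself left $(1_2,(\Z^2)^k)$-invariant, with $\|g\|_{\C^m_0}=\|Df\|_{\C^m_0}$. This is how the paper sets things up; it then simply quotes \cite[Lemma 4.6 and Remark 5]{effopp1} for $g$.

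The gap is the bound $\|\veceta\trans T^{-1}\|\gg\|T\|^{1-2\beta}\|\veceta\|^{\beta}$. Your singular-value argument uses only that $\veceta$ is an integer matrix of rank $2$. Its output, $\max(\|T\|/\|\veceta\|,\|\veceta\|/\|T\|)$, has weighted geometric mean $\|T\|^{1-2\beta}\|\veceta\|^{2\beta-1}$, which falls short by a factor $\|\veceta\|^{1-\beta}$. No bookkeeping of exponents can repair this, because the inequality, and indeed the lemma, is false for general rank-$2$ integer matrices. Take $k=2$, $\veceta=\smatr 10n1$ and $T=\smatr 1n01$. Then $\veceta\trans T^{-1}=1_2$, while $\|T\|,\|\veceta\|\asymp n$. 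Moreover, by \eqref{BASICFOURIERLEMRES1} with $N=1$, $\hf(T,\veceta)=\hf(1_2,1_2)$ is independent of $n$. So the specific normalization of the representatives in $B_k$ is the essential input, not a refinement, and you only name it as a fallback.

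The missing step is short. Rows $\ell_1$ and $\ell_2$ of $\veceta\in B_k$ are $(r_{\ell_1},0)$ and $(r_{\ell_2},q_{\ell_2})$, with $r_{\ell_1}\geq1$, $|q_{\ell_2}|\geq1$ and $0\leq r_{\ell_2}<|q_{\ell_2}|$. Let $\vecu_1,\vecu_2$ be the rows of $\trans T^{-1}$. The corresponding rows of $\veceta\trans T^{-1}$ are $r_{\ell_1}\vecu_1$ and $q_{\ell_2}(\vecu_2+t\vecu_1)$ with $|t|<1$. Hence $\|\vecu_1\|+\|\vecu_2\|\leq 3\|\veceta\trans T^{-1}\|$, that is, $\|\veceta\trans T^{-1}\|\gg\|T\|$. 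Combine this with the trivial bound $\|\veceta\|\leq\|\veceta\trans T^{-1}\|\,\|T\|$. This gives $\|\veceta\trans T^{-1}\|\gg\max(\|T\|,\|\veceta\|/\|T\|)\geq\|T\|^{1-2\beta}\|\veceta\|^{\beta}$. Your case split on whether $\|T\|^2\geq\|\veceta\|$ is exactly the right one once this is in hand.

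Be careful with the reduction to $R=1_2$. In general $\veceta\trans R^{-1}$ is not a normalized representative, so the geometric bound must be applied to the original $T$ and $\veceta$. This is legitimate because $\veceta\trans T^{-1}$ is invariant under $(T,\veceta)\mapsto(R^{-1}T,\veceta\trans R^{-1})$, as you noted.
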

(Here, of course, in ``$D\wh f_R$'', the differential operator $D$ acts on the \textit{first} variable of $\widehat{f_R}$.)
\begin{proof}
Set $g:=Df_R\in\C^m_0(X)$;
then by differentiation under the integration sign in 
\eqref{WHFDEF} we have
$D\hf_R=\widehat{g}$.
Hence the lemma follows from 
\cite[Lemma 4.6 and Remark 5]{effopp1}
applied to $g$,
combined with the relation $g=Df_R=(Df)_R$ and \eqref{normfReqnormf}.
\end{proof}

\begin{remark}\label{effopp1TRANSLrem}
We here explain how to translate between the notation of the present paper and the notation in \cite{effopp1}.
In \cite{effopp1}, %
the elements of $(\R^2)^{\oplus k}$ are represented as
$2k\times 1$ column matrices,
and $\G'=\SL(2,\R)$ acts from the left on $(\R^2)^{\oplus k}$ through
\begin{align}\label{effopp1TRANSLremdisc1}
\matr abcd \cmatr{\trans\vecv}{\trans\vecw}=\cmatr{a\trans\vecv+b\trans\vecw}{c\trans\vecv+d\trans\vecw}
\qquad\text{for }\: \matr abcd\in\G',\: \vecv,\vecw\in\R^k.
\end{align}
Here %
and in the following, 
we stick to the convention of the present paper
in that elements $\vecv,\vecw$ in $\R^k$ are row matrices;
therefore %
their transposes, $\trans\vecv$ and $\trans\vecw$, are column matrices. %
The elements of $\G=\G'\ltimes(\R^2)^k$ are in \cite{effopp1} represented by
pairs $(M,\vecu)$ in $\G'\times\trans(\R^{2k})$,
where $\trans(\R^{2k})$ denotes the space of $2k\times 1$ column matrices;
and the multiplication law in \cite{effopp1} is
$(M,\vecu)(M',\vecu')=(MM',\vecu+M\vecu')$.
The translation into the notation of the present paper is given by the bijection
\begin{align}\label{effopp1TRANSLremdisc2}
J:\G'\times\trans(\R^{2k})\to\G,  %
\qquad J\bigl((M,\vecu)\bigr):=(1_2,j(\vecu))M,
\end{align}
where $j$ is the map
\begin{align*}
j:\trans(\R^{2k})\to(\R^2)^k,\qquad j\left(\cmatr{\trans\vecv}{\trans\vecw}\right)
:=\bigl(\trans\vecw,-\trans\vecv\bigr),
\qquad (\vecv,\vecw\in\R^k).
\end{align*}
One verifies that
\begin{align}\label{effopp1TRANSLremdisc0}
j(M\vecu)=j(\vecu)M^{-1},\qquad\forall \vecu\in\trans(\R^{2k}),\: M\in\G',
\end{align}
and using this property it is easy to check that 
the map in \eqref{effopp1TRANSLremdisc2} is a Lie group isomorphism.
Note also that $J$ restricts to the identity map on the subgroup $\G'=\SL(2,\R)<\G$.

In \cite[Sec.\ 4]{effopp1},
the Fourier decomposition of
a $\C^{k+1}$ function $F:\G'\times\trans(\R^{2k})\to\CC$
which is left $\{1_2\}\times\trans(\Z^{2k})$--invariant
is expressed as
\begin{align}\label{effopp1TRANSLremdisc4}
F(M,\vecxi)=\sum_{\vecm\in\trans(\Z^{2k})}\hF(M,\vecm) e(\vecm\vecxi),
\end{align}
where %
$\hF$ is a function
on $\G'\times\trans(\Z^{2k})$,
and where ``$\vecm\vecxi$'' denotes the standard scalar product in $\trans(\R^{2k})$.
In particular, if $F:=f\circ J$ for some function $f\in\C^{k+1}(X)$,
then it follows that
the Fourier coefficients $\hf(M,\vecm)$ in \eqref{FOURIERSERIES}, \eqref{WHFDEF}
are related to the ones in \eqref{effopp1TRANSLremdisc4} through
\begin{align}\label{effopp1TRANSLremdisc3}
\hF(M,\vecm)=\hf(M,j(\vecm)),\qquad\forall M\in G',\: \vecm\in\trans(\Z^{2k}).
\end{align}
Using these relations, it is now easy to verify that e.g.\ 
the formulas \eqref{BASICFOURIERLEMRES1}, \eqref{effopp1p155lm1} 
and \eqref{BASICFOURIER} are indeed
equivalent to the corresponding formulas proved in \cite{effopp1}.
\end{remark}

\subsection{Obtaining the leading term}
\label{LEADINGTERMSEC}

When proving Theorem \ref{MAINTHM3gen},
our task is to study the integral
$\int_\R f\bigl(\Gamma (1_2,\vecxi)M \mathrm{u}_x\mathrm{a}_y\bigr) h(x)\,dx$.
Decomposing $f$ as in \eqref{BASICFOURIER}, 
we get
\begin{align}\notag
\int_\R f\bigl(\Gamma (1_2,\vecxi)M \mathrm{u}_x\mathrm{a}_y\bigr) h(x)\,dx
=\int_\R \wh f\left(M\mathrm{u}_x\mathrm{a}_y,\bn\right)h(x)\,dx
\hspace{120pt}
\\\label{MAINSTEP1}
+\sum_{\veceta\in A_k}\sum_{R\in \overline\Gamma'_\infty\backslash\overline\Gamma'/\Gamma'}
\sum_{T\in \Gamma'_\infty \backslash [R]}
e(\tr(\veceta\trans T^{-1}\trans\vecxi))\int_\R
\widehat{f_R}\left(TM\mathrm{u}_x\mathrm{a}_y,\veceta\right)h(x)\,dx
\\\notag
+\sum_{\veceta\in B_k}\sum_{R\in \overline\Gamma'/\Gamma'} \sum_{T\in[R]}
e(\tr(\veceta\trans T^{-1}\trans\vecxi))\int_\R
\widehat{f_R}\left(TM\mathrm{u}_x\mathrm{a}_y,\veceta\right)h(x)\,dx.
\end{align}

The first integral in the right-hand side of \eqref{MAINSTEP1}
is a weighted average along an expanding %
translate 
of a horocycle in 
$X':=\Gamma'\bs G'=\Gamma(N)\backslash\SL(2,\R)$. It is well-known that such an average tends to
$\int_{X'}\hf(\cdot,\bn)\,d\mu_{X'}\int_\R h\,dx
=\int_{\GaG} f\,d\mu\int_\R h\,dx$ 
with a polynomial rate as $y\to0$;
see
\cite[Prop.\ A.6]{KleinbockMargulis}.
We will here derive a precise statement based on an application of\label{KMref}
\cite[Theorem 1]{iha}.

First, note that for every $0<y\leq 1$ and $\beta\geq10y$,
by setting $T:=\beta/y$ and applying 
\cite[Theorem 1 and (14)]{iha}
we have for any $0<\ve<\frac12$:
\begin{align}\notag
\frac1{\beta}\int_0^\beta\hf\bigl(\Gamma\bigl(M\mathrm{u}_x\mathrm{a}_y,\bn\bigr)\bigr)\,dx
&=\frac1T\int_{0}^{T}\hf\bigl(\Gamma\bigl(M\mathrm{a}_y \mathrm{u}_t,\bn\bigr)\bigr)\,dt
\\\label{MAINTERMtreatm1}
&=\int_{X}f\,d\mu+O_{\ve}\bigl(\|f\|_{\C_0^4}\bigr)\cdot\bigl(U^{-\frac12+\ve}+T^{s_1-1}\bigr),
\end{align}
where $U:=T/\scrY\bigl(M\mathrm{a}_y \mathrm{a}_T\bigr)$
and where $s_1\in[\frac12,1)$ is defined by
$s_1=\frac12+\sqrt{\frac14-\lambda_1}$
if there exists a small eigenvalue $0<\lambda_1<\frac14$
of the Laplace operator on the hyperbolic surface
$\Gamma'\bs\HH$,
and otherwise $s_1=\frac12$.
Using the bound by Kim and Sarnak \cite{KS} towards the Ramanujan conjecture,
we know that 
$s_1\in[\frac12,\frac12+\frac7{64}]$.
Furthermore, by
\cite[(12)]{iha} we have
\begin{align}\label{MAINTERMtreatm3}
U=\frac T{\scrY\bigl(M \mathrm{a}_{\beta}\bigr)}\geq\frac{\min(1,\beta^2)}y\, \scrY(M)^{-1}.
\end{align}
Hence, for every $0<y\leq 1$ and $\beta\geq10y$,
\begin{align}\notag
\int_0^\beta &\hf\bigl(\Gamma\bigl(M\mathrm{u}_x\mathrm{a}_y,\bn\bigr)\bigr)\,dx
\\\label{MAINTERMtreatm2}
&=\beta\int_{X}f\,d\mu+O_{\ve}\bigl(\|f\|_{\C_0^4}\bigr)\cdot
\Bigl(|\beta|\min(1,\beta^2)^{\ve-\frac12}y^{\frac12-\ve}\sqrt{\scrY( M)}
+|\beta|^{\frac7{64}+\frac12}y^{\frac12-\frac7{64}}\Bigr).
\end{align}
The absolute signs around $\beta$ 
ensure that the estimate \eqref{MAINTERMtreatm2} is also
valid for all $\beta\leq-10y$.
Indeed, in this case we have
\begin{align*}
\frac1{\beta}\int_0^\beta\hf\bigl(\Gamma\bigl(M\mathrm{u}_x\mathrm{a}_y,\bn\bigr)\bigr)\,dx
=\frac1{T}\int_0^{T}\hf\bigl(\Gamma\bigl(M\mathrm{u}_{\beta}\mathrm{a}_y\mathrm{u}_t,\bn\bigr)\bigr)\,dt
\end{align*}
with $T:=|\beta|/y$, after which %
the above argument carries over,
with
\begin{align*}
U=\frac T{\scrY\bigl(M\mathrm{u}_{\beta}\mathrm{a}_{|\beta|}\bigr)}
=\frac T{\scrY\bigl(M\mathrm{a}_{|\beta|}\mathrm{u}_{-1}\bigr)}
\gg \frac{\min(1,\beta^2)}y \,\scrY(M)^{-1}.
\end{align*}
In the case $|\beta|<10y$, we will use the trivial estimate
\begin{align}\label{MAINTERMtreatm4}
\int_0^\beta &\hf\bigl(\Gamma\bigl(M\mathrm{u}_x\mathrm{a}_y,\bn\bigr)\bigr)\,dx
=\beta\int_{X}f\,d\mu+O\bigl(\|f\|_{\C_0^0}\,|\beta|\bigr).
\end{align}

Returning to the first integral on 
the right-hand side of \eqref{MAINSTEP1}, note that by 
integration by parts,
assuming $h\in\C_a^1(\R)$ with $a>2$, 
\begin{align}\label{MAINTERMtreatm5}
\int_\R \wh f\bigl(M\mathrm{u}_x\mathrm{a}_y,\bn\bigr)h(x)\,dx
=-\int_{\R}h'(\beta) \int_0^\beta &\hf\bigl(M\mathrm{u}_x\mathrm{a}_y,\bn\bigr)\,dx\,d\beta.
\end{align}
Using the estimates \eqref{MAINTERMtreatm2} and \eqref{MAINTERMtreatm4}
in \eqref{MAINTERMtreatm5}, we conclude
that for any $h\in\C_3^1(\R)$,
\begin{align}\label{MAINTERMgen}
\int_\R \wh f\bigl(M\mathrm{u}_x\mathrm{a}_y,\bn\bigr)h(x)\,dx
=\int_{X}f\,d\mu\int_{\R}h\,dx
+O\Bigl(\|f\|_{\C_0^4}\|h\|_{C_3^1}\sqrt{\scrY( M)}\, y^{\frac12-\frac7{64}}\Bigr).
\end{align}
The error term in \eqref{MAINTERMgen} is subsumed (with flying colors)
by the bound in
\eqref{MAINTHM3genres1};
this holds since 
$\delta_m(y;\vecxi)\gg y^{\frac14}\log(1+y^{-1})$
(see Lemma \ref{deltalowboundLEM})
and by the following lemma.

\begin{lem}\label{scrYconnFrobnormLEM2}
For every $M\in\G'$ we have $\scrY(M)\leq\|M\|^2$.
\end{lem}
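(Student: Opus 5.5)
We need $\scrY(M)\leq\|M\|^2$, where $\|M\|^2=a^2+b^2+c^2+d^2$ for $M=\smatr abcd$. (The paper writes $\|T\|:=\tr(T\trans T)$ in \eqref{Frobnorm}, but the right-hand side there is the square root of this trace; we use $\|M\|=\sqrt{a^2+b^2+c^2+d^2}$.) The plan is to write $\tim\gamma M(i)$ explicitly and bound the denominator from below using integrality of the bottom row of $\gamma$.

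For $\gamma=\smatr{*}{*}{p}{q}\in\SL(2,\Z)$ we have $\gamma M=\smatr{*}{*}{c'}{d'}$ with $(c',d')=(p,q)M=(pa+qc,\,pb+qd)$. Hence
\begin{align*}
\tim \gamma M(i)=\frac{1}{c'^2+d'^2}=\frac{1}{\|(p,q)M\|^2}.
\end{align*}
So it suffices to show $\|(p,q)M\|^2\geq\|M\|^{-2}$ for every nonzero integer vector $(p,q)$, since $(p,q)\neq\bn$ always holds for the bottom row of an element of $\SL(2,\Z)$.

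Write $\vecw=(p,q)$, so that $\vecw=\vecw' M^{-1}$ with $\vecw'=\vecw M$. Then $1\leq\|\vecw\|\leq\|\vecw'\|\,\|M^{-1}\|_{\mathrm{op}}$, where the first inequality holds because $\vecw$ is a nonzero integer vector. For $M\in\SL(2,\R)$, $M^{-1}=\smatr{d}{-b}{-c}{a}$ has the same Frobenius norm as $M$. Since the operator norm is at most the Frobenius norm, $\|M^{-1}\|_{\mathrm{op}}\leq\|M\|$. This gives $\|\vecw M\|\geq\|M\|^{-1}$, hence $\tim\gamma M(i)\leq\|M\|^2$.

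Taking the supremum over $\gamma$ yields $\scrY(M)\leq\|M\|^2$. There is no genuine obstacle; the one point requiring care is the notational clash in \eqref{Frobnorm} noted above, and we read $\|M\|$ as the Frobenius norm $\sqrt{a^2+b^2+c^2+d^2}$.
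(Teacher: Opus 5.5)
Your proof is correct, and it takes a different route from the paper's. The paper argues geometrically. With $w=M(i)=u+iv$ and $\delta$ the hyperbolic distance from $w$ to $i$, it uses $e^{\delta}\leq 2\cosh\delta=(u^2+v^2+1)/v=\|M\|^2$ (via \eqref{FROBINIWASAWAEQ}). It then uses that moving $w$ into the standard fundamental domain by some $\gamma\in\SL(2,\Z)$ realizes $\scrY(M)=\tim\gamma(w)$ without increasing the distance to $i$, and concludes with $\tim z\leq e^{d(z,i)}$. You instead bound every term of the supremum directly, via $\tim\gamma M(i)=\|(p,q)M\|^{-2}$ and the lattice estimate $\|\vecw M\|\geq\|\vecw\|/\|M^{-1}\|_{\mathrm{op}}\geq\|M\|^{-1}$ for nonzero $\vecw\in\Z^2$, which uses that $M^{-1}$ and $M$ have the same Frobenius norm. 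Your argument is elementary and self-contained. It does not need the fact, which the paper only quotes as known, that points of the fundamental domain minimize the distance to $i$ within their orbit, nor that the supremum defining $\scrY$ is attained there. It also makes explicit the identity $\scrY(M)^{-1/2}=\min_{\vecw\in\Z^2\setminus\{\bn\}}\|\vecw M\|$, which is the shortest-vector viewpoint invoked in Remark \ref{ygTSg0Tremark}. The paper's version instead ties $\|M\|$ directly to hyperbolic distance. Your reading of \eqref{Frobnorm} as $\|M\|=\sqrt{a^2+b^2+c^2+d^2}$ is the intended one; the paper's own proof uses exactly this normalization.
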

\begin{proof}
Write $w=u+iv=M(i)\in\HH$
and let $\delta$ be the hyperbolic distance between the points $w$ and $i$.
Then
\begin{align*}
e^{\delta}\leq2\cosh(\delta)=2+\frac{|w-i|^2}{v}=\frac{u^2+v^2+1}{v}=\|M\|^2,
\end{align*}
where the last equality holds by \cite[eq.\ (42)]{effopp1}
(see also \eqref{FROBINIWASAWAEQ} below).
On the other hand,
$\scrY(M)=\tim\gamma(w)$ where $\gamma$ is an element in $\SL(2,\Z)$
for which $\gamma(w)$ lies in the standard fundamental domain for $\SL(2,\Z)$,
viz., $|\tre\gamma(w)|\leq\frac12$ and $|\gamma(w)|\geq1$.
It is known that the hyperbolic distance $\delta'$ between $\gamma(w)$ and $i$
satisfies $\delta'\leq\delta$,
and also $\tim\gamma(w)\leq e^{\delta'}$.
Hence $\scrY(M)\leq e^{\delta}\leq\|M\|^2$.
\end{proof}

\newpage

\section{Cancellation in an exponential sum}
\label{cancellationSEC}
 A crucial tool in the proof of Theorem \ref{MAINTHM3gen}
will be Proposition \ref{EXPSUMprop1} below,
which gives a bound on a linear exponential sum
running over all integer tuples $\veca=(a_1,a_2,a_3,a_4)$
satisfying $a_1a_4-a_2a_3=1$ and a congruence condition.

For any function $w\in\C^n(\R^d)$, set
\begin{align}\label{SpangendimDEF}
\|w\|_{\C_a^n}=\sum_{|\vecbeta|\leq n}\sup_{\vecx\in\R^d}\,(1+\|\vecx\|)^a\,\bigl|(\partial^\vecbeta w)(\vecx) \bigr|,
\end{align}
where the sum is taken over all multi-indices $\vecbeta\in \N^d$ 
satisfying %
$|\vecbeta|\leq n$;
also let $\C_a^n(\R^d)$ be the space of all $w\in\C^n(\R^d)$ with
$\|w\|_{\C_a^n}<\infty$.
(Note that when $d=1$, \eqref{SpangendimDEF} specializes to \eqref{Span1dimDEF}.)
Recall that we have fixed a positive integer $N$,
and that for any $R\in\SL(2,\Z)$,  $[R]$ denotes
the set of matrices 
in $\SL(2,\Z)$ which are congruent to $R$ modulo $N$. 
\begin{prop}\label{EXPSUMprop1}
Let $R\in\SL(2,\Z)$ and $B\geq1$.
For any $\vecalf=(\alpha_1,\alpha_2,\alpha_3,\alpha_4)\in\R^4$, $X\geq1$ and $w\in\C_c^5(\R^4)$ with
$\supp(w)\subset[-B,B]^4$, we have
\begin{align}\label{EXPSUMprop1res}
\sum_{\smatr{a_1}{a_2}{a_3}{a_4}\in[R]}e(\vecalf\cdot\veca)
\, w\Bigl(\frac{\veca}{X}\Bigr)
\ll_{B,N} \|w\|_{\C^5_0}\,X^{2}\sum_{1\leq q\leq X}
\frac{\tau(q)}{q^{3/2}}
\biggl(1+\frac{X\| q\vecalf\|_{\Z}}q\biggr)^{\hspace{-3pt}-1}.
\end{align}
Here $\vecalf\cdot\veca=\alpha_1a_1+\alpha_2a_2+\alpha_3a_3+\alpha_4a_4$, 
and $\tau(q)$ is the number of positive divisors of $q$.
\end{prop}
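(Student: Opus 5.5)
We detect the determinant condition with the delta symbol method of Duke--Friedlander--Iwaniec / Heath-Brown, in the form
\begin{align*}
\delta(n=0)=\frac{c_Q}{Q^2}\sum_{q=1}^{\infty}\sum_{\substack{a\bmod q\\(a,q)=1}}e\Bigl(\frac{an}{q}\Bigr)g(q,n),
\end{align*}
applied to $n=a_1a_4-a_2a_3-1$ with $Q\asymp X$. We only need the range $q\ll Q$, and $g(q,n)$ is a smooth weight with $g(q,n)=h(q/Q,n/Q^2)$ up to negligible error when $q$ is small. The congruence condition $\veca\equiv R \bmod N$ is handled by writing $\veca=R+N\vecb$ with $\vecb\in\Z^4$. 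The left side of \eqref{EXPSUMprop1res} then becomes a sum over $q\leq X$ and over reduced residues $a\bmod q$ of a sum over $\vecb\in\Z^4$. In this inner sum we split $\vecb$ into residue classes modulo $Nq$ (or modulo $q$, after absorbing $N$) and apply four-dimensional Poisson summation. This yields
\begin{align*}
\frac{X^4}{Q^2}\sum_{q}\frac1{q^4}\sum_{\vecc\in\Z^4}S_q(\vecc)\,I_q(\vecc),
\end{align*}
where $S_q(\vecc)=\sum_{a}^{*}\sum_{\vecb\bmod q}e\bigl(a(b_1b_4-b_2b_3-1)/q+\vecc\cdot\vecb/q\bigr)$, with suitable $N$-twists, and
\[
I_q(\vecc)=\int_{\R^4}w(\vecx)\,h\Bigl(\frac{q}{Q},\frac{X^2(x_1x_4-x_2x_3)-1}{Q^2}\Bigr)\,e\bigl((X\vecalf-X\vecc/q)\cdot\vecx\bigr)\,d\vecx .
\]

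Next we evaluate the exponential sum. The quadratic form $b_1b_4-b_2b_3$ is a split form in four variables with determinant $1/16$, so the inner sum over $\vecb$ factors into two Gauss-type sums of size $q^2$. Completing the square leaves $S_q(\vecc)\approx q^2\sum_a^* e(\bar a\,(\text{quadratic in }\vecc)-a)/q$, which is a Kloosterman sum. By Weil's bound, $|S_q(\vecc)|\ll q^{2}\cdot\tau(q)q^{1/2}(q,\ldots)^{1/2}$; the gcd factors average out harmlessly, and only the $\tau(q)q^{5/2}$ shape matters. Combining this with the prefactor $X^4/(Q^2q^4)\asymp X^2 q^{-4}$ gives a contribution of $X^2\tau(q)q^{-3/2}$ per $q$ and per dominant $\vecc$, which exactly matches \eqref{EXPSUMprop1res}.

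The integral $I_q(\vecc)$ is bounded by repeated integration by parts in $\vecx$, five times, which is why $\|w\|_{\C^5_0}$ appears. One must account for the $h$-factor, whose $x$-derivatives are of size $X^2/Q^2\cdot (Q/q)\asymp X/q$ (standard delta-method bounds on $\partial h$). This gives
\[
I_q(\vecc)\ll \|w\|_{\C^5_0}\Bigl(1+\frac{q\,\|X\vecalf-X\vecc/q\|}{X}\Bigr)^{-5},
\]
up to constants depending on $B$. Hence only $\vecc$ with $\|q\vecalf-\vecc\|\lesssim q/X$ contribute substantially, and the nearest such $\vecc$ gives exactly the factor $\bigl(1+X\|q\vecalf\|_\Z/q\bigr)^{-1}$, with room to spare. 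The remaining $\vecc$ sum converges thanks to the fifth power, and one power is used to keep the $\vecc$-sum bounded.

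I expect the main obstacle to be the integral $I_q(\vecc)$ in the regime where $q$ is small relative to $Q$. There $h$ is a rough function of the second variable, and to extract the decay in $\|q\vecalf-\vecc\|$ one must also handle the case where $X\vecalf-X\vecc/q$ is large compared with $X/q$, uniformly. I would first try the standard estimates of Heath-Brown (Lemmas 5 and 8 of his delta-method paper), taking a single derivative to gain the $(\cdot)^{-1}$ factor rather than aiming for full stationary-phase asymptotics. The $N$-twist in the congruence should only affect constants, yielding the stated dependence on $B$ and $N$.
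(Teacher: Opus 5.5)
Your overall architecture (delta symbol, Poisson summation modulo $qN$, reduction of the complete sum to $q^2$ times a Kloosterman sum, Weil's bound giving $\tau(q)q^{5/2}$, and the resulting weight $X^2\tau(q)q^{-3/2}$ per $q$) is the same as in the paper. The gcd factor in Weil's bound is in fact $1$, since one argument of the Kloosterman sum is $-1$. The gap is in the integral $I_q(\vecc)$, which is exactly where the saving $\bigl(1+X\|q\vecalf\|_{\Z}/q\bigr)^{-1}$ must come from. Since $\|X\vecalf-X\vecc/q\|=\frac Xq\|q\vecalf-\vecc\|$, the bound you derive is simply $I_q(\vecc)\ll(1+\|q\vecalf-\vecc\|)^{-5}$. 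For the nearest $\vecc_0$ this is $\asymp 1$, because $\|q\vecalf-\vecc_0\|=\|q\vecalf\|_{\Z}\le 1$. So you only get $\sum_{\vecc}|I_q(\vecc)|\ll 1$, hence $X^2\sum_{q\le X}\tau(q)q^{-3/2}\asymp X^2$, which is the trivial bound already implied by \eqref{SL2Zbasiccountingbound}. Your sentence ``only $\vecc$ with $\|q\vecalf-\vecc\|\lesssim q/X$ contribute substantially'' misreads your own estimate, which decays on the scale $\|q\vecalf-\vecc\|\asymp 1$, not $q/X$. Taking a single derivative does not rescue this. Each $\vecx$-derivative of the delta weight $h(q/Q,\cdot)$ costs a factor $\asymp X/q$, while each integration by parts gains only $\bigl(\frac Xq\|q\vecalf-\vecc\|\bigr)^{-1}$. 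The net gain per step is therefore $\|q\vecalf-\vecc\|^{-1}$, never anything on the scale $X/q$.

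The missing idea is to exploit the curvature of the quadratic phase rather than integrate by parts against the weight. Keep the delta weight as a Fourier integral, $\int p_q(z)e(zn)\,dz$ as in Proposition~\ref{prop:Kloos} (Heath-Brown's $h$ can be written the same way). After Poisson summation you then face $I(z,\vecv)=\int w(\vecx)e(zQ_1(\vecx)+\vecv\cdot\vecx)\,d\vecx$, with $\vecv=\frac X{qN}(qN\vecalf-\vecc)$ and $|z|$ effectively $\lesssim X/q$. Because $Q_0$ is nondegenerate in four variables, $I(z,\vecv)\ll\min(1,|z|^{-2})$. Moreover $I(z,\vecv)$ is small, of size $\bigl((1+|z|)^{-1/2}\|\vecv\|\bigr)^{-5}$, once $\|\vecv\|\gg|z|$. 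The paper packages both facts in \eqref{eq:Ibound}, proved by mollifying at scale $(1+|z|)^{-1/2}$; this makes each derivative cost only $|z|^{1/2}$ and uses only five derivatives of $w$. For the nearest frequency, integrating $z^{-2}$ over $|z|\gtrsim\|\vecv\|$ gives $\|\vecv\|^{-1}\asymp\bigl(\frac Xq\|Nq\vecalf\|_{\Z}\bigr)^{-1}$. This exponent $-1=1-\frac42$ is a stationary-phase effect that first-order integration by parts cannot see. You must also show that all other frequencies, and the range $|z|\gtrsim X/q$, contribute $\ll q/X$. That requires the good/bad splitting in the proof of \eqref{eq:1}, not merely convergence of the $\vecc$-sum. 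Two smaller points:
\begin{itemize}
\item Heath-Brown's integral estimates are stated for smooth weights, so citing them would require redoing them with explicit $\|w\|_{\C^5_0}$ dependence.
\item The congruence modulo $N$ produces $\|Nq\vecalf\|_{\Z}$ rather than $\|q\vecalf\|_{\Z}$. Passing between them needs the short reindexing argument following \eqref{almostEXPSUMprop1complete}, not just a change of constants.
\end{itemize}
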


\vspace{10pt}

Before proving Proposition \ref{EXPSUMprop1},
let us note that %
it can easily be generalized to include non-compactly supported weight functions:
\begin{cor}\label{EXPSUMcor1}
Let $\ve>0$.
For any $\vecalf=(\alpha_1,\alpha_2,\alpha_3,\alpha_4)\in\R^4$, $X\geq1$ and $F\in\C_{7+\ve}^5(\R^4)$ we have
\begin{align}\label{EXPSUMcor1res}
\sum_{\smatr{a_1}{a_2}{a_3}{a_4}\in[R]}e(\vecalf\cdot\veca)
\, F\Bigl(\frac{\veca}{X}\Bigr)
\ll_{\ve,N}\|F\|_{\C_{7+\ve}^5}X^{2}\sum_{1\leq q\leq X}
\frac{\tau(q)}{q^{3/2}}
\biggl(1+\frac{X\| q\vecalf\|_{\Z}}q\biggr)^{\hspace{-3pt}-1}.
\end{align}
\end{cor}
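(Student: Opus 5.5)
We deduce Corollary \ref{EXPSUMcor1} from Proposition \ref{EXPSUMprop1} by a smooth dyadic partition of unity in the $\veca$-variable. Fix $\chi_0\in\C_c^\infty(\R^4)$ supported in $[-2,2]^4$ and $\chi\in\C_c^\infty(\R^4)$ supported in $[-4,4]^4\setminus(-1,1)^4$ such that
\begin{align*}
\chi_0(\vecx)+\sum_{j\geq1}\chi(2^{-j}\vecx)=1\qquad\text{for all }\vecx\in\R^4.
\end{align*}
Write $F=\sum_{j\geq0}F_j$, where $F_0=F\chi_0$ and $F_j(\vecx)=F(\vecx)\chi(2^{-j}\vecx)$ for $j\geq1$. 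The sum on the left of \eqref{EXPSUMcor1res} converges absolutely: on the hyperboloid, the number of $\veca$ with $\|\veca\|\leq Y$ is $\ll Y^2$, and $F$ decays like $\|\vecx\|^{-7-\ve}$. Hence we may split it as $\sum_j\Sigma_j$, where $\Sigma_j$ is the sum with $F_j(\veca/X)$ in place of $F(\veca/X)$.

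For $j\geq1$ we rescale, setting $w_j(\vecy):=F_j(2^j\vecy)$, so that $F_j(\veca/X)=w_j(\veca/X_j)$ with $X_j:=2^{-j}X$. Each $w_j$ is supported in $[-4,4]^4$, so Proposition \ref{EXPSUMprop1} applies with $B=4$, uniformly in $j$. On $\supp\chi(2^{-j}\cdot)$ we have $\|\vecx\|\asymp 2^j$, and each derivative of $w_j$ costs at most a factor $2^j$. Therefore
\begin{align*}
\|w_j\|_{\C_0^5}\ll 2^{5j}\sup_{\|\vecx\|\asymp2^j,\,|\vecbeta|\leq5}|\partial^\vecbeta F(\vecx)|\ll 2^{5j}2^{-(7+\ve)j}\|F\|_{\C^5_{7+\ve}}=2^{-(2+\ve)j}\|F\|_{\C^5_{7+\ve}}.
\end{align*}

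The key step is comparing the resulting majorant at scale $X_j$ with that at scale $X$. The proposition requires $X_j\geq1$. For $j$ with $2^j>X$ this fails, so we use a variant instead: for $X_j<1$, apply the proposition with $X$ replaced by $1$ and the weight $\vecy\mapsto F_j(\vecy)$ rescaled by the factor $X2^{-j}$ inside a box of size $B\ll1$. Alternatively, we bound $\Sigma_j$ trivially by counting: there are $\ll(2^j)^2$ points with $\|\veca\|\asymp 2^jX$ hmm—more precisely $\ll (2^jX)^2$ points, each weighted by $|F_j|\ll 2^{-(7+\ve)j}$. Thus $\Sigma_j\ll X^22^{-(5+\ve)j}$. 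This is also the route for every $j$ with $2^j\geq X$: since the $q=1$ term of the majorant is $\geq(1+X)^{-1}$, the total of the trivial bounds over such $j$ is
\begin{align*}
\ll X^2\sum_{2^j\geq X}2^{-(5+\ve)j}\ll X^{-3}\ll X^{2}\cdot\frac1{1+X},
\end{align*}
which is dominated by the right-hand side of \eqref{EXPSUMcor1res}.

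For $1\leq 2^j\leq X$ we need monotonicity of the majorant in $X$. Define
\begin{align*}
\Phi(Y):=\sum_{1\leq q\leq Y}\frac{\tau(q)}{q^{3/2}}\biggl(1+\frac{Y\|q\vecalf\|_{\Z}}{q}\biggr)^{-1}.
\end{align*}
Shrinking $Y$ to $Y'\leq Y$ drops terms with $q>Y'$ but increases each remaining term, by a factor at most $Y/Y'$. So $\Phi(X_j)\leq 2^j\Phi(X)$, and the proposition gives
\begin{align*}
\Sigma_j\ll 2^{-(2+\ve)j}\|F\|_{\C^5_{7+\ve}}X_j^2\cdot2^j\Phi(X)=2^{-(3+\ve)j}\|F\|_{\C^5_{7+\ve}}X^2\Phi(X).
\end{align*}
This is summable in $j$. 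The $j=0$ term is handled directly by the proposition with $B=2$. Summing over all $j$ yields \eqref{EXPSUMcor1res}. The only delicate point is this comparison $\Phi(X_j)\leq 2^j\Phi(X)$, and the weight $7+\ve$ leaves ample room for it.
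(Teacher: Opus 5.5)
\textbf{The rescaling goes the wrong way, and this breaks your main step.} Since $w_j(\vecy)=F_j(2^j\vecy)$, you have $F_j(\veca/X)=w_j(2^{-j}\veca/X)=w_j\bigl(\veca/(2^jX)\bigr)$. So Proposition \ref{EXPSUMprop1} must be applied at scale $2^jX$, not at $X_j=2^{-j}X$. Your own trivial count of $\ll(2^jX)^2$ points with $\|\veca\|\asymp 2^jX$ is consistent only with scale $2^jX$. Your bound $\Sigma_j\ll 2^{-(3+\ve)j}\|F\|_{\C^5_{7+\ve}}X^2\Phi(X)$ rests on the factor $X_j^2=2^{-2j}X^2$, which undercounts the relevant matrices by $2^{4j}$. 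With the correct scale, the proposition gives
\[
\Sigma_j\ll_N 2^{-(2+\ve)j}\|F\|_{\C^5_{7+\ve}}(2^jX)^2\,\Phi(2^jX)=2^{-\ve j}\|F\|_{\C^5_{7+\ve}}X^2\,\Phi(2^jX).
\]
The margin is therefore exactly $2^{-\ve j}$, not ``ample room'': five derivatives cost $2^{5j}$ and the point count costs $2^{2j}$. On the other hand, $2^jX\geq1$ for every $j\geq0$, so the case $X_j<1$ never arises. Neither the split at $2^j\ge X$ nor the trivial bound is needed.

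\textbf{You need the comparison in the opposite direction.} The required statement is $\Phi(2^jX)\ll\Phi(X)$ uniformly in $j$, not the shrinking bound you proved. Enlarging the scale only decreases each factor $(1+Y\|q\vecalf\|_{\Z}/q)^{-1}$. But it adds the terms with $X<q\le 2^jX$, whose total is $\ll X^{-1/2}\log(X+1)$ by Lemma \ref{basicboundLEM1}. To absorb these you need the lower bound $\Phi(X)\gg X^{-1/2}\log(X+1)$. This comes from the range $\frac12X\le q\le X$, where $\|q\vecalf\|_{\Z}\le1$ forces each factor to be $\geq\frac13$, as in the argument of Lemma \ref{deltalowboundLEM}. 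The bound $\Phi(X)\ge(1+X)^{-1}$ from the $q=1$ term, which you used for large $j$, is far too weak for this. With the lower bound in hand, $\sum_j2^{-\ve j}\ll_\ve1$ completes the proof. This corrected argument is the paper's own: a partition of unity at scales $2^kX$, the estimate $\|w_k\|_{\C^5_0}\ll2^{(5-a)k}\|F\|_{\C^5_a}$, extension of the $q$-sum to all $q$, and absorption of the tail via the lower bound.
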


\begin{remark}\label{EXPSUMcor1convREM}
The sum in the left-hand side of \eqref{EXPSUMcor1res} is absolutely convergent;
indeed this only requires that $F\in\C_{2+\ve}^0$.
This claim is proved e.g.\ using dyadic decomposition and the bound
\begin{align}\label{SL2Zbasiccountingbound}
\#\{\veca\in\Z^4\col a_1a_4-a_2a_3=1,\: \|\veca\|\leq Y\}\ll Y^2,
\qquad \forall Y>0.
\end{align}
\end{remark}
\begin{proof}[Proof of Corollary \ref{EXPSUMcor1}]
The corollary follows from Proposition \ref{EXPSUMprop1}
via a standard partition of unity argument.
Fix a function $\varphi\in\C^\infty_c(\R^4)$ such that
$\varphi(\vecx)=1$ for all $\vecx\in[-1,1]^4$
and $\varphi(\vecx)=0$ for all $\vecx\notin(-2,2)^4$.
Set $w_0(\vecx):=\varphi(\vecx)F(\vecx)$
and $w_k(\vecx):=\bigl(\varphi(\vecx)-\varphi(2\vecx)\bigr)F(2^k\vecx)$
for $k=1,2,3,\ldots$.
Then $w_k\in\C_c^5(\R^4)$ and $\supp(w_k)\subset[-2,2]^4$ for all $k\geq0$,
and $F(\vecx)=\sum_{k=0}^\infty w_k(2^{-k}\vecx)$ for all $\vecx\in\R^4$,
where the sum in the right-hand side is essentially finite,
since $\varphi(\vecx)-\varphi(2\vecx)=0$ for all $\vecx\in[-\frac12,\frac12]^4$.
Hence the left-hand side of \eqref{EXPSUMcor1res}
can be decomposed as
\begin{align}\label{EXPSUMcor1newpf1}
\sum_{k=0}^{\infty}\sum_{\smatr{a_1}{a_2}{a_3}{a_4}\in[R]} e(\vecalf\cdot\veca)
\, w_k\Bigl(\frac{\veca}{2^kX}\Bigr).
\end{align}
Note that $\|w_k\|_{C_0^5}\ll_{a} 2^{(5-a)k}\|F\|_{\C_a^5}$
for any $a\geq0$ and $k\in\NN$.
Hence by Proposition~\ref{EXPSUMprop1},
the above sum is
\begin{align*}
&\ll_{\ve,N} \|F\|_{\C_{7+\ve}^5}\,\sum_{k=0}^\infty 
2^{-(2+\ve)k}\, (2^kX)^{2}\sum_{1\leq q\leq 2^kX}
\frac{\tau(q)}{q^{3/2}}
\biggl(1+\frac{2^kX\| q\vecalf\|_{\Z}}q\biggr)^{\hspace{-3pt}-1}
\\
&\leq\|F\|_{\C_{7+\ve}^5}\,X^{2}\,\sum_{k=0}^\infty 
2^{-\ve k}\sum_{q=1}^{\infty}
\frac{\tau(q)}{q^{3/2}}
\biggl(1+\frac{X\| q\vecalf\|_{\Z}}q\biggr)^{\hspace{-3pt}-1}.
\end{align*}
Using $\sum_{k=0}^\infty 2^{-\ve k}\ll_{\ve}1$,
\eqref{handwrp77midimproved},
and 
$X^{-\frac12}\log(X+1)\ll\sum_{\frac12X\leq q\leq X}\tau(q)q^{-\frac32}\bigl(1+X\| q\vecalf\|_{\Z}/q\bigr)^{-1}$
(which can be proved by essentially the same argument as in Lemma~\ref{deltalowboundLEM}),
we obtain the bound
\eqref{EXPSUMcor1res}.
\end{proof}
The rest of this section will be dedicated to the proof of Proposition \ref{EXPSUMprop1}. Our key tool in proving Proposition \ref{EXPSUMprop1} will be provided by the Hardy--Littlewood--Ramanujan circle method and more precisely, the delta symbol version of it.

\subsection{The delta method activated}
For an integer $n$, let $\delta_0(n)=\begin{cases}
1 & \text{ if }n=0 \\
0 & \text{ otherwise}
\end{cases}$, be the function that detects when $n=0$. The delta symbol version of the circle method originated in the work of Duke, Friedlander and Iwaniec \cite{DFI}. It has since been reinterpreted in a form that is more readily applicable to detecting when a polynomial equation is zero, starting with the work of Heath-Brown \cite{Heath-Brown96}. We will in particular begin by recalling a more recent version of the delta method appearing in the work of Marmon and Vishe \cite[Lemma 2.2]{Marmon_Vishe}:

\begin{prop}
\label{prop:Kloos}
For any real $X\geq1$ and any integer $n$, one has
\begin{align}\label{prop:Kloosres1}
 \delta_0(n)=
 \sum_{1\leq q\leq X}
~\starsum_{a=1}^q\intr p_{q}(z)e((a/q+z)n)\,dz+O_K(X^{-K}),
\end{align}
with $K\geq0$ arbitrary. %
Here for each integer $1\leq q\leq X$, $p_q(z):=p_q^{(X)}(z)\in \C^\infty(\RR)$ is a smooth function satisfying the following decay property:
\begin{equation}|p_q(z)|\ll_{K} (1+qX|z|)^{-K},\label{eq:pbound1}
\end{equation}                                            
for any $K\geq 0$.
Also, {\footnotesize ${\displaystyle \starsum_{a=1}^q}$} denotes summation over all $1\leq a\leq q$ relatively prime to $q$.
\end{prop}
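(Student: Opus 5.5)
The plan is to obtain the identity from Heath-Brown's version of the Duke--Friedlander--Iwaniec delta symbol \cite{Heath-Brown96}, rather than from a Farey dissection. A smooth partition of unity of $\RR/\ZZ$ subordinate to the Farey arcs would naturally produce weights depending on $a$ as well as $q$, and it is hard to control them uniformly, because arcs of very different widths overlap. Recall Heath-Brown's identity: with $Q=X$ there are a constant $c_Q=1+O_K(Q^{-K})$ and a smooth function $h\colon(0,\infty)\times\RR\to\RR$ such that, for every integer $n$,
\begin{align*}
\delta_0(n)=\frac{c_Q}{Q^{2}}\sum_{q=1}^{\infty}\;\starsum_{a=1}^{q} e\Bigl(\frac{an}{q}\Bigr)\, h\Bigl(\frac qQ,\frac n{Q^2}\Bigr),
\end{align*}
where $h(x,y)=0$ whenever $x>\max(1,2|y|)$. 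It also satisfies derivative bounds of the shape $x^{i}y^{j}\partial_x^i\partial_y^j h(x,y)\ll_{i,j} x^{-1}$, together with the refinement, valid for $x$ small, that $h(x,y)$ is essentially a bump of height $\asymp x^{-1}$ and $y$-width $\asymp x$, with $\partial_y^j h\ll_j x^{-1-j}$.

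The first step removes the dependence on the size of $n$. I would fix $\rho\in\C_c^\infty(\RR)$ with $\rho=1$ on $[-\frac14,\frac14]$ and $\supp\rho\subset[-\frac12,\frac12]$. For $|n|\leq X^2/2$ the support property of $h$ forces $q\leq X$, and $\rho(n/X^2)$ equals $1$ on the range $|n|\leq X^2/4$ where it matters. For $|n|>X^2/2$ the factor $\rho(n/X^2)$ vanishes, which agrees with $\delta_0(n)=0$ there. Consequently, for \emph{all} integers $n$,
\begin{align*}
\delta_0(n)=\frac{c_X}{X^{2}}\sum_{1\leq q\leq X}\;\starsum_{a=1}^{q}e\Bigl(\frac{an}{q}\Bigr)\, g_q\Bigl(\frac n{X^2}\Bigr),\qquad g_q(y):=h\Bigl(\frac qX,y\Bigr)\rho(y).
\end{align*}
Writing $g_q$ as the inverse Fourier transform of $\widehat{g_q}$ and substituting $t=X^2z$, we obtain exactly the form \eqref{prop:Kloosres1} with
\begin{align*}
p_q(z):=c_X\,\widehat{g_q}(X^2z).
\end{align*}
This function is smooth in $z$, since $g_q$ is compactly supported, and it depends only on $q$ and $X$. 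In this construction the error term $O_K(X^{-K})$ is in fact zero; alternatively it absorbs the replacement of $c_X$ by $1$, if one prefers.

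It remains to prove \eqref{eq:pbound1}, and this is where I expect the main obstacle, since the naive estimate loses powers of $X$. The derivative bounds give $\partial_y^j g_q\ll_j (q/X)^{-1-j}$ with $g_q$ supported in a set of bounded size. Integrating by parts $K$ times then yields $|\widehat{g_q}(t)|\ll_K (X/q)\,(1+q|t|/X)^{-K}$, so with $t=X^2z$ we get $|p_q(z)|\ll (X/q)(1+qX|z|)^{-K}$. This has the right decay scale but is too large by the factor $X/q$. To remove that factor I would use the finer structure of $h$ for small $x=q/X$. In that regime $h(x,\cdot)$ is, up to a smooth error, $x^{-1}$ times a bump of $y$-width $\asymp x$; more precisely $h(x,y)=x^{-1}\bigl(\omega(y/x)+E(x,y)\bigr)$, where $\omega$ is a fixed bump, $E$ has support of measure $O(x)$ in $y$, and $\partial_y^jE\ll_j x^{-j}$. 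With this decomposition the $L^1$ norms in $y$ of $\partial_y^j g_q$ are $\ll_j x^{-j}$ rather than $x^{-1-j}$. Integrating by parts against these $L^1$ bounds then gives $|\widehat{g_q}(t)|\ll_K(1+x|t|)^{-K}$, which is precisely $(1+qX|z|)^{-K}$. This refined $L^1$ control of the $y$-derivatives of $h$ for small $x$ is the technical heart of the argument; it is the content of the auxiliary lemmas in \cite[Sec.~3--4]{Heath-Brown96} and \cite[Lemma~2.2]{Marmon_Vishe}, and I would follow those proofs.
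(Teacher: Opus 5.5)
The paper gives no proof of this proposition; it quotes it as a black box from \cite[Lemma 2.2]{Marmon_Vishe}. Your derivation from Heath-Brown's identity is a self-contained route, and it is correct. Since $\rho(0)=1$, you have $\delta_0(n)=\rho(n/X^2)\delta_0(n)$ for every integer $n$. On $\supp\rho(\cdot/X^2)$ the support property $h(x,y)=0$ for $x>\max(1,2|y|)$ removes all $q>X$. Fourier inversion of the compactly supported $g_q$ then gives $p_q(z)=c_X\widehat{g_q}(X^2z)$, which is smooth, depends only on $q$ and $X$, and yields the identity with zero error term. Compared with the paper's citation, your argument shows explicitly that the proposition follows formally from Heath-Brown's identity plus his estimates for $h$, with an explicit $p_q$. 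The citation is simply shorter.

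Your last step contains three inaccuracies, none of which affects the conclusion.

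\emph{The decomposition is not right.} You write $h(x,y)=x^{-1}(\omega(y/x)+E(x,y))$ with $E$ supported on a set of $y$-measure $O(x)$. For fixed $x$, however, $h(x,\cdot)$ has tails of size $x^{-1}\{x^N+(x/|y|)^N\}$ on the whole range $|y|\le\tfrac12$. It need not decay at all as $|y|\to\infty$, which is one reason your cutoff $\rho$ is indispensable.

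\emph{What you actually need is Heath-Brown's pointwise bound.} That bound, from \cite{Heath-Brown96}, is $\partial_y^jh(x,y)\ll_{j,N}x^{-1-j}\{x^N+\min(1,(x/|y|)^N)\}$. Integrate it over $|y|\le\tfrac12$, using Leibniz for the factor $\rho$. This gives $\|\partial_y^jg_q\|_{L^1}\ll_j x^{-j}$ for all $j\ge0$ and $x=q/X\le1$. Hence $|\widehat{g_q}(t)|\ll_K(1+x|t|)^{-K}$, and with $t=X^2z$ this is the required $(1+qX|z|)^{-K}$. So the step you call the technical heart is a one-line consequence of that estimate. Deferring it to \cite{Marmon_Vishe} would be circular, since that lemma is the statement being proved.

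\emph{The scale-invariant bounds are not enough.} The bounds $x^iy^j\partial_x^i\partial_y^jh\ll x^{-1}$ that you list first give only $\|g_q\|_{L^1}\ll x^{-1}$. But $p_q(0)=c_X\int g_q$ must be $O(1)$, so the decay factor $\min(1,(x/|y|)^N)$ is essential.
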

Henceforth, throughout this section, we will assume the conditions appearing in the statement of Proposition \ref{EXPSUMprop1}. Let $Q((a_1,a_2,a_3,a_4))=a_1a_4-a_2a_3-1$ be the quadratic form, with homogeneous part $Q_0(\veca)=a_1a_4-a_2a_3$,
defined by the matrix $\frac{1}{2}M_0$, where $M_0=\left(\begin{matrix}
0&0&0&1\\0&0&-1&0\\0&-1&0&0\\1&0&0&0
\end{matrix}\right)$. 
Let $\vecr=(r_1,r_2,r_3,r_4)$ be the vector in $\Z^4$ such that
$R=\matr{r_1}{r_2}{r_3}{r_4}$.
The sum on the left-hand side of \eqref{EXPSUMprop1res} now becomes
\begin{align}\label{Edef}
E:=\sum_{\veca\in \vecr+N\ZZ^4}\delta_0(Q(\veca))e(\vecalf\cdot\veca) w(\veca/X).
\end{align}
We next apply Proposition \ref{prop:Kloos} 
to detect when $Q(\veca)=0$.  %
We choose the parameter $X$ in Proposition \ref{prop:Kloos} 
to be equal to the cut-off variable $X$ appearing in \eqref{Edef}.
Using \eqref{prop:Kloosres1} with $K=4$,
and noticing that 
there are at most $O_B(X^4)$ terms in the above sum
for which $w(\veca/X)\neq0$,
we obtain
\begin{align}\label{eq:E1ex}
E=\sum_{1\leq q\leq X}
~\starsum_{a=1}^q\intr p_{q}(z)S(a/q+z) dz+O_B(\|w\|_{C_0^0}),
\end{align}
where
\begin{equation}
S(z)=\sum_{\veca\in\vecr+N\ZZ^4}w(\veca/X)e(z Q(\veca)+\vecalf\cdot\veca)
\end{equation}
is the corresponding exponential sum.
To estimate $S(\frac aq+z)$, we make a change of variables $\veca=\vecx+qN\vecy$. 
Observe that the value of $e(\frac aqQ(\vecx+qN\vecy))$ is independent of the choice of $\vecy$. Therefore, 
\begin{align*}
S\Bigl(\frac aq+z\Bigr)&=\sum_{\veca\in\vecr+N\ZZ^4}w(\veca/X)
e\Bigl(\Bigl(\frac aq+z\Bigr) Q(\veca)+\vecalf\cdot\veca\Bigr)
\\
&=\sum_{\substack{0\leq \vecx<qN\\ \vecx\equiv\vecr\mod N}}e\Bigl(\frac aqQ(\vecx)\Bigr)
\sum_{\vecy\in\ZZ^4}w\biggl(\frac{\vecx+qN\vecy}X\biggr)e\Bigl(zQ(\vecx+qN\vecy)+\vecalf\cdot (\vecx+qN\vecy)\Bigr).
\end{align*}
Here, by $0\leq \vecx<qN$, we denote that $\vecx$ is an integer vector whose entries each satisfy the condition $0\leq x_1,...,x_4<qN$. 
Applying Poisson summation to the sum over $\vecy$ gives:
\begin{align*}
&\sum_{\substack{0\leq \vecx<qN\\ \vecx\equiv\vecr\mod N}}e\Bigl(\frac aqQ(\vecx)\Bigr)
\sum_{\vecv\in\ZZ^4}\int_{\R^4} 
w\biggl(\frac{\vecx+qN\vecy}X\biggr)e\Bigl(zQ(\vecx+qN\vecy)+\vecalf\cdot (\vecx+qN\vecy)-\vecv\cdot\vecy\Bigr)\,d\vecy.
\end{align*}
Upon further setting $\vecz=\vecx+qN\vecy$, 
and therefore $\vecy=(\vecz-\vecx)/qN$, we reach
\begin{align*}
S &\Bigl(\frac aq+z\Bigr)=\frac1{(qN)^4}
\sum_{\substack{0\leq \vecx<qN\\ \vecx\equiv\vecr\mod N}}
\sum_{\vecv\in\ZZ^4}
e\Bigl(\frac{aNQ(\vecx)+\vecv\cdot\vecx}{qN}\Bigr)
\int_{\R^4}
w(\vecz/X)e\Bigl(zQ(\vecz)+\frac{qN\vecalf-\vecv}{qN}\cdot\vecz\Bigr)\,d\vecz
\\
&=\Bigl(\frac X{qN}\Bigr)^4 
\sum_{\substack{0\leq \vecx<qN\\ \vecx\equiv\vecr\mod N}}
\sum_{\vecv\in\ZZ^4}
e\Bigl(\frac{aNQ(\vecx)+\vecv\cdot\vecx}{qN}\Bigr)
\int_{\R^4} w(\vecz)
e\Bigl(zX^2Q_1(\vecz)+\frac X{qN}(qN\vecalf-\vecv)\cdot\vecz\Bigr)\,d\vecz
\\
&=\Bigl(\frac X{qN}\Bigr)^4 \sum_{\vecv\in\ZZ^4} S_{a,q,\vecr}(\vecv)\, I\biggl(zX^2,\frac X{qN}(qN\vecalf-\vecv)\biggr).
\end{align*}
Here $Q_1(\vecz)=z_1z_4-z_2z_3-X^{-2}=Q_0(\vecz)-X^{-2}$ so that $Q(X\vecz)=X^2Q_1(\vecz)$, 
and
\begin{align*}
S_{a,q,\vecr}(\vecv)&=
\sum_{\substack{0\leq \vecx<qN\\ \vecx\equiv\vecr\mod N}} e_{qN}\bigl(aNQ(\vecx)+\vecv\cdot\vecx\bigr)
\end{align*}
denotes the quadratic exponential sum (with $e_b(z):=e(z/b)$), and
\begin{align*}
I(z,\vecv)&=\int_{\R^4} w(\vecx)e(zQ_1(\vecx)+\vecv\cdot\vecx)\,d\vecx
\end{align*}
denotes the quadratic exponential integral. Substituting back in our expression for $E$ in \eqref{eq:E1ex}, we reach
\begin{align}\label{E1b}
E=\Bigl(\frac X{qN}\Bigr)^4\sum_{1\leq q\leq X}
\sum_{\vecv\in\ZZ^4} S(q,\vecv)\intr p_{q}(z)
I\biggl(zX^2,\frac X{qN}(qN\vecalf-\vecv)\biggr)\,dz+O_B(\|w\|_{C_0^0}),
\end{align}
where
\begin{equation}\label{eq:sqdef}
S(q,\vecv):=\starsum_{a\bmod{q}}S_{a,q,\vecr}(\vecv)=\starsum_{a\bmod{q}}
\sum_{\substack{0\leq \vecx<qN\\ \vecx\equiv\vecr\mod N}}
e_{qN}\bigl(aNQ(\vecx)+\vecv\cdot\vecx\bigr)
\end{equation}
denotes the complete quadratic exponential sum.

\subsection{Exponential sum bounds}
The exponential sums $S(q,\vecv)$ are well studied complete exponential sums and can be precisely evaluated for a general quadratic form $Q$, for any odd $q$ co-prime to the determinant of the matrix defining its homogeneous quadratic form, as seen in \cite[Lemma 26]{Heath-Brown96}. In the special case when $Q(\vecx)=x_1x_4-x_2x_3-1$, we may simplify the argument and obtain uniform bounds which work for all integers $q$ and any $\vecv\in\ZZ^4$.

By the Fourier inversion formula on $(\Z/N\Z)^4$,
for any $a$ with $\gcd(a,q)=1$, we have the following:
\begin{align}\notag
S_{a,q,\vecr} (\vecv) &=N^{-4}\sum_{0\leq\vecc<N}e_N(\vecr\cdot\vecc)
\sum_{\vecx\in(\Z/qN\Z)^4}e_{qN}\bigl(aN\,Q(\vecx)+\vecv\cdot\vecx\bigr) \,e_N\bigl(-\vecc\cdot\vecx\bigr).
\end{align}
Here the sum over $\vecx$ can be handled by writing
$\vecx=\vecz+q\vecy$ with $0\leq\vecz<q$ and 
$0\leq\vecy<N$. %
Note that $e_{qN}(aN\,Q(\vecz+q\vecy))
=e_q(a\,Q(\vecz+q\vecy))=e_q(a\,Q(\vecz))$, %
and for each $\vecz$ we have
\begin{align*}
\sum_{0\leq\vecy<N}e_{qN}\bigl(\vecv\cdot(\vecz+q\vecy)\bigr)e_N\bigl(-\vecc\cdot(\vecz+q\vecy)\bigr)
=e_{qN}\bigl((\vecv-q\vecc)\cdot\vecz\bigr)\sum_{0\leq\vecy<N}e_{N}\bigl((\vecv-q\vecc)\cdot\vecy\bigr),
\end{align*}
which vanishes unless $\vecv-q\vecc\equiv\bn\mod N$.
Hence we get:
\begin{align}\label{expsumdisc2}
S_{a,q,\vecr}(\vecv)=\sum_{\substack{0\leq\vecc<N\\ (q\vecc\equiv\vecv\mod N)}}
e_N(\vecr\cdot\vecc)\sum_{0\leq \vecz<q}e_q\bigl(a\,Q(\vecz)+\vecw\cdot\vecz\bigr),
\end{align}
where we write $\vecw:=N^{-1}(\vecv-q\vecc)\in\Z^4$.
Here %
\begin{align*}
a\,Q(\vecz)+\vecw\cdot\vecz=z_4(az_1+w_4)+z_3(-az_2+w_3)-a+w_1z_1+w_2z_2.
\end{align*}
For a given $\langle z_1,z_2\rangle$, 
the sum over $\langle z_3,z_4\rangle$ in \eqref{expsumdisc2} vanishes
unless $z_1\equiv -a^{-1}w_4$ and $z_2\equiv a^{-1}w_3\mod q$.
Hence
\begin{align*}
S_{a,q,\vecr}(\vecv)=q^2\sum_{\substack{0\leq\vecc<N\\ (q\vecc\equiv\vecv\mod N)}}
e_N(\vecr\cdot\vecc)e_q\bigl(-a-a^{-1}(w_1w_4-w_2w_3)\bigr),
\end{align*}
and so
\begin{align*}
S(q,\vecv)=q^2\sum_{\substack{0\leq\vecc<N\\ (q\vecc\equiv\vecv\mod N)}}
e_N(\vecr\cdot\vecc)\,\starsum_{a\bmod{q}}e_q\bigl(-a-a^{-1}(w_1w_4-w_2w_3)\bigr).
\end{align*}
Using Weil's bound for Kloosterman sums \cite[(1.60)]{IK}, we reach the following bound:
\begin{equation}
|S(q,\vecv)|\leq N^4\tau(q) q^{5/2},
\end{equation}
where $\tau(q)$ denotes the number of positive divisors of $q$. Substituting this back in \eqref{E1b}, along with the decay property of the functions $p_q$, we obtain the following bound for any $K\geq 1$:
\begin{align}\notag
|E|&\ll_{K,B} \|w\|_{C_0^0}
+X^4\sum_{1\leq q\leq X} \frac{\tau(q)}{q^{3/2}}\intr (1+qX|z|)^{-K}
\left(\sum_{\vecv\in\ZZ^4}
\biggl|I\biggl(zX^2,\frac X{qN}(qN\vecalf-\vecv)\biggr)\biggr|\right)\,dz
\\\label{E1b1}
&\ll \|w\|_{C_0^0}+X^2\sum_{1\leq q\leq X} \frac{\tau(q)}{q^{3/2}}\intr (1+q|z|/X)^{-K}
\left(\sum_{\vecv\in\ZZ^4}
\biggl|I\biggl(z,\frac X{qN}(qN\vecalf-\vecv)\biggr)\biggr|\right)\,dz.
\end{align}

\subsection{Bounding the exponential integral} %
Recall that %
\begin{align*}
I(z,\vecv)&=\int_{\R^4} w(\vecx)e(zQ_1(\vecx)+\vecv\cdot\vecx)\,d\vecx
\qquad (z\in\R,\:\vecv\in\RR^4).
\end{align*}
Let $w_0\in\C^\infty_c(\RR^4)$ be a fixed function satisfying
$\supp(w_0)\subset[0,1]^4$, $w_0\geq0$ and 
$\int_{\R^4} w_0(\vecx)\,d\vecx=1$. 
Note that for any $\delta>0$ we have 
\begin{align*}
w(\vecx)&=\delta^{-4}\int_{\R^4} w(\vecx) w_0(\vecu/\delta)\,d\vecu=\delta^{-4}\int_{\R^4} w(\vecx) w_0((\vecx-\vecu)/\delta)\,d\vecu,
\end{align*}
and thus we may write  %
\begin{align*}
I(z,\v)=\int_{\R^4} \int_{\R^4} \delta^{-4}w_0((\vecx-\vecu)/\delta)w(\vecx)e(zQ_1(\vecx)+\v\cdot\vecx)\, d\vecx \, d\vecu.
\end{align*}
Setting $\vecx=\vecu+\delta \vecy$ we obtain
\begin{align}\notag
|I(z,\v)|&=\biggl|\int_{\R^4} \int_{\R^4} w_0(\y)w(\vecu+\delta\y)e\bigl(zQ_1(\vecu+\delta\y)+\v\cdot(\vecu+\delta \y)\bigr)\,d\y \,d\vecu\biggr|
\\\label{Izvcomp1}
&\leq \int_{\R^4} \left|\int_{\R^4} w_0(\y)\,w(\vecu+\delta\y)\,e\bigl(z\delta^2Q_0(\y)\bigr)\,e\bigl(\delta(z \vecu M_0+\v)\cdot \y\bigr)\,d\y \right|\,d\vecu.
\end{align}
At this point, we choose $\delta=(1+|z|)^{-\frac12}$  %
and note that then $0<\delta\leq1$ and $|z\delta^2|\leq 1$.
Because of the factor $w_0(\vecy)\,w(\vecu+\delta\vecy)$ in the integrand,
and since $\supp(w_0)\subset[0,1]^4$ and $\supp(w)\subset[-B,B]^4$,
the integral over $\vecy$ in \eqref{Izvcomp1} can be restricted to $\vecy\in[0,1]^4$.
Similarly, the integral over $\vecu$ can then be restricted to $\vecu\in[-B_1,B_1]$,
where $B_1=B+1$.
For each $\vecu\in[-B_1,B_1]$ satisfying
$\delta\bigl\|z \vecu M_0+\v\bigr\|\geq1$, we estimate the integral over $\vecy$
by integrating by parts five times with respect to $y_j$
for a suitable $j$ and using the fact that
$\Bigl|\frac{\partial^5}{\partial y_j^5}\Bigl(w_0(\y)\,w(\vecu+\delta\y)\,e\bigl(z\delta^2Q_0(\y)\bigr)\Bigr)\Bigr|
\ll\|w\|_{\C_0^5}$ for all $\vecy\in[0,1]^4$ (since $\delta\leq1$ and $|z\delta^2|\leq1$).
For the remaining vectors 
$\vecu\in[-B_1,B_1]$, we bound the integral over $\vecy$ trivially.
This gives:
\begin{align}\label{eq:Ibound}
|I(z,\v)|
\ll \|w\|_{\C_0^5} \int_{\vecu \in [-B_1,B_1]^4} \Bigl(1+(1+|z|)^{-\frac12}\bigl\|z \vecu M_0+\v\bigr\|\Bigr)^{-5}d\vecu,
\end{align}
for any $z\in \RR$ and $\vecv\in\R^4$.

\subsection{The proof of Proposition \ref*{EXPSUMprop1}} 
\label{pfPropEXPSUMprop1SEC}
Combining \eqref{eq:Ibound} with \eqref{E1b1}, we obtain
\begin{equation}\label{eq:E1b1'}\begin{split}
   &|E|\ll_{K,B} \|w\|_{C_0^0} + \|w\|_{\C_0^5}X^2\sum_{1\leq q\leq X}\frac{\tau(q)}{q^{3/2}}\\&\times\intr \Bigl(1+\frac{q|z|}X\Bigr)^{-K}
\left(\sum_{\vecv\in\ZZ^4}
\int_{\vecu \in [-B_1,B_1]^4} \Bigl(1+(1+|z|)^{-\frac12}\Bigl\|z \vecu M_0+\frac X{qN}(qN\vecalf-\vecv)\Bigr\|\Bigr)^{-5}d\vecu\right)\,dz.\end{split}
\end{equation}
We will prove that the following bound holds
for every integer $1\leq q\leq X$ and for any sufficiently large integer $K$:
\begin{align}\notag
\intr \Bigl(1+\frac{q|z|}X\Bigr)^{-K}
\sum_{\vecv\in\ZZ^4} \int_{\vecu \in [-B_1,B_1]^4} \Bigl(1+(1+|z|)^{-\frac12}
\Bigl\|z \vecu M_0+\frac X{qN}(qN\vecalf-\vecv)\Bigr\|\Bigr)^{-5}\,d\vecu \, dz
\hspace{20pt}
\\\label{eq:1}
\ll_{B,K,N} \biggl(1+\frac{X\| Nq\vecalf\|_{\Z}}{q}\biggr)^{-1}.
\end{align}
After combining this bound
with \eqref{eq:E1b1'}
we get:
\begin{align}\label{almostEXPSUMprop1complete}
|E|
&\ll_{B,K,N}\|w\|_{C_0^0}+\|w\|_{\C^5_0}\,X^{2}\sum_{1\leq q\leq X}
\frac{\tau(q)}{q^{3/2}}
\biggl(1+\frac{X\| Nq\vecalf\|_{\Z}}q\biggr)^{\hspace{-3pt}-1}.
\end{align}
Here we have:
\begin{align*}
\sum_{1\leq q\leq X}\frac{\tau(q)}{q^{3/2}}\biggl(1+\frac{X\| Nq\vecalf\|_{\Z}}q\biggr)^{\hspace{-3pt}-1}
&\ll_N \sum_{1\leq q\leq X}\frac{\tau(Nq)}{(Nq)^{3/2}}\biggl(1+\frac{X\| Nq\vecalf\|_{\Z}}{Nq}\biggr)^{\hspace{-3pt}-1}
\\
&\leq \sum_{q=1}^{\infty}\frac{\tau(q)}{q^{3/2}}\biggl(1+\frac{X\| q\vecalf\|_{\Z}}q\biggr)^{\hspace{-3pt}-1}
\\
&\leq\sum_{1\leq q\leq X}\frac{\tau(q)}{q^{3/2}}\biggl(1+\frac{X\| q\vecalf\|_{\Z}}q\biggr)^{\hspace{-3pt}-1}
+O\Bigl(X^{-\frac12}\log(X+1)\Bigr)
\\
&\ll\sum_{1\leq q\leq X}\frac{\tau(q)}{q^{3/2}}\biggl(1+\frac{X\| q\vecalf\|_{\Z}}q\biggr)^{\hspace{-3pt}-1}.
\end{align*}
Here the inequality in the third line above holds by Lemma \ref{basicboundLEM1},
and the last bound follows from the lower bound $\sum_{\frac12X\leq q\leq X}\tau(q)q^{-\frac32}\bigl(1+X\| q\vecalf\|_{\Z}/q\bigr)^{-1}
\gg X^{-\frac12}\log(X+1)$,
which was noted in the proof of 
Corollary \ref{EXPSUMcor1}.
Therefore, in order to establish Proposition \ref{EXPSUMprop1}, it would be enough to establish \eqref{eq:1}.

To start the proof of \eqref{eq:1},
we split the integral over $z$ into two parts, 
$\{|z|\geq \frac{X}{8B_1Nq}\}$ and $\{|z|<\frac{X}{8B_1Nq}\}$, 
and estimate the corresponding contributions separately.
We begin with the case $|z|\geq \frac{X}{8B_1Nq}$.
Given any fixed $z\in\R$ satisfying $|z|\geq \frac{X}{8B_1Nq}$,
we will split the sum over $\vecv\in\Z^4$ into two parts: 
We call $\vecv$ {\em good} if 
$\frac X{Nq}\bigl\|Nq\vecalf-\vecv\bigr\|\geq 3B_1|z|$ and {\em bad} otherwise. 
Note that $\|z\vecu M_0\|\leq 2B_1|z|$ %
for any $\vecu\in[-B_1,B_1]^4$. Therefore, for every good $\vecv$, we have
$\bigl\|z \vecu M_0+\frac X{Nq}(Nq\vecalf-\vecv)\bigr\|\geq \frac X{3Nq}\bigl\|Nq\vecalf-\vecv\bigr\|$.
As a result,
\begin{align}\notag
\sum_{\substack{\vecv\in\ZZ^4\\ \vecv \textrm{ good}}}
\int_{\vecu \in [-B_1,B_1]^4} 
&\Bigl(1+(1+|z|)^{-\frac12}\Bigl\|z \vecu M_0+\frac X{Nq}(Nq\vecalf-\vecv)\Bigr\|\Bigr)^{-5}\,d\vecu
\\\label{vsumuintbound}
&\ll_{N} \frac{(1+|z|)^{\frac 52}q^5}{X^5}\int_{\vecu \in [-B_1,B_1]^4} \sum_{\substack{\vecv\in\ZZ^4\\ \vecv \textrm{ good}}}
\bigl\|Nq\vecalf-\vecv\bigr\|^{-5}d\vecu
\ll_{B,N} \frac{q^5|z|^{\frac 52}}{X^5}, %
\end{align}
where in the last bound we used the fact that
$\bigl\|Nq\vecalf-\vecv\bigr\|\geq \frac{3B_1Nq|z|}{X}\geq\frac38$
for every good $\vecv$,
and also that $|z|\gg_{B,N}1$ 
since $|z|\geq \frac{X}{8B_1Nq}$.
On the other hand, for every bad vector $\vecv$,
we estimate the integral over $\vecu$ trivially by expanding it to the whole of $\R^4$
and making a change of variables 
$\vecu_1=(1+|z|)^{-\frac12}\bigl(z \vecu M_0+\frac X{Nq}(Nq\vecalf-\vecv)\bigr)$:
\begin{align}\notag
\sum_{\substack{\vecv\in\ZZ^4\\ \vecv \textrm{ bad}}}
\int_{\vecu \in [-B_1,B_1]^4} 
\Bigl(1+(1+|z|)^{-\frac12}\Bigl\|z \vecu M_0+\frac X{Nq}(Nq\vecalf-\vecv)\Bigr\|\Bigr)^{-5}\,d\vecu
\hspace{60pt}
\\\label{ztoz1change}
\ll_B\sum_{\substack{\vecv\in\ZZ^4\\ \vecv \textrm{ bad}}}
\frac{(1+|z|)^2}{z^4}
\int_{\vecu_1 \in \R^4}(1+\|\vecu_1\|)^{-5}\,d\vecu_1
\ll_{B,N} z^{-2}\Bigl(1+\frac{q|z|}X\Bigr)^4
\ll_{B,N} \frac{q^4z^2}{X^4},
\end{align}
where we used
the fact that the number of bad vectors $\vecv\in\Z^4$ is 
$\ll \bigl(1+\frac{B_1Nq|z|}X\bigr)^4\ll_{B,N} \bigl(1+\frac{q|z|}X\bigr)^4$,
and furthermore $|z|\gg_{B,N}1$ and $\frac{q|z|}X\gg_{B,N}1$,
since $|z|\geq \frac{X}{8B_1Nq}$.
Combining the bounds for both good and bad vectors $\vecv$,
we obtain the following, for any $K\geq4$:
\begin{align}\notag
\int_{|z|\geq \frac{X}{8B_1Nq}} 
\Bigl(1+\frac{q|z|}X\Bigr)^{-K}
\sum_{\vecv\in\ZZ^4} \int_{\vecu \in [-B_1,B_1]^4} \Bigl(1+(1+|z|)^{-\frac12}
\Bigl\|z \vecu M_0+\frac X{Nq}(Nq\vecalf-\vecv)\Bigr\|\Bigr)^{-5}\,d\vecu \, dz
\\ \notag
\ll_{B,N}\int_{|z|\geq \frac{X}{8B_1Nq}} \Bigl(1+\frac{q|z|}X\Bigr)^{-K}
\biggl(\frac{q^5|z|^{\frac 52}}{X^5}+ \frac{q^4z^2}{X^4}\biggr)\,dz
\hspace{150pt}
\\
\ll \frac{X}{q}\int_{|t|\geq \frac1{8B_1N}}(1+|t|)^{-K}
\biggl(\frac{q^{\frac52}|t|^{\frac52}}{X^{\frac52}}+ \frac{q^2|t|^2}{X^2}\biggr)\,dt
\ll \frac{q}{X}
\ll \biggl(1+\frac {X\|Nq\vecalf\|_{\Z}}q\biggr)^{-1}.    %
\end{align}
Hence we have proved that the contribution from $\{|z|\geq \frac{X}{8B_1Nq}\}$ in \eqref{eq:1}
indeed satisfies the stated bound.

\vspace{5pt}

We next turn to the remaining part $\bigl\{|z|<\frac{X}{8B_1Nq}\bigr\}$.  
For $z$ in this range, $\|z\vecu M_0\|<\frac X{4Nq}$
for any $\vecu\in [-B_1,B_1]^4$.
Choose $\vecv_0\in\Z^4$ so that the minimum of 
$\bigl\|Nq\vecalf-\vecv\bigr\|$ over $\vecv\in\Z^4$ is 
attained for $\vecv=\vecv_0$.
Then for every $\vecv\in\Z^4\setminus\{\vecv_0\}$
we have $\bigl\|Nq\vecalf-\vecv\bigr\|\geq  \frac12$,
and thus
$\Bigl\|z \vecu M_0+\frac X{Nq}(Nq\vecalf-\vecv)\Bigr\|
\geq \frac X{2Nq}\bigl\|Nq\vecalf-\vecv\bigr\|$
for all $\vecu\in [-B_1,B_1]^4$.
Therefore, for any fixed $z$ with $|z|<\frac{X}{8B_1Nq}$,
\begin{align}\notag
\sum_{\substack{\vecv\in\ZZ^4\\ \vecv\neq \vecv_0}}
\int_{\vecu \in [-B_1,B_1]^4} \Bigl(1+(1+|z|)^{-\frac12}
\Bigl\|z \vecu M_0+\frac X{Nq}(Nq\vecalf-\vecv)\Bigr\|\Bigr)^{-5}\,d\vecu 
\hspace{50pt}
\\\notag
\ll_{B}\sum_{\substack{\vecv\in\ZZ^4\\ \vecv\neq \vecv_0}} 
(1+|z|)^{\frac52} \biggl\|\frac X{Nq}(Nq\vecalf-\vecv)\biggr\|^{-5}
\ll_N\frac{q^5}{X^5}(1+|z|)^{\frac52}.
\end{align}
The contribution from these terms to \eqref{eq:1} is
\begin{align*}
\ll_{B,N} \int_{|z|<\frac{X}{8B_1Nq}}
\frac{q^5}{X^5}(1+|z|)^{\frac52}\,dz
\ll \frac{q^{3/2}}{X^{3/2}}
\ll \biggl(1+\frac {X\|Nq\vecalf\|_{\Z}}q\biggr)^{-1},
\end{align*}
i.e.\ this contribution also satisfies the bound
claimed in \eqref{eq:1}.

It now remains to estimate
\begin{equation}\label{remainstoestimate}
\int_{|z|<\frac{X}{8B_1Nq}}\int_{\vecu \in [-B_1,B_1]^4}
\Bigl(1+(1+|z|)^{-\frac12}\Bigl\|z \vecu M_0+\frac X{Nq}(Nq\vecalf-\vecv_0)\Bigr\|\Bigr)^{-5}\, d\vecu \, dz.
\end{equation}
We estimate the inner integral trivially when $|z|\leq1$,
while for $|z|>1$ we expand the inner integral to the whole of $\R^4$
and make the same variable change $\vecu\leftrightarrow\vecu_1$ as in \eqref{ztoz1change}.
Then we obtain that \eqref{remainstoestimate} is
\begin{align}\notag
&\ll B_1^4+\int_{|z|>1}z^{-2}\int_{\vecu_1\in\R^4}(1+\|\vecu_1\|)^{-5}\,d\vecu_1
\ll_B 1.
\end{align}
This suffices for our goal of proving the bound in \eqref{eq:1} 
if $\frac{X\| Nq\vecalf\|_{\Z}}{q}\leq 1$. 
Hence, we will now assume that $\frac{X\| Nq\vecalf\|_{\Z}}{q}>1$. 
Let us expand the integral over $z$ in \eqref{remainstoestimate}
to the whole of $\R$, and then split it into the two parts
$\{|z|\leq\frac{X\| Nq\vecalf\|_{\Z}}{4B_1Nq}\}$ and 
$\{|z|>\frac{X\| Nq\vecalf\|_{\Z}}{4B_1Nq}\}$.
In the first range,
since $\|\vecu M_0\|\leq2B_1$ for all $\vecu\in[-B_1,B_1]^4$,
we have $\bigl\|z \vecu M_0\bigr\|\leq\frac X{2Nq}\|Nq\vecalf\|_{\Z}
=\frac X{2Nq}\bigl\|Nq\vecalf-\vecv_0\bigr\|$,
and so the contribution to \eqref{remainstoestimate} from these $z$ is
\begin{align*}
&\ll\int_{|z|\leq\frac{X\| Nq\vecalf\|_{\Z}}{4B_1Nq}}\int_{\vecu \in [-B_1,B_1]^4}
\biggl((1+|z|)^{-\frac12} \frac X{Nq}\bigl\|Nq\vecalf\bigr\|_{\Z}\biggr)^{-5}\,d\vecu \, dz
\\
&\ll_{B,N} 
\biggl(\frac Xq\|Nq\vecalf\|_{\Z}\biggr)^{-5}
\int_{|z|\leq\frac{X\| Nq\vecalf\|_{\Z}}{4B_1Nq}}
(1+|z|)^{\frac 52}\, dz
\ll_{B,N}\biggl(\frac Xq\|Nq\vecalf\|_{\Z}\biggr)^{-\frac 32},
\end{align*}
where in the last bound we used the fact that 
$\frac{X\| Nq\vecalf\|_{\Z}}{4B_1Nq}\gg_{B,N}1$,
since we are currently assuming
$\frac{X\|Nq\vecalf\|_{\Z}}{q}>1$.
In the remaining range, we again estimate the integral over $\vecu$ by expanding it to whole of $\RR^4$
and making
the same variable change $\vecu\leftrightarrow\vecu_1$ as in \eqref{ztoz1change};
this gives that the contribution to \eqref{remainstoestimate} from these $z$ is
\begin{align*}
\ll_{B,N}\int_{|z|>\frac{X\| Nq\vecalf\|_{\Z}}{4B_1Nq}} z^{-2}
\int_{\RR^4}(1+\|\vecu_1\|)^{-5}\,d\vecu_1\, dz
\ll_{B,N} &\left(\frac{X\| Nq\vecalf\|_{\Z}}{q}\right)^{-1}
\\
&\ll \left(1+\frac{X\| Nq\vecalf\|_{\Z}}{q}\right)^{-1},
\end{align*}
where the last bound holds since 
$\frac{X\|Nq\vecalf\|_{\Z}}{q}>1$.
This completes the proof of the bound in \eqref{eq:1},
and hence also the proof of Proposition \ref{EXPSUMprop1}.
\hfill$\square$

\newpage

\section{The contribution from $A_k$- and $B_k$-orbits}
\label{midapproachSEC}
\label{BKSEC}

In this section we will establish cancellation in each of the sums
appearing in the last two lines of \eqref{MAINSTEP1},
thereby allowing us to conclude
the proof of Theorem \ref{MAINTHM3gen}.
The crucial ingredient will be the exponential sum bound from
the previous section, %
Corollary \ref{EXPSUMcor1}.
However, non-trivial work will be needed to 
put the sums in a format amenable to application of 
Corollary~\ref{EXPSUMcor1}, and to further control the 
derivatives appearing in the bound %
which the corollary provides.

\subsection{Bounding a sum involving a general test function on $\SL(2,\R)$}

We will start by proving a bound on a sum of the form
\begin{align}
\sum_{\smatr abcd\in[R]}
e(\alpha_1a+\alpha_2b+\alpha_3c+\alpha_4d)
\int_{-\infty}^{\infty} \phi\left(\matr abcd M\mathrm{u}_x \mathrm{a}_y \right)h(x)\,dx,
\end{align}
where $\phi$ belongs to an appropriate class of test functions on $\SL(2,\R)$.
Note that a sum of this form appears as the innermost sum in the last line of 
\eqref{MAINSTEP1}.

Let $X_1,X_2,X_3$ be the following elements in the Lie algebra $\lsl(2,\R)$:
\begin{align}\label{X1X2X3def}
X_1=\matr 0100,\qquad
X_2=\matr0010,\qquad
X_3=\matr100{-1}.
\end{align}
These elements are also viewed as left invariant vector fields 
on $\G'=\SL(2,\R)$.
For any $r\in\R_{\geq0}$ and $m\in\Z_{\geq0}$,
let us introduce the following norm on functions $\phi\in\C^m(\G')$,
somewhat analogous to the norm in \eqref{CMNORMDEF}:
\begin{align}\label{CrmnormonSL2def}
\|\phi\|_{\C_r^m}:=\sum_{\ord(D)\leq m}\,\sup_{T\in \G'}\,\|T\|^r\bigl|(D\phi)(T)\bigr|. %
\end{align}
Here the sum is taken over all
monomials $D$ in $X_1,X_2,X_3$ of degree $\leq m$,
and $\|T\|$ is the Frobenius matrix norm
(see \eqref{Frobnorm}).
Also let us write $\C_r^m(\G')$ for the space of all functions 
$\phi\in\C^m(\G')$ with $\|\phi\|_{\C_r^m}<\infty$.

\begin{prop}\label{AkBkgenPROPMgen}
For any $\phi\in \C_{311}^5(\G')$,
$h\in\C_{43}^{5}(\R)$, %
$M\in\G'$, $\vecalf\in\R^4$ and $0<y\leq1$,
\begin{align}\notag
\sum_{\smatr abcd\in[R]}
e(\alpha_1a+\alpha_2b+\alpha_3c+\alpha_4d)
\int_{-\infty}^{\infty} \phi\left(\matr abcd M\mathrm{u}_x \mathrm{a}_y \right)h(x)\,dx
\hspace{60pt}
\\\label{AkBkgenPROPMgenres}
\ll_N \|\phi\|_{\C_{311}^5}\, \|h\|_{\C_{43}^5}\,\|M\|^{13}
\, \sum_{1\leq q\leq y^{-1/2}}\frac{\tau(q)}{q^{3/2}}
\biggl(1+\frac{\| q\vecalf\|_{\Z}}{q\sqrt y}\biggr)^{\hspace{-3pt}-1}.
\end{align}
\end{prop}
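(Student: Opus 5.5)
The plan is to rewrite the left-hand side of \eqref{AkBkgenPROPMgenres} as a single application of Corollary \ref{EXPSUMcor1} with cut-off parameter $X=y^{-1/2}$. Define
\begin{align*}
F(\veca):=\int_{-\infty}^{\infty}\phi\left(\matr{a_1}{a_2}{a_3}{a_4}M\mathrm{u}_x\mathrm{a}_y\right)h(x)\,dx,
\end{align*}
and set $F_y(\vecx):=F(y^{-1/2}\vecx)$. Then the sum becomes $\sum_{\veca\in[R]}e(\vecalf\cdot\veca)F_y(\veca/X)$. The needed bound then follows from Corollary \ref{EXPSUMcor1}, provided we can show
\begin{align*}
\|F_y\|_{\C^5_{7+\ve}}\ll\|\phi\|_{\C^5_{311}}\|h\|_{\C^5_{43}}\|M\|^{13}\,y.
\end{align*}
The factor $y$ cancels $X^2=y^{-1}$. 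The sum over $q\le X$ in the corollary has exactly the form of the right-hand side of \eqref{AkBkgenPROPMgenres}, since $X\|q\vecalf\|_{\Z}/q=\|q\vecalf\|_{\Z}/(q\sqrt y)$.

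A problem is that $F$ is only defined on the determinant-one locus $\{a_1a_4-a_2a_3=1\}$, while Corollary \ref{EXPSUMcor1} needs a smooth function on all of $\R^4$. I would extend $F$ off that locus by a smooth retraction. For $\vecx$ with $\det\vecx$ near $1$, replace the matrix $\vecx$ by $\vecx/\sqrt{\det \vecx}$, and multiply by a cutoff $\chi(\det\vecx)$ that equals $1$ near $\det=1$ and is supported in $\det\in[\frac12,2]$. Under the scaling $\vecx\mapsto y^{-1/2}\vecx$ the determinant becomes $y^{-1}\det\vecx$, so the cutoff must be taken as $\chi(y^{-1}\det\vecx)$. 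This cutoff has large derivatives, but only in the direction normal to the quadric; I expect these derivatives to be absorbed by the $y$-powers coming from the polynomial weights.

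The core estimate is then a pointwise bound on $F$ and on its derivatives along $\SL(2,\R)$, with polynomial decay in $\|T\|$. Write $T=\smatr abcd$ and $TM=\smatr{a'}{b'}{c'}{d'}$. Then
\begin{align*}
TM\mathrm{u}_x\mathrm{a}_y=\matr{\sqrt y\,a'}{(a'x+b')/\sqrt y}{\sqrt y\,c'}{(c'x+d')/\sqrt y}.
\end{align*}
Its Frobenius norm is large unless $|a'|,|c'|\ll y^{-1/2}$, $x$ lies near $-b'/a'$ (or $-d'/c'$), and the residual $|a'x+b'|,|c'x+d'|\ll\sqrt y$. Using the decay of $\phi$ (weight $\|\cdot\|^{311}$) together with the decay of $h$ (weight $(1+|x|)^{43}$), I would show
\begin{align*}
|F(T)|\ll \|\phi\|\,\|h\|\,\|M\|^{O(1)}\sqrt y\,\bigl(1+\sqrt y\,\|TM\|\bigr)^{-A}.
\end{align*}
Here the factor $\sqrt y$ comes from the $x$-window of length about $\sqrt y/\max(|a'|,|c'|)$ inside the smallest column scale. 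Converting $\|TM\|$ back to $\|T\|$ costs powers of $\|M\|$, which is where the factor $\|M\|^{13}$ should come from.

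Derivatives of $F_y$ in the ambient directions are handled as follows. Express the ambient derivatives through the left-invariant fields $X_1,X_2,X_3$ applied to $\phi$, plus the radial direction handled by the retraction. Each derivative of $F_y$ at scale $X$ multiplies by at most $y^{-1/2}\|TM\|$-type factors. The $x$-integration can be used, by integrating by parts in $x$ against $h$, to trade the $\mathrm{u}$-direction derivatives. Doing this in a bookkeeping-heavy way yields the exponents $311$, $43$ and $13$.

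\textbf{Main obstacle.} The hardest step is this last derivative estimate: obtaining, uniformly in $y$, enough decay ($7+\ve$ in $\vecx$) for five derivatives of $F_y$, including the normal cutoff $\chi(y^{-1}\det\vecx)$, with only polynomial loss in $\|M\|$. I would first try to work in Iwasawa-type coordinates adapted to $TM$. In those coordinates, $X_1$-derivatives are absorbed by integrating by parts in $x$. $X_2$ and $X_3$ cost factors of $\|TM\|$, and those factors are paid for by the heavy weight $311$.
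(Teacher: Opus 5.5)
There is a genuine gap, and it sits exactly at the step you flag as the main obstacle: the extension off the quadric. You apply Corollary \ref{EXPSUMcor1} in the scaled variable $\vecx=\sqrt y\,\veca$. There the quadric is $\{\det\vecx=y\}$, the relevant mass sits at $\|\vecx\|\asymp 1$, and the weight $(1+\|\vecx\|)^{7+\ve}$ contains no power of $y$.

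Your cutoff $\chi(y^{-1}\det\vecx)$ localizes to the shell $\{y/2\le\det\vecx\le 2y\}$. Near $\|\vecx\|\asymp1$ this shell has normal thickness $\asymp y$, since $\|\nabla\det\vecx\|=\|\vecx\|$, so each normal derivative costs $y^{-1}$. The radial retraction $\vecx\mapsto\sqrt y\,\vecx/\sqrt{\det\vecx}$ has the same defect. A normal displacement $\epsilon$ moves the image point by $\approx\epsilon\|\vecx\|/(2y)$ along the quadric, in a direction where the restricted function (of size $y$) generally has derivative of size $y$. Hence $\|F_y\|_{\C^5_{7+\ve}}$ is of order $y\cdot y^{-5}=y^{-4}$ rather than $\ll y$, a loss of $y^{-5}$.

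No polynomial weight absorbs this. The proof of Proposition \ref{EXPSUMprop1} needs $\C^5$ control in all four directions at unit scale, because it integrates by parts at scale $\delta=(1+|z|)^{-1/2}$ in the exponential integral. Widening the cutoff to unit scale does not rescue the construction either: $\vecx/\sqrt{\det\vecx}$ is singular on $\{\det\vecx=0\}$, which lies within distance $O(y)$ of the quadric.

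What is missing is an extension that is smooth at unit scale in $\vecx$, uniformly in $y$. The paper uses the coordinates $[u,v,s]$ on $\G'$ (first column $(u,v)$, followed by $\mathrm{u}_s$) and sets $F(\vecx)=y\,\omega(x_1x_4-x_2x_3)\int\phi([x_1,x_3,s])\,h\bigl(ys-\frac{x_1x_2+x_3x_4}{x_1^2+x_3^2}\bigr)\,ds$, with $\omega\equiv1$ on $[0,1]$.

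This formula involves only the first column and the quantity $\frac{x_1x_2+x_3x_4}{x_1^2+x_3^2}$, both defined for every value of the determinant. So the cutoff lives at scale $1$, and it equals $1$ at every point $\sqrt y\,\veca$ with $\det\veca=1$. The only singular set is $(x_1,x_3)=\bn$. There the decay of $\phi$ (via \eqref{matrnormformula1}) supplies the factor $\bigl(1+(x_1^2+x_3^2)^{-1}\bigr)^{-k_2}$ in Lemma \ref{MAJORANTINEQLEM1}. That factor beats the powers $(x_1^2+x_3^2)^{-2|\vecalf|}$ produced by \eqref{Lieder1}--\eqref{Lieder2}, and no integration by parts in $x$ is needed. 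The matrix $M$ is handled afterwards by composing with the linear map $R_M$ (Lemma \ref{FMconnecttoFLEM}); that is where $\|M\|^{5+8}$ comes from.

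Your pointwise bound is also off by $\sqrt y$. The $x$-window has length $\sqrt y/\max(|a'|,|c'|)$, which is $\asymp y$ precisely when $\max(|a'|,|c'|)\asymp y^{-1/2}$, so the integral is $O(y)$. A bound of only $\sqrt y$ would lose $y^{-1/2}$ on its own after multiplying by $X^2=y^{-1}$.
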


To prepare for the proof of Proposition \ref{AkBkgenPROPMgen},
for any real numbers $u,v,s$ with $(u,v)\neq(0,0)$, 
we introduce the short-hand notation 
$[u,v,s]$
to denote the matrix
\begin{align*}
[u,v,s]:=\begin{pmatrix}u & -v/(u^2+v^2) %
\\[3pt] v & u/(u^2+v^2) %
\end{pmatrix}\matr 1s01 \in\G'.
\end{align*}
Note that the map $(u,v,s)\mapsto[u,v,s]$
is a diffeomorphism of $(\R^2\setminus\{\bn\})\times\R$ onto $\G'$. For any matrix $\smatr abcd$ in $\G'$, we have
\begin{align*}
\matr abcd = \left[ a,c,\:\frac{ab+cd}{a^2+c^2}\right].
\end{align*}
Using $(u,v,s)$ as coordinates, %
$\frac{\partial}{\partial u}$
and $\frac{\partial}{\partial v}$ and $\frac{\partial}{\partial s}$
are vector fields on $\G'$.
It is straightforward to check the following relations:
\begin{align}\label{Lieder1}
\frac{\partial}{\partial s}=X_1;
\qquad
\frac{\partial}{\partial u}=
\biggl(\frac{v}{(u^2+v^2)^2}+\frac{2us}{u^2+v^2}+vs^2\biggr)X_1
-vX_2+\biggl(\frac{u}{u^2+v^2}+vs\biggr)X_3,
\end{align}
and
\begin{align}\label{Lieder2}
\frac{\partial}{\partial v}=
\biggl(\frac{-u}{(u^2+v^2)^2}+\frac{2vs}{u^2+v^2}-us^2\biggr)X_1
+uX_2+\biggl(\frac{v}{u^2+v^2}-us\biggr)X_3.
\end{align}

\vspace{5pt}

 Let $\phi$ and $h$ be given as in the statement of Proposition \ref{AkBkgenPROPMgen}.
Let us fix a function $\omega\in\C_c^{\infty}(\R)$ such that $\omega(t)=1$ for all $0\leq t\leq1$
and $\omega(t)=0$ whenever $|t|\geq2$.
For given $0<y\leq1$, we define the function
$F:\R^4\to\CC$ by
$F(\vecx)=0$ if $(x_1,x_3)=\bn$, and otherwise
\begin{align}\label{midapproach1}
F(\vecx)=y\,\omega\bigl(x_1x_4-x_2x_3\bigr)\int_{-\infty}^{\infty} \phi\bigl(\big[ x_1,x_3,s\big]\bigr)\,
h\biggl(ys-\frac{x_1x_2+x_3x_4}{x_1^2+x_3^2}\biggr)\,ds.
\end{align}
(We will see below that the integral is absolutely convergent.)
Let us note that for every $\matr abcd\in\G'$, we have
$y(ad-bc)=y\in(0,1]$ and therefore $\omega\bigl(y(ad-bc)\bigr)=1$.
As a result,
\begin{align}\label{midapproach1a}
F\bigl(\sqrt ya,\sqrt yb,\sqrt yc,\sqrt yd\bigr)
&=y\int_{-\infty}^{\infty} \phi\bigl(\big[\sqrt y a,\sqrt y c,s\big]\bigr)
h\biggl(ys-\frac{ab+cd}{a^2+c^2}\biggr)\,ds
\hspace{40pt}
\\\notag
&=\int_{-\infty}^{\infty} \phi\left( %
\matr abcd\matr{\sqrt y}{x/\sqrt y}0{1/\sqrt y}\right)h(x)\,dx
\hspace{20pt}
\\\notag
&=\int_{-\infty}^{\infty} \phi\left( %
\matr abcd \mathrm{u}_x \mathrm{a}_y \right)h(x)\,dx,
\end{align}
where we substituted 
$s=y^{-1}\bigl(\frac{ab+cd}{a^2+c^2}+x\bigr)$.
In particular, it follows that in the special case $M=1_2$, the left-hand side of
\eqref{AkBkgenPROPMgenres} equals
\begin{align}\label{midapproach2}
\sum_{\smatr abcd\in[R]} e\bigl(\alpha_1 a+\alpha_2 b+\alpha_3c+\alpha_4d\bigr)
F\bigl(\sqrt ya,\sqrt yb,\sqrt yc,\sqrt yd\bigr).
\end{align}
We will start by proving Proposition \ref{AkBkgenPROPMgen} in the special case $M=1_2$,
and later we will extend the proof to the case of general $M$.

\begin{lem}\label{midapproachLEM1}
Let $n\in\NN$ and $a\in\R_{\geq0}$,
and set $r=53n+5a+6$ and $r'=7n+a$.
Then for any 
$\phi\in\C_r^{n}(\G')$, $h\in\C_{r'}^n(\R)$ %
and $0<y\leq1$,
the function $F$ defined in
\eqref{midapproach1} 
lies in $\C_a^n(\R^4)$ and satisfies
\begin{align}\label{midapproachLEM1res}
\|F\|_{\C_a^n}\ll_{n,a}
\|\phi\|_{\C_r^n} \|h\|_{\C_{r'}^n} \cdot y.
\end{align}
\end{lem}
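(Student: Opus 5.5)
The plan is to change variables in the $s$-integral so that $\vecx$ enters only through the group element, and then estimate the resulting integrand and its $\vecx$-derivatives directly.

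\textbf{Reduction to a group-valued integrand.} For $\vecx$ with $(x_1,x_3)\neq\bn$ put $\sigma(\vecx):=\frac{x_1x_2+x_3x_4}{x_1^2+x_3^2}$. Substituting $s=\sigma(\vecx)+t$ in \eqref{midapproach1} gives
\begin{align*}
F(\vecx)=y\,\omega\bigl(x_1x_4-x_2x_3\bigr)\int_{\R}\phi\bigl([x_1,x_3,\sigma(\vecx)+t]\bigr)\,h\bigl(yt+(y-1)\sigma(\vecx)\bigr)\,dt .
\end{align*}
It is simpler, though, to keep $s$ as the variable. The point is that $[x_1,x_3,s]$ has Frobenius norm squared
\begin{align*}
\|[x_1,x_3,s]\|^2=\rho^2(1+s^2)+\rho^{-2},\qquad \rho^2=x_1^2+x_3^2 .
\end{align*}
The weight $\|T\|^{r}$ in $\|\phi\|_{\C^n_r}$ therefore gives decay $\ll \|\phi\|_{\C_r^0}\,(\rho^2(1+s^2)+\rho^{-2})^{-r/2}$. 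This simultaneously handles $\rho\to0$, $\rho\to\infty$ and $|s|\to\infty$, so the integral converges absolutely.

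\textbf{Support and the weight $(1+\|\vecx\|)^a$.} On $\supp F$ we have $|x_1x_4-x_2x_3|\le2$. Hence $|x_2|+|x_4|\ll \rho^{-1}(1+\rho|\sigma|)$, since $(x_2,x_4)$ is determined by the projection $\sigma$ onto $(x_1,x_3)$ and the perpendicular component $(x_1x_4-x_2x_3)/\rho$. It follows that
\begin{align*}
1+\|\vecx\|\ll \rho+\rho^{-1}+|\sigma| .
\end{align*}
To absorb $|\sigma|$ I use $h$. Either $|ys-\sigma|\ge|\sigma|/2$, in which case $(1+|ys-\sigma|)^{-r'}$ pays for it, or $|s|\ge|\sigma|/(2y)\ge|\sigma|/2$, in which case the $\phi$-decay in $s$ pays. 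Thus
\begin{align*}
(1+\|\vecx\|)^{a}|F(\vecx)|\ll y\,\|\phi\|_{\C^0_r}\|h\|_{\C^0_{r'}}\int(\rho^2(1+s^2)+\rho^{-2})^{-(r-a)/2}\,ds\ll y ,
\end{align*}
using $\int(\rho^2s^2+\rho^{-2})^{-c}\,ds\ll\rho^{2c-1}$ for small $\rho$ and the bound $\rho^{-1}$ for large $\rho$. Both are fine once $r$ is large.

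\textbf{Derivatives.} The main obstacle is the $\vecx$-derivatives of order $\le n$. Differentiating produces four kinds of factor:
\begin{itemize}
\item derivatives of $\omega(x_1x_4-x_2x_3)$, which cost polynomial factors in $\|\vecx\|$;
\item derivatives of $h(ys-\sigma(\vecx))$, which give $h^{(j)}$ times derivatives of $\sigma$, each bounded by $(\rho^{-1}+\rho+|\sigma|)^{O(1)}\ll(1+\|\vecx\|)^{O(1)}(\rho+\rho^{-1})^{O(1)}$;
\item $\partial_{x_1}$ and $\partial_{x_3}$ acting on $\phi([x_1,x_3,s])$, which I convert to left-invariant fields via \eqref{Lieder1} and \eqref{Lieder2};
\item the coefficients in \eqref{Lieder1} and \eqref{Lieder2}, which are polynomials of degree $\le3$ in $u,v,s,(u^2+v^2)^{-1}$ and so are bounded by $\|[u,v,s]\|^{O(1)}$, since $|s|,\rho,\rho^{-1}\le\|T\|^{O(1)}$.
\end{itemize}
Iterating $n$ times, each derivative loses at most a bounded power of $\|T\|$ together with a bounded power of $(1+\|\vecx\|)$. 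The $(1+\|\vecx\|)$ powers are absorbed as in the previous step by trading them against $\|T\|$ and against the $h$-weight. This is the source of the linear-in-$n$ exponents $r=53n+5a+6$ and $r'=7n+a$: roughly $7$ powers of $h$-weight per derivative, and a larger number of powers of $\|T\|$ per derivative from the Lie-derivative coefficients and the $\rho^{\pm1}$ factors. I would not optimize these exponents; I would only check that with these values every resulting integral $\int(\rho^2(1+s^2)+\rho^{-2})^{-c}\,ds$ has $c$ large enough to be uniformly bounded. That yields \eqref{midapproachLEM1res}, with the overall factor $y$ coming from the prefactor in \eqref{midapproach1}. Continuity of the derivatives, including as $(x_1,x_3)\to\bn$ where $F$ is set to $0$, follows from the same decay bounds by dominated convergence.
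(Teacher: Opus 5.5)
\textbf{Verdict.} Your strategy is the paper's, but as written it contains one false inequality and defers the exponent count that is the actual content of the lemma.

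\textbf{Agreement with the paper.} You differentiate under the integral sign using \eqref{Lieder1}--\eqref{Lieder2} and bound $D\phi$ by $\|T\|^{-r}$ with $\|T\|^2=\rho^2(1+s^2)+\rho^{-2}$. You use $|x_1x_4-x_2x_3|\le2$ to control $\|\vecx\|$, and pay for $|\sigma|$ with either the $h$-weight or $|s|$. Your case split is the inequality $1+|\sigma|\le(1+|ys-\sigma|)(1+|s|)$ used in Lemma~\ref{MAJORANTINEQLEM1}.

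\textbf{The false inequality.} Write $(x_2,x_4)$ as $\sigma\,(x_1,x_3)$ plus a vector orthogonal to $(x_1,x_3)$ of length at most $2/\rho$. The parallel component then has length $\rho|\sigma|$, not $|\sigma|$. Hence only $1+\|\vecx\|\ll(\rho+\rho^{-1})(1+|\sigma|)$ holds. Your bound $1+\|\vecx\|\ll\rho+\rho^{-1}+|\sigma|$ fails, e.g.\ for $\vecx=(R,R^2,0,0)$, where $\sigma=R$ and $\|\vecx\|\asymp R^2$. The repair costs only extra powers of $\|T\|$, since your case split works equally well against a product. A smaller slip: $\int_\R(\rho^2s^2+\rho^{-2})^{-c}\,ds\asymp\rho^{2c-2}$, not $\rho^{2c-1}$.

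\textbf{The missing count.} The lemma asserts the specific exponents $r=53n+5a+6$ and $r'=7n+a$, and you defer exactly this verification. It does close along your route:
\begin{itemize}
\item Each $\partial_{x_i}$ contributes a factor $(x_1^2+x_3^2)^{-2}$. It raises the degree of a polynomial coefficient in $(\vecx,s)$ by at most $7$: the term $vs^2$ in \eqref{Lieder1}, cleared of the denominator $(u^2+v^2)^2$, has degree $7$. Derivatives of $\sigma$, of $\omega(x_1x_4-x_2x_3)$ and of the accumulated denominator cost less. This gives \eqref{midapproach3}.
\item For $|\vecalf|=m\le n$, the quantity $(1+\|\vecx\|)^a|\partial^{\vecalf}F(\vecx)|$ is then bounded by $y\|\phi\|_{\C^n_r}\|h\|_{\C^n_{r'}}$ times the $s$-integral of $(1+\|\vecx\|)^{7m+a}(1+|s|)^{7m}\rho^{-4m}\|T\|^{-r}(1+|ys-\sigma|)^{-r'}$.
\item By the corrected bound, $(1+\|\vecx\|)^{7m+a}\ll(\rho+\rho^{-1})^{7m+a}(1+|s|)^{7m+a}(1+|ys-\sigma|)^{7m+a}$. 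The last factor is absorbed by $r'=7n+a$.
\item Using $\rho^{\pm1}\le\|T\|$ and $1+|s|\le\|T\|^2$, everything else is $\ll\|T\|^{39m+3a}$.
\item Since $r-39m-3a\ge14n+2a+6\ge6$ and $\|T\|\ge\sqrt2$, you are left with $\int_\R\|[x_1,x_3,s]\|^{-6}\,ds\ll\min(\rho^4,\rho^{-6})$. This gives \eqref{midapproachLEM1res}.
\end{itemize}

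\textbf{Regularity across $(x_1,x_3)=\bn$.} The bound $\min(\rho^4,\rho^{-6})$ tends to $0$ as $\rho\to0$, uniformly in $(x_2,x_4)$. This uniform vanishing of all derivatives of order $\le n$ is what gives $F\in\C^n(\R^4)$ with all such derivatives zero on $\{(x_1,x_3)=\bn\}$. Dominated convergence by itself does not supply this.

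With the inequality corrected and this count written out, your argument is a complete proof, slightly more economical than the paper's. It integrates $\|T\|^{-c}$ directly and uses $\rho|\sigma|$, rather than converting to $(1+|s|)^{-k_3}$ and bounding via $|\vecz\cdot\vecw|$ as in Lemma~\ref{MAJORANTINEQLEM1}.
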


\begin{proof}
From \eqref{midapproach1},
by differentiating under the integration sign\footnote{The fact that this is permitted is justified in a standard manner,
using the bounds and relations 
\eqref{derboundrep}, \eqref{matrnormformula1} and \eqref{PDjellbound}
below.}
and using \eqref{Lieder1} and \eqref{Lieder2}, it follows
that for every multi-index
$\vecalf\in\N^4$ and all $\vecx\in\R^4$ with $(x_1,x_3)\neq\bn$,
we have
\begin{align}\notag
\partial_{\vecx}^{\vecalf}F(\vecx)
=y\,\omega^{(\ell)}  & \bigl(x_1x_4-x_2x_3\bigr)\cdot
\,(x_1^2+x_3^2)^{-2|\vecalf|}
\\\label{midapproach3}
&\times \sum_{D,j,\ell}\int_{-\infty}^{\infty} P_{D,j,\ell}^{\vecalf}(\vecx,s)\cdot
\bigl[D\phi\bigr]\bigl(\big[ x_1,x_3,s\big]\bigr)
h^{(j)}\biggl(ys-\frac{x_1x_2+x_3x_4}{x_1^2+x_3^2}\biggr)\,ds.
\end{align}
In the above sum,
$D$ runs over all monomials in $X_1,X_2,X_3$ of degree $\leq |\vecalf|$,
and $j$ and $\ell$ run over non-negative integers with $j+\ell\leq|\vecalf|-\deg D$,
and each $P_{D,j,\ell}^{\vecalf}(\vecx,s)$ is a polynomial 
of degree $\leq7|\vecalf|$
in the variables
$x_1,x_2,x_3,x_4,s$.

By the definition of the norm $\|\phi\|_{\C_r^m}$, we have
\begin{align}\label{derboundrep}
\bigl|\bigl[D\phi\bigr](T)\bigr|\leq\frac{\|\phi\|_{\C_r^m}}{\|T\|^{r}}
\end{align}
for any monomial $D$ in $X_1,X_2,X_3$ of degree $\leq m$
and any $T\in\G'$.
Here if $T=[x_1,x_3,s]$, we have
\begin{align}\label{matrnormformula1}
\|T\|^2
=(x_1^2+x_3^2)(1+s^2)+\frac1{x_1^2+x_3^2}.
\end{align}
We also have 
\begin{align}\label{PDjellbound}
\bigl|P_{D,j,\ell}^{\vecalf}(\vecx,s)\bigr|\ll_{\vecalf} (1+\|\vecx\|+|s|)^{7|\vecalf|}
\leq (1+\|\vecx\|)^{7|\vecalf|}(1+|s|)^{7|\vecalf|},
\end{align}
for all tuples of $D,j,\ell$ appearing in \eqref{midapproach3}.
Hence it follows from \eqref{midapproach3}
that for any $n\in\NN$, $r,r'\in\R_{\geq0}$,
any multi-index
$\vecalf\in\N^4$ with $|\vecalf|\leq n$,
and all $\vecx\in\R^4$ with $(x_1,x_3)\neq\bn$,
\begin{align}\label{midapproach5}
&\bigl|\partial_{\vecx}^{\vecalf}F(\vecx)\bigr|
\ll_{n}
\|\phi\|_{\C_r^{n}} \|h\|_{\C_{r'}^n}
\cdot y\cdot
\frac{(1+\|\vecx\|)^{7n}}{(x_1^2+x_3^2)^{2|\vecalf|}}\,
I\bigl(|x_1x_4-x_2x_3|\leq2\bigr)
\\\notag
&\hspace{12pt}\times\int_{-\infty}^{\infty} \bigl(1+|s|\bigr)^{7n}
\biggl((x_1^2+x_3^2)(1+s^2)+\frac1{x_1^2+x_3^2}\biggr)^{\hspace{-4pt}-r/2}
\biggl(1+\biggl|ys-\frac{x_1x_2+x_3x_4}{x_1^2+x_3^2}\biggr|\biggr)^{\hspace{-4pt}-r'}
\,ds.
\end{align}
Also note that $\partial_{\vecx}^{\vecalf}F(\vecx)=0$ whenever 
$|x_1x_4-x_2x_3|>2$, since the support of $\omega$ is contained in $[-2,2]$.
In order to bound the expression in \eqref{midapproach5} and complete the proof of 
Lemma \ref{midapproachLEM1},
we will make use of the following auxiliary lemma.

\begin{lem}\label{MAJORANTINEQLEM1}
Let $k_1,k_2,k_3\in\R_{\geq0}$,  %
and set $r=5k_1+2k_2+2k_3$ and $r'=k_1$.
Then 
for every $0<y\leq1$, $s\in\R$ and every $\vecx\in\R^4$ satisfying
$|x_1x_4-x_2x_3|\leq2$ and $(x_1,x_3)\neq\bn$,
we have
\begin{align}\notag
\biggl((x_1^2+x_3^2)(1+s^2)+\frac1{x_1^2+x_3^2}\biggr)^{\hspace{-4pt}-r/2}
\biggl(1+\biggl|ys-\frac{x_1x_2+x_3x_4}{x_1^2+x_3^2}\biggr|\biggr)^{\hspace{-4pt}-r'}
\hspace{60pt}
\\\label{MAJORANTINEQLEM1res}
\ll (1+\|\vecx\|)^{-k_1}\biggl(1+\frac1{x_1^2+x_3^2}\biggr)^{\hspace{-4pt}-k_2}\hspace{3pt}(1+|s|)^{-k_3},
\end{align}
where the implied constant depends only on $k_1,k_2,k_3$.
\end{lem}
\begin{proof}
Set $\vecz:=(x_1,x_3)$ and $\vecw:=(x_2,x_4)$.
Let $\vece=\|\vecz\|^{-1}\vecz$, let $\vecf$ be a unit vector orthogonal to $\vece$ in $\R^2$,
and let $a,b\in\R$ be such that $\vecw=a\vece+b\vecf$.
The assumption $|x_1x_4-x_2x_3|\leq2$ means that the 
parallelogram spanned by $\vecz$ and $\vecw$ has area $\leq2$,
that is, $|b|\cdot\|\vecz\|\leq2$.
Using this and $\vecz\cdot\vecw=a\|\vecz\|$,
it follows that
\begin{align}\notag
1+\|\vecx\|^2
&=1+\|\vecz\|^2+\|\vecw\|^2=1+\|\vecz\|^2+a^2+b^2
\leq 1+\|\vecz\|^2+\frac{|\vecz\cdot\vecw|^2+4}{\|\vecz\|^2}
\\\label{MAJORANTINEQLEM1pf1}
&\ll\bigl(1+\|\vecz\|^2+\|\vecz\|^{-2}\bigr)\bigl(1+|\vecz\cdot\vecw|^2\bigr)
\ll\bigl(\|\vecz\|^2+\|\vecz\|^{-2}\bigr)\bigl(1+|\vecz\cdot\vecw|\bigr)^2.
\end{align}
Hence: %
\begin{align}\notag
(1+\|\vecx\|)^{k_1} &\bigl(1+\|\vecz\|^{-2}\bigr)^{k_2}(1+|s|)^{k_3}
\\\label{MAJORANTINEQLEM1pf4}
&\ll \bigl(\|\vecz\|^2+\|\vecz\|^{-2}\bigr)^{\frac12 k_1}\bigl(1+|\vecz\cdot\vecw|\bigr)^{k_1}
\bigl(1+\|\vecz\|^{-2}\bigr)^{k_2}(1+|s|)^{k_3}
\\\notag
&\ll \bigl(\|\vecz\|^2+\|\vecz\|^{-2}\bigr)^{\frac12 k_1+k_2}\bigl(1+|\vecz\cdot\vecw|\bigr)^{k_1}(1+|s|)^{k_3},
\end{align}
where from now on in this proof we allow the implied constant in any ``$\ll$'' to depend on $k_1,k_2,k_3$,
but on no other variables.

Next set
\begin{align*}
A:=\frac{\vecz\cdot\vecw}{\|\vecz\|^2}=\frac{x_1x_2+x_3x_4}{x_1^2+x_3^2},
\end{align*}
and note that
\begin{align}\label{MAJORANTINEQLEM1pf2}
1+|\vecz\cdot\vecw|
\leq (1+|A|)\bigl(1+\|\vecz\|^2\bigr)
\ll (1+|A|)\bigl(\|\vecz\|^2+\|\vecz\|^{-2}\bigr).
\end{align}
Furthermore, 
\begin{align}\label{MAJORANTINEQLEM1pf3}
1+|A|\leq 1+|ys-A|+|s|\leq(1+|ys-A|)(1+|s|).
\end{align}
Using \eqref{MAJORANTINEQLEM1pf2} and \eqref{MAJORANTINEQLEM1pf3},
it follows that the expression in \eqref{MAJORANTINEQLEM1pf4} is
\begin{align}\label{MAJORANTINEQLEM1pf5}
\ll\bigl(\|\vecz\|^2+\|\vecz\|^{-2}\bigr)^{\frac32 k_1+k_2}\bigl(1+|ys-A|\bigr)^{k_1}(1+|s|)^{k_1+k_3}.
\end{align}
Finally, $\bigl(\|\vecz\|\sqrt{1+s^2}-\|\vecz\|^{-1}\bigr)^2\geq0$, 
and thus
\begin{align}
1+|s|<2\sqrt{1+s^2}\leq \|\vecz\|^2(1+s^2)+\|\vecz\|^{-2}.
\end{align}
Hence the expression in \eqref{MAJORANTINEQLEM1pf5} is
\begin{align}\label{MAJORANTINEQLEM1pf6}
\ll\bigl(\|\vecz\|^2(1+s^2)+\|\vecz\|^{-2}\bigr)^{\frac52 k_1+k_2+k_3}\bigl(1+|ys-A|\bigr)^{k_1}.
\end{align}
Now \eqref{MAJORANTINEQLEM1res} follows,
with $r=5k_1+2k_2+2k_3$ and $r'=k_1$,
from the chain of inequalities in
\eqref{MAJORANTINEQLEM1pf4},
\eqref{MAJORANTINEQLEM1pf5}
and \eqref{MAJORANTINEQLEM1pf6}.
\end{proof}

We can now conclude the proof of Lemma \ref{midapproachLEM1}:
Recall \eqref{midapproach5} 
and the fact that
$\partial_{\vecx}^{\vecalf}F(\vecx)=0$ whenever 
$|x_1x_4-x_2x_3|>2$.
Applying Lemma \ref{MAJORANTINEQLEM1} 
with $k_1=7n+a$, $k_2=2n+1$ and $k_3=7n+2$
(thus $r=53n+5a+6$ and $r'=7n+a$),
we obtain:
\begin{align}\label{midapproach14}
\bigl|\partial_{\vecx}^{\vecalf}F(\vecx)\bigr|\ll_{n,a}
\|\phi\|_{\C_r^n} \|h\|_{\C_{r'}^n}
\cdot y\cdot (1+\|\vecx\|)^{-a}
\biggl(1+\frac1{x_1^2+x_3^2}\biggr)^{\hspace{-4pt}-1}
\end{align}
for all $\vecx\in\R^4$ with $(x_1,x_3)\neq\bn$,
and all $\vecalf$ with $|\vecalf|\leq n$.

Recall also that, by definition, $F(\vecx)=0$ 
whenever $(x_1,x_3)=\bn$.
Note that it follows from \eqref{midapproach14}
that $\partial_{\vecx}^{\vecalf}F(\vecx)\to0$ 
as $(x_1,x_3)\to\bn$, 
in fact uniformly with respect to all $(x_2,x_4)\in\R^2$.
This holds for every $\vecalf$ with $|\vecalf|\leq n$;
hence we conclude that 
$F\in\C^n(\R^4)$,
with $\partial_{\vecx}^{\vecalf}F(\vecx)=0$
whenever $(x_1,x_3)=\bn$ and $|\vecalf|\leq n$.
Finally, the bound \eqref{midapproachLEM1res} now follows from
\eqref{midapproach14}.
This completes the proof of Lemma \ref{midapproachLEM1}.
\end{proof}

The proof of 
Proposition~\ref{AkBkgenPROPMgen}
in the special case $M=1_2$
is now immediate,
using Corollary~\ref{EXPSUMcor1}
and Lemma \ref{midapproachLEM1}.
Indeed, given $\phi$ and $h$ as in 
Proposition~\ref{AkBkgenPROPMgen},
Lemma \ref{midapproachLEM1} implies that 
\begin{align}\label{AkBkgenPROPpf1}
\|F\|_{\C_8^5}\ll\|\phi\|_{\C^5_{311}} \|h\|_{\C_{43}^5}\cdot y.
\end{align}
Recall also that the left-hand side of
\eqref{AkBkgenPROPMgenres} for $M=1_2$ equals
\eqref{midapproach2}.
Hence Proposition~\ref{AkBkgenPROPMgen} 
in the special case $M=1_2$
now follows from 
Corollary \ref{EXPSUMcor1}.

\vspace{5pt}

Finally we extend the proof of 
Proposition~\ref{AkBkgenPROPMgen} to the case of general $M\in\G'$.
Let $R_M$ be the block matrix $R_M=\matr{\trans M}00{\trans M}\in\GL(4,\R)$,
and note that then
for every $\vecx\in\R^4$ we have
\begin{align}\label{xMdef}
\matr{x_1}{x_2}{x_3}{x_4} M=\matr{x_{M,1}}{x_{M,2}}{x_{M,3}}{x_{M,4}},
\qquad \text{where }\: \vecx_M=\begin{pmatrix}x_{M,1}
\\ x_{M,2} \\ x_{M,3} \\ x_{M,4} \end{pmatrix}
:=R_M\cdot\vecx.
\end{align}
We next define
the function $F_M:\R^4\to\CC$ by
\begin{align*}
F_M(\vecx):=F(R_M\cdot\vecx).
\end{align*}
Then using \eqref{midapproach1a}, the left-hand side of
\eqref{AkBkgenPROPMgenres} equals
\begin{align}\label{midapproachMgen2}
\sum_{\smatr abcd\in[R]}
e(\alpha_1a+\alpha_2b+\alpha_3c+\alpha_4d)
\, F_M\bigl(\sqrt ya,\sqrt yb,\sqrt yc,\sqrt yd\bigr).
\end{align}

\begin{lem}\label{FMconnecttoFLEM}
For any $a\in\R_{\geq0}$ and $n\in\Z_{\geq0}$ we have
\begin{align}\label{FMconnecttoFLEMres}
\|F_M\|_{\C_a^n}\ll_{n,a} \|M\|^{n+a}\|F\|_{\C_a^n}.
\end{align}
\end{lem}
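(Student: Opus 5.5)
This is a chain-rule estimate for composition with the fixed linear map $R_M$, combined with a comparison of weights. The plan is to bound derivatives and weights separately and then multiply the two bounds.

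\emph{Derivatives.} Since $F_M=F\circ R_M$ and $R_M$ is linear, the chain rule expresses each derivative $\partial^{\vecbeta}F_M(\vecx)$ with $|\vecbeta|=j\leq n$ as a linear combination of the derivatives $(\partial^{\vecgamma}F)(R_M\vecx)$ with $|\vecgamma|=j$. The coefficients are products of $j$ entries of $R_M$. Every entry of $R_M$ is bounded by $\|M\|$, so
\begin{align*}
|\partial^{\vecbeta}F_M(\vecx)|\ll_n \|M\|^{j}\sum_{|\vecgamma|=j}|(\partial^{\vecgamma}F)(R_M\vecx)|
\leq \|M\|^{j}\,\|F\|_{\C_a^n}\,(1+\|R_M\vecx\|)^{-a}.
\end{align*}

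\emph{Weights.} We need $(1+\|\vecx\|)\ll \|M\|(1+\|R_M\vecx\|)$. This comes from $\vecx=R_M^{-1}R_M\vecx$, where $R_M^{-1}=\mathrm{diag}(\trans M^{-1},\trans M^{-1})$. Because $M\in\SL(2,\R)$, the inverse $M^{-1}$ has the same entries as $M$ up to sign and permutation, so its operator norm is at most $\|M^{-1}\|=\|M\|$. Hence $\|\vecx\|\leq\|M\|\,\|R_M\vecx\|$. Using also $\|M\|\geq\sqrt2\geq1$, we get
\begin{align*}
1+\|\vecx\|\leq \|M\|(1+\|R_M\vecx\|),
\end{align*}
which gives $(1+\|R_M\vecx\|)^{-a}\leq \|M\|^{a}(1+\|\vecx\|)^{-a}$.

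\emph{Conclusion.} Multiply the two bounds, use $\|M\|^{j}\leq\|M\|^{n}$ (valid because $\|M\|\geq1$), and sum over $|\vecbeta|\leq n$. This yields \eqref{FMconnecttoFLEMres}. The only step needing care is the weight comparison, and the inequality $\|M^{-1}\|=\|M\|$ handles it.
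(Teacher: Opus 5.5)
Your proposal is correct and follows essentially the same route as the paper: the chain rule for the linear map $R_M$ produces a factor $\|M\|^{|\beta|}$ from products of entries of $R_M$, and the weight comparison $1+\|\vecx\|\leq\|M\|(1+\|R_M\vecx\|)$ comes from bounding the operator norm of $R_M^{-1}$ by $\|M\|$. The only difference is cosmetic: you make the constants explicit via $\|M^{-1}\|=\|M\|$ for $M\in\SL(2,\R)$, whereas the paper just records $\sup_{\vecy\in S^3}\|R_M^{-1}\vecy\|\asymp\|M\|$ and $\sup_{\ell,j}|R_{M,\ell,j}|\asymp\|M\|$.
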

\begin{proof}
First let us note that for any $j\in\{1,2,3,4\}$,
with notation as in \eqref{xMdef}, we have:
\begin{align*}
\partial_{x_j}F_M(\vecx)=
\partial_{x_j}F(R_M\vecx)
&=\sum_{\ell=1}^4 (\partial_{\ell}F)(R_M\vecx)\cdot\partial_{x_j} x_{M,\ell}
=\sum_{\ell=1}^4 R_{M,\ell,j}\cdot (\partial_{\ell}F)(R_M\vecx),
\end{align*}
where $R_{M,\ell,j}$ is the $(\ell,j)$th entry of $R_M$,
and $\partial_{\ell}F$ denotes the partial derivative of $F(\vecx)$ with respect to $x_{\ell}$.
Iterating the last computation, it follows that for any $r\geq1$ and any $j_1,\ldots,j_r\in\{1,2,3,4\}$,
\begin{align*}
\partial_{x_{j_1}}\cdots \partial_{x_{j_r}} F_M(\vecx)
=\sum_{\vecell\in\{1,2,3,4\}^r}\biggl(\prod_{i=1}^r R_{M,\ell_i,j_i}\biggr)\cdot
(\partial_{\ell_1}\cdots\partial_{\ell_r}F)(R_M\vecx).
\end{align*}
Here for any $a\geq0$ and $n\geq r$ %
we have
$\bigl|(\partial_{\ell_1}\cdots\partial_{\ell_r}F)(R_M\vecx)\bigr|
\leq \|F\|_{\C_a^n}\cdot (1+\|R_M\vecx\|)^{-a}$,
by \eqref{SpangendimDEF}.
Hence we conclude that for any multi-index $\vecalf$ with $|\vecalf|\leq n$,
\begin{align*}
\bigl|\partial_{\vecx}^{\vecalf} F_M(\vecx)\bigr|
\ll_n \bigl(\sup_{\ell,j}|R_{M,\ell,j}|\bigr)^{|\vecalf|} \,
\|F\|_{\C_a^n}\, (1+\|R_M\vecx\|)^{-a}.
\end{align*}
Multiplying by $(1+\|\vecx\|)^a$, taking the supremum over $\vecx$
and adding over all $\vecalf$, we obtain
(again recall \eqref{SpangendimDEF}):
\begin{align*}
\|F_M\|_{\C_a^n}\ll_n \|F\|_{\C_a^n}\bigl(\sup_{\ell,j}|R_{M,\ell,j}|\bigr)^n
\cdot\sup_{\vecx\in\R^4}\biggl(\frac{1+\|\vecx\|}{1+\|R_M\vecx\|}\biggr)^{a}.
\end{align*}
Here
\begin{align}\label{FMconnecttoFLEMpf1}
\sup_{\vecx\in\R^4}\frac{1+\|\vecx\|}{1+\|R_M\vecx\|}
=\sup_{\vecy\in\R^4}\frac{1+\|R_M^{-1}\vecy\|}{1+\|\vecy\|}
= \max\Bigl(1,\sup_{\vecy\in S^3}\|R_M^{-1}\vecy\|\Bigr),
\end{align}
where $S^3$ denotes the unit sphere $\{\vecy\in\R^4\col\|\vecy\|=1\}$.
Using the definition of $R_M$ we have,
writing $M=\smatr{a'}{b'}{c'}{d'}$:
\begin{align*}
\sup_{\vecy\in S^3}\|R_M^{-1}\vecy\|
\asymp\max(|a'|,|b'|,|c'|,|d'|)\asymp\|M\|,
\end{align*}
and also $\sup_{\ell,j}|R_{M,\ell,j}|\asymp\|M\|$.
Recall also that $\|M\|\geq\sqrt2$ always.
Hence \eqref{FMconnecttoFLEMres} follows.
\end{proof}

Combining Lemma \ref{FMconnecttoFLEM} %
and Lemma \ref{midapproachLEM1},
we conclude that
\begin{align}\label{AkBkgenPROPpf1Mgen}
\|F_M\|_{\C_8^5}\ll\|M\|^{13}\|\phi\|_{\C^5_{311}} \|h\|_{\C_{43}^5}\cdot y.
\end{align}
Recall also that the left-hand side of
\eqref{AkBkgenPROPMgenres} equals
\eqref{midapproachMgen2}.
Hence Proposition \ref{AkBkgenPROPMgen} now follows as a consequence of 
Corollary \ref{EXPSUMcor1}.
\hfill$\square$ $\square$ $\square$

\subsection{The contribution from $B_k$-orbits}

Here we %
bound the sum in the third line of \eqref{MAINSTEP1}.
We assume $k\geq2$, %
since $B_k$ is empty for $k=1$.
\begin{prop}\label{BKBOUNDPROP}
Let $k\geq2$, $N\in\Z^+$
and set $X=\GaG$ with $\G=\SL(2,\R)\ltimes(\R^2)^k$
and $\Gamma=\Gamma(N)\ltimes(\Z^2)^k$.
Let $m_1\geq\max(311,2k+1)$,
and set $n:=3m_1+5$.
Then for any $f\in \C_0^{n}(X)$, 
any $h\in\C_{43}^{5}(\R)$,
and any $M\in\SL(2,\R)$, 
$\vecxi=(\trans\vecxi_1\,\trans\vecxi_2)\in(\R^2)^k$ and $0<y\leq1$,
\begin{align}\notag
\sum_{\veceta\in B_k}\sum_{R\in \overline\Gamma'/\Gamma'} \sum_{T\in [R]}
e\bigl(\tr\bigl(\veceta\trans T^{-1}\trans\vecxi\bigr)\bigr)
\int_\R \wh f_R\left(TM\mathrm{u}_x \mathrm{a}_y ,\veceta\right)h(x)\,dx
\hspace{125pt}
\\\label{BKBOUNDPROPres}
\ll_{m_1}\|f\|_{\C_0^{n}}\, \|h\|_{\C_{43}^5}\,\|M\|^{13}\,
\sum_{\veceta\in B_k}\|\veceta\|^{-m_1}\sum_{1\leq q\leq y^{-1/2}}\frac{\tau(q)}{q^{3/2}}
\biggl(1+\frac{\bigl\| q[\veceta;\vecxi]\bigr\|_{\Z}}{q\sqrt y}\biggr)^{\hspace{-3pt}-1},
\end{align}
where
\begin{align}\label{alfbetaxiDEF}
[\veceta;\vecxi]:=\bigl(\veceta_2\trans\vecxi_2,-\veceta_2\trans\vecxi_1,-\veceta_1\trans\vecxi_2,\veceta_1\trans\vecxi_1\bigr)\in\R^4
\qquad \text{for }\:\veceta=(\trans\veceta_1,\trans\veceta_2)\in B_k.
\end{align}
\end{prop}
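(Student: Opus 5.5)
For each fixed $\veceta\in B_k$ and each coset $R\in\overline\Gamma'/\Gamma'$, the inner sum over $T\in[R]$ should be put in the shape of Proposition~\ref{AkBkgenPROPMgen}. Two things are needed: the phase must be linear in the entries of $T$, and the weight must be a test function $\phi$ on $\G'$ with controlled $\C^5_{311}$-norm.

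The phase is handled first. Write $T=\smatr abcd$, so $\trans T^{-1}=\smatr d{-c}{-b}a$. With $\veceta=(\trans\veceta_1\;\trans\veceta_2)$, the columns of $\veceta\trans T^{-1}$ are $d\trans\veceta_1-b\trans\veceta_2$ and $-c\trans\veceta_1+a\trans\veceta_2$. Hence
\begin{align*}
\tr\bigl(\veceta\trans T^{-1}\trans\vecxi\bigr)=a\,\veceta_2\trans\vecxi_2-b\,\veceta_2\trans\vecxi_1-c\,\veceta_1\trans\vecxi_2+d\,\veceta_1\trans\vecxi_1=[\veceta;\vecxi]\cdot(a,b,c,d),
\end{align*}
which is exactly the vector in \eqref{alfbetaxiDEF}. (The signs and index pairing should be checked against the trace convention, but the structure is clear.)

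For the weight, set $\phi_{R,\veceta}(T):=\wh f_R(T,\veceta)$. The inner sum is then literally the left-hand side of \eqref{AkBkgenPROPMgenres}, with $\vecalf=[\veceta;\vecxi]$ and $\phi=\phi_{R,\veceta}$. Proposition~\ref{AkBkgenPROPMgen} bounds it by
\begin{align*}
\|\phi_{R,\veceta}\|_{\C^5_{311}}\,\|h\|_{\C^5_{43}}\,\|M\|^{13}\sum_{1\le q\le y^{-1/2}}\frac{\tau(q)}{q^{3/2}}\Bigl(1+\frac{\|q[\veceta;\vecxi]\|_{\Z}}{q\sqrt y}\Bigr)^{-1}.
\end{align*}
The sum over $R$ is finite, with $\ll_N 1$ terms, so it only remains to show
\begin{align*}
\|\phi_{R,\veceta}\|_{\C^5_{311}}\ll_{m_1}\|f\|_{\C^n_0}\,\|\veceta\|^{-m_1}.
\end{align*}

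This norm bound is the main obstacle, and I would get it from Lemma~\ref{PARTBDECAYFNLEM2}. For each monomial $D$ in $X_1,X_2,X_3$ of degree $\le5$, which is also of degree $\le k$ since $k\geq 2$ only if we are careful, apply the lemma with some $0<\beta<\frac12$ and exponent $m$. This gives
\begin{align*}
|D\wh f_R(T,\veceta)|\ll\|Df\|_{\C_0^{m}}\,\|T\|^{-m(1-2\beta)}\,\|\veceta\|^{-m\beta}.
\end{align*}
We need $m(1-2\beta)\ge311$ and $m\beta\ge m_1$. The natural choice is $\beta=\tfrac13$ and $m=3m_1$. Then $m(1-2\beta)=m_1\ge311$ and $m\beta=m_1$. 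The norm used is $\|Df\|_{\C^{3m_1}_0}\le\|f\|_{\C^{3m_1+5}_0}=\|f\|_{\C^n_0}$, which matches $n=3m_1+5$.

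There is one wrinkle to check: Lemma~\ref{PARTBDECAYFNLEM2} requires $\ord(D)\le k$, whereas we need order up to $5$. I would handle this by observing that the restriction there is inessential. Alternatively, one can apply the underlying bound from \cite[Lemma 4.6]{effopp1} to $g=Df_R\in\C^m_0(X)$ directly, since the argument only uses $g\in\C_0^{m+k}$. In that case one should budget $n$ to include the extra $k$ derivatives. The assumption $m_1\ge 2k+1$ presumably absorbs such a loss, or ensures convergence of $\sum_{\veceta}\|\veceta\|^{-m_1}$ over the $2k$-dimensional lattice.

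Summing over $R$ and $\veceta$ then gives \eqref{BKBOUNDPROPres}. Absolute convergence, which justifies interchanging the sums with the integral, follows from the same decay together with \eqref{SL2Zbasiccountingbound}.
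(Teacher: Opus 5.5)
Your proposal is correct and is the paper's proof: the phase is $[\veceta;\vecxi]\cdot(a,b,c,d)$, and you apply Proposition~\ref{AkBkgenPROPMgen} with $\phi=\wh f_R(\cdot,\veceta)$. You then bound $\|\phi\|_{\C^5_{311}}\ll_{m_1}\|f\|_{\C_0^{3m_1+5}}\|\veceta\|^{-m_1}$ via Lemma~\ref{PARTBDECAYFNLEM2} with $\beta=\tfrac13$, $m=3m_1$, and sum over the finitely many $R$ and over $\veceta$.

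On your wrinkle: the ``order $\le k$'' in that lemma is only a generic bound on the order of $D$, since its proof just needs $Df_R\in\C^m_0(X)$, and the paper applies it with $D$ of order up to $5$. So there is no extra dimension-dependent derivative loss to budget for, and the hypothesis $m_1\ge 2k+1$ serves only to make $\sum_{\veceta\in B_k}\|\veceta\|^{-m_1}$ converge.
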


The proof of 
Proposition~\ref{BKBOUNDPROP}
is an immediate application of Proposition \ref{AkBkgenPROPMgen}.
Indeed,
the sum over $T$ on
the left-hand side of \eqref{BKBOUNDPROPres}
equals
the left-hand side of \eqref{AkBkgenPROPMgenres}
with $\phi:=\hf_R(\cdot,\veceta)$ and $\vecalf:=[\veceta;\vecxi]$.
Note also that for any $m_1,n\in\NN$,
it follows from
Lemma \ref{PARTBDECAYFNLEM2}
(applied with $\beta=\frac13$ and $m=3m_1$)
that
\begin{align}\label{hfRnormbound}
\|\hf_R(\cdot,\veceta)\|_{\C_{m_1}^n}\ll_{m_1}\|f\|_{\C_0^{3m_1+n}}\|\veceta\|^{-m_1}.
\end{align}
In particular, for any $m_1\geq 311$ we have
$\|\hf_R(\cdot,\veceta)\|_{\C_{311}^5}\ll_{m_1}\|f\|_{\C_0^{3m_1+5}}\|\veceta\|^{-m_1}$.
Hence 
Proposition \ref{AkBkgenPROPMgen}
gives that the
sum over $T$ in \eqref{BKBOUNDPROPres} is
\begin{align*}
\ll_{m_1}
\|f\|_{\C_0^{3m_1+5}}\|\veceta\|^{-m_1}
\, \|h\|_{\C_{43}^5}\,\|M\|^{13}\,
\sum_{1\leq q\leq y^{-1/2}}\frac{\tau(q)}{q^{3/2}}
\biggl(1+\frac{\| q[\veceta;\vecxi]\|_{\Z}}{q\sqrt y}\biggr)^{\hspace{-3pt}-1}.
\end{align*}
Adding now over $\veceta$ and $R$,
requiring $m_1>2k$ to ensure that
$\sum_{\veceta\in B_k}\|\veceta\|^{-m_1}<\infty$, 
we obtain the bound \eqref{BKBOUNDPROPres},
i.e.\ Proposition~\ref{BKBOUNDPROP} is proved.
\hfill$\square$

\subsection{The contribution from $A_k$-orbits}
\label{Aknewsec}

Next we bound the sum in the second line of \eqref{MAINSTEP1}.
\begin{prop}\label{AkboundProp}
Let $k\geq1$, $N\in\Z^+$
and set $X=\GaG$ with $\G=\SL(2,\R)\ltimes(\R^2)^k$
and $\Gamma=\Gamma(N)\ltimes(\Z^2)^k$.
Let $m\geq \max(321,k+1)$,
and set $\alpha=\frac12(m+321)$.
Then for any $f\in\C_{\alpha}^{m+5}(X)$, any $h\in\C_{43}^5(\R)$,
and any $M\in\SL(2,\R)$, 
$\vecxi\in(\R^2)^k$ and $0<y\leq1$,
\begin{align}\notag
\sum_{\veceta\in A_k}
&\sum_{R\in \overline\Gamma'_\infty\backslash\overline\Gamma'/\Gamma'}
\sum_{T\in \Gamma'_\infty \backslash [R]}
e\bigl(\tr(\veceta\trans T^{-1}\trans\vecxi\bigr)\bigr)\int_\R
\wh f_R\left(TM\mathrm{u}_x \mathrm{a}_y ,\veceta\right)h(x)\,dx
\\\label{AkboundPropRES}
&\ll_m \|f\|_{\C_\alpha^{m+5}}
\, \|h\|_{\C_{43}^5}\,\|M\|^{13}
\,\sum_{\veca\in\Z^k\setminus\{\bn\}}\|\veca\|^{-m}\sum_{1\leq q\leq y^{-1/2}}\frac{\tau(q)}{q^{3/2}}
\biggl(1+\frac{\bigl\| q\veca\vecxi %
\bigr\|_{\Z}}{q\sqrt y}\biggr)^{\hspace{-3pt}-1}.
\end{align}
\end{prop}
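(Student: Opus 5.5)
Fix $\veceta=(\trans\vecr\:\bn)\in A_k$ with $\vecr\in\Z^k\setminus\{\bn\}$ and a double coset representative $R$. The plan is to unfold the sum over $T\in\Gamma'_\infty\backslash[R]$ into a sum over all of $[R]$, and then apply Proposition \ref{AkBkgenPROPMgen} with a suitable test function $\phi$ on $\G'$.

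\emph{Step 1: the exponential factor.} For $T=\smatr abcd$ we have $\trans T^{-1}=\smatr d{-c}{-b}a$, so $\veceta\trans T^{-1}=(d\trans\vecr\;\;-c\trans\vecr)$. Hence
\[
\tr(\veceta\trans T^{-1}\trans\vecxi)=d\,\vecr\trans\vecxi_1-c\,\vecr\trans\vecxi_2 ,
\]
which depends only on the bottom row $(c,d)$ of $T$ and is therefore $\Gamma'_\infty$-invariant, as it must be.

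\emph{Step 2: unfolding.} The function $\psi(M'):=\int_\R\hf_R(M'M\mathrm{u}_x\mathrm{a}_y,\veceta)h(x)\,dx$ is left $\Gamma'_\infty$-invariant. I would choose a smooth $\chi$ on $\R$ with $\sum_{j\in\Z}\chi(u+jN)=1$ and set
\[
\phi(M'):=\chi(u(M'))\,\hf_R(M',\veceta),
\]
where $u(M')$ is the Iwasawa $u$-coordinate of $M'$ as in \eqref{TFNIWASAWA}. Since $\hf_R(\cdot,\veceta)$ is itself left $\Gamma'_\infty$-invariant, summing $\phi(TM\mathrm{u}_x\mathrm{a}_y)$ over $T\in[R]$ recovers the sum over $\Gamma'_\infty\backslash[R]$. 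The unfolded sum then has exactly the shape of the left-hand side of \eqref{AkBkgenPROPMgenres}, with
\[
\vecalf=(0,0,-\vecr\trans\vecxi_2,\;\vecr\trans\vecxi_1).
\]
Moreover $\|q\vecalf\|_{\Z}=\|q\vecr\vecxi\|_{\Z}$, because $\vecr\vecxi=(\vecr\trans\vecxi_1,\vecr\trans\vecxi_2)$. Summing over $\vecr$ and over the finitely many representatives $R$ then gives \eqref{AkboundPropRES}, with $\veca=\vecr$.

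\emph{Step 3 (main obstacle): bounding $\|\phi\|_{\C^5_{311}}$.} I need
\[
\|\phi\|_{\C^5_{311}}\ll\|f\|_{\C^{m+5}_\alpha}\|\vecr\|^{-m}.
\]
Write $T=\mathrm{u}_u\mathrm{a}_v k_\theta$. With the cutoff $|u|\ll N$ we have $\|T\|^2\asymp v+v^{-1}$ (cf.\ the formula \eqref{FROBINIWASAWAEQ}), so $\|T\|^{311}\asymp\max(v,v^{-1})^{311/2}$. Lemma \ref{DERDECAYTFNFROMCMNORMLEM2} with $\ell\le5$ gives
\[
\|\vecr\|^{-m}\,v^{m/2-\ell_1-\ell_2}\min(1,v^{-\alpha})
\]
for the Iwasawa-coordinate derivatives.

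The vector fields $X_1,X_2,X_3$ are expressed in $\partial_u,\partial_v,\partial_\theta$ with coefficients polynomial in $v^{\pm1}$ and trigonometric in $\theta$. A degree-$5$ monomial therefore costs at most roughly $v^{\pm5}$, and derivatives of $\chi(u)$ are harmless.

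The two regimes are then handled separately:
\begin{itemize}
\item For small $v$, the factor $v^{m/2-5-O(1)}$ must beat $v^{-311/2}$. This needs $m\gtrsim 321$, which matches the hypothesis $m\ge321$.
\item For large $v$, the factor $v^{m/2+O(1)-\alpha}$ must beat $v^{311/2}$. This needs $\alpha\ge\frac12(m+321)$, which is exactly the stated choice of $\alpha$.
\end{itemize}

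I expect this bookkeeping to be the delicate point: one must track the exact powers of $v$ created by converting the left-invariant fields into Iwasawa derivatives. Once Step 3 is done, combining it with Proposition \ref{AkBkgenPROPMgen} and the finiteness of $\overline\Gamma'_\infty\backslash\overline\Gamma'/\Gamma'$ yields \eqref{AkboundPropRES}. The sum over $\vecr$ converges because $m>k$.
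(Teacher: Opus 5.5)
Your proposal is correct and follows the paper's proof essentially step by step. It uses the same smooth partition of unity in the Iwasawa $u$-coordinate to unfold the sum over $\Gamma'_\infty\backslash[R]$ into a sum over $[R]$, the same $\vecalf=(0,0,-\vecr\trans\vecxi_2,\vecr\trans\vecxi_1)$ fed into Proposition~\ref{AkBkgenPROPMgen}, and the same norm bound, which is the paper's Lemma~\ref{AktfnormboundLEM1} with $n=5$, $r=311$. The bookkeeping you flag closes with no extra $O(1)$ loss. The vector fields $X_i$ only produce factors $v^{\ell_4}$ with $0\le\ell_4\le5$, which gives $v^{m/2-5}$ for $v\le1$ and $v^{m/2+5-\alpha}$ for $v\ge1$, so $m\ge321$ and $\alpha=\frac12(m+321)$ are exactly what is required.
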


To start the proof of Proposition \ref{AkboundProp},
let $\veceta=(\trans\veca,\bn)\in A_k$ and
$R\in \overline\Gamma'_\infty\backslash\overline\Gamma'/\Gamma'$ be fixed.
Recall that then $\hf_R(\smatr 1{Nn}01 T;\veceta)=\hf_R(T;\veceta)$ for all $T\in\SL(2,\R)$
and $n\in\Z$,
as noted just below \eqref{BASICFOURIER}.
Let us express $\hf_R$ in Iwasawa coordinates, as in \eqref{TFNIWASAWA};
then the relation just noted reads
$\hf_R(u+Nn,v,\theta;\veceta)=\hf_R(u,v,\theta;\veceta)$.
Fix, once and for all, a function $\psi\in\C_c^\infty(\R)$ satisfying
$\supp(\psi)\subset(-1,1)$ and
$\sum_{n\in\Z}\psi(x+n)=1$ for all $x\in\R$.
Let us now define the function %
$\phi:\SL(2,\R)\to\CC$ by (in Iwasawa coordinates)
\begin{align}\label{tfRDEFrep}
\phi(u,v,\theta):=\psi(N^{-1}u)\,\hf_R(u,v,\theta;\veceta).
\end{align}
Then 
$\supp(\phi)\subset(-N,N)\times\R^+\times(\R/2\pi\Z)$,
and 
\begin{align*}
\sum_{n\in\Z}\phi\bigl(u+Nn,v,\theta\bigr)
=\sum_{n\in\Z}\psi(N^{-1}u+n)\hf_R(u,v,\theta;\veceta)=\hf_R(u,v,\theta;\veceta)
\end{align*}
for all $u,v,\theta$.
In other words,
\begin{align*}
\hf_R(T;\veceta)=\sum_{n\in\Z}\phi\biggl(\matr 1{Nn}01 T\biggr)
=\sum_{\gamma\in\Gamma_\infty'}\phi\bigl(\gamma T\bigr),
\qquad\forall T\in\SL(2,\R).
\end{align*}
Using this formula in \eqref{AkboundPropRES},
and using also the fact that 
$\veceta\trans T^{-1}$ is invariant under $T\mapsto\gamma T$ for any $\gamma\in\Gamma_\infty'$
(since $\veceta=(\trans\veca,\bn)$),
it follows that 
the sum over $T$ in \eqref{AkboundPropRES} can be
rewritten as:
\begin{align}\label{MAINSTEP1line2disc2}
\sum_{T\in [R]}
e\bigl(\tr(\veceta\trans T^{-1}\trans\vecxi\bigr)\bigr)\int_\R
\phi\left(TM\mathrm{u}_x \mathrm{a}_y \right)h(x)\,dx.
\end{align}
This is exactly the sum which appears in the left-hand side of \eqref{AkBkgenPROPMgenres},
with $\vecalf=(0,0,-\veca\trans\vecxi_2,\veca\trans\vecxi_1)$,
where $\trans\vecxi_1$ and $\trans\vecxi_2$ are the column vectors of $\vecxi$.

Next we prove:
\begin{lem}\label{AktfnormboundLEM1}
For any $m,n\in\NN$ and $r\in\R_{\geq0}$
such that $m\geq 2n+r$,
letting $\alpha:=\frac12(m+r)+n$ we have
\begin{align}\label{AktfnormboundLEM1res}
\|\phi\|_{\C_r^n}\ll_{m,n,r}\|f\|_{\C_{\alpha}^{m+n}}\|\veca\|^{-m}.
\end{align}
\end{lem}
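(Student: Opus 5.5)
We need to bound $\sup_T\|T\|^r|D\phi(T)|$ for monomials $D$ in $X_1,X_2,X_3$ of degree $\le n$, where $\phi(u,v,\theta)=\psi(N^{-1}u)\hf_R(u,v,\theta;\veceta)$. We work throughout in Iwasawa coordinates $T=\mathrm{u}_u\mathrm{a}_v k_\theta$.

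\emph{Step 1: Lie derivatives in coordinates.} Express $X_1,X_2,X_3$, acting as left invariant vector fields, in the coordinates $(u,v,\theta)$. Each $X_i$ is a combination of $v\partial_u$, $v\partial_v$ and $\partial_\theta$ whose coefficients are trigonometric polynomials in $\theta$; these coefficients are bounded with bounded $\theta$-derivatives. Iterating, a monomial $D$ of degree $\le n$ is a finite sum of terms
\[
c(\theta)\,v^{\ell_1+\ell_2}\partial_u^{\ell_1}\partial_v^{\ell_2}\partial_\theta^{\ell_3},\qquad \ell_1+\ell_2+\ell_3\le n,
\]
with bounded $c$. Here each $v\partial_v$ applied to a power $v^j$ only produces another power $v^j$, so the powers of $v$ match the orders of $\partial_u,\partial_v$ as written.

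\emph{Step 2: derivatives of $\phi$.} By the Leibniz rule, derivatives falling on $\psi(N^{-1}u)$ are harmless and keep $|u|<N$. Lemma~\ref{DERDECAYTFNFROMCMNORMLEM2}, applied with $m$ and $\ell=\ell_1+\ell_2+\ell_3\le n$, gives
\[
|D\phi(u,v,\theta)|\ll \|f\|_{\C_\alpha^{m+n}}\,\|\veca\|^{-m}\,v^{m/2}\min(1,v^{-\alpha})\,I(|u|<N).
\]
The factor $v^{\ell_1+\ell_2}$ from Step 1 exactly cancels the $v^{-\ell_1-\ell_2}$ loss in that lemma. This is valid for any $\alpha\ge0$, and $f\in\C_\alpha^{m+n}$ is precisely the hypothesis we have.

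\emph{Step 3: the weight $\|T\|^r$.} For $T=\mathrm{u}_u\mathrm{a}_v k_\theta$ we have $\|T\|^2=(u^2+v^2+1)/v$ (the formula used in Lemma~\ref{scrYconnFrobnormLEM2}). On $|u|<N$ this gives
\[
\|T\|^r\ll_N (v+v^{-1})^{r/2}.
\]
Hence
\[
\|T\|^r|D\phi|\ll \|f\|\,\|\veca\|^{-m}\,(v+v^{-1})^{r/2}\,v^{m/2}\min(1,v^{-\alpha}).
\]
For $v\le1$ this is $\ll v^{(m-r)/2}\le1$, since $m\ge r$. For $v\ge1$ it is $\ll v^{(r+m)/2-\alpha}=v^{-n}\le1$, since $\alpha=\frac12(m+r)+n$. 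Both bounds are uniform, so taking the supremum and summing over $D$ gives the lemma.

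\emph{Main obstacle.} The delicate point is Step 1: checking that the $v$-powers in the coordinate expression of $D$ do not exceed $v^{\ell_1+\ell_2}$, so that no negative powers of $v$ survive as $v\to0$. If the bookkeeping instead produces extra factors of $v^{\pm1}$ per derivative, the proof absorbs them using the slack in the hypothesis $m\ge 2n+r$. With that slack, extra factors of $v^{-n}$ at small $v$ are still controlled by $v^{(m-r)/2}$, and the extra $n$ in $\alpha$ handles the large-$v$ side. This explains why the statement is phrased with those margins, and Step 3's case split should then go through unchanged.
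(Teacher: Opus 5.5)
Your route is the paper's: write $X_1,X_2,X_3$ in Iwasawa coordinates, apply Lemma~\ref{DERDECAYTFNFROMCMNORMLEM2}, use $\|T\|^2=(u^2+v^2+1)/v$ on $|u|<N$, and split into $v\le1$ and $v\ge1$. Your Step~1 is correct and sharper than what the paper records. Each $X_i$ is a combination of $v\partial_u$, $v\partial_v$, $\partial_\theta$ with coefficients depending only on $\theta$. An induction using $v\partial_v(v^jg)=jv^jg+v^{j+1}\partial_vg$ then shows that every term carries exactly the factor $v^{\ell_1+\ell_2}$. The paper only keeps a factor $v^{\ell_4}$ with $\ell_4\le s$, which is why it needs $m\ge 2n+r$ for small $v$.

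The step that fails as written is Step~2. The cancellation of $v^{\ell_1+\ell_2}$ against the loss $v^{-\ell_1-\ell_2}$ in Lemma~\ref{DERDECAYTFNFROMCMNORMLEM2} happens only when all the $\partial_u$'s fall on $\hf_R$. If $i$ of them fall on the cutoff $\psi(N^{-1}u)$, the lemma gives only $v^{m/2-(\ell_1-i)-\ell_2}\min(1,v^{-\alpha})$. After multiplying by $v^{\ell_1+\ell_2}$ you are left with $v^{m/2+i}\min(1,v^{-\alpha})$. For example, the term $vN^{-1}\psi'(N^{-1}u)\hf_R$ in $v\partial_u\phi$ is only $\ll v^{1+m/2-\alpha}$ for $v\ge1$. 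So your displayed bound $|D\phi|\ll\|f\|_{\C_\alpha^{m+n}}\|\veca\|^{-m}v^{m/2}\min(1,v^{-\alpha})$ is not justified for $v\ge1$, and the cutoff is not ``harmless''.

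The repair is short. For $v\le1$ the extra $v^i$ only helps, and you still get $v^{(m-r)/2}\le1$. For $v\ge1$ you get $\|T\|^r|D\phi|\ll v^{\frac12(m+r)+i-\alpha}=v^{i-n}\le1$, since $i\le\ell_1\le n$. So the $+n$ in $\alpha$ is not spare room, as your Step~3 suggests; it is exactly what absorbs the cutoff derivatives. This is also why the paper's \eqref{hfpartderbound} drops the $-\ell_1$ from the exponent when $v\ge1$.

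Your ``main obstacle'' paragraph therefore puts the difficulty in the wrong place. Step~1 is fine; the loss comes from the Leibniz rule, and the choice of $\alpha$ absorbs it, not the hypothesis $m\ge2n+r$. With your sharper Step~1, $m\ge r$ would in fact suffice.
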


\begin{proof}
Let $m,n,r,\alpha$ be given as in the statement of the lemma.
By \eqref{tfRDEFrep} and Lemma \ref{DERDECAYTFNFROMCMNORMLEM2},
for any $f\in\C_{\alpha}^{m+n}(X)$
and any $\ell_1,\ell_2,\ell_3\in\NN$ with $\ell:=\ell_1+\ell_2+\ell_3\leq n$ we have
\begin{align}\label{hfpartderbound}
\bigl|\partial_u^{\ell_1}\partial_v^{\ell_2}\partial_\theta^{\ell_3}\phi(u,v,\theta)\bigr|
&\ll_{m,n,r}
\|f\|_{\C_\alpha^{m+\ell}}\|\veca\|^{-m}
\begin{cases} v^{\frac m2-\ell_2-\alpha}&\text{if }v\geq1
\\
v^{\frac m2-\ell_2-\ell_1}&\text{if }v\leq1,
\end{cases}
\end{align}
for all $u,v,\theta$.
Next, one verifies by a straightforward computation that %
in terms of the Iwasawa coordinates,
the left invariant vector fields 
$X_1,X_2,X_3$ on $\G'=\SL(2,\R)$ (see \eqref{X1X2X3def})
are  given by:
\begin{align}\label{AktfnormboundLEM1pf1}
X_1&=(\cos2\theta)v\partial_u-(\sin2\theta)v\partial_v-(\sin\theta)^2\partial_\theta;
\\\notag
X_2&=(\cos2\theta)v\partial_u-(\sin2\theta)v\partial_v+(\cos\theta)^2\partial_\theta;
\\\notag
X_3
&=2(\sin2\theta)v\partial_u+2(\cos2\theta)v\partial_v+(\sin 2\theta)\partial_\theta.
\end{align}
It follows that for any $s\in\NN$ and any 
$\veck\in\{1,2,3\}^s$,
\begin{align}\label{AktfnormboundLEM1pf2}
\bigl(X_{k_1}X_{k_2}\cdots X_{k_s}\phi\bigr)(u,v,\theta)
=\sum_{\vecell\in S(\veck)}P_{\veck,\vecell}(\theta) v^{\ell_4}
\cdot \partial_u^{\ell_1}\partial_v^{\ell_2}\partial_\theta^{\ell_3}\phi(u,v,\theta),
\end{align}
where $S(\veck)$ is a finite subset of $(\Z_{\geq0})^4$
and each $P_{\veck,\vecell}(\theta)$ is a polynomial in $\sin\theta$ and $\cos\theta$
with coefficients in $\Z$;
and where $\ell_4\leq s$ and $\ell_1+\ell_2+\ell_3\leq s$
for all $\vecell\in S(\veck)$.
It follows from \eqref{hfpartderbound} and \eqref{AktfnormboundLEM1pf2}
that, for any $s\leq n$ and $\veck\in\{1,2,3\}^s$,
\begin{align}\label{hfpartderboun3}
\bigl|\bigl(X_{k_1}X_{k_2}\cdots X_{k_s}\phi\bigr)(u,v,\theta)\bigr|
\ll_{m,n,r}\|f\|_{\C_\alpha^{m+s}}\|\veca\|^{-m}
\begin{cases} v^{\frac m2+s-\alpha}&\text{if }v\geq1
\\
v^{\frac m2-s}&\text{if }v\leq1.
\end{cases}
\end{align}
Recalling the definition of the norm $\|\cdot\|_{\C_r^n}$ of functions on $\SL(2,\R)$
(see \eqref{CrmnormonSL2def})
and using the formula
\begin{align}\label{FROBINIWASAWAEQ}
\left\|\matr 1u01\matr{\sqrt v}00{1/\sqrt
v}\matr{\cos\theta}{-\sin\theta}{\sin\theta}{\cos\theta}\right\|
=\sqrt{\frac{u^2+v^2+1}v},
\end{align}
along with the fact that
$\supp(\phi)\subset(-N,N)\times\R_{>0}\times(\R/2\pi\Z)$
because of the factor $\psi(N^{-1}u)$ in \eqref{tfRDEFrep},
it now follows that 
\begin{align*}
\|\phi\|_{\C_r^n}\ll_{m,n,r}
\|f\|_{\C_\alpha^{m+n}}\|\veca\|^{-m}
\sup_{v>0}\biggl(\Bigl(\frac{N^2+1+v^2}v\Bigr)^{r/2}
\left.\begin{cases} v^{\frac m2+n-\alpha}&\text{if }v\geq1
\\
v^{\frac m2-n}&\text{if }v\leq1
\end{cases}\right\}\biggr).
\end{align*}
Here the supremum is finite since
$m\geq 2n+r$ and $\alpha=\frac12(m+r)+n$.
Hence we obtain the bound in \eqref{AktfnormboundLEM1res}.
\end{proof}

It follows from 
Proposition \ref{AkBkgenPROPMgen},
Lemma \ref{AktfnormboundLEM1} and the observations around \eqref{MAINSTEP1line2disc2},
that
for any $m\geq 321$, letting $\alpha=\frac12(m+321)$, %
we have that the sum over $T$ in \eqref{AkboundPropRES}
is
\begin{align}
\ll_{m} \|f\|_{\C_\alpha^{m+5}}\|\veca\|^{-m}
\|h\|_{\C_{43}^5}\,\|M\|^{13}
\, \sum_{1\leq q\leq y^{-1/2}}\frac{\tau(q)}{q^{3/2}}
\biggl(1+\frac{\| q\vecalf\|_{\Z}}{q\sqrt y}\biggr)^{\hspace{-3pt}-1},
\end{align}
with $\vecalf=(0,0,-\veca\trans\vecxi_2,\veca\trans\vecxi_1)$.
Hence, requiring also $m>k$ so that
$\sum_{\veca\in\Z^k\setminus\{\bn\}}\|\veca\|^{-m}<\infty$,
and noticing that $\|q\vecalf\|_{\Z}=\|q\veca\vecxi\|_{\Z}$,
we obtain the bound
\eqref{AkboundPropRES}
i.e.\ Proposition \ref{AkboundProp} is proved.
\hfill$\square$ 

\subsection{Proof of Theorem \ref*{MAINTHM3gen}}
\label{MAINTHM3genpfSEC}

Recalling \eqref{alfbetaxiDEF},
we note that for any $\veceta=(\trans\veca,\trans\vecb)\in(\Z^2)^k$, $\vecxi\in(\R^2)^k$ and $q\in\Z^+$,
we have
$\bigl\| q[\veceta;\vecxi]\bigr\|_{\Z}\geq\bigl\| q(\veca\trans\vecxi_1,\veca\trans\vecxi_2)\bigr\|_{\Z}
=\|q\veca\vecxi\|_{\Z}$.
Recall also that $B_k$ is a subset of $(\Z^k\setminus\{\bn\})^2$.
Hence in \eqref{BKBOUNDPROPres} we have
\begin{align}\notag
&
\sum_{\veceta\in B_k}\|\veceta\|^{-m_1}\sum_{1\leq q\leq y^{-1/2}}\frac{\tau(q)}{q^{3/2}}
\biggl(1+\frac{\bigl\| q[\veceta;\vecxi]\bigr\|_{\Z}}{q\sqrt y}\biggr)^{\hspace{-3pt}-1}
\\\label{BKBOUNDPROPresdisc}
&\leq\sum_{\veca\in\Z^k\setminus\{\bn\}}
\sum_{\vecb\in\Z^k\setminus\{\bn\}}
(\|\veca\|^2+\|\vecb\|^2)^{-m_1/2}
\sum_{1\leq q\leq y^{-1/2}}\frac{\tau(q)}{q^{3/2}}
\biggl(1+\frac{\bigl\| q\veca\vecxi\bigr\|_{\Z}}{q\sqrt y}\biggr)^{\hspace{-3pt}-1}.
\end{align}
 Here, for every $\veca\in\Z^k\setminus\{\bn\}$,
\begin{align*}
\sum_{\vecb\in\Z^k\setminus\{\bn\}}
(\|\veca\|^2+\|\vecb\|^2)^{-m_1/2}
\leq\#\{\vecb\in\Z^k\setminus\{\bn\}\col\|\vecb\|\leq\|\veca\|\}\cdot\|\veca\|^{-m_1}
\hspace{110pt}
\\
+\sum_{j=0}^\infty
\#\{\vecb\in\Z^k\setminus\{\bn\}\col  2^j\|\veca\|<\|\vecb\|\leq 2^{j+1}\|\veca\|\}\cdot(2^j\|\veca\|)^{-m_1}
\hspace{20pt}
\\
\ll_k\|\veca\|^{k-m_1}+\sum_{j=0}^\infty (2^j\|\veca\|)^{k-m_1}
\ll\|\veca\|^{k-m_1},
\end{align*}
where in the last step we used the fact that $m_1>2k>k$
in Proposition \ref{BKBOUNDPROP}.
It follows that the sum in \eqref{BKBOUNDPROPresdisc} is
\begin{align*}
\ll_k\sum_{\veca\in\Z^k\setminus\{\bn\}}
\|\veca\|^{k-m_1}
\sum_{1\leq q\leq y^{-1/2}}\frac{\tau(q)}{q^{3/2}}
\biggl(1+\frac{\bigl\| q\veca\vecxi\bigr\|_{\Z}}{q\sqrt y}\biggr)^{\hspace{-3pt}-1}.
\end{align*}

Now let $m$ be an arbitrary integer $\geq\max(321,k+1)$
and set $\alpha=\frac12(m+321)$,
as in Proposition \ref{AkboundProp}.
Then set $m_1=m+k$. %
For this $m_1$, we get
$n=3m_1+5=3m+3k+5$ in Proposition \ref{BKBOUNDPROP},
and since $n>m+5$,
the sum of the
bounds in \eqref{BKBOUNDPROPres} and \eqref{AkboundPropRES}
is 
\begin{align}\label{AkBkboundsum}
\ll_m \|f\|_{\C_{\alpha}^n}\,\|h\|_{\C_{43}^5}\,\|M\|^{13}
\sum_{\veca\in\Z^k\setminus\{\bn\}}\|\veca\|^{-m}\sum_{1\leq q\leq y^{-1/2}}\frac{\tau(q)}{q^{3/2}}
\biggl(1+\frac{\bigl\| q\veca\vecxi\bigr\|_{\Z}}{q\sqrt y}\biggr)^{\hspace{-3pt}-1}.
\end{align}
Theorem \ref{MAINTHM3gen} is now a consequence 
of \eqref{MAINSTEP1},
the estimate in \eqref{MAINTERMgen},
the observation just below \eqref{MAINTERMgen},
and the bound in \eqref{AkBkboundsum}.
\hfill$\square$ $\square$ $\square$

\subsection{Proof of Corollary \ref*{MAINTHM3cor2}}
\label{BTsec}
Fix a function $\omega\in\C^\infty_c(\R)$ satisfying
$\omega\geq0$,
$\supp(\omega)\subset[-N,N]$
and $\sum_{j\in\Z}\omega(x-jN)=1$ for all $x\in\R$.
Then
\begin{align}\notag
\frac 1T\int_\R f\bigl(\Gamma(1_2, {\vecxi})\mathrm{u}_x\mathrm{a}_y\bigr)\eta(x)h(T^{-1}x)\,dx
\hspace{200pt}
\\\label{MAINTHM3cor2pf1}
=\frac 1T\sum_{j\in\Z}\int_\R f\bigl(\Gamma(1_2, {\vecxi})\mathrm{u}_x\mathrm{a}_y\bigr)
\,\omega(x-jN)\eta(x)h(T^{-1}x)\,dx.
\end{align}
Substituting $x=x_{\new}+jN$
and using $\Gamma(1_2,{\vecxi})\mathrm{u}_{jN}=\Gamma \mathrm{u}_{jN}(1_2,\vecxi\mathrm{u}_{jN})
=\Gamma(1_2,\vecxi\mathrm{u}_{jN})$,
which holds since $\mathrm{u}_{jN}\in %
\Gamma$,
we obtain %
\begin{align}\label{MAINTHM3cor2pf10}
=\frac1T\sum_{j\in\Z}\int_\R f\bigl(\Gamma\bigl(1_2,{\vecxi}\mathrm{u}_{jN}\bigr)\mathrm{u}_x\mathrm{a}_y\bigr)
\,\tth_j(x)\,dx
\end{align}
with $\tth_j(x):=\omega(x)\,\eta(x+jN)\,h\bigl(T^{-1}(x+jN)\bigr)$.
Applying Theorem \ref{MAINTHM3gen},
we get %
\begin{align}\label{MAINTHM3cor2pf12}
=\frac 1T\sum_{j\in\Z}\biggl(\int_X f\,d\mu\int_{\R}\tth_j(x)\,dx
+O\Bigl(\|f\|_{\C_\alpha^n} \|\tth_j\|_{\C_{43}^5}\delta_m(y;\vecxi\mathrm{u}_{jN})\Bigr)\biggr).   %
\end{align}
Here
\begin{align*}
\sum_{j\in\Z} \int_{\R}\tth_j(x)\,dx
=\sum_{j\in\Z} \int_{\R}\omega(x-jN) \eta(x)h(T^{-1}x)\,dx
=T\int_{\R} \eta(Tx)h(x)\,dx.
\end{align*}
Also, for each $j\in\Z$
we have $\tth_j\in\C^5_c(\R)$ with $\supp(\tth_j)\subset[-N,N]$,
and hence since $T\geq1$,
\begin{align}\label{MAINTHM3cor2pf11}
\|\tth_j\|_{\C_{43}^5}\ll \|\eta\|_{\C_0^5}\|h\|_{\C^5_2}\cdot (1+|j|/T)^{-2}.
\end{align}
Using these facts in \eqref{MAINTHM3cor2pf12},
we obtain \eqref{MAINTHM3cor2res1}.
\hfill$\square$

\section{General orbits}
\label{genorbSEC}

\subsection{Proof of Theorem \ref*{MAINTHM4}}
\label{MAINTHM4pfsubsec1}
Let $f,h,g$ and $T$ be given as in the statement of Theorem~\ref{MAINTHM4}.
Take $M\in\SL(2,\R)$ and $\vecxi\in(\R^2)^k$ so that $g=(1_2,{\vecxi})M$.
Choose $\gamma\in\SL(2,\Z)$
so that $\gamma^{-1} M\mathrm{a}_T$ lies inside the standard fundamental domain for
$\SL(2,\Z)\bs\SL(2,\R)$, i.e.\ so that
\begin{align}\label{funddomDEF}
\bigl|\gamma^{-1} M\mathrm{a}_T(i)\bigr|\geq1
\quad\text{and}\quad
\bigl|\tre\bigl(\gamma^{-1} M\mathrm{a}_T(i)\bigr)\bigr|\leq\tfrac12.
\end{align}
Then we have
\begin{align}\label{scrYlowerbd}
\scrY(M\mathrm{a}_T )=
\tim \bigl(\gamma^{-1} M\mathrm{a}_T (i)\bigr)\geq\tfrac12\sqrt3.
\end{align}
Let us write 
\begin{align}\label{ygTdef}
y=y_g(T):=\frac{\scrY(M\mathrm{a}_T )}T.
\end{align}
\begin{remark}\label{ygTSg0Tremark}
The notation ``$y_g(T)$'' is the same as in
\cite[eq.\ (6)]{SASL}.
Let us also note that
\begin{align}\label{ygTSg0TremarkRES}
y=y_g(T)\asymp S_{g,\bn}(T)^{-2},
\end{align}
with absolute implied constants.
Indeed, recall
\eqref{bMxiqTDEF},
and note that
the condition
$S\,\fR_T\cap \Z^2\,\p_{\bn}(g)=\{\bn\}$
is equivalent with $ST^{-\frac12}[-1,1]^2\cap \Z^2\,\p_{\bn}(g\mathrm{a}_T)=\{0\}$;
therefore
$T^{-\frac12}\, S_{g,\bn}(T)$ is comparable with the Euclidean length of the 
shortest non-zero vector in the lattice $\Z^2\,\p_{\bn}(g\mathrm{a}_T)$.
From this, the relation \eqref{ygTSg0TremarkRES}
follows from the facts noted in
\cite[just above eq.\ (6)]{SASL}.
Note also that \eqref{ygTSg0TremarkRES} gives for
the first term appearing in the bound in \eqref{MAINTHM4res}:
\begin{align}\label{ygTSg0TremarkRES2}
\scrL_3\Bigl(S_{g,\bn}(T)^{-\frac12}\Bigr)\asymp\scrL_3(y^{\frac14}).
\end{align}
\end{remark}

Continuing with the proof of Theorem \ref{MAINTHM4},
note that if $y\geq 10^{-4}$ then $\scrL_3\bigl(S_{g,\bn}(T)^{-\frac12}\bigr)\asymp\scrL_3(y^{\frac14})\gg1$
so that \eqref{MAINTHM4res} holds trivially.
Hence from now on let us assume that $y<10^{-4}$.
It follows that $T=\scrY(M\mathrm{a}_T )/y>10^3$.

The following result is the central step of the proof of Theorem \ref*{MAINTHM4}.
\begin{prop}\label{MAINTHM4pfPROP}
Under the assumptions introduced %
above,
\begin{align}\label{MAINTHM4pfPROPres}
\biggl|\frac1T\int_{\R}f\bigl(\Gamma g\mathrm{u}_t\bigr) h\Bigl(\frac tT\Bigr)\,dt
-\int_X f\,d\mu \int_{\R}h\,dt\biggr|
\hspace{170pt}
\\\notag
\ll \|f\|_{\C_{\alpha}^n}\|h\|_{\C_0^5}\Biggl(\sqrt y+
\sum_{\ell\in\Z}    %
\frac{Ty}{(Ty+|\ell|)^2}\,\delta_m\Biggl(
\biggl(\frac{Ty+|\ell|}{T\sqrt y}\biggr)^2;\vecxi\gamma\mathrm{u}_{\ell}\Biggr)\Biggr).
\end{align}
\end{prop}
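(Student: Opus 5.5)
The plan is to move the orbit into standard position using $\gamma$, cut the $\mathrm{u}_\R$-orbit into pieces by a smooth partition of unity indexed by $\ell\in\Z$, write each piece as an expanding translate $(1_2,\vecxi\gamma\mathrm{u}_\ell)M_\ell\mathrm{u}_x\mathrm{a}_{y_\ell}$ with $\|M_\ell\|\ll1$, and apply Theorem \ref{MAINTHM3gen} to each piece.

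\emph{Normalization.} Since $\Gamma(1_2,\vecxi)\gamma=\Gamma\gamma(1_2,\vecxi\gamma)=\Gamma(1_2,\vecxi\gamma)$, we have $\Gamma g\mathrm{u}_t=\Gamma(1_2,\vecxi\gamma)\gamma^{-1}M\mathrm{u}_t$. By \eqref{funddomDEF} we can write $\gamma^{-1}M\mathrm{a}_T=\mathrm{u}_{x_0}\mathrm{a}_{Y}k_\theta$ with $|x_0|\leq\frac12$, $Y=\scrY(M\mathrm{a}_T)=Ty$ and $k_\theta\in\mathrm{SO}(2)$. Substituting $t=Ts$ gives
\[
\gamma^{-1}M\mathrm{u}_{Ts}=\mathrm{u}_{x_0}\mathrm{a}_{Y}k_\theta\mathrm{u}_s\mathrm{a}_T^{-1},\qquad |s|\leq1.
\]
So, up to the rotation $k_\theta$, the orbit piece is a horocycle arc of $\HH$-length $\asymp1$ at height $Y$, pushed by $\mathrm{a}_T^{-1}$. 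In $\HH$, the points $\gamma^{-1}M\mathrm{u}_{Ts}(i)$ lie on a horocycle based at the real point $\gamma^{-1}M(\infty)$, a Euclidean circle of diameter $\asymp Ty$. The portion with $|s|\leq1$ sweeps real parts over a range that may extend far beyond $[-\frac12,\frac12]$.

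\emph{Partition.} Fix $\omega\in\C_c^\infty(\R)$ with $\sum_{\ell}\omega(x-\ell)=1$. Insert $\omega(\tre(\gamma^{-1}M\mathrm{u}_t(i))-\ell)$ into the integral; this is a smooth function of $t$. On the $\ell$-th piece, use $\mathrm{u}_\ell\in\Gamma'$ (here $N=1$) and $\Gamma(1_2,\vecxi\gamma)\mathrm{u}_\ell=\Gamma(1_2,\vecxi\gamma\mathrm{u}_\ell)$, exactly as in the proof of Corollary \ref{MAINTHM3cor2}. This recenters the real part to $[-1,1]$. Near real part $\ell$, the horocycle has imaginary part $v_\ell$ with $v_\ell\asymp Ty\,(T\sqrt y/(Ty+|\ell|))^2$ (elementary circle geometry: at horizontal distance $\asymp|\ell|$ from the top, a circle of diameter $D=Ty$ has height $\asymp D^{-1}$ times something of size $(D/(D+|\ell|))^2\cdot$const). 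There the horocycle direction is a rotation of the horizontal one, so the piece equals $(1_2,\vecxi\gamma\mathrm{u}_\ell)M_\ell\mathrm{u}_x\mathrm{a}_{y_\ell}$ with $y_\ell=((Ty+|\ell|)/(T\sqrt y))^2$ and $M_\ell$ a bounded matrix (a rotation times a bounded unipotent and diagonal). The density in $x$ is a rescaled smooth bump with $\C^5_{43}$-norm $\ll\|h\|_{\C^5_0}$. Its total mass, i.e.\ the fraction of $[-T,T]$ in $t$ spent there, is $\asymp Ty/(Ty+|\ell|)^2$. Applying Theorem \ref{MAINTHM3gen} to each piece gives main term $\int_Xf\,d\mu$ times the weight, plus an error $\|f\|\|h\|\,\frac{Ty}{(Ty+|\ell|)^2}\,\delta_m(y_\ell;\vecxi\gamma\mathrm{u}_\ell)$. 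Summing over $\ell$ reproduces the sum in \eqref{MAINTHM4pfPROPres}, and the main terms add up to $\int_Xf\,d\mu\int h$.

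\emph{Main obstacle.} I expect the main difficulty to be the top part of the horocycle, where $y_\ell\geq1$ (the pieces with $|\ell|\lesssim T\sqrt y$). There Theorem \ref{MAINTHM3gen} does not apply, since it requires $y\leq1$, and the orbit sits high in the cusp. For this part I would first try bounding the integral trivially, using the decay $|f|\ll\|f\|_{\C^n_\alpha}\scrY^{-\alpha}$ together with the fact that the region of $t$ where the height exceeds $1$ has measure fraction $\ll\sqrt y$. This should produce the $\sqrt y$ term.

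A second delicate point is making the change of variables $t\mapsto(\ell,x)$ uniformly smooth, so that the transformed weights $h$ have controlled $\C^5_{43}$ norms with the right $(1+|x|)^{-43}$ decay. I would handle it by computing explicitly with the circle parametrization $k_\theta\mathrm{u}_s$ in Iwasawa coordinates.
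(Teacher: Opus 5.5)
Your architecture is the paper's. With $\smatr abcd=\gamma^{-1}M\mathrm{a}_T$, the paper's index $j(z)=\lfloor\frac{az+b}{cz+d}\rfloor$ is exactly the unit window containing the real part of $\gamma^{-1}M\mathrm{u}_{Tz}(i)$. The paper likewise recenters by $\mathrm{u}_{-j}$, applies Theorem \ref{MAINTHM3gen} to the low pieces, and bounds the high part trivially.

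The gap is in the step that carries all the content: that the $\ell$-th piece has $s$-measure $\ll Ty/(Ty+|\ell|)^2$, parameter $\ll\bigl((Ty+|\ell|)/(T\sqrt y)\bigr)^2$, and $\|M_\ell\|\ll1$. Your formulas happen to be right, but you only assert them, and the picture you derive them from is wrong.
\begin{itemize}
\item The points are $\smatr abcd(s+i/T)$, $|s|\le1$, with $c^2+d^2=Y^{-1}$. So the horocycle has diameter $T/c^2\ge T^2y$, not $\asymp Ty$.
\item Your $v_\ell$ satisfies $v_\ell y_\ell=Ty$, so $v_0=T$. This is incompatible with $y_0=y$ and $\|M_0\|\ll1$.
\item By your own formula, the pieces with $y_\ell\ge1$ are those with $|\ell|\gtrsim T\sqrt y$, not $|\ell|\lesssim T\sqrt y$.
\end{itemize}

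What actually has to be shown is the following. On the low part, the height at $s=z$ is $\asymp t/T$ with $t=(cz+d)^{-2}$. The real part there is $\approx\frac{az+b}{cz+d}$, whose $z$-derivative is $t$. The fundamental-domain condition $|ac+bd|\le\frac12(c^2+d^2)$ gives $\max(|a|,|b|)<2\sqrt Y$. From these one gets:
\begin{itemize}
\item $t\ge Y/6$ on $[-2,2]$;
\item $|\ell|<7\sqrt{Yt}$;
\item $|\ell|>\frac12\sqrt{Yt}$ once $t\ge100Y$.
\end{itemize}
Hence $t\asymp(Y+|\ell|)^2/Y$ on the $\ell$-th piece, and its measure is $\ll Y/(Y+|\ell|)^2$. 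Boundedness of $M_\ell=\mathrm{u}_{-\ell}\smatr abcd\mathrm{a}_{t^{-1}}\mathrm{u}_{tz}$ comes from its bottom row $(c/\sqrt t,\pm1)$ with $|c|<\frac1{10}$, and from its top-right entry of modulus $\bigl|\frac{az+b}{cz+d}-\ell\bigr|<1$. This uses $Y>100$; the paper treats $Y\le100$ separately by applying Theorem \ref{MAINTHM3gen} directly.

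A second problem is that a cutoff in the real part does not localize to a single arc. When the arc passes over the top of the horocycle (e.g.\ $|d/c|<1$), the upper branch $|s+d/c|<1/T$ lies at height $>T/(2c^2)$. It sweeps real parts over an interval of length $T/c^2\ge T^2y$. So the window of each low piece also picks up a distant, extremely compressed component, on which there is no bounded $M_\ell$ and no controlled $\C^5_{43}$ density. You need an extra height cutoff, or you should localize in $s$ instead.

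The paper avoids this with the continuous splitting $\Phi\bigl(\frac{z-s}{(cs+d)^2}\bigr)(cs+d)^{-2}$. This localizes in $s$ at the natural scale $(cz+d)^2=t^{-1}$. It lets the paper choose, separately for each $z$, either Theorem \ref{MAINTHM3gen} (when $t(z)\le10^{-2}T$) or the trivial bound, with no smooth transition needed.

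Finally, the trivial part needs no cusp decay of $f$. The set $\fS=\{z\in[-2,2]:(cz+d)^2<100/T\}$ has measure $\le20/(|c|\sqrt T)$ and is nonempty only if $|c|>\frac25Y^{-1/2}$. So $|\fS|<50\sqrt y$, and $\|f\|_\infty\|h\|_\infty$ suffices.
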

\begin{proof}
First assume $\scrY(M\mathrm{a}_T )\leq 100$. %
Using 
$\Gamma g=\Gamma\gamma^{-1}g
=\Gamma (1_2,{\vecxi}\gamma)\gamma^{-1}M$
and $\mathrm{u}_t=\mathrm{a}_T \, \mathrm{u}_{t/T}\, \mathrm{a}_{T^{-1}}$, we have
\begin{align*}
\frac1T\int_{\R}f\bigl(\Gamma g\mathrm{u}_t\bigr) h\Bigl(\frac tT\Bigr)\,dt
=\int_{\R}f\Bigl(\Gamma(1_2,{\vecxi}\gamma)\gamma^{-1} M\,\mathrm{a}_T\, \mathrm{u}_x\,\mathrm{a}_{T^{-1}}\Bigr) h(x)\,dx.
\end{align*}
Since $\gamma^{-1} M\mathrm{a}_T $ belongs to the standard fundamental domain
for $\SL(2,\Z)\bs\SL(2,\R)$ and also
$\tim\bigl(\gamma^{-1} M\mathrm{a}_T (i)\bigr)\leq100$,
it follows that $\|\gamma^{-1} M\mathrm{a}_T \|\ll1$.
Recall also that we are assuming $\supp(h)\subset[-1,1]$.
Hence Theorem \ref{MAINTHM3gen} gives
\begin{align}\label{nonclosedgendisc33case4pre}
\frac1T\int_{\R}f\bigl(\Gamma g\mathrm{u}_t\bigr) h\Bigl(\frac tT\Bigr)\,dt
=\int_X f\,d\mu \int_{\R}h\,dt
+O\biggl(\|f\|_{\C_{\alpha}^{n}}\, \|h\|_{\C_0^5}\,\delta_{m}(T^{-1};\,{\vecxi}\gamma)\biggr).
\end{align}
This implies the bound in \eqref{MAINTHM4pfPROPres};
indeed the error term in \eqref{nonclosedgendisc33case4pre}
is subsumed by the contribution from $\ell=0$ in the right-hand side of 
\eqref{MAINTHM4pfPROPres}, by Lemma~\ref{deltambasicfactLEM1}
and since $T^{-1}=\scrY(M\mathrm{a}_T )^{-1}y\leq\frac2{\sqrt 3}y$.

\vspace{5pt}

It remains to consider the case $\scrY(M\mathrm{a}_T )>100$.
Let us set
\begin{align}\label{MAINTHM4pf1pre}
\matr abcd:=\gamma^{-1} M \mathrm{a}_T\in\SL(2,\R).
\end{align}
Then %
$\tim(\gamma^{-1} M\mathrm{a}_T(i))=(c^2+d^2)^{-1}$,
and so
\begin{align}\label{MAINTHM4pf1}
c^2+d^2=\frac1{\scrY(M\mathrm{a}_T )}<\frac1{100}.
\end{align}
Let us fix, once and for all, a function $\Phi\in\C_c^{\infty}(\R)$ satisfying
$\Phi\geq0$, 
$\supp(\Phi)\subset[-1,1]$ and $\int_{\R}\Phi(z)\,dz=1$.
We then decompose our orbital integral as follows:
\begin{align}\label{MAINTHM4pf10}
\frac1T\int_{\R}
&f\bigl(\Gamma g \mathrm{u}_t\bigr) h\Bigl(\frac tT\Bigr)\,dt
=\int_{\R}\int_{\R}f\bigl(\Gamma g\mathrm{u}_{Ts}\bigr) h(s)\,\Phi\Bigl(\frac{z-s}{(cs+d)^2}\Bigr)\,(cs+d)^{-2}\,ds\,dz.
\end{align}
This identity holds by Fubini's Theorem
and since 
$(cs+d)^{-2}\,\int_{\R}\Phi\bigl(\frac{z-s}{(cs+d)^2}\bigr)\,dz=1$
for all $s\in\R\setminus\{-\frac dc\}$;
it also follows that the double integral in the right-hand side is absolutely convergent.

\vspace{5pt}

Given any $z\in\R\setminus\{-\frac dc\}$,
we will now analyse the integral over $s$ in \eqref{MAINTHM4pf10}.
Let us define
\begin{align}\label{Fzdef}
F_z(s):=\Phi\Bigl(\frac{z-s}{(cs+d)^2}\Bigr)\,(cs+d)^{-2}.
\end{align}
Note that since %
$z\neq-\frac dc$,
the function $F_z$ is $\C^\infty$ on all $\R$, with the understanding that
(if $c\neq0$) $F_z(-\frac dc):=0$.
Let us also write
\begin{align}\label{tjdefs}
t=t(z):=(cz+d)^{-2}   %
\qquad\text{and}\qquad
j=j(z):=\left\lfloor\frac{az+b}{cz+d}\right\rfloor\in\Z.
\end{align}
Using $\mathrm{u}_{Ts}=\mathrm{a}_{T/t} \mathrm{u}_{ts} \mathrm{a}_{t/T}$
and $\Gamma g=\Gamma \mathrm{u}_{-j}\gamma^{-1}g
=\Gamma(1_2,\vecxi\gamma \mathrm{u}_j) \mathrm{u}_{-j}\gamma^{-1}M$,
and substituting $s=z+t^{-1}x$,
we may rewrite the integral over $s$ in \eqref{MAINTHM4pf10} as follows:
\begin{align}\label{MAINTHM4pf10a}
&\int_{\R}f\bigl(\Gamma g\mathrm{u}_{Ts}\bigr) h(s)\,F_z(s)\,ds
=\frac1t\int_{\R}f\bigl(\Gamma (1_2,\vecxi\gamma \mathrm{u}_{j}) \tM %
\mathrm{u}_x \mathrm{a}_{t/T}\bigr)\, \tth_z(x)\,dx,
\end{align}
where 
\begin{align}\label{MAINTHM4pf10b}
\tM=\tM(z):=\mathrm{u}_{-j}\gamma^{-1}M\mathrm{a}_{T/t} \mathrm{u}_{tz}
&=\mathrm{u}_{-j}\matr abcd \mathrm{a}_{t^{-1}}\mathrm{u}_{tz}
\\\notag
&=\matr{(a-jc)/\sqrt t}{(az+b-j(cz+d))\sqrt t}{c/\sqrt t}{(cz+d)\sqrt t}.
\end{align}
and
\begin{align}\label{thzDEF}
\tth_z(x)=h(z+t^{-1}x)\,F_z(z+t^{-1}x).
\end{align}
Using $\supp(h)\subset[-1,1]$ and \eqref{Fzdef},
it follows that for every $s$ satisfying $h(s)F_z(s)\neq0$,
we have $|s|\leq1$ and $|z-s|\leq(cs+d)^2$,
and 
\begin{align}\label{MAINTHM4pf11}
0<(cs+d)^2\leq c^2+2|cd|+d^2\leq 2(c^2+d^2) =2\,\scrY(M\mathrm{a}_T )^{-1}
<\tfrac1{50},
\end{align}
where we also used \eqref{MAINTHM4pf1}.
These imply that $|z|<\frac{51}{50}$,
and also
\begin{align}\label{MAINTHM4pf14prepre}
\bigl|(cs+d)-(cz+d)\bigr|
=|c| |z-s|\leq |c|(cs+d)^2
< 10^{-1}50^{-\frac12}|cs+d|
< 70^{-1}|cs+d|.   %
\end{align}
Therefore,
\begin{align}\label{MAINTHM4pf11a}
|cz+d|<\tfrac{71}{70}\,|cs+d|
< \tfrac 32\, \scrY(M\mathrm{a}_T )^{-\frac12},
\end{align}
where we used \eqref{MAINTHM4pf11} and $\frac{71}{70}\, \sqrt 2<\frac32$.
It will also be useful to record that
\eqref{MAINTHM4pf14prepre} implies
$\bigl|(cs+d)-(cz+d)\bigr|<69^{-1}|cz+d|$,
and so
\begin{align}\label{MAINTHM4pf11b}
(cs+d)^2< 2(cz+d)^2
\qquad\text{and}\qquad
(cs+d)^{-2}< 2(cz+d)^{-2}.
\end{align}

Next we claim that
\begin{align}\label{MAINTHM4pf12goal}
\|\tth_z\|_{\C_{43}^5}\ll\|h\|_{\C^5_0}\,t.
\end{align}
To prove this, one starts by verifying %
that there exist %
polynomials
$P_{n,m}(X,Y,Z)\in\Z[X,Y,Z]$ for 
$n\geq m\geq0$ such that for all $n\geq0$:
\begin{align}\label{MAINTHM4pf12}
F_z^{(n)}(s)=\sum_{m=0}^n \Phi^{(m)}\Bigl(\frac{z-s}{(cs+d)^2}\Bigr)\,P_{n,m}\Bigl((cs+d)^{-1},z-s,c\Bigr),
\end{align}
and furthermore, for any $n\geq m\geq0$,
a monomial $X^{\ell}Y^vZ^{\alpha}$
can appear with a non-zero coefficient in 
$P_{n,m}(X,Y,Z)$ only if
$v,\alpha\leq n$ and 
$2\leq \ell\leq 2+2n+v$.
Next note that for any $s$ with $h(s)F_z(s)\neq0$
we have $|z-s|\leq(cs+d)^2$ and
$\frac{70}{71}|cz+d|\leq |cs+d|\leq \frac{70}{69}|cz+d|$
(by \eqref{MAINTHM4pf14prepre})
and $t(z)=(cz+d)^{-2}>\frac49\,\scrY(M\mathrm{a}_T)>1$
(by \eqref{MAINTHM4pf11a}),
and hence for any $0\leq v\leq n$ and $2\leq\ell\leq2+2n+v$:
\begin{align*}
\Bigl|(cs+d)^{-\ell}(z-s)^v\Bigr|\ll_{\ell,v}|cz+d|^{-\ell+2v}   %
=t^{\frac12\ell-v}\leq t^{1+n+\frac12v-v}=t^{1+n-\frac12v}\leq t^{n+1}.
\end{align*}
Recall also that $|c|<\frac1{10}$, by \eqref{MAINTHM4pf1}.
Hence it follows that 
for any $s$ with $h(s)F_z(s)\neq0$,
and any $n\geq0$,
we have $\bigl|F^{(n)}_z(s)\bigr|\ll_n t^{n+1}$.
Using also Leibniz' rule, we conclude that
\begin{align*}
\biggl|\frac{d^n}{ds^n}\bigl(h(s)F_z(s)\bigr)\biggr|
\ll_n\|h\|_{\C^n_0}\,t^{n+1},
\qquad\forall s\in\R.
\end{align*}
Hence, via \eqref{thzDEF},
$\|\tth_z\|_{\C^n_0}\ll_n \|h\|_{\C^n_0}\,t$.
Recall from previous observations that
$h(s)F_z(s)\neq0$ implies $|z-s|\leq(cs+d)^2< %
2(cz+d)^2=2t^{-1}$ (see in particular \eqref{MAINTHM4pf11b});
therefore $\supp(\tth_z)\subset[-2,2]$.
Hence \eqref{MAINTHM4pf12goal} follows.

Next we will verify that for any 
$z\in\R\setminus\{-\frac dc\}$ for which $\tth_z\not\equiv0$,
the matrix $\tM=\tM(z)$ defined in \eqref{MAINTHM4pf10b}
satisfies
\begin{align}\label{MAINTHM4pf10bfact}
\|\tM\|\ll 1.
\end{align}
For this, recall that we have seen that $\tth_z\not\equiv0$
implies that $t>\frac49\,\scrY(M\mathrm{a}_T)>1$;
hence the bottom left entry of $\tM$ satisfies
$|c/\sqrt t|<|c|<10^{-1}$.
Using the definitions of $t=t(z)$ and $j=j(z)$ in \eqref{tjdefs},
it also follows that the bottom right entry of $\tM$ has absolute value $1$,
and that the top right entry satisfies
\begin{align*}
\bigl|(az+b-j(cz+d))\sqrt t\bigr| %
=\biggl|\frac{az+b}{cz+d}-j\biggr|<1.
\end{align*}
Using these facts together with $\det(\tM)=1$, it follows that the remaining entry of $\tM$ satisfies
$\bigl|(a-jc)/\sqrt t\bigr|<2$.
Hence \eqref{MAINTHM4pf10bfact} holds.

Now if $t(z)/T\leq1$, then Theorem \ref{MAINTHM3gen} applies 
to the right-hand side of \eqref{MAINTHM4pf10a},
and using \eqref{MAINTHM4pf12goal}
and \eqref{MAINTHM4pf10bfact}
we conclude %
that for any fixed $m,n,\alpha$ as in the statement of
Theorem \ref{MAINTHM3gen},
\begin{align}\notag
\int_{\R} f\bigl(\Gamma g\mathrm{u}_{Ts}\bigr)  h(s)\,F_z(s) %
\,ds    %
&=\frac1t\int_X f\,d\mu \int_{\R} \tth(x)\,dx
+O\Bigl(\|f\|_{\C_{\alpha}^n}\|h\|_{\C_0^5}\,\delta_{m}\Bigl(\frac tT;\, {\vecxi} \gamma \mathrm{u}_{j}\Bigr)\Bigr)
\\\label{MAINTHM4pf21pre1}
&=\int_X f\,d\mu \int_{\R} h(s)\,F_z(s) %
\,ds
+O\Bigl(\|f\|_{\C_{\alpha}^n}\|h\|_{\C_0^5}\,\delta_{m}\Bigl(\frac tT;\, {\vecxi} \gamma \mathrm{u}_{j}\Bigr)\Bigr).
\end{align}

We will also require a trivial bound, to use when $t(z)/T$ is not small.
First note that for every $z\in\R\setminus\{-\frac dc\}$ we have
\begin{align}\label{MAINTHM4pf21pre2pf1}
\int_{\R}|h(s)|F_z(s)\,ds
\leq 2\|h\Phi\|_{\L^{\infty}}\int_{z-2(cz+d)^2}^{z+2(cz+d)^2}(cz+d)^{-2}\,ds
=8\|h\Phi\|_{\L^{\infty}}\ll\|h\|_{\L^{\infty}},
\end{align}
where we used the fact that
$h(s)F_z(s)\neq0$ implies that
$|s|\leq1$ and $|z-s|\leq(cs+d)^2<2(cz+d)^2$,
and also $(cs+d)^{-2}<2(cz+d)^{-2}$;
indeed see \eqref{MAINTHM4pf11b}.
It follows from \eqref{MAINTHM4pf21pre2pf1} that
\begin{align}\label{MAINTHM4pf21pre2}
\int_{\R} f\bigl(\Gamma g\mathrm{u}_{Ts}\bigr)  h(s)\,F_z(s) %
\,ds    %
=\int_X f\,d\mu \int_{\R} h(s)\,F_z(s) %
\,ds
+O\Bigl(\|f\|_{\L^{\infty}}\|h\|_{\L^{\infty}}\Bigr).  %
\end{align}

\vspace{5pt}

Recall that $t(z):=(cz+d)^{-2}$. Now set
\begin{align}\label{fSdef}
\fS:=\{z\in[-2,2]\col t(z)>10^{-2}T\}.
\end{align}
Let us write $Y:=\scrY(M\mathrm{a}_T )$.
Note that if $z\in\fS$ then $|cz+d|<10\,T^{-\frac12}< \frac1{10}\,Y^{-\frac12}$
(the last bound holds by \eqref{ygTdef} and since $y<10^{-4}$),
and so
\begin{align}\label{MAINTHM4pf27}
2|c|\geq|cz|\geq|d|-|cz+d|>|d|-\tfrac1{10}\,Y^{-\frac12}.
\end{align}
By using \eqref{MAINTHM4pf27} if $|d|>\frac9{10}\,Y^{-\frac12}$
and otherwise using \eqref{MAINTHM4pf1}, we conclude:
If $\fS\neq\emptyset$ then $|c|>\frac25\,Y^{-\frac12}$.
Since $z\in\fS\Rightarrow |z+\frac dc|<10|c|^{-1}T^{-\frac12}$,
it follows that
\begin{align}\label{MAINTHM4pf18}
\bigl|\fS\bigr| %
\leq\frac{20}{|c|\sqrt T}<50\sqrt{\frac{Y}T}=50\sqrt y,
\end{align}
where $|\fS|$ denotes the Lebesgue measure of $\fS$.
Now in \eqref{MAINTHM4pf10},
recall that $h(s)F_z(s)\neq0$ implies both $|s|\leq1$ and $|z|<\frac{51}{50}<2$;
then apply 
\eqref{MAINTHM4pf21pre1} for each $z\in[-2,2]\setminus\fS$,
and \eqref{MAINTHM4pf21pre2} for each $z\in\fS$.
Using also \eqref{MAINTHM4pf18},
and the fact that $\int_{-2}^2F_z(s)\,dz=1$ for all $s\in(-1,1)\setminus\{-\frac dc\}$
while $h(s)=0$ when $|s|\geq1$,
it follows that
\begin{align}\notag
&\frac1T\int_{\R}f\bigl(\Gamma g\mathrm{u}_t\bigr) h\Bigl(\frac tT\Bigr)\,dt
=\int_X f\,d\mu \int_{-2}^2\int_{\R} h(s)\,F_z(s)\,ds\,dz
\\\notag
&\hspace{140pt}
+O\biggl(\|f\|_{\C_{\alpha}^n}\|h\|_{\C_0^5}\biggl(\sqrt y+\int_{[-2,2]\setminus\fS}\delta_{m}\Bigl(\frac tT;\,\vecxi\gamma \mathrm{u}_{j}\Bigr)\,dz\biggr)\biggr)
\\\label{MAINTHM4pf21}
&=\int_X f\,d\mu \int_{\R}h(s)\,ds
+O\biggl(\|f\|_{\C_{\alpha}^n}\|h\|_{\C_0^5}\biggl(\sqrt y+\int_{[-2,2]\setminus\fS}\delta_{m}\Bigl(\frac tT;\,\vecxi\gamma \mathrm{u}_{j}\Bigr)\,dz\biggr)\biggr).
\end{align}

It remains to bound the integral over $z\in [-2,2]\setminus\fS$,
wherein it should be remembered that
$t=t(z)=(cz+d)^{-2}$
and $j=j(z):=\left\lfloor\frac{az+b}{cz+d}\right\rfloor$.
Using \eqref{MAINTHM4pf1pre}, the second inequality in \eqref{funddomDEF}
is equivalent with
$|ac+bd|\leq\frac12(c^2+d^2)$. %
Combining this with 
${\displaystyle b=\frac{d(ac+bd)-c}{c^2+d^2}}$,
${\displaystyle a=\frac{c(ac+bd)+d}{c^2+d^2}}$
and \eqref{MAINTHM4pf1},
it follows that
\begin{align}\label{MAINTHM4pf22}
\max(|a|,|b|)\leq\tfrac12 {Y}^{-\frac12}+{Y}^{\frac12}<2\sqrt{{Y}}.
\end{align}
Hence for all $z\in[-2,2]$ we have
$|az+b|<6\sqrt{{Y}}$
and so 
\begin{align}\label{MAINTHM4pf23}
|j(z)|\leq 1+\biggl|\frac{az+b}{cz+d}\biggr|
<1+6\sqrt{{Y}\, t(z)}
<7\sqrt{{Y}\, t(z)}.
\end{align}
(The last inequality holds since ${Y}>100$ and $|cz+d|<3{Y}^{-\frac12}$, viz., $t(z)>9^{-1}{Y}>1$.)
It follows from \eqref{MAINTHM4pf23} and \eqref{fSdef} that
for all $z\in[-2,2]\setminus\fS$ we have
$|j(z)|<\sqrt{{Y}T}=T\sqrt y$.
Using also the fact that $\delta_{m}(y;\vecxi)$ is an increasing function of $y$,
it follows that %
\begin{align}\label{MAINTHM4pf26}
\int_{[-2,2]\setminus\fS}\delta_{m}\biggl(\frac{t(z)}T;\,\vecxi \gamma \mathrm{u}_{j(z)}\biggr)\,dz
\leq\sum_{|\ell|<T\sqrt y}\bigl|A_{\ell}\bigr|\,
\delta_{m}\biggl(\frac{t_{\ell}}T;\:{\vecxi} \gamma \mathrm{u}_{\ell}\biggr),
\end{align}
where the sum runs over all integers $\ell$ satisfying $|\ell|<T\sqrt y$. Here,
\begin{align*}
A_{\ell}:=\bigl\{z\in[-2,2]\setminus\fS\col j(z)=\ell\bigr\},
\qquad \qquad
t_{\ell}:=\sup\{t(z)\col z\in A_{\ell}\},
\end{align*}
and $\bigl|A_{\ell}\bigr|$ denotes the Lebesgue measure of $A_{\ell}$.

For any $z\in[-2,2]$ such that
$t(z)\geq 100{Y}$
we have 
$|cz+d|\leq 10^{-1}{Y}^{-\frac12}$; thus via
\eqref{MAINTHM4pf1} and \eqref{MAINTHM4pf22},
\begin{align}
|az+b|=\biggl|\frac{a(cz+d)-1}c\biggr|
>\frac{1-2\cdot 10^{-1}}{{Y}^{-1/2}}=\tfrac45 \sqrt{{Y}},
\end{align}
and so 
\begin{align}\label{MAINTHM4pf24}
|j(z)|\geq\biggl|\frac{az+b}{cz+d}\biggr|-1>\tfrac45\sqrt{{Y}\, t(z)}-1
>\tfrac12\sqrt{{Y}\, t(z)}.
\end{align}

Note that   %
$\ell\leq\frac{az+b}{cz+d}<\ell+1$
for all $z\in A_{\ell}$.
The function $z\mapsto\frac{az+b}{cz+d}$ is 
injective on $\R\setminus\{-\frac dc\}$
and has derivative
$\frac d{dz}\bigl(\frac{az+b}{cz+d}\bigr)=t(z)$.
It follows from \eqref{MAINTHM4pf1} that
$t(z)\geq \frac16\, Y$ for all $z\in[-2,2]$;
furthermore, \eqref{MAINTHM4pf23} implies that
$t(z)>\frac1{49}\ell^2/{Y}$ for all $z\in A_{\ell}$.
Hence we have
\begin{align}\label{MAINTHM4pf28}
|A_{\ell}|\leq\min\biggl(\frac6{{Y}},\frac{49{Y}}{\ell^2}\biggr)
\ll\frac Y{(Y+|\ell|)^2},   %
\qquad\forall\ell.
\end{align}
Furthermore, it follows from the statement around \eqref{MAINTHM4pf24}
that for every integer $\ell$ with
$|\ell|\leq 5{Y}$ we have $t_{\ell}\leq 100{Y}$,
while if $|\ell|>5{Y}$ then
$t_{\ell}\leq 4\ell^2/{Y}$.
Hence
\begin{align}\label{MAINTHM4pf29}
t_{\ell}\ll \frac{(Y+|\ell|)^2}Y, %
\qquad\forall \ell.
\end{align}
Using %
\eqref{MAINTHM4pf28}, \eqref{MAINTHM4pf29}
and Lemma \ref{deltambasicfactLEM1},
and also recalling that ${Y}=Ty$,
it follows via \eqref{MAINTHM4pf26} that %
\begin{align*}
\int_{[-2,2]\setminus\fS}\delta_{m}\biggl(\frac{t(z)}T;\,\vecxi \gamma \mathrm{u}_{j(z)}\biggr)\,dz
\ll\sum_{|\ell|<T\sqrt y}
\frac{Ty}{(Ty+|\ell|)^2}\,\delta_m\Biggl(
\biggl(\frac{Ty+|\ell|}{T\sqrt y}\biggr)^2;\vecxi\gamma\mathrm{u}_{\ell}\Biggr).
\end{align*}
Using this bound in \eqref{MAINTHM4pf21},
we obtain \eqref{MAINTHM4pfPROPres} \footnote{Note that in \eqref{MAINTHM4pfPROPres}
we have ignored the summation condition $|\ell|<T\sqrt y$;
however, since $\delta_m(y';\vecxi)\asymp 1$ uniformly over all $y'\geq1$ and $\vecxi\in(\R^2)^k$,
the total contribution from the terms with $|\ell|\geq T\sqrt y$
is $\asymp\sqrt y$,
i.e.\ subsumed by the first term in the bound in \eqref{MAINTHM4pfPROPres}.},
and so Proposition \ref{MAINTHM4pfPROP} is proved.
\end{proof}

We will now further analyse the sum over $\ell$ in 
the right-hand side of \eqref{MAINTHM4pfPROPres}.
By \eqref{deltaNEWDEF}, 
this sum equals
\begin{align}\label{nonclosedgendisc60pre1}
\sum_{\vecq\in\Z^k\setminus\{\bn\}}\sum_{d=1}^{\infty}\frac{\tau(d)}{\|\vecq\|^{m}d^{3/2}}
\sum_{\ell\in\Z}\frac{Ty}{(Ty+|\ell|)^2}
\biggl(1+\frac{T\sqrt y}{(Ty+|\ell|)d}\big\| \bigl(v_1,\ell v_1+v_2\bigr)\big\|_{\Z}\biggr)^{\hspace{-3pt}-1},
\end{align}
where we have introduced 
the shorthand notation $(v_1,v_2):=d\vecq\vecxi\gamma$.   %
Let us also set $w_j:=\|v_j\|_{\Z}\in[0,\frac12]$ for $j=1,2$.
Then 
$\big\| \bigl(v_1,\ell v_1+v_2\bigr)\big\|_{\Z}\asymp w_1+\|s\ell w_1+w_2\|_{\Z}$ for all $\ell\in\Z$,
where $s=1$ if either both or none of $v_1$ and $v_2$ lie in 
$[0,\frac12]+\Z$, %
otherwise $s=-1$.
Hence the expression in \eqref{nonclosedgendisc60pre1} is   %
\begin{align}\label{nonclosedgendisc60pre2}
\asymp\sum_{\vecq\in\Z^k\setminus\{\bn\}}\sum_{d=1}^{\infty}\frac{\tau(d)}{\|\vecq\|^{m}d^{3/2}}
\sum_{\ell\in\Z}\frac{Ty}{(Ty+|\ell|)^2}
\biggl(1+\frac{T\sqrt y}{(Ty+|\ell|)d}\bigl(w_1+\|\ell w_1+w_2\|_{\Z}\bigr)\biggr)^{\hspace{-3pt}-1}.
\end{align}
Here we will use the following bound.
\begin{lem}\label{nonclosedKeyBoundLEM1new}
For any $w_1,w_2\in[0,\frac12]$, $\alpha\geq\frac1{10}$ and $\beta>0$,
we have
\begin{align}\label{nonclosedKeyBoundLEM1newres}
\sum_{{j}\in\Z}\frac{\alpha}{(\alpha+|{j}|)^2}
\biggl(1+\frac{\alpha\beta}{\alpha+|{j}|}\bigl(w_1+\|{j} w_1+w_2\|_{\Z}\bigr)\biggr)^{\hspace{-3pt}-1}
\hspace{60pt}
\\\notag
\ll \scrL_1\biggl(\frac1{1+\alpha\beta w_1+\beta w_2}\biggr)
+\scrL_2\biggl(\frac1{1+\beta }\biggr),
\end{align}
where the implied constant is absolute.
\end{lem}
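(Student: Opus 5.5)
The plan is to decompose the sum over $j$ dyadically in $|j|$ and, in each block, to control how the points $jw_1+w_2$ are distributed modulo $1$ by a counting argument. Write $S$ for the left-hand side of \eqref{nonclosedKeyBoundLEM1newres}. Split $\Z$ into the initial block $J_0=\{|j|\leq\alpha\}$ and the blocks $J_k=\{2^{k-1}\alpha<|j|\leq 2^k\alpha\}$ for $k\geq1$. On $J_k$, with $\Lambda=2^k\alpha$, the weight $\frac{\alpha}{(\alpha+|j|)^2}$ is $\asymp \alpha\Lambda^{-2}$. Moreover $\frac{\alpha\beta}{\alpha+|j|}\asymp\gamma_k:=\alpha\beta/\Lambda$ on $J_k$. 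The same holds on $J_0$ with $\Lambda=\alpha$, i.e.\ $\gamma_0\asymp\beta$; there the hypothesis $\alpha\geq\frac1{10}$ guarantees that $J_0$ contains $\ll 1+\alpha$ integers. Hence
\begin{align*}
S\ll\sum_{k\geq0}\frac{\alpha}{\Lambda_k^2}\sum_{j\in J_k}\bigl(1+\gamma_k w_1+\gamma_k\|jw_1+w_2\|_{\Z}\bigr)^{-1}.
\end{align*}

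The key input is a counting bound. For $L\geq1$ and $0<\delta\leq\frac12$ I would prove
\begin{align*}
\#\bigl\{|j|\leq L\col\|jw_1+w_2\|_{\Z}\leq\delta\bigr\}\ll(1+Lw_1)\Bigl(1+\frac{\delta}{w_1}\Bigr),
\end{align*}
together with the trivial bound $\ll L$. The justification is that, as $j$ runs over $[-L,L]$, the real number $jw_1+w_2$ passes through $\ll 1+Lw_1$ unit intervals. Within each one it hits a $\delta$-neighbourhood of an integer at most $1+2\delta/w_1$ times, because consecutive values are spaced $w_1$ apart; since $w_1\leq\frac12$, there is no aliasing.

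Feeding this into a dyadic decomposition in $\delta=\|jw_1+w_2\|_{\Z}\approx2^{-i}$, a block contributes
\begin{align*}
\ll\frac{1}{1+\gamma w_1}\Bigl[\min\bigl(\Lambda,1+\Lambda w_1\bigr)+\frac{\log(2+\gamma)}{\gamma w_1}\cdot\min(1,\gamma w_1)\Bigr]+\frac{\Lambda\log(2+\gamma)}{1+\gamma}.
\end{align*}
The last term comes from the $\Lambda\delta$ part of the count, i.e.\ from equidistribution at scale $\delta\geq 1/\Lambda$.

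For the block $J_0$ I would treat the single term $j=0$ separately: it equals $\alpha^{-1}(1+\beta(w_1+w_2))^{-1}$, and this is where the $w_2$-dependence $\scrL_1\bigl(1/(1+\alpha\beta w_1+\beta w_2)\bigr)$ should come from. For $j\neq0$, the counting bound shows that $\|jw_1+w_2\|_{\Z}$ is small only for $\ll 1+\delta/w_1$ values of $j$ per wrap. After multiplying by $\alpha\Lambda^{-2}$ and summing over $k$, the terms $\frac{\alpha}{\Lambda}\cdot\frac{\log(2+\alpha\beta/\Lambda)}{1+\alpha\beta/\Lambda}$ form a sum over $\Lambda=2^k\alpha$. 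This sum is essentially $\sum_{k}2^{-k}\log(2+\beta2^{-k})/(1+\beta2^{-k})$: it is $\ll(1+\beta)^{-1}\log^2(2+\beta)$, coming from about $\log\beta$ scales each of size $\beta^{-1}\log\beta$, plus a tail $\ll\beta^{-1}$. This is exactly $\scrL_2(1/(1+\beta))$. The $w_1$-dependent pieces, namely $\frac{1}{1+\gamma w_1}\cdot\frac{\alpha}{\Lambda}(1+\Lambda w_1)\Lambda^{-1}$ and similar, sum geometrically in $k$. They should produce $\scrL_1\bigl(1/(1+\alpha\beta w_1)\bigr)$, using $\gamma w_1\Lambda=\alpha\beta w_1$.

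The main obstacle is the bookkeeping in the intermediate regime, where $\gamma w_1\approx1$ and $\Lambda w_1\gtrsim1$. There the logarithm from the $\delta$-dyadic sum must not be lost twice. I would first try to bound the inner $\delta$-sum by $\min\bigl(\Lambda,(1+\Lambda w_1)(1+\gamma w_1)^{-1}\log(2+\gamma)\bigr)+\Lambda\log(2+\gamma)/(1+\gamma)$, and check that only one logarithm survives in the $w$-dependent part. If this fails, I would instead split according to whether $\Lambda w_1\leq1$ (no wrapping, so the $j$-values form a monotone arithmetic progression and the sum is $\ll 1+1/(\gamma w_1)$ directly) or $\Lambda w_1>1$ (equidistribution dominates, giving the $\Lambda/(1+\gamma)$ term). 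All implied constants are absolute, since no parameter other than $\alpha,\beta,w_1,w_2$ enters.
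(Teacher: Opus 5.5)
\textbf{Gap: the $w_2$-dependence.} Your counting bound $\#\{|j|\le L\col\|jw_1+w_2\|_{\Z}\le\delta\}\ll(1+Lw_1)(1+\delta/w_1)$ does not involve $w_2$, and it is sharp for $w_2=0$. The weights $\frac{\alpha}{(\alpha+|j|)^2}$ and $\frac{\alpha\beta}{\alpha+|j|}$ do not involve $w_2$ either. So any bound you derive from the counting bound for $\sum_{j\neq0}$ is $w_2$-independent, and must therefore dominate the $w_2=0$ sum. The single term $j=0$ cannot repair this.

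Concretely, take $\alpha=1$, $w_1=\beta^{-1}$ and $\beta\to\infty$. For $w_2=\frac12$ the right-hand side of the lemma is $\asymp\beta^{-1}(\log\beta)^2$. For $w_2=0$, every term with $1\le|j|\le\beta/2$ equals exactly $\frac12(1+|j|)^{-2}$, so $\sum_{j\neq0}\asymp1$. Hence your argument cannot give better than $O(1)$ at $w_2=\frac12$. At best it proves $\scrL_1\bigl(\frac1{1+\alpha\beta w_1}\bigr)+\scrL_2\bigl(\frac1{1+\beta}\bigr)$, which is too weak for the later application: there, the term $\beta w_2=\frac{w_2}{d\sqrt y}$ is what controls $S_{g,d\vecq}(T)$ when $w_1$ is tiny. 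The loss is not at $j=0$. The $w_2$-term only matters when $w_2/w_1\gg\alpha$. In that case the range $\alpha<|j|\lesssim w_2/w_1$, which carries weight $\asymp1$, is exactly where your counting bound cannot rule out $\|jw_1+w_2\|_{\Z}\approx|j|w_1$.

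\textbf{Missing observation and how to repair.} The sequence $jw_1+w_2$ starts at distance $w_2$ from $\Z$ and needs $\gg w_2/w_1$ steps to get close to an integer. For $|j|<w_2/(2w_1)$ one has $jw_1+w_2\in(\frac12w_2,\frac34)$, hence $\|jw_1+w_2\|_{\Z}\ge\frac12w_2$.

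The paper builds this in by partitioning $\Z$ into the sets $I_k$ of those $j$ for which $k$ is the integer nearest to $jw_1+w_2$. On $I_k$ one has $w_1+\|jw_1+w_2\|_{\Z}\asymp(1+|j-j_k|)w_1$, with $|j_k|\ge|j_0|$ and $|j_0|+1\asymp w_2/w_1+1$. This gives $S_k\ll\scrL_1\bigl(1/(1+(\alpha+|j_k|)\beta w_1)\bigr)$ for $|k|\le5$, where $(\alpha+|j_0|)\beta w_1\gg\alpha\beta w_1+\beta w_2$, while $\sum_{|k|>5}S_k\ll\scrL_2(\beta^{-1})$.

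Your dyadic scheme can be repaired with the same input:
\begin{itemize}
\item Blocks with $\Lambda<w_2/(2w_1)$ contribute $\ll\alpha/(\Lambda+\alpha\beta w_2)$, which sums to $\ll\scrL_1\bigl(1/(1+\beta w_2)\bigr)$. Such blocks exist only if $\alpha w_1<w_2/2$, so this is $\ll\scrL_1\bigl(1/(1+\alpha\beta w_1+\beta w_2)\bigr)$.
\item Your $w_1$-dependent terms then only start at $\Lambda\gtrsim\max(\alpha,w_2/w_1)$.
\end{itemize}

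\textbf{Secondary issue.} Your displayed per-block bound, and your ``first try'' variant, are false as stated. For $w_2=0$, $\alpha=1$, $w_1=\beta^{-1}$ and $\Lambda=\sqrt\beta$, the block sum is $\asymp\sqrt\beta$, while your expression is $\asymp\log\beta$. For a non-wrapping block, the near-integer part should be $\ll\min\bigl(\Lambda,\log(2+\alpha\beta w_1)/(\gamma w_1)\bigr)$, not $\log(2+\gamma)\min(1,\gamma w_1)/(\gamma w_1)$.
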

\begin{proof}
If $\beta \leq10$ then the lemma holds trivially,
since then in the right-hand side
of \eqref{nonclosedKeyBoundLEM1newres} 
we have %
$\scrL_2((1+\beta )^{-1})\asymp1$,
while the left-hand side 
is bounded above by
$\sum_{{j}\in\Z}\frac{\alpha }{(\alpha +|{j}|)^2}\asymp 1$.
Hence from now on we may assume $\beta \geq10$.
We will also assume $w_1>0$ in the following;
the remaining case $w_1=0$ may be handled %
by an easy direct analysis, or %
at the end of the proof by
taking the limit $w_1\to0$ in the result proved for $w_1>0$.

The set of integers can be expressed as the disjoint union of the intervals
\begin{align*}
I_k:=\{j\in\Z\col jw_1+w_2\in k+(-\tfrac12,\tfrac12]\}
\qquad (k\in\Z).
\end{align*}
Hence the left-hand side of \eqref{nonclosedKeyBoundLEM1newres} equals
$\sum_{k\in\Z}S_k$, where
\begin{align*}
S_k:=\sum_{j\in I_k}\frac{\alpha }{(\alpha +|j|)^2}
\biggl(1+\frac{\alpha \beta }{\alpha +|{j}|}\bigl(w_1+\|{j} w_1+w_2\|_{\Z}\bigr)\biggr)^{\hspace{-3pt}-1}.
\end{align*}
Let us also set %
\begin{align}\label{nonclosedKeyBoundLEM1newpf4}
j_k:=\biggl\lfloor \frac{k-w_2}{w_1}+\frac12\biggr\rfloor.
\end{align}
This means that $j_k$ is the unique integer satisfying
$j_kw_1+w_2\in k+\bigl(-\tfrac12w_1,\tfrac12w_1\bigr]$.
Hence $j_k\in I_k$. It also follows that each $j\in I_k$ satisfies
$|j-j_k|<\frac1{2w_1}+\frac12<w_1^{-1}$
and 
\begin{align*}
w_1+\|{j} w_1+w_2\|_{\Z}
=w_1+\bigl|jw_1+w_2-k\bigr|
=w_1+\bigl|(j-j_k)w_1+j_kw_1+w_2-k\bigr|
\hspace{50pt}
\\
\asymp (1+|j-j_k|)w_1,
\end{align*}
where the last relation holds since $\bigl|j_kw_1+w_2-k\bigr|\leq\frac12w_1$.
Hence
\begin{align}\label{nonclosedKeyBoundLEM1newpf1new}
S_k\ll \sum_{j_k-w_1^{-1}<j<j_k+w_1^{-1}}
\frac{\alpha }{(\alpha +|j|)^2}\biggl(1+\frac{\alpha \beta }{\alpha +|{j}|}(1+|j-j_k|)w_1\biggr)^{\hspace{-3pt}-1}.
\end{align}
Setting $j=n+j_k$ and comparing the terms for $n$ and $-n$, we get
\begin{align}\label{nonclosedKeyBoundLEM1newpf1newA}
S_k 
&\ll \sum_{0\leq n<w_1^{-1}}\frac{\alpha }{(\alpha +|n-J_k|)\bigl(\alpha +|n-J_k|+\alpha \beta w_1(n+1)\bigr)},
\end{align}
where $J_k:=|j_k|$.

One way to bound the sum in 
\eqref{nonclosedKeyBoundLEM1newpf1newA},
which we will use for small $k$,
is as follows.
Extend the summation to \textit{all} $n\geq0$,
and then note that the term corresponding to any $n$ in the interval
$J_k<n\leq 2J_k$ is subsumed by the term corresponding to
$n'=2J_k-n$, since $|n-J_k|=|n'-J_k|$ while $n+1\geq n'+1$.
Hence we may restrict the summation to the three intervals
(i) $0\leq n<\frac12(J_k-1)$, (ii)
$\frac12(J_k-1)\leq n\leq J_k$ and (iii) $n>2J_k$.
(Note that (i) is void unless $J_k\geq2$.)
For $n$ belonging to the interval (i), %
we have $|n-J_k|\asymp J_k$ ($\geq2$) and we use $m=n+1$ as summation variable;
for $n$ belonging the interval (ii) we have
$n+1\asymp J_k+1$ and we use $m=J_k-n$ as summation variable.
In this way we obtain 
(we extend the summation ranges somewhat in cases (i) and (ii) to simplify the notation): %
\begin{align}\label{nonclosedKeyBoundLEM1newpf1newB}
S_k\ll\frac \alpha {(\alpha +J_k)^2}\sum_{m=1}^{J_k}\frac1{1+Am}
+\sum_{m=0}^{J_k}\frac \alpha {(\alpha +m)(B+m)}
+\sum_{n=2J_k+1}^\infty\frac C{(\alpha +n)(C+n)},
\end{align}
where $A=\frac{\alpha \beta w_1}{\alpha +J_k}$,
$B=\alpha +\alpha \beta w_1(J_k+1)$ and
$C=\frac \alpha {1+\alpha \beta w_1}$.
By a direct case-by-case analysis, this leads to
\begin{align}\label{nonclosedKeyBoundLEM1newpf1newC}
S_k\ll \frac{\alpha }{(\alpha +J_k)^2}
\left.\begin{cases}
J_k&\text{if }\: AJ_k\leq1
\\[3pt]
A^{-1}\log(2+AJ_k)&\text{if }\: AJ_k>1
\end{cases}\right\}
+
\left.\begin{cases}
{\displaystyle\frac{J_k+1}B+\scrL_1\Bigl(\frac C\alpha \Bigr)}&\text{if }\: J_k\leq \alpha 
\\[13pt]
{\displaystyle \scrL_1\Bigl(\frac \alpha B\Bigr)+\frac C{J_k}} &\text{if }\: J_k> \alpha 
\end{cases}\right\}.
\end{align}
We claim that this implies
\begin{align}\label{nonclosedKeyBoundLEM1newpf1newD}
S_k\ll\scrL_1\biggl(\frac1{1+(\alpha +J_k)\beta w_1}\biggr).
\end{align}
The fact that the right-hand side of \eqref{nonclosedKeyBoundLEM1newpf1newD}
bounds the first term in 
\eqref{nonclosedKeyBoundLEM1newpf1newC}  %
follows by noticing that
if $AJ_k\leq1$ then $\frac{\alpha +J_k}{\alpha J_k}\geq \beta w_1$
and hence $\frac{(\alpha +J_k)^2}{\alpha J_k}\gg 1+(\alpha +J_k)\beta w_1$,
while if $AJ_k>1$ then
$J_k\beta w_1\geq J_kA>1$ and thus also
$\alpha ^{-1}(\alpha +J_k)^2A=(\alpha +J_k)\beta w_1\gg 1+(\alpha +J_k)\beta w_1$.
The fact that the right-hand side of \eqref{nonclosedKeyBoundLEM1newpf1newD}
bounds the remaining terms in 
\eqref{nonclosedKeyBoundLEM1newpf1newC}  %
follows by noticing that
if $J_k\leq \alpha $ then $\frac B{J_k+1}=\frac \alpha {J_k+1}+\alpha \beta w_1
\gg 1+\alpha \beta w_1\gg 1+(\alpha +J_k)\beta w_1$
(here we used $\alpha \geq \frac1{10}$ to get $\frac \alpha {J_k+1}\gg1$)
and also $\alpha/C=1+\alpha \beta w_1\gg 1+(\alpha +J_k)\beta w_1$,
while if $J_k>\alpha $ then
$B/\alpha =1+(J_k+1)\beta w_1\gg 1+(\alpha +J_k)\beta w_1$
and $C/J_k=\frac{J_k}\alpha +J_k\beta w_1> 1+J_k\beta w_1\gg 1+(\alpha +J_k)\beta w_1$.
Hence \eqref{nonclosedKeyBoundLEM1newpf1newD} indeed holds.

It follows from \eqref{nonclosedKeyBoundLEM1newpf4} that
$\cdots<j_{-1}<j_0\leq 0<j_1<j_2<\cdots$
and $J_k\geq J_0$ for all $k$,
and that $J_0+1\asymp\frac{w_2}{w_1}+1$.
Hence, using also the fact that $\alpha \geq \frac1{10}$,
we get from \eqref{nonclosedKeyBoundLEM1newpf1newD}:
\begin{align}\label{nonclosedKeyBoundLEMpf20}
\sum_{|k|\leq5}S_k\ll\scrL_1\biggl(\frac1{1+(\alpha +J_0)\beta w_1}\biggr)
\ll\scrL_1\biggl(\frac1{1+\alpha \beta w_1+\beta w_2}\biggr).
\end{align}

\vspace{5pt}

Next, when $|k|>5$,
we have %
$J_k\geq\frac{|k|-1}{w_1}\geq\frac5{w_1}$,
so that \eqref{nonclosedKeyBoundLEM1newpf1newA} implies
\begin{align}\label{nonclosedKeyBoundLEMpf21pre}
S_k\ll \frac \alpha {(\alpha +J_k)^2}\sum_{0\leq n<w_1^{-1}}\frac1{1+A(n+1)},
\qquad\text{where }\: A:=\frac{\alpha \beta w_1}{\alpha +J_k}.
\end{align}
Considering the three cases $A\leq w_1$, $w_1<A\leq1$ and $A>1$ separately,
this gives
\begin{align}\label{nonclosedKeyBoundLEMpf21}
S_k\ll \frac \alpha {(\alpha +J_k)^2}\cdot\frac{\log(2+Aw_1^{-1})}{A+w_1}
\ll\frac{\alpha w_1\log(2+\beta )}{(\alpha w_1+|k|)(\alpha \beta w_1+|k|)}.
\end{align}
Hence
\begin{align*}
\sum_{|k|>5}S_k
\ll \alpha w_1\log(2+\beta )\sum_{k=6}^\infty\frac{1}{(\alpha w_1+k)(\alpha \beta w_1+k)}.
\end{align*}
Here $0<\alpha w_1<\alpha \beta w_1$, and we find that
\begin{align}\label{nonclosedKeyBoundLEMpf23}
\sum_{k=6}^\infty\frac{1}{(\alpha w_1+k)(\alpha \beta w_1+k)}
\ll\begin{cases}
1&\text{if }\: \alpha \beta w_1\leq1
\\
(\alpha \beta w_1)^{-1}\log(2+\beta )&\text{if }\: \alpha \beta w_1>1.
\end{cases}
\end{align}
Therefore,
\begin{align}\label{nonclosedKeyBoundLEMpf22}
\sum_{|k|>5}S_k\ll\scrL_2(\beta ^{-1}).
\end{align}
Adding the two bounds \eqref{nonclosedKeyBoundLEMpf20}
and \eqref{nonclosedKeyBoundLEMpf22}
(and recalling that we are currently assuming $\beta \geq10$),
we obtain the bound in \eqref{nonclosedKeyBoundLEM1newres},
i.e.\ Lemma \ref{nonclosedKeyBoundLEM1new} is proved.
\end{proof}
Applying Lemma \ref{nonclosedKeyBoundLEM1new}
with $\alpha=Ty$ and $\beta=\frac1{d\sqrt y}$
(this is permitted since $\alpha=Ty=\scrY(M  \mathrm{a}_T )\geq\frac12\sqrt3>\frac1{10}$),
we obtain that the expression in \eqref{nonclosedgendisc60pre2} is
\begin{align}\notag
\ll\sum_{\vecq\in\Z^k\setminus\{\bn\}} & \sum_{d=1}^{\infty}\frac{\tau(d)}{\|\vecq\|^{m}d^{3/2}}
\biggl(\scrL_1\biggl(\biggl(1+\frac{T\sqrt y}d w_1+\frac{w_2}{d\sqrt y}\biggr)^{-1}\biggr)
+\scrL_2\biggl(\frac{d\sqrt y}{d\sqrt y+1}  %
\biggr)\biggr)
\\\label{nonclosedgendisc60}
&\ll \scrL_3(y^{\frac14})+\sum_{\vecq\in\Z^k\setminus\{\bn\}}\sum_{d=1}^{\infty}\frac{\tau(d)}{\|\vecq\|^{m}d^{3/2}}
\scrL_1\Bigl(\frac1{1+d^{-1}S}\Bigr),
\qquad\text{where }\: S:=T\sqrt y w_1+\frac{w_2}{\sqrt y}.
\end{align}
For the last estimate, we used the bounds
$\sum_{d\leq y^{-1/2}}\frac{\tau(d)}{d^{3/2}}\scrL_2(d\sqrt y)\ll\scrL_3(y^{\frac14})$
and $\sum_{d>y^{-1/2}}\frac{\tau(d)}{d^{3/2}}\ll\scrL_1(y^{\frac14})$,
which follow from Lemma \ref{basicboundLEM1}.

\vspace{5pt}

In order to complete the proof of Theorem \ref{MAINTHM4}, now
it only remains to bound $S$ %
in terms of the quantity $S_{g,d\vecq}(T)$ defined in \eqref{bMxiqTDEF}.
To this end, let us write 
$\smatr {a'}{b'}{c'}{d'} %
:=\gamma^{-1} M \mathrm{a}_T$ \footnote{This is as in\label{apbpcpdpfootnote}
\eqref{MAINTHM4pf1pre} except that we use the
variable names $a',b',c',d'$ to avoid a notation clash with our summation variable $d$;
note also that we are currently \textit{not} assuming
$\scrY(M\mathrm{a}_T )>100$; 
however we do have $\scrY(M\mathrm{a}_T )\geq\frac12\sqrt 3$; see \eqref{scrYlowerbd}.},
and consider the grid
\begin{align}\label{MAINTHM4pf31}
\Z^2\,\p_{d \vecq}(g)\mathrm{a}_T
=\bigl(\Z^2+d\vecq{\vecxi}\bigr)M\mathrm{a}_T
=\bigl(\Z^2+d\vecq{\vecxi}\bigr)\gamma  \matr {a'}{b'}{c'}{d'} 
=\bigl(\Z^2+(d\vecq\trans{\vecxi}_{\gamma,1},d\vecq\trans{\vecxi}_{\gamma,2})\bigr)\matr {a'}{b'}{c'}{d'}.
\end{align}
In the last expression we introduced the notation 
$(\trans{\vecxi}_{\gamma,1},\trans{\vecxi}_{\gamma,2}):=\vecxi\gamma$.
Now %
$w_j=\|v_j\|_{\Z}=\|d\vecq\trans{\vecxi}_{\gamma,j}\|_{\Z}$,
and hence there exists a choice of signs
$\ve_1,\ve_2\in\{1,-1\}$ such that
the point $(\ve_1w_1,\ve_2w_2)$ belongs to the grid
$\Z^2+(d\vecq\trans{\vecxi}_{\gamma,1},d\vecq\trans{\vecxi}_{\gamma,2})$;
therefore the grid in \eqref{MAINTHM4pf31} %
contains the point
\begin{align}\label{MAINTHM4pf31a}
(\ve_1w_1,\ve_2w_2) \matr {a'}{b'}{c'}{d'}
=\bigl(\ve_1w_1a'+\ve_2w_2c',\ve_1w_1b'+\ve_2w_2d'\bigr).
\end{align}
As in \eqref{MAINTHM4pf1} we have
${c'}^2+{d'}^2=\scrY(M\mathrm{a}_T)^{-1}=(Ty)^{-1}$;
thus $|c'|,|d'|\leq (Ty)^{-\frac12}$.
Also the argument giving
\eqref{MAINTHM4pf22}
still applies (see footnote \ref{apbpcpdpfootnote})
to give
$|a'|,|b'|<2\,\scrY(M\mathrm{a}_T)^{\frac12}=2(Ty)^{\frac12}$.
Recall also that $S:=T\sqrt y w_1+y^{-1/2}w_2$;
therefore
$w_1\leq (T\sqrt y)^{-1}S$ and $w_2\leq \sqrt yS$.
It follows %
the point in \eqref{MAINTHM4pf31a} lies in the square
$3ST^{-\frac12} [-1,1]^2$.
Applying $\mathrm{a}_T^{-1}$ to this point,
we conclude that the grid
$\Z^2\,\p_{d \vecq}(g)$
contains a point in the rectangle
$3 ST^{-\frac12} [-1,1]^2\mathrm{a}_T^{-1}
=3 S\, \fR_T$.
By \eqref{bMxiqTDEF}, this implies that
\begin{align}
S_{g,d\vecq}(T)\leq 3\,S.
\end{align}
It follows that the expression in \eqref{nonclosedgendisc60} is
\begin{align}\label{nonclosedgendisc63}
&\ll
\scrL_3(y^{\frac14})
+\sum_{\vecq \in\Z^k\setminus\{\bn\}}\sum_{d=1}^{\infty}\frac{\tau(d)}{\|\vecq\|^md^{3/2}}
\scrL_1\biggl(\frac1{1+d^{-1}S_{g,d\vecq}(T)}\biggr).
\end{align}
To sum up, we have proved that the expression in \eqref{nonclosedgendisc63}
is an upper bound for the sum over $\ell$ in the right-hand side of \eqref{MAINTHM4pfPROPres}.
Hence, recalling also \eqref{ygTSg0TremarkRES2},
we obtain the bound in \eqref{MAINTHM4res},
i.e.\ Theorem \ref{MAINTHM4} is proved.
\hfill$\square$

\subsection{Properties of the bound in Theorem \ref*{MAINTHM4}}

Here we wish to show that for generic initial points, %
the right-hand side of \eqref{MAINTHM4res} decays like $T^{-\frac14+\ve}$. %
The two lemmas below provide more precise statements.

The following bound is a direct consequence of 
Sullivan's logarithm law for geodesics
\cite[Sec.\ 9]{Sullivan}.
Note that $S_{g,\bn}(T)$ only depends on the $\SL(2,\R)$-component of $g\in\G$.
\begin{lem}\label{SullivanloglawapplLEM}
For any given $\ve>0$,
we have for Haar-almost every $M\in\SL(2,\R)$:
\begin{align}\label{SullivanloglawapplLEMres}
\scrL_3\Bigl(S_{M,\bn}(T)^{-\frac12}\Bigr)\ll T^{-\frac14}(\log T)^{\frac{13}4+\ve}
\qquad\text{as }\: T\to\infty.
\end{align}
\end{lem}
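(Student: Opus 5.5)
The plan is to reduce the statement to the size of $y_M(T)=\scrY(M\mathrm{a}_T)/T$ and then invoke Sullivan's logarithm law. By \eqref{ygTSg0TremarkRES2} in Remark \ref{ygTSg0Tremark}, we have $\scrL_3\bigl(S_{M,\bn}(T)^{-\frac12}\bigr)\asymp\scrL_3(y^{\frac14})$ with $y=\scrY(M\mathrm{a}_T)/T$. Since $\scrL_3(x)=x(\log(2+x^{-1}))^3$ is increasing in $x$ on $(0,1]$ and bounded for $x\geq1$, it suffices to show that for Haar-almost every $M$,
\begin{align*}
\scrY(M\mathrm{a}_T)\ll_{M,\ve} (\log T)^{1+4\ve/13}\qquad\text{for all large }T.
\end{align*}
Granting this, $y\ll T^{-1}(\log T)^{1+\ve'}$ with $\ve'=4\ve/13$. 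Then $y^{\frac14}\ll T^{-\frac14}(\log T)^{\frac14+\frac{\ve'}4}$, and $\log(2+y^{-\frac14})\ll\log T$. Hence $\scrL_3(y^{\frac14})\ll T^{-\frac14}(\log T)^{\frac{13}4+\ve}$, after adjusting $\ve$.

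The key input is the logarithm law. The map $M\mapsto \SL(2,\Z)M\mathrm{a}_T$ is the geodesic flow, run for time $\log T$, on the unit tangent bundle of the modular surface. The function $\log\scrY$ is, up to $O(1)$, the distance to a fixed base point in the cusp. Sullivan \cite[Sec.\ 9]{Sullivan} gives, for almost every starting point, $\limsup_{t\to\infty}\dist(x_t,x_0)/\log t=1/(2\cdot\frac12)$ in the appropriate normalisation. In our coordinates this translates to $\scrY(M\mathrm{a}_T)\leq(\log T)^{1+o(1)}$.

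To get the sharper form with exponent $1+\ve'$ rather than $1+o(1)$, I would redo the Borel--Cantelli step directly. The cusp set $\{\scrY>Y\}$ has $\mu$-measure $\asymp Y^{-1}$. Discretise $T=e^{t}$ at integer $t$, and use that $\scrY$ changes by at most a bounded factor when $t$ moves by at most $1$. The events $\{\scrY(M\mathrm{a}_{e^t})>t^{1+\ve'}\}$ have measure $\ll t^{-1-\ve'}$, which is summable in $t$. Plain Borel--Cantelli, which needs no mixing, then gives the bound for almost every $M$, first modulo $\SL(2,\Z)$ and hence for Haar-almost every $M\in\SL(2,\R)$.

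The main obstacle, though minor, is checking the normalisations. One must confirm that $\mu\{\scrY>Y\}\asymp Y^{-1}$, which follows from the hyperbolic area $\int_Y^\infty v^{-2}\,dv$. One must also confirm that $\scrY(M\mathrm{a}_{e^{t+s}})\asymp\scrY(M\mathrm{a}_{e^t})$ for $|s|\leq1$; this holds since $\mathrm{a}_{e^s}$ moves points a bounded hyperbolic distance, and $\log\scrY$ is Lipschitz for that distance.
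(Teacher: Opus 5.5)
Your proposal is correct, and its skeleton is the paper's: reduce via \eqref{ygTSg0TremarkRES2} to showing $\scrY(M\mathrm{a}_T)\le(\log T)^{1+\ve'}$ for all large $T$, then convert this into the bound on $\scrL_3(y^{\frac14})$. The difference lies in how that upper bound is obtained. The paper simply cites Sullivan's logarithm law \cite[Theorem 6]{Sullivan}, using \cite[eq.\ (14)]{iha} to pass from hyperbolic distance to $\scrY$, in the form $\limsup_{T\to\infty}\log\scrY(M\mathrm{a}_T)/\log\log T=1$. You instead prove the needed half directly, applying the first Borel--Cantelli lemma to the events $\{\scrY(M\mathrm{a}_{e^t})>t^{1+\ve'}\}$, $t\in\Z^+$. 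For this you use $\mu\{\scrY>Y\}\asymp Y^{-1}$, the invariance of $\mu$ under right multiplication by $\mathrm{a}_{e^t}$, and $\scrY(g\mathrm{a}_{e^s})\le e^{|s|}\scrY(g)$ to interpolate between integer times. This is valid and self-contained, and it makes clear that only the easy, mixing-free upper-bound half of the logarithm law is used; the paper's route is a one-line citation.

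Your stated motivation for redoing it is a misreading, though. The bound $\scrY(M\mathrm{a}_T)\le(\log T)^{1+o(1)}$ already gives $\scrY(M\mathrm{a}_T)\le(\log T)^{1+\ve'}$ for every fixed $\ve'>0$ and all large $T$, which is exactly how the paper uses Sullivan. So no ``sharper form'' is needed.

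Also, $\scrL_3$ is neither increasing on $(0,1]$ (for example $\scrL_3(0.3)>\scrL_3(0.5)$) nor bounded on $[1,\infty)$. This is harmless, because your direct computation does not use either claim. It only needs $y\ge\frac{\sqrt3}{2}T^{-1}$, which follows from $\scrY\ge\frac{\sqrt3}2$ and gives $\log(2+y^{-\frac14})\ll\log T$; and $y<1$ for large $T$ in any case.
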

\begin{proof}
By \cite[Theorem 6]{Sullivan}
and \cite[eq.\ (14)]{iha},
we have for Haar-almost every $M\in\SL(2,\R)$:
\begin{align*}
\limsup_{T\to\infty}\frac{\log\scrY(M\mathrm{a}_T )}{\log\log T}=1.
\end{align*}
This implies (via \eqref{ygTdef})
that $y_M(T)<T^{-1}(\log T)^{1+\ve}$ %
for all sufficiently large $T$,
and so, by \eqref{ygTSg0TremarkRES2},
we obtain \eqref{SullivanloglawapplLEMres} 
(with $\ve/4$ in place of $\ve$).
\end{proof}
\begin{lem}\label{THM4boundgenericdecayLEM}
Let $m>k\geq1$.
Write $g=(1_2,{\vecxi})M$.
Then for any fixed $M\in\SL(2,\R)$
and any given $\ve>0$,
we have for Lebesgue almost all ${\vecxi}\in(\R^2)^k$:
\begin{align}\label{THM4boundgenericdecayLEMres}
\sum_{\vecq\in\Z^k\setminus\{\bn\}}\sum_{d=1}^{\infty}\frac{\tau(d)}{\|\vecq\|^{m}d^{3/2}}
\, \scrL_1\biggl(\frac1{1+d^{-1}S_{g,d\vecq}(T)}\biggr)
\ll T^{-\frac14}(\log T)^{3+\ve}
\end{align}
as $T\to\infty.$
\end{lem}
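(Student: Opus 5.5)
Following the pattern of Lemma \ref{labaaxiLEM}, I would prove this with a first-moment estimate and Borel--Cantelli along a dyadic sequence of $T$. Fix $M$ and write $S_{d\vecq}(T)=S_{g,d\vecq}(T)$. By \eqref{bMxiqTDEF}, $S_{g,\vecq'}(T)$ depends on $\vecxi$ only through the grid $(\Z^2+\vecq'\vecxi)M$. For $\vecq'=d\vecq\neq\bn$ and $\vecxi$ uniform in $(\R^2)^k/(\Z^2)^k$, the shift $\vecq'\vecxi$ is uniformly distributed mod $\Z^2$. This holds because $\vecxi\mapsto d\vecq\vecxi$ is a surjective homomorphism of tori, so it pushes Haar measure to Haar measure. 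Hence the grid is a uniformly random translate of the fixed lattice $\Z^2M$, whose covolume is $1$.

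The first key step is to compute the distribution of $S_{d\vecq}(T)$. For $S>0$, the event $S_{d\vecq}(T)<S$ means that some point of the random grid lies in $S\fR_T$. This set has area $4S^2/T$, so the probability is at most $\min(1,4S^2/T)$ (union bound over lattice translates intersecting a fixed fundamental domain). Thus $\mathbb{P}(S_{d\vecq}(T)<S)\ll \min(1,S^2/T)$. I would then integrate $\scrL_1$ against this tail, using that $x\mapsto\scrL_1(x)$ is increasing:
\begin{align*}
\int \scrL_1\Bigl(\frac1{1+d^{-1}S_{d\vecq}(T)}\Bigr)\,d\vecxi
\ll \int_0^\infty \scrL_1\Bigl(\frac1{1+S/d}\Bigr)\, d\bigl(\min(1,S^2/T)\bigr).
\end{align*}
For $d\leq\sqrt T$ this is $\ll d\,T^{-1/2}\log(2+\sqrt T/d)$. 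The mass is concentrated near $S\approx\sqrt T$ and gives $d/\sqrt T$, while smaller $S$ contribute at most logarithmically more. For $d>\sqrt T$ I use the trivial bound $\ll1$.

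The second step sums over $d$ and $\vecq$ using Lemma \ref{basicboundLEM1}:
\begin{align*}
\sum_{d\le\sqrt T}\frac{\tau(d)}{d^{1/2}}\,T^{-1/2}\log(2+\sqrt T/d)\ \ll\ T^{-1/4}\log T,
\end{align*}
and $\sum_{d>\sqrt T}\tau(d)d^{-3/2}\ll T^{-1/4}\log T$. One may need an extra log if the $\log(2+\sqrt T/d)$ factor is handled crudely, since dyadic summation gives $\sum_j 2^{j/2}\,j\,T^{-1/2}\cdots$, which is still fine. Summing over $\vecq$ with $m>k$ gives that the expectation $E(T)$ of the left side of \eqref{THM4boundgenericdecayLEMres} satisfies $E(T)\ll T^{-1/4}\log T$, or at worst $T^{-1/4}(\log T)^2$.

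The last step is Borel--Cantelli with monotonicity. Take $T_j=2^j$. By Markov's inequality, the measure of the set where the sum exceeds $j^{1+\ve}E(T_j)$ is $\ll j^{-1-\ve}$, which is summable. So almost surely, for large $j$, the sum at $T_j$ is $\ll T_j^{-1/4}(\log T_j)^{2+\ve}$. To pass from $T_j$ to all $T\in[T_j,T_{j+1}]$, I need monotonicity of $S_{g,\vecq'}(T)$ in $T$. The rectangle $\fR_T$ shrinks horizontally as $T$ grows, so $S_{g,\vecq'}(T)$ is nondecreasing in $T$ and the summand is nonincreasing. Hence the value at $T$ is at most the value at $T_j$, which is $\ll T^{-1/4}(\log T)^{2+\ve}$. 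This is within the claimed $(\log T)^{3+\ve}$.

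The main obstacle is the first step. It needs a careful tail bound for the shortest vector of a random translate of a fixed lattice inside the elongated rectangle $S\fR_T$. This rectangle is very thin, so the union bound must be done with respect to the area, valid for any fixed $M$ (with constants depending on $M$). Only the upper bound on the probability is needed, and this works uniformly because for a uniform random shift the expected number of grid points in any set $A$ equals $|A|$ exactly.
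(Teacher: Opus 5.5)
Your proposal is correct and follows essentially the same route as the paper. The paper likewise averages over the torus, pushing $\vecxi\mapsto d\vecq\vecxi$ forward to Haar measure. It then bounds the first moment by the same area estimate (after rescaling by $\mathrm{a}_T$ to squares, $J(a;L)\ll\scrL_1(1/(1+a))$ with $a=d^{-1}T^{1/2}$), sums over $d$ via Lemma \ref{basicboundLEM1}, and concludes with Borel--Cantelli as in Lemma \ref{labaaxiLEM}. Your finer dyadic treatment of $\log(2+\sqrt T/d)$ saves one logarithm over the paper's first-moment bound $T^{-1/4}(\log T)^2$. Your observation that $S_{g,\vecq'}(T)$ is nondecreasing in $T$ (because $\fR_T$ shrinks) makes explicit the monotonicity that the paper's ``same type of argument'' leaves implicit.
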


\begin{proof}
The strategy of the proof is the same as for Lemma \ref{labaaxiLEM}.
Let us write $C$ for the unit cube $C=((0,1)^2)^k$,
and consider the integral
\begin{align}\label{THM4boundgenericdecayLEMpf1}
\int_C &\sum_{\vecq\in\Z^k\setminus\{\bn\}}\sum_{d=1}^{\infty}\frac{\tau(d)}{\|\vecq\|^{m}d^{3/2}}
\, \scrL_1\biggl(\frac1{1+d^{-1}S_{g,d\vecq}(T)}\biggr)
\,d{\vecxi}.
\end{align}
Note that the grid $\Z^2\p_{d\vecq}(g)$ appearing inside the definition of
$S_{g,d\vecq}(T)$
(see \eqref{bMxiqTDEF}) equals
\begin{align*}
\Z^2\p_{d\vecq}(g)
=\Z^2\p_{d\vecq}((1_2,\vecxi)M)
=(\Z^2+d\vecq\vecxi)M.
\end{align*}
Recall also that $\fR_T=T^{-\frac12} [-1,1]^2\mathrm{a}_T^{-1}$;
therefore
\begin{align*}
S_{g,d\vecq}(T) &=\sup\bigl\{S\geq0\col S\,\fR_T\cap (\Z^2+d\vecq\vecxi)M=\emptyset\bigr\}
=T^{\frac12}\, r\bigl((\Z^2+d\vecq\vecxi)M\mathrm{a}_T\bigr),
\end{align*}
where $r(L')$ for any grid $L'\subset\R^2$ is defined by
\begin{align*}
r(L'):=\sup\bigl\{r\geq0 \col [-r,r]^2\cap L'=\emptyset\bigr\}.
\end{align*}
Let us also set
\begin{align*}
L_{M,T}:=\Z^2M\mathrm{a}_T.
\end{align*}
Now change order of summation and integration in 
\eqref{THM4boundgenericdecayLEMpf1},
and note that for any 
fixed $\vecq$ and $d$ appearing in the sums, %
the pushforward of Lebesgue measure on $C$
by the map $\vecxi\mapsto d\vecq\vecxi\mod\Z^2$ 
equals Lebesgue measure on the torus $\R^2/\Z^2$,
which by the map 
$\vecw\mapsto \vecw M\mathrm{a}_T$ is
transformed to Lebesgue measure on the torus $\R^2/L_{M,T}$.
Hence it follows that the integral in 
\eqref{THM4boundgenericdecayLEMpf1} can be rewritten as
\begin{align}\label{THM4boundgenericdecayLEMpf2}
&\sum_{\vecq\in\Z^k\setminus\{\bn\}}\sum_{d=1}^{\infty}\frac{\tau(d)}{\|\vecq\|^{m}d^{3/2}}
J(d^{-1}T^{\frac12};L_{M,T})
\quad
\text{with }\:
J(a;L):=\int_{\R^2/L}\scrL_1\biggl(\frac1{1+a\cdot r(L+\vecv)}\biggr)\,d\vecv.
\end{align}

Next we claim that
\begin{align}\label{THM4boundgenericdecayLEMpf3}
J(a;L)\ll\scrL_1\Bigl(\frac1{1+a}\Bigr)
\end{align}
uniformly over all $a>0$ and all lattices $L<\R^2$ of  co-area one.
For $0<a\leq10$, \eqref{THM4boundgenericdecayLEMpf3} is immediate from the fact that %
$J(a;L)\ll1$; hence from now on 
we may assume $a\geq10$.
By dyadic decomposition,
\begin{align}\notag
J(a;L) &=\sum_{k\in\Z}\int_{\{\vecv\in\R^2/L\col 2^{-k-1}< r(L+\vecv)\leq 2^{-k}\}}
\scrL_1\biggl(\frac1{1+a\cdot r(L+\vecv)}\biggr)\,d\vecv
\\\label{THM4boundgenericdecayLEMpf4}
&\ll\sum_{k\in\Z} \bigl|\bigl\{\vecv\in\R^2/L\col r(L+\vecv)\leq 2^{-k}\bigr\}\bigr| 
\cdot\scrL_1\biggl(\frac1{1+2^{-k}a}\biggr),
\end{align}
where $|\cdot|$ denotes Lebesgue measure on $\R^2/L$.
Now note that $r(L+\vecv)\leq 2^k$ 
holds if and only if
$[-2^{-k},2^{-k}]^2\cap (L+\vecv)\neq\emptyset$,
and the area of the image of the square
$[-2^{-k},2^{-k}]$ in $\R^2/L$ is bounded above both by
the area of the original square, and by $|\R^2/L|=1$.
Hence the last sum is
\begin{align*}
&\leq\sum_{k\in\Z} \min\bigl(4\cdot 2^{-2k},1\bigr)\cdot \scrL_1\biggl(\frac1{1+2^{-k}a}\biggr)
\\
&\ll \sum_{k\leq0}\frac{\log\bigl(2^{-k}a\bigr)}{2^{-k}a}
+\sum_{0<k\leq\log_2a}2^{-2k}\frac{\log a}{2^{-k}a}
+\sum_{k>\log_2a}2^{-2k}
\ll\frac{\log a}a\asymp\scrL_1\Bigl(\frac1a\Bigr)\asymp\scrL_1\Bigl(\frac1{1+a}\Bigr).
\end{align*}
This completes the proof of \eqref{THM4boundgenericdecayLEMpf3}.

Using the formula in \eqref{THM4boundgenericdecayLEMpf2}
and the bound \eqref{THM4boundgenericdecayLEMpf3},
it follows that for all $T\geq1$, 
the integral in \eqref{THM4boundgenericdecayLEMpf1} is
\begin{align}\label{THM4boundgenericdecayLEMpf5}
\ll \frac{\log\bigl(1+\sqrt T\bigr)}{\sqrt T}\sum_{1\leq d\leq\sqrt T}\frac{\tau(d)}{\sqrt d}
+\sum_{d>\sqrt T}\frac{\tau(d)}{d^{3/2}}
\ll T^{-\frac14}\bigl(\log(1+T)\bigr)^2.
\end{align}
See Lemma \ref{basicboundLEM1} for the last inequality.

Now the same type of argument as in the end of the proof of
Lemma \ref{labaaxiLEM}
applies to prove that \eqref{THM4boundgenericdecayLEMres} 
holds for Lebesgue almost all $\vecxi\in C$.
Hence \eqref{THM4boundgenericdecayLEMres} 
in fact holds for Lebesgue almost all $\vecxi\in (\R^2)^k$,
since the sum in the left-hand side of \eqref{THM4boundgenericdecayLEMres}
is invariant under $\vecxi\mapsto\vecxi+\veca$ for any $\veca\in(\Z^2)^k$.
\end{proof}

\end{document}